\theoremstyle:=definition,remark,plain\do{%
 \expandafter\g@addto@macro\csname th@\theoremstyle\endcsname{%
 \addtolength\thm@preskip\parskip
 }
 }
\declaretheorem[name=Theorem,numberwithin=section]{thm}
\declaretheorem[name=Proposition,numberlike=thm]{prop}
\declaretheorem[name=Lemma,numberlike=thm]{lemma}
\declaretheorem[name=Corollary,numberlike=thm]{cor}
\declaretheorem[name=Definition,style=definition,qed=$\blacktriangle$,numberlike=thm]{defn}
\declaretheorem[name=Example,style=definition,qed=$\blacktriangle$,numberlike=thm]{ex}
\declaretheorem[name=Remark,style=definition,qed=$\blacktriangle$,numberlike=thm]{rmk}
\declaretheorem[name=Assumption,style=definition,qed=$\blacktriangle$,numberlike=thm]{ass}
\newcommand{\R}{\mathbb{R}}
\newcommand{\euc}{\mathrm{euc}}
\newcommand{\loc}{\mathrm{loc}}
\newcommand{\round}{\mathrm{round}}
\newcommand{\inj}{\mathrm{inj}}
\newcommand{\diam}{\mathrm{diam}}
\newcommand{\Ric}{\mathrm{Ric}}
\newcommand{\dist}{\mathrm{dist}}
\newcommand{\sgn}{\mathrm{sgn}}
\newcommand{\real}{\mathrm{Re}}
\newcommand{\pa}[2]{\frac{\partial #1}{\partial #2}}
\newcommand{\paop}[1]{\pa{}{#1}}
\newcommand{\supp}{\mathrm{supp}}
\newcommand{\esssup}{\mathrm{ess \, sup}}
\newcommand{\essinf}{\mathrm{ess \, inf}}
\newcommand{\ep}{\varepsilon}
\newcommand{\perm}{\epsilon}
\newcommand{\current}[1]{\llbracket #1 \rrbracket}
\newcommand{\NN}{\mathbb{N}}
\newcommand{\cT}{\mathcal{T}}
\newcommand{\cH}{\mathcal{H}}
\newcommand{\cN}{\mathcal{N}}
\newcommand{\cS}{\mathcal{S}}
\newcommand{\tr}{\operatorname{tr}}
\newcommand{\vol}{\mathsf{vol}}
\newcommand{\G}{\mathrm{G}_2}
\newcommand{\Spin}[1]{\mathrm{Spin}(#1)}
\newcommand{\spa}{\operatorname{span}}
\newcommand{\im}{\operatorname{im}}
\newcommand{\smith}{\eth}
\newcommand{\crit}{\operatorname{crit}}
\newcommand{\Div}{\operatorname{div}}
\begin{document}

\title{Bubble Tree Convergence of \\ Conformally Cross Product Preserving Maps}

\author{Da Rong Cheng \\ \textit{Department of Mathematics, University of Chicago} \\ \tt{chengdr@uchicago.edu} \and Spiro Karigiannis \\ \textit{Department of Pure Mathematics, University of Waterloo} \\ \tt{karigiannis@uwaterloo.ca} \and Jesse Madnick \\ \textit{Department of Mathematics and Statistics, McMaster University} \\ \tt{madnickj@mcmaster.ca}}

\date{October 2, 2019}

\maketitle

\begin{abstract}
We study a class of weakly conformal $3$-harmonic maps, called associative Smith maps, from $3$-manifolds into $7$-manifolds that parametrize associative $3$-folds in Riemannian $7$-manifolds equipped with $\mathrm{G}_2$-structures. Associative Smith maps are solutions of a conformally invariant nonlinear first order PDE system, called the Smith equation, that may be viewed as a $\mathrm{G}_2$-analogue of the Cauchy--Riemann system for $J$-holomorphic curves.

In this paper, we show that associative Smith maps enjoy many of the same analytic properties as $J$-holomorphic curves in symplectic geometry. In particular, we prove: (i) an interior regularity theorem, (ii) a removable singularity result, (iii) an energy gap result, and (iv) a mean-value inequality. While our approach is informed by the holomorphic curve case, a number of nontrivial extensions are involved, primarily due to the degeneracy of the Smith equation.

At the heart of above results is an $\varepsilon$-regularity theorem that gives quantitative $C^{1,\beta}$-regularity of $W^{1,3}$ associative Smith maps under a smallness assumption on the $3$-energy. The proof combines previous work on weakly $3$-harmonic maps and the observation that the associative Smith equation demonstrates a certain ``compensation phenomenon'' that shows up in many other geometric PDEs.

Combining these analytical properties and the conformal invariance of the Smith equation, we explain how sequences of associative Smith maps with bounded $3$-energy may be conformally rescaled to yield bubble trees of such maps. When the $\mathrm{G}_2$-structure is closed, we prove that both the $3$-energy and the homotopy are preserved in the bubble tree limit. This result may be regarded as an associative analogue of part of Gromov's Compactness Theorem in symplectic geometry.
\end{abstract}

\tableofcontents

\section{Introduction}

\subsection{Motivation} \label{sec:motivation}

In symplectic geometry, moduli spaces of holomorphic curves play an important role~\cite{Gro, MS}. Starting with a symplectic manifold $(M, \omega)$, one chooses an almost complex structure $J_M$ that is compatible with $\omega$ in some sense. A map $u \colon (\Sigma, J_\Sigma) \to (M, \omega, J_M)$ from a Riemann surface $(\Sigma, J_\Sigma)$ is then called a $(J_\Sigma, J_M)$-holomorphic map (or simply a holomorphic curve) if it solves the Cauchy--Riemann system
\begin{equation} \label{eq:CR}
J_M \circ du = du \circ J_\Sigma.
\end{equation}
Various moduli spaces of solutions to~\eqref{eq:CR} can then be studied. A priori, these moduli spaces are almost never compact, largely due to their conformal invariance (see property (i) below); their compactifications are described by Gromov's Compactness Theorem. From the compactified moduli spaces, one can derive powerful invariants of the original symplectic manifold $(M,\omega)$, independent of the original choice of $J_M$. Here we emphasize that in its full generality, Gromov's Compactness Theorem allows the complex structure $J_{\Sigma}$ to vary in such way that the Riemann surface $(\Sigma, J_{\Sigma})$ degenerates. However, for the purposes of the present paper we are primarily interested in the case where $J_{\Sigma}$ is fixed.

Underpinning Gromov's Compactness Theorem in this case are a litany of crucial algebraic and analytic properties enjoyed by solutions of the Cauchy--Riemann system~\eqref{eq:CR}. To recall these, let $u \colon \Sigma^2 \to M^{2n}$ be a smooth map, and equip both $(\Sigma, J_\Sigma)$ and $(M, J_M, \omega)$ with compatible Riemannian metrics. Define the $2$-energy of $u$ on a measurable set $A \subset \Sigma$ by
\begin{align*}
E(u; A) & = \frac{1}{2} \int_A |du|^2\,\vol_\Sigma
\end{align*}
where $\vol_\Sigma$ is the volume form on $\Sigma$, and write $\overline{\partial} u := \textstyle \frac{1}{2}\left( du + J_M \circ du \circ J_\Sigma \right)$. Then we have
\begin{enumerate}[(i)]
\item (\emph{Conformal Invariance}.) If $u$ satisfies~\eqref{eq:CR} and $f \colon \Sigma \to \Sigma$ is a conformal diffeomorphism, then $u \circ f$ satisfies~\eqref{eq:CR}.
\item (\emph{Weak Conformality}.) Every solution of~\eqref{eq:CR} is weakly conformal.
\item (\emph{Calibrated Image}.) If $u$ is an immersion satisfying~\eqref{eq:CR}, then $u(\Sigma) \subset M$ is calibrated by $\omega$.
\item (\emph{Energy Identity}.) Every smooth map $u \colon \Sigma \to M$ satisfies $\frac{1}{2} |du|^2\,\vol_\Sigma = |\overline{\partial}u|^2\,\vol_\Sigma + u^*\omega$. Therefore
$$E(u; \Sigma) = \int_\Sigma |\overline{\partial} u|^2\,\vol_\Sigma + \int_\Sigma u^*\omega.$$
In particular, $E(u; \Sigma) \geq \int_\Sigma u^*\omega$, with equality if and only if $\overline{\partial}u = 0$. Thus, by Stokes's theorem, holomorphic curves minimize $2$-energy in their homology class. Moreover, if $\partial \Sigma = \O$, then the minimum $2$-energy attained is $\int_\Sigma u^*\omega = [\omega] \cdot u_*[\Sigma]$, which depends only on the topological data $[\omega] \in H^2(M; \R)$ and $u_*[\Sigma] \in H_2(M; \R)$.\end{enumerate}

Note that Properties (i)--(iv) may all be proved using linear algebra. Significantly less trivial are the following five analytic properties of holomorphic curves. In the sequel, we let $B(r) \subset \R^2$ denote the open ball of radius $r > 0$ centered at the origin.
 \begin{enumerate}[(i)] \setcounter{enumi}{4}
 \item (\emph{Mean value inequality}.) There exist constants $C, \ep_0 > 0$ such that every holomorphic curve $u \colon B(2r) \to M$ with $E(u; B(2r)) < \ep_0$ satisfies
$$\sup_{B(r)} |du|^2 \leq \frac{C}{r^2}\, E(u; B(2r)).$$
\item (\emph{Interior Regularity}.) If $u \in W^{1,2}(\Sigma; M)$ satisfies~\eqref{eq:CR} almost everywhere, then $u$ is $C^\infty$.
\item (\emph{Removable Singularities}.) If $u \colon B(1) \setminus \{0\} \to M$ is a smooth holomorphic curve with
\begin{equation*}
E(u; B(1) \setminus \{0\}) < \infty,
\end{equation*}
then $u$ extends to a smooth holomorphic curve $u \colon B(1) \to M$.
\item (\emph{Energy Gap}.) There exists a constant $\ep_0 > 0$ such that every holomorphic curve $u \colon S^2 \to M$ with $E(u; S^2) < \ep_0$ is constant.
\item (\emph{Compactness Modulo Bubbling}.) Let $u_n \colon (\Sigma, J_\Sigma) \to (M, J_M)$ be a sequence of holomorphic curves with $E(u_n; \Sigma) \leq C$. Then there exists a holomorphic curve $u_\infty \colon \Sigma \to M$ and a (possibly empty) finite set $\mathcal{S} = \{x_1, \ldots, x_q\} \subset \Sigma$ such that, after passing to a subsequence of $\{u_n\}$:
\begin{enumerate}[(a)]
\item We have $u_n \to u_\infty$ in $C^1_{\loc}$ on $\Sigma \setminus \mathcal{S}$.
\item We have $|du_n|^2\,d\mu \to |du_\infty|^2\,d\mu + \sum_{i=1}^q m_i \delta(x_i)$ as Radon measures on $\Sigma$, where $\delta(x_i)$ is the Dirac delta measure at $x_i$. Also, each $m_i \geq \frac{1}{2} \ep_0$ with $\ep_0$ as in (viii).
\item If $\| du_n \|_{L^p} \leq C$ for some $p \in (2, \infty]$, then $\mathcal{S} = \varnothing$.
\end{enumerate}
\end{enumerate}
Note that by (iv), any sequence of holomorphic curves representing the same homology class will satisfy a uniform $2$-energy bound, and hence property (ix) may be applied to such a sequence. For a more detailed discussion of (i)--(ix), we refer the reader to~\cite{MS, PW, Ye}.
 
In fact, several important conformally-invariant geometric PDE systems, such as the Yang--Mills equation on $4$-manifolds~\cite{DK, Fe} and the harmonic map equation on surfaces~\cite{P}, satisfy analogues of properties (i)--(ix) with respect to an appropriate conformally invariant energy functional. In each of these cases, properties (i)--(ix) can be used to construct ``bubble trees" of such objects, which may be regarded as comprising the boundary of the corresponding compactified moduli space. In favorable cases, such as that of holomorphic curves~\cite{PW} or harmonic maps from surfaces~\cite{P}, the bubble tree enjoys two additional properties, which can be loosely stated as follows:
\begin{enumerate}[(i)] \setcounter{enumi}{9}
\item The 2-energy is preserved in the bubble tree limit.
\item The bubble tree has no necks. Consequently, homotopy is preserved in the bubble tree limit.
\end{enumerate}
Precise statements of (x) and (xi) may be found in~\cite{MS, PW, Ye}.

\textbf{In this paper, we demonstrate that analogues of Properties (i)--(xi) hold for a class of weakly conformal maps that parametrize associative $3$-folds in Riemannian $7$-manifolds equipped $\G$-structures.} We are motivated in part by the larger project of ``counting" associative submanifolds in order to obtain invariants of $\G$-manifolds, which we briefly discuss.

Recall that a $\G$-structure on a $7$-manifold $M$ is a $\G$-subbundle of the frame bundle of $M$. Equivalently, it is a choice of a $3$-form $\varphi \in \Omega^3(M)$ with the property that the bilinear form
$$B_\varphi \in \Gamma(\text{Sym}^2(T^*M) \otimes \Lambda^7(T^*M) )$$
given by $B_\varphi(X,Y) = \iota_X\varphi \wedge \iota_Y \varphi \wedge \varphi$ is definite. Since $\G \leq \text{SO}(7)$, a $\G$-structure on $M$ naturally induces a Riemannian metric $h_\varphi$ on $M$, although the correspondence $\varphi \mapsto h_\varphi$ is not injective.

Two classes of submanifolds are of particular interest in $\G$ geometry: the associative $3$-folds, which are semi-calibrated by $\varphi$, and the coassociative $4$-folds, which are semi-calibrated by $\ast\varphi$. It is often useful to regard associative $3$-folds and coassociative $4$-folds, respectively, as $\G$ analogues of holomorphic curves and special Lagrangian submanifolds. One may also consider gauge-theoretic objects on (bundles over) $7$-manifolds with $\G$-structures, such as $\G$-monopoles and $\G$-instantons, by analogy with familiar objects in $3$ dimensions and $4$ dimensions. For background on $\G$ geometry, we refer the reader to~\cite{J-book}.
 
In~\cite[$\S$3]{DT}, Donaldson--Thomas described an analogy between $\G$-instantons on $7$-manifolds and flat connections on $3$-manifolds, suggesting that $\G$-instantons may be used to define $7$-dimensional analogues of the Casson invariant and Floer homology. Then, Tian~\cite{Ti} showed that sequences of $\G$-instantons may bubble along associative submanifolds, complicating the compactification of the moduli space. To account for this, Donaldson--Segal~\cite[$\S$6.2]{DS} conjectured that a count of $\G$-instantons, appropriately weighted by a count of associative submanifolds, ought to yield a deformation invariant of $\G$-manifolds.

In~\cite{HW} and~\cite{Ha}, Haydys and Walpuski suggested that these weights might be described by considering generalized Seiberg--Witten monopoles on the associatives. Doan--Walpuski~\cite{DW} explored this idea further, describing a Floer homology group generated by both associatives and ADHM Seiberg--Witten monopoles that could lead to a deformation invariant. A rather different perspective is offered by Joyce in~\cite{J}, who puts forth a conjectural notion of $\G$ quantum cohomology. (Some other earlier work on the analysis of counting associative submanifolds in a particular special case (``thin'') was done by Leung--Wang--Zhu~\cite{LWZ, LWZ-2}.)

One of the main challenges confronting each of these proposed deformation invariants is that $1$-parameter families of associative immersions may degenerate to a map of the form $u \circ \pi$, where $u \colon \Sigma^3 \to M^7$ is an associative immersion and $\pi \colon \widetilde{\Sigma}^3 \to \Sigma^3$ is a branched cover. Thus, we are led to seek a class of maps whose immersions parametrize associative submanifolds, but is sufficiently large to allow for nonempty critical loci.

Indeed, in \cite[page 90, item (i)]{LWZ-2}, Leung--Wang--Zhu point out that the conformality of the Cauchy--Riemann equation plays a key role for holomorphic curves, and suggest that an analogous conformally invariant PDE system for associative submanifolds would be desirable.

This is where the work of Smith~\cite{Sm} enters the picture. In his 2011 PhD thesis, Smith introduced a class of maps that generalize holomorphic maps of Riemann surfaces into almost Hermitian manifolds. Namely, a map $u \colon (\Sigma^n, g, P_\Sigma) \to (M^m, h, P_M)$ between Riemannian manifolds $(\Sigma^n, g)$ and $(M^m, h)$ endowed with compatible $k$-fold vector cross products $P_\Sigma \colon \Gamma(\Lambda^{k}T\Sigma) \to \Gamma(T\Sigma) $ and $P_M \colon \Gamma(\Lambda^kTM) \to \Gamma(TM)$ is said to be $k$-\textit{Smith} (or \textit{Smith} when $k$ is clear from context) if $du$ preserves the vector cross products up to a scale factor $\lambda = \lambda(du)$ depending only on $du$. That is,
\begin{equation} \label{eq:Gen-Smith}
P_M \circ \Lambda^kdu = \lambda(du)\,du \circ P_\Sigma.
\end{equation}
Equation~\eqref{eq:Gen-Smith} is henceforth referred to as the Smith equation. In brief, Smith maps are ``conformally cross product preserving." Smith's original terminology~\cite{Sm} for such maps was ``multiholomorphic'', which we do not use.

Vector cross products are rather special geometric structures. Indeed, a $k$-fold vector cross product $P \colon \Gamma(\Lambda^kTX) \to \Gamma(TX)$ on a Riemannian manifold $(X, g)$ induces a semi-calibration $\alpha \in \Omega^{k+1}(X)$ by raising an index:
\begin{equation}
\alpha(v_1, \ldots, v_{k+1}) = g( P(v_1, \ldots, v_k), v_{k+1} ). \label{eq:induced-cal}
\end{equation}
Moreover, by the work of Brown--Gray~\cite{BG}, $k$-fold vector cross products are classified into four families, and each is equivalent to particular reduction of the structure group of $X$. We summarize their classification in the following table:
\begin{center}
 \begin{tabular}{ | c | c | l | }
 \hline
 $k$ & $\dim(X)$ & Equivalent Structure \\ \hline
 $k$ & $k+1$ & Orientation \\ 
 $1$ & $2r$ & Almost Hermitian Structure \\ 
 $2$ & $7$ & G$_2$-Structure \\ 
 $3$ & $8$ & Spin$(7)$-Structure \\ \hline
 \end{tabular}
\end{center}
In particular, $2$-fold vector cross products can only occur in dimensions $3$ and $7$, while $3$-fold vector cross products can only occur in dimensions $4$ and $8$.

Smith primarily considered (as do we) the case in which the domain $\Sigma^n$ and target $M^m$ carry $(n-1)$-fold vector cross products. In that case, it can be shown that nonconstant solutions of~\eqref{eq:Gen-Smith} must satisfy $\lambda(du) = \frac{1}{(\sqrt{n})^{n-2}}|du|^{n-2}$, so that~\eqref{eq:Gen-Smith} reads
\begin{equation} \label{eq:Spec-Smith}
P_M \circ \Lambda^{n-1}du = \frac{1}{(\sqrt{n})^{n-2}} |du|^{n-2}\,du \circ P_\Sigma.
\end{equation}
In particular, when $n = 2$, equation~\eqref{eq:Spec-Smith} is the Cauchy--Riemann system~\eqref{eq:CR} for holomorphic curves.

Note that~\eqref{eq:Spec-Smith} is a nonlinear first order PDE system involving $m$ equations on $m$ unknown functions of $n$ variables. When $n = 2$, the system is elliptic, but when $n > 2$, it degenerates at points of the critical set
\begin{equation*}
\crit_u = \{x \in \Sigma \mid (du)_x = 0 \}.
\end{equation*}

Smith proved~\cite{Sm} that solutions of~\eqref{eq:Spec-Smith} satisfy the analogues of properties (i)--(iv). To state his result, let $\alpha \in \Omega^n(M)$ denote the semi-calibration induced by $P_M$ as in~\eqref{eq:induced-cal}, and denote the $n$-energy of a map $u \in W^{1,n}(\Sigma; M)$ on a measurable set $A \subset \Sigma$ by
\begin{align*}
E(u; A) & = \frac{1}{(\sqrt{n})^n} \int_A |du|^n\,\vol_\Sigma.
\end{align*}

\begin{thm}[Smith~\cite{Sm}] \label{thm:Smith-Four-Parts}
Let $u \colon \Sigma^n \to M^m$ be a smooth map, where $n < m$.
\begin{enumerate}[(i)]
\item If $u$ satisfies~\eqref{eq:Spec-Smith} and $f \colon \Sigma \to \Sigma$ is a conformal diffeomorphism, then $u \circ f$ satisfies~\eqref{eq:Spec-Smith}.
\item Every solution of~\eqref{eq:Spec-Smith} is weakly conformal.
\item If $u$ is an immersion satisfying~\eqref{eq:Spec-Smith}, then $u(\Sigma) \subset M$ is semi-calibrated by $\alpha$.
\item We have
$$ E(u; \Sigma) \geq \int_\Sigma u^*\alpha. $$
Moreover, equality holds if and only if $u$ satisfies~\eqref{eq:Spec-Smith}. Thus, if $d\alpha = 0$, then Smith maps minimize $n$-energy in their homology class. If, in addition, $\partial \Sigma = \varnothing$, then the minimum $n$-energy attained can simply be expressed as $\int_\Sigma u^*\alpha = [\alpha] \cdot u_*[\Sigma]$, which depends only on topological data.
\end{enumerate}
\end{thm}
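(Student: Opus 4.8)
The plan is to reduce all four assertions to pointwise linear algebra at a single point $x \in \Sigma$. Write $V = T_x \Sigma$, $W = T_{u(x)} M$, $A = (du)_x \colon V \to W$, and let $P_V, P_W$ denote the vector cross products and $\alpha_W$ the semi-calibration induced on $V,W$ by $P_\Sigma, P_M, \alpha$; fix a positively oriented orthonormal basis $e_1, \dots, e_n$ of $V$ and set $a_i = |Ae_i|^2$. Since the domain carries an $(n-1)$-fold cross product in dimension $n$, $P_V$ is the volume cross product: $\langle P_V(v_1, \dots, v_{n-1}), w\rangle = \vol_V(v_1, \dots, v_{n-1}, w)$, whence $P_V(f_1, \dots, f_{n-1}) = f_n$ for every positively oriented orthonormal basis $(f_1, \dots, f_n)$, and $P_V(Rv_1, \dots, Rv_{n-1}) = R\,P_V(v_1, \dots, v_{n-1})$ for $R \in \mathrm{SO}(V)$. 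On the target I will use only the universal properties of any vector cross product: $P_W(w_1, \dots, w_{n-1})$ is orthogonal to each $w_j$, and $|P_W(w_1, \dots, w_{n-1})|^2 = \det\big(\langle w_i, w_j\rangle\big)$, so by Hadamard $|P_W(w_1, \dots, w_{n-1})|^2 \le \prod_i |w_i|^2$ with equality iff the $w_i$ are orthogonal or one vanishes. I will also use $\alpha_W(w_1, \dots, w_n) = \langle P_W(w_1, \dots, w_{n-1}), w_n\rangle$ and the identity $u^*\alpha = \alpha_W(Ae_1, \dots, Ae_n)\,\vol_\Sigma$ at $x$.

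For (i): a conformal diffeomorphism $f$ has $(df)_x = \mu R$ with $\mu > 0$ and $R \in \mathrm{SO}(V)$ (the orientation-reversing case being handled by a sign), so $B := d(u\circ f)_x = A\mu R$ satisfies $|B| = \mu|A|$ and $Bv_j = \mu A(Rv_j)$; substituting into \eqref{eq:Spec-Smith} for $B$, extracting the factor $\mu^{n-1}$, applying \eqref{eq:Spec-Smith} for $A$ at the vectors $Rv_j$, and using $\mathrm{SO}(V)$-equivariance of $P_V$, the two sides coincide — a pure homogeneity computation. For (ii): apply \eqref{eq:Spec-Smith} with $(v_1, \dots, v_{n-1}) = (e_1, \dots, e_{n-1})$, and then with the remaining choices of $n-1$ of the $e_i$ in the first slots, to get $P_W(Ae_1, \dots, \widehat{Ae_k}, \dots, Ae_n) = \pm c\,Ae_k$ with $c = |A|^{n-2}/(\sqrt n)^{n-2}$. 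When $A \ne 0$ then $c \ne 0$, so orthogonality of $P_W$ to its arguments forces $\langle Ae_i, Ae_j\rangle = 0$ for $i \ne j$; the Gram identity then gives $\prod_{i \ne k} a_i = c^2 a_k$ for every $k$. A short case analysis (if some $a_j = 0$ then, since $c \ne 0$, all $a_k = 0$, contradicting $A \ne 0$; otherwise dividing two of these identities gives $a_k^2 = a_l^2$, so all $a_i$ are equal) shows $a_1 = \dots = a_n = |A|^2/n$. Hence $A$ sends an orthonormal basis to an orthogonal basis of common length $|A|/\sqrt n$, i.e. $A$ is conformal (or $A = 0$); one also reads off $c = (|A|^2/n)^{(n-2)/2}$, recovering the stated scale factor.

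For (iii): with $A$ injective and conformal as in (ii), orient $u(\Sigma)$ so that $Ae_1, \dots, Ae_n$ is positively oriented; then, with $\mu = |A|/\sqrt n$, the vectors $Ae_i/\mu$ form an orthonormal basis of $T_{u(x)}u(\Sigma)$ and, using \eqref{eq:Spec-Smith} and $c = \mu^{n-2}$, $\alpha_W(Ae_1, \dots, Ae_n) = c|Ae_n|^2 = \mu^{n-2}\mu^2 = \mu^n$, so $\alpha$ restricts to the volume form of $u(\Sigma)$; as $\alpha$ is a semi-calibration, $u(\Sigma)$ is semi-calibrated by $\alpha$. For (iv), for arbitrary $A$ estimate the density of $u^*\alpha$ by Cauchy--Schwarz, Hadamard and AM--GM:
\begin{align*}
\alpha_W(Ae_1, \dots, Ae_n) &\le |P_W(Ae_1, \dots, Ae_{n-1})|\,|Ae_n| \le \prod_{i=1}^{n}|Ae_i| \\
&= \Big(\prod_{i=1}^{n} a_i\Big)^{1/2} \le \Big(\tfrac{1}{n}\sum_{i=1}^{n} a_i\Big)^{n/2} = \frac{|A|^n}{(\sqrt n)^n},
\end{align*}
and integrate to get $\int_\Sigma u^*\alpha \le E(u;\Sigma)$. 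If $u$ solves \eqref{eq:Spec-Smith}, then (ii)--(iii) give equality in this chain at every point, hence equality of the integrals. Conversely, equality of the integrals forces pointwise equality a.e., hence everywhere by continuity; at a point, equality in AM--GM forces all $a_i = t$, equality in Hadamard forces $Ae_1, \dots, Ae_{n-1}$ orthogonal (unless $A = 0$, where \eqref{eq:Spec-Smith} is trivial), and equality in Cauchy--Schwarz forces $P_W(Ae_1, \dots, Ae_{n-1}) = t^{(n-2)/2}Ae_n = c\,A(P_V(e_1,\dots,e_{n-1}))$. Since the density of $u^*\alpha$ is basis-independent, the same holds for every positively oriented orthonormal basis, and since both sides of \eqref{eq:Spec-Smith} are alternating in $v_1, \dots, v_{n-1}$ — hence determined by their values on positively oriented orthonormal $(n-1)$-frames, whose wedges span $\Lambda^{n-1}V$ — we conclude \eqref{eq:Spec-Smith} holds at that point. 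Finally, if $d\alpha = 0$ then $\int_\Sigma u^*\alpha$ depends only on the homology class of $u$ by Stokes, so Smith maps minimize $n$-energy in their class, with minimum $\int_\Sigma u^*\alpha = [\alpha]\cdot u_*[\Sigma]$ when $\partial\Sigma = \varnothing$.

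The hard part is not analytic — everything is elementary linear algebra — but the equality discussion in (iv) needs the most care: upgrading ``equality on one orthonormal frame'' to ``\eqref{eq:Spec-Smith} on all $(n-1)$-tuples of vectors'' requires the multilinearity-plus-spanning argument, and one must keep track that the AM--GM/Hadamard/Cauchy--Schwarz equality conditions together pin down exactly the Smith configuration. In (ii) the bookkeeping in the case analysis — in particular ruling out vanishing $a_i$ when $A \ne 0$, using that $c = 0$ only when $A = 0$ — is the point most prone to slips.
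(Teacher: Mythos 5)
Your proposal is correct, and it reaches the same conclusions as the paper but takes a genuinely different route for part (ii). The paper proves weak conformality (its Theorem~\ref{thm:smith-conformal}) at the level of a general $k$-fold cross product with $k>1$: it applies the fundamental identity~\eqref{eq:VCP-identity2} iteratively, uses the Smith equation on both sides to obtain $|Au_1\wedge\cdots\wedge Au_{k-1}|^2 = \lambda^{2(k-1)}|u_1\wedge\cdots\wedge u_{k-1}|^2$ for all $(k-1)$-frames, and then invokes a spectral-theorem lemma (Lemma~\ref{lemma:eigen}) to conclude $A^*A=\lambda^2 I$. You instead exploit that the \emph{domain} cross product is the Hodge star in dimension $n$, evaluate the Smith equation on $(n-1)$-subsets of a fixed oriented orthonormal frame, and read off pairwise orthogonality of the $Ae_i$ directly from the defining property~\eqref{eq:Porthogonal} of the target cross product; the Gram identities $\prod_{i\ne k}a_i=c^2 a_k$ plus your case analysis then equalize the $a_i$. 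This is more elementary and self-contained (no need for the iterated identity or the eigenvalue argument) at the cost of being specific to the $k=n-1$ case — but that is the only case the theorem concerns, so nothing is lost. For (iv) your chain Cauchy--Schwarz $\to$ Hadamard $\to$ AM--GM is the same estimate as the paper's Theorem~\ref{thm:generalized-calib}, just with the comass inequality unpacked, and your care with the equality-case bookkeeping (extending from one frame to all frames, then to all of $\Lambda^{n-1}V$ by multilinearity) mirrors how the paper passes through Proposition~\ref{prop:smith-conf-calib}. Parts (i) and (iii) match the paper's arguments. One small caveat: as you note parenthetically, the orientation-reversing case of (i) does \emph{not} actually preserve~\eqref{eq:Spec-Smith} — the Hodge star picks up a sign — so the paper's precise version (Proposition~\ref{prop:smith-u-conf-inv}) restricts to orientation-preserving $f$, and the blanket ``conformal diffeomorphism'' in the theorem statement should be read with that restriction.
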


Our interest is in the exceptional cases $(n,m) = (3,7)$ and $(n,m) = (4,8)$. We shall refer to $2$-Smith maps from a $3$-manifold into a $7$-manifold as \textit{associative Smith maps}, and refer to $3$-Smith maps from a $4$-manifold into an $8$-manifold as \textit{Cayley Smith maps}. The corresponding cases of~\eqref{eq:Spec-Smith} are called the \textit{associative Smith equation} and \textit{Cayley Smith equation}, respectively.

In light of Smith's result, it is natural to ask whether analogues of (v)--(xi) for holomorphic curves also hold true for associative and Cayley Smith maps. \textbf{The purpose of the present paper is to establish all seven of these properties.} In this present paper, our main focus is on the associative Smith equation, although analogous results hold for the Cayley Smith equation.

\begin{rmk}
Smith's theorem indicates that solutions of~\eqref{eq:Spec-Smith} are attractive parametrizations of calibrated submanifolds. Fortunately, associative (respectively, Cayley) Smith maps are at least as abundant as associative (respectively, Cayley) submanifolds. Indeed, given an associative immersion $u \colon \Sigma^3 \to (M^7, h)$ into a $7$-manifold $M$ with a $\G$-structure and a smooth function $F \colon \Sigma \to \R$, equipping the domain $\Sigma$ with the Riemannian metric $g = e^{F} u^*h$ makes $u$ into an associative Smith map.
\end{rmk}

\subsection{Methods and main results} \label{sec:results-methods}

\textbf{Methods.} We emphasize that many of the techniques used in establishing properties of holomorphic curves ($n = 2$) \textit{do not} carry over \textit{mutatis mutandis} to our situation ($n = 3$ or 4). This is primarily due to the following two reasons, which are themselves related:\begin{enumerate}[(i)]
\item The second order elliptic system obtained by differentiating~\eqref{eq:Spec-Smith} is of ``$n$-harmonic type'' and thus degenerate when $n \neq 2$, as opposed to being uniformly elliptic when $n = 2$. Consequently, the methods used in proving regularity and compactness results for holomorphic curves do not directly apply to our case.
\item Solutions to~\eqref{eq:Spec-Smith} for $n \neq 2$ need not be ``immersions except for a discrete set" as they are when $n = 2$. In fact, it is not known whether a unique continuation theorem holds when $n \neq 2$ (see~\cite[Theorem 2.3.2]{MS} for the case $n = 2$). Thus, in extending properties (x) and (xi), one runs into complications when trying to apply results about minimal submanifolds that prove useful in the study of holomorphic curves, such as the isoperimetric inequality or the monotonicity formula.
\end{enumerate}

Of the two issues above, the second is less of a problem as we simply avoid it by working directly with the $n$-energy of our maps, as in~\cite{MS}, instead of applying minimal surface results (or more generally geometric measure theory) to their images, as in~\cite{PW}. The reader interested in the distinction between these two approaches is suggested to compare, for example,~\cite[Theorem 4.4.1(i)]{MS} and~\cite[Lemma 3.1]{PW}.

As for the first issue, one expects previous work on $n$-harmonic maps to help in dealing with the degeneracy of the equation, and indeed we are aided greatly by the results of Uhlenbeck~\cite{U}, Giaquinta--Modica~\cite{GiMa}, Duzaar--Fuchs~\cite{DF} and Duzaar--Mingione~\cite{DM}. However, there is one crucial aspect that existing literature on $n$-harmonic maps does not fully cover, namely the continuity of $W^{1, n}$-weak solutions (on $n$-dimensional domains), which has only been established in full generality for the case $n = 2$~\cite{He}. Analogous results for $n \neq 2$ all require additional assumptions (see for example~\cite{Ha-Li, T, TW}) that do not generally fit with our setting. This is where the special structure of the Smith equation~\eqref{eq:Spec-Smith} comes in. Specifically, we are able to show that equation~\eqref{eq:Spec-Smith} demonstrates a ``compensation phenomenon'' which has been observed in many other geometric PDEs, including $n$-harmonic maps into special targets. See $\S$\ref{sec:int-reg-sub}, in particular Remarks~\ref{hardyspacermk} and~\ref{rmk:compensation}, for details. Finally, we point out that results on minimizers of the $n$-energy, such as those obtained by Hardt--Lin~\cite{Ha-Li}, do not apply to our situation, as Smith maps are only minimizing within a fixed homology class.

\textbf{Main results.} We summarize the main results of the paper. In this section, the statements of the theorems are slightly simplified from their precise forms.

 Let $(\Sigma^3, g, \ast)$ denote an oriented Riemannian $3$-manifold equipped with the $2$-fold vector cross product $\ast = P_\Sigma \colon \Gamma(\Lambda^2T\Sigma) \to \Gamma(T\Sigma)$ given by the Hodge operator.

Let $(M^7, h, J)$ denote a closed oriented Riemannian $7$-manifold equipped with a $2$-fold vector cross product $J = P_M \colon \Gamma(\Lambda^2TM) \to \Gamma(TM)$. Note that the data $(h,J)$ induces a definite $3$-form $\varphi \in \Omega^3(M)$, and hence a $\G$-structure on $M$, by raising an index:
$$\varphi(x,y,z) = h(J(x,y), z).$$
For the moment, we do not assume that $\varphi$ is closed or coclosed.

For technical convenience, we isometrically embed $M$ in a Euclidean space $\R^d$. The Sobolev space $W^{1, p}(\Sigma; M)$ is then by definition
\[
W^{1, p}(\Sigma; M) = \{ u \in W^{1, p}(\Sigma; \R^{d})\ |\ u(x) \in M \text{ for a.e. }x \in \Sigma \}.
\]
Throughout, the manifolds $\Sigma$ and $M$, the embedding $M \to \R^d$, and all of the relevant geometric data $g, h, \ast, J$, are assumed to be smooth.

The associative Smith equation~\eqref{eq:Spec-Smith} for maps $u \colon (\Sigma^3, g, \ast) \to (M^7, h, J)$ reads
$$J \circ \Lambda^2du = \tfrac{1}{\sqrt{3}}|du|\,du \circ \ast.$$
In the case $u \in W^{1, 3}(\Sigma; M)$, the weak derivative $(du)_{x}$ is a linear map $T_{x}\Sigma \to T_{u(x)}M$ for almost every $x \in \Sigma$, and at such points both sides of the above equation make sense. Hence we say that a map $u \in W^{1, 3}(\Sigma; M)$ is an associative Smith map if the weak derivative $du$ satisfies the above equation almost everywhere on $\Sigma$.

As above, we let $E(u; A) = \int_A |du|^3\,\vol_\Sigma$ denote the $3$-energy of $u$ on a measurable set $A \subset \Sigma$.

Our first result is an $\ep$-regularity theorem for weak solutions of~\eqref{eq:Spec-Smith}, which gives $C^{1,\beta}$-regularity with \emph{a priori} estimates under a smallness assumption on the $3$-energy. Two of the main ingredients for the proof are the regularity of $n$-harmonic functions due to Uhlenbeck~\cite{U}, and a deep result from harmonic analysis due to Fefferman--Stein~\cite{FS}. Note that because of the degeneracy of~\eqref{eq:Spec-Smith}, in general we do not expect the solution to be better than $C^{1, \beta}$. However, on the set where $du$ is nonzero we do get smoothness. For convenience we state the $\ep$-regularity theorem in local terms and assume the domain is $B(2) \subset \R^{3}$. In this case we may identify the (weak) derivative $du$ of a map $u \in W^{1, 3}(B(2); \R^{d})$ with an element of $L^{3}(B(2); \R^{3\times d})$, which we denote by $Du$.

\textbf{Theorem~\ref{thm:ep-reg}.} ($\ep$-regularity.)
\emph{There exist $\ep_{0}, \beta > 0$ depending on $M, J$ and the embedding $M \to \R^{d}$ such that if $g$ is a Riemannian metric on $B(2)$ with $|g - g_{\text{euc}}|_{0; B(2)} + |Dg|_{0; B(2)} < \ep_0$ and $u \in W^{1, 3}(B(2); M)$ is an associative Smith map with respect to $g$ satisying $E(u; B(2)) < \ep_{0}$, then the following hold:
\begin{enumerate}[(a)]
\item The map $u$ belongs to $C^{1, \beta}(B(1); M)$ and the norm $|u|_{1, \beta; B(1)}$ can be estimated through $M, J$, the embedding $M \to \R^{d}$, and $E(u; B(2))$.
\item In addition, $u$ is smooth on the open set $\{x \in B(1)\ |\ Du(x) \neq 0\}$.
\end{enumerate}
}

On a number of occasions, especially in $\S$\ref{sec:zero-neck-length}, we need a more explicit gradient estimate than the one obtained in part (a) of Theorem~\ref{thm:ep-reg}. The following result, based on an adaptation of~\cite{DF}, provides such an estimate, again under a smallness assumption on the energy. As in Theorem~\ref{thm:ep-reg} we suppose that the domain $\Sigma$ is the ball $B(2) \subset \R^3$.

\textbf{Theorem~\ref{thm:MVI}.} (Mean value inequality.)
\emph{Suppose that the Ricci curvature of the domain metric $g$ is bounded, in the sense that
$$\left| \text{Ric}_{g(x)}(v,v) \right| \leq K \left| v \right|^2_{g(x)} \quad \text{ for all } x \in B(2), \, v \in \R^3.$$
Then there exist $C > 0$ and $\ep_1 > 0$ such that if $|g - g_{\text{euc}}|_{0; B(2)} + |Dg|_{0; B(2)} \leq \ep_1$ and $u \in W^{1,3}(B(2); M)$ is an associative Smith map with $E(u; B(2)) < \ep_1$, then
$$\sup_{B(\frac{1}{2})} |Du(x)|^3 \leq C E(u; B(2))$$
where both $C$ and $\ep_1$ depend only on $(M,J)$, the embedding $M \to \R^d$, and the constant $K$.
}

Covering the domain with small enough balls to which Theorem~\ref{thm:ep-reg} is applicable, we obtain the following interior regularity result without a smallness assumption on the energy. We remark that, as opposed to Theorem~\ref{thm:ep-reg}, the $C^{1, \beta}$-regularity given below is only qualitative as Theorem~\ref{thm:int-reg} does not provide an a priori estimate of the $C^{1, \beta}$-norm and thus leaves open the possibility of a sequence of Smith maps bubbling.

\textbf{Theorem~\ref{thm:int-reg}.} (Interior regularity.)
\emph{Suppose that $g$ is a smooth Riemannain metric on $B(2)$, and that $u$ is a $W^{1, 3}$-Smith map on $B(2)$ with respect to $g$. Then $u$ has H\"older continuous first derivatives on $B(1)$. Moreover, $u$ is $C^\infty$ on the open set $\{x \in B(1) \colon Du(x) \neq 0\}$.
}

As another corollary of Theorem~\ref{thm:ep-reg}, we get that isolated singularities of associative Smith maps are removable.

\textbf{Theorem~\ref{thm:rem-sing}.} (Removable singularity.)
\emph{Suppose that $g$ is a smooth Riemannian metric on $B(2)$, and that $u \in C^{1}_{\loc}(B(2)\setminus\{0\}; M)$ is a Smith map with respect to $g$, satisfying
\begin{equation*}
\int_{B(2)}|Du|^{3}dx < \infty.
\end{equation*}
Then in fact $u$ extends to a $C^{1}$-Smith map on all of $B(2)$.}

The next result can be viewed as a global version of Theorem~\ref{thm:MVI}. Specifically, as in~\cite[$\S$4.1]{MS}, a short argument using Theorem~\ref{thm:MVI} and the conformal invariance of the Smith equation~\eqref{eq:Spec-Smith} establishes the following.

\textbf{Proposition~\ref{prop:energy-gap}.} (Energy gap.)
\emph{There exists a constant $\ep_0 > 0$, depending only on $(M,J)$ and the embedding $M \to \R^d$, such that every $C^1$ associative Smith map $u \colon (S^3, g_{\round}) \to (M^7, h)$ with $E(u; S^3) < \ep_0$ is constant.
}

We then turn to sequences of associative Smith maps with uniformly bounded $3$-energy. The following theorem is central to the construction of the bubble tree limit. Specifically, from Theorem~\ref{thm:ep-reg} and Theorem~\ref{thm:rem-sing}, together with an argument using the weak-$*$ compactness of Radon measures which is by now standard, we deduce $C^{1}$-convergence away from a finite set of points to a $C^{1, \beta}$-limit which is again a Smith map.

\textbf{Proposition~\ref{prop:conv-mod-bubbling}.} (Compactness modulo bubbling.)
\emph{Let $(\Sigma^3, g)$ be a closed Riemannian $3$-manifold. Let $\Omega \subset \Sigma$ be an open set, and let $\{\Omega_n\}$ be a sequence of open sets that exhaust $\Omega$. Let $g_n$ be a Riemannian metric on $\Omega_n$ such that $g_n \to g$ smoothly on compact subsets of $\Omega$.}

\emph{Let $u \colon (\Omega_n, g_n) \to (M^7, h)$ be a sequence of $C^1$ associative Smith maps satisfying a uniform $3$-energy bound
$$E(u_n; \Omega_n) \leq E_0.$$
Then there exists a finite set $\mathcal{S} = \{x_1, \ldots, x_q\} \subset \Sigma$ and an associative Smith map $u \in C^1_{\loc}(\Omega; M)$ with $\int_\Omega |du|_g^3\,d\mu_g \leq E_0$ such that, after passing to a subsequence:
\begin{enumerate}[(a)]
\item We have $u_n \to u$ in $C^1_{\loc}(\Omega \setminus \mathcal{S})$.
\item We have
$$\left|du_n\right|^3_{g_n} d\mu_{g_n} \to \left| du \right|^3_g d\mu_g + \sum_{i=1}^q m_i \delta(x_i) \quad \text{ as Radon measures on $\Omega$},$$
where each $\delta(x_i)$ is a Dirac measure. Moreover, each $m_i \geq \frac{1}{2}\ep_0$, where $\ep_0$ is as in Theorem~\ref{thm:ep-reg}.
\item If $\| du_n \|_{p; \Omega_n} \leq C$ for some $p \in (3, \infty]$, then $\mathcal{S} = \varnothing$.
\end{enumerate}
}

\begin{rmk} \label{rmk:uv-initial-rmk}
One can construct associative submanifolds of the form $\Sigma^2 \times S^1$ in $Y^6 \times S^1$, where $Y^6$ is a Calabi-Yau $3$-fold. Because of this, one might think that, since holomorphic curves bubble at points, associative Smith maps should generically bubble along curves. The issue is that bubbling of holomorphic curves corresponds to concentration of $2$-energy, whereas bubbling of associative Smith maps corresponds to concentration of $3$-energy. This is all very carefully explained in \S\ref{sec:uv-stuff} using relations between different classes of Smith maps we establish in \S\ref{sec:smith-relations}.
\end{rmk}

In $\S$\ref{sec:bubble-tree}, we study the behavior of sequences of associative Smith maps with uniformly bounded $3$-energy near the bubble points $x_i \in \mathcal{S}$. Indeed, using Theorem~\ref{prop:conv-mod-bubbling}, the conformal invariance of the Smith equation, and the conformal invariance of the $3$-energy functional $E$, we will see that such a sequence gives rise to a bubble tree of associative Smith maps.

In outline, bubble trees arise in the following way. Beginning with a sequence $u_n \colon (\Sigma^3, g) \to (M^7, h)$ of associative Smith maps with $E(u_n; \Sigma) \leq E_0$, Theorem~\ref{prop:conv-mod-bubbling} shows that a subsequence of $u_n$ converges in $C^1_{\loc}$ to an associative Smith map $u_\infty \colon \Sigma \to M$ off of a finite set $\mathcal{S}$ of bubble points in $\Sigma$. 

If $\mathcal{S} \neq \varnothing$, then for each bubble point $x_i \in \mathcal{S}$, we may conformally rescale the sequence $u_n$ in such a way that a subsequence of the rescaled maps converges in $C^1_{\loc}$ to an associative Smith map $\widetilde{u}_{\infty, i} \colon S^3 \to M^7$ off of a finite set $\mathcal{S}_i \subset S^3 \setminus \{p^-\}$, where $p^-$ is the south pole. If any of the sets $\mathcal{S}_i \neq \varnothing$, then for each first-level bubble point $x_{ij} \in \mathcal{S}_i$, we may again conformally rescale $u_n$ so that a subsequence of the rescaled maps converge in $C^1_{\loc}$ to an associative Smith map $\widetilde{u}_{\infty, ij} \colon S^3 \to M$ off of a finite set $\mathcal{S}_{ij} \subset S^3 \setminus \{p^-\}$, and so on.

In $\S$\ref{sec:bubble-construction}, we use the energy gap of Proposition~\ref{prop:energy-gap} to show that this process does, in fact, terminate after a finite number of iterations. The result is a tree of associative Smith maps whose base vertex corresponds to the base map $u_\infty$, whose higher vertices correspond to the bubble maps $\widetilde{u}_{\infty, I}$, and whose edges correspond to the bubble points $x_I$, where here $I = (i_1, \ldots, i_k)$ is a multi-index.

Having constructed the bubble tree, we turn to the analogues of Properties (x) and (xi). For this, we need to assume that the $\G$-structure $\varphi$ on $M$ is \emph{closed}. In $\S$\ref{sec:no-energy-loss} and $\S$\ref{sec:zero-neck-length}, respectively, we show that both $3$-energy and homotopy are preserved in the ``bubble tree limit".

\textbf{Theorem~\ref{thm:no-energy-loss}.} (No energy loss.)
\emph{Suppose $d\varphi = 0$. If $u_n \colon \Sigma \to M$ is a sequence of associative Smith maps with uniformly bounded $3$-energy, then
$$\lim_{n \to \infty} E(u_n) = E(u_\infty) + \sum_I E(\widetilde{u}_{\infty, I}).$$
}

\textbf{Theorem~\ref{thm:zero-neck-length}.} (Zero neck length.)
\emph{Suppose $d\varphi = 0$. If $u_n \colon \Sigma \to M$ is a sequence of associative Smith maps with uniformly bounded $3$-energy, then its bubble tree has no necks. Therefore, for each multi-index $I$, we have
$$\widetilde{u}_{\infty, I}(x_{Ij}) = \widetilde{u}_{\infty, Ij}(p^-).$$
In particular, the set
$$u_\infty(\Sigma) \cup \bigcup_I \widetilde{u}_{\infty, I}(S^3)$$
is connected.
}

The precise meaning of ``no necks" is explained in both $\S$\ref{sec:bubble-overview} and $\S$\ref{sec:zero-neck-length}.

Our proofs of Theorems~\ref{thm:no-energy-loss} and~\ref{thm:zero-neck-length} both depend on a more detailed understanding of the $3$-energies of associative Smith maps on $3$-dimensional annuli, which we study in $\S$\ref{sec:energy-annuli}. In turn, this requires an energy gap result of Brian White~\cite{Wh} as well as the homologically energy-minimizing property of Smith maps (see Smith's result (iv) above), the latter of which requires that the $\G$-structure be closed.

\begin{rmk} Here is a simple explicit example of bubbling, adapted from \cite[Example 1.1]{Li}. Let $\iota \colon (S^3, g_{\text{round}}) \to (M^7, h)$ be an isometric associative immersion of a round $3$-sphere into a $\text{G}_2$-manifold $(M^7, h)$. For example, we could take $(M^7, h)$ to be the spinor bundle $\slash\!\!\!\mathcal{S}(S^3)$ equipped with the Bryant--Salamon metric~\cite{BS}, and take $\iota$ to be the inclusion map of the zero section. Let
$$ \sigma \colon (S^3 \setminus \{p^-\}, g_\text{round}) \to (\mathbb{R}^3, g_{\text{euc}}) $$
denote stereographic projection. Then the maps $u_n \colon (\mathbb{R}^3, g_{\text{euc}}) \to (M^7, h)$ given by $u_n(x) := \iota(\sigma^{-1}(nx))$ form a sequence of associative Smith maps with bounded $3$-energy. Moreover, $u_n \to u_\infty$ in $C^1_{\text{loc}}(\mathbb{R}^3 \setminus \{0\})$, where $u_\infty(x) \equiv \iota(p^-)$ is a constant map, while
$$ |du_n|^3\,d\mu \to m\delta({\{0\}}) $$
as Radon measures on $\mathbb{R}^3$ for some $m > 0$.
\end{rmk}

\subsection{Organization and notation}

The paper is organized as follows. In $\S$\ref{sec:linear}, we present a comprehensive treatment of the linear algebra of Smith maps, including vector cross products and calibrations. In $\S$\ref{sec:smith-maps-manifolds}, we discuss Smith maps between manifolds, including the energy identity and the relation to $n$-harmonic maps. In $\S$\ref{sec:int-reg-cpt}, we establish many of the analytical results, including interior regularity, the mean value inequality, removable singularities, compactness modulo bubbling, and the energy gap. In $\S$\ref{sec:bubble-tree}, we give a detailed explanation of bubbling and the bubble tree, and prove that there is no energy loss and zero neck length when the $\G$-structure is closed. Finally, Appendix~\ref{sec:harmonic-analysis-appendix} collects some needed results from harmonic analysis, and Appendix~\ref{sec:regularity-appendix} presents the proofs of two of the results from $\S$\ref{sec:int-reg-cpt}.

We employ the following notation throughout the paper:
\begin{itemize} \setlength\itemsep{-1mm}
\item We use $g_{\euc}$ to denote the Euclidean metric on $\R^n$ for any $n$.
\item On a Riemannian manifold we use $|\cdot|$ for the pointwise norm on tensors induced from the metric. We use $\inj_M$ to denote the injectivity radius of $M$ and $\diam(A)$ to denote the diameter of a set $A$. In most cases the metric is clear from the context, but it is indicated by subscripts when necessary.
\item When $A$ is a set, we use the following notation for norms:
\begin{align*}
& \text{$| \cdot |_{k; A}$ on $C^k(A)$}, \qquad \, \, \text{$| \cdot |_{k, \alpha; A}$ on $C^{k,\alpha}(A)$}, \\
& \text{$\| \cdot \|_{p; A}$ on $L^p (A)$}, \qquad \text{$\| \cdot \|_{1, p; A}$ on $W^{1,p} (A)$}.
\end{align*}
Moreover, $[ \cdot ]_{\alpha; A}$ denotes the H\"older semi-norm with exponent $\alpha$ on $A$.
\item In a Riemannian manifold, we use $B(x; r)$ to denote the open ball at $x$ of radius $r > 0$. When local coordinates are chosen, we write $B(r) := B(0;r)$ for brevity.
\item When $u : \Sigma \to M$ is a $C^1$ map, then $du \in \Gamma( T^* \Sigma \otimes u^* TM )$ denotes the differential, so $(du)_x : T_x \Sigma \to T_{u(x)} M$ for all $x \in \Sigma$. By the embedding $M \to \R^{d}$, we may also view $(du)_x$ as a linear map $T_{x}\Sigma \to \R^{d}$ with image contained in $T_{u(x)}M$. The latter viewpoint extends to maps in $W^{1, 3}(\Sigma; M)$, but only holds for almost every $x \in \Sigma$.
\item If $\Sigma\subset\R^{3}$ and $u\in W^{1, 3}(\Sigma; \R^{d})$, the derivative $du$ may be identified with a map in $L^{3}(\Sigma; \R^{3 \times d})$, which we denote by $Du$.
\item Given a function $F : A \to \R$, we use $(f)_A$ to denote its average value over $A$, namely
\begin{equation*}
(f)_A = \frac{1}{\int_A \vol} \int_A F \, \vol = \fint_A F \, \vol.
\end{equation*}
\end{itemize}

Other notation is introduced and defined when it is needed.

\textbf{Acknowledgements.} The authors gratefully acknowledge useful conversations with Ali Aleyasin, Gavin Ball, Benoit Charbonneau, Aleksander Doan, Andriy Haydys, Jason Lotay, Andr\'{e} Neves, Rick Schoen, Kyler Siegel, and Thomas Walpuski. The second author is supported by an NSERC Discovery Grant. The third author also thanks McKenzie Wang for his support and encouragement.

\textbf{Remark.} After this work was completed, the authors learned that Mou--Wang~\cite{MW} had studied a conformally invariant $n$-harmonic type system which the Smith maps satisfy when $n = 3$ or $4$, and obtained a no energy loss result for bubble trees of solutions using methods quite different from ours. Indeed our proof of Theorem~\ref{thm:no-energy-loss} relies on an isoperimetric-type estimate coming from the geometric properties of Smith maps (see Section~\ref{sec:energy-annuli}), whereas in~\cite{MW} the authors exploit again the compensation phenomenon mentioned in Section~\ref{sec:results-methods} (see~\cite[Lemma 3.3 and pages 363--364]{MW}). We refer the reader to the introduction of our Section~\ref{sec:int-reg-sub} for comments on other similarities and contrasts between our work and~\cite{MW}.

\section{The linear algebra of Smith maps} \label{sec:linear}

Vector cross products were introduced by Brown--Gray~\cite{BG} and were further studied much later by Lee--Leung~\cite{LL}. The notion of a vector cross product preserving map was introduced by Gray in~\cite{Gr-CR}. We refer to such maps as \textit{Gray maps}. A generalized notion, which can reasonably be called conformally vector cross product preserving, was introduced by Smith in~\cite{Sm}, where they were called \textit{multiholomorphic maps}. We refer to such maps as \textit{Smith maps}. Calibrations were introduced in the seminal paper of Harvey--Lawson~\cite{HL}. In this section we discuss the linear algebraic aspects of vector cross products, calibrations, and Smith maps.

A few of the results in $\S$\ref{sec:VCP} and in $\S$\ref{sec:VCP-maps} are at least implicit in~\cite{Gr-CR} for the case of Gray maps. We adapt the proofs to the more general case of Smith maps and flesh out several details that are missing from~\cite{Gr-CR}, which is in any case somewhat difficult to access. We also present many more results in $\S$\ref{sec:VCP-maps} that do not appear to be in the literature.

The main results in this section are that any Smith map is weakly conformal, proved in Theorem~\ref{thm:smith-conformal}, the relations between Smith maps and calibrations in $\S$\ref{sec:smith-calib}, and the \textit{generalized calibration inequality} established in Theorem~\ref{thm:generalized-calib}. These results are also implicit in the unpublished preprint~\cite{Sm}, although their precise statements are somewhat obscured. In the present paper we significantly clarify both of these results, first by distilling them to their minimal hypotheses (which is entirely linear algebraic), and second by parcelling out the various required components into separate lemmas and propositions. We hope the careful exposition in this section will be useful to a wider audience.

\subsection{Preliminaries} \label{sec:prelim}

We use the term \textit{Euclidean space} to denote a finite-dimensional real vector space equipped with a positive definite inner product. Here we collect several preliminary results on linear maps between Euclidean spaces and the exterior powers of such maps. In particular we require multiple versions of \textit{Hadamard's inequality}. We collect them here with proofs for completeness.

Let $(V, \langle \cdot, \rangle)$ be an $n$-dimensional Euclidean space and let $(W, \langle \cdot, \rangle)$ be an $m$-dimensional Euclidean space. Let $A : V \to W$ be a linear map. Let $A^* : W \to V$ be the adjoint map. We define the \textit{matrix norm}, also called the Frobenius norm or Hilbert--Schmidt norm, of $A$ by $|A|^2 = \tr (A^* A)$. With respect to orthonormal bases $\{ v_1, \ldots, v_n \}$ of $V \cong \R^n$ and $\{ w_1, \ldots, w_m \}$ of $W \cong \R^m$, this norm is given by
\begin{equation} \label{eq:matrix-norm}
\begin{aligned}
|A|^2 & = \tr (A^* A) = \sum_{l=1}^n \sum_{i=1}^m A_{\, l}^i A_{\, l}^i \\
& = \sum_{l=1}^n \sum_{i,j=1}^m \langle A_{\, l}^i w_i, A_{\, l}^j w_j \rangle = \sum_{l=1}^n \langle Av_l, Av_l \rangle = \sum_{l=1}^n |Av_l|^2.
\end{aligned}
\end{equation}
We need to use both the $\tr (A^* A)$ and the $\sum_{l=1}^n \langle Av_l, Av_l \rangle$ expressions for $|A|^2$.

\begin{lemma} \label{lemma:conf-isom}
The map $A : V \to W$ is called a \textit{conformal injection} if $\langle A v_1, A v_2 \rangle = \lambda^2 \langle v_1, v_2 \rangle$ for some $\lambda > 0$. This is equivalent to $A^* A = \lambda^2 I$ and also equivalent to $A = \lambda \hat A$ where $\hat A$ is an isometric injection. That is, $\langle \hat A v_1, \hat A v_2 \rangle = \langle v_1, v_2 \rangle$. When $\lambda = 1$ then $A = \hat A$ is an isometric injection. Moreover, we necessarily have $\lambda^2 = \frac{1}{n} |A|^2$ where $n=\dim V$. 
\end{lemma}
\begin{proof}
From $\langle A^* A v_1, v_2 \rangle = \langle Av_1, Av_2 \rangle$, we have that $\langle Av_1, Av_2 \rangle = \lambda^2 \langle v_1, v_2 \rangle$ if and only if $A^* A = \lambda^2 I$. Let $\hat A = \lambda^{-1} A$. Then $A^* A = \lambda^2 I$ is equivalent to $\hat A^* \hat A = I$. The last statement follows from~\eqref{eq:matrix-norm}, since $|A|^2 = \tr(A^* A) = \tr(\lambda^2 I) = n \lambda^2$.
\end{proof}

\begin{cor} \label{cor:conf-isom-comp}
Let $n = \dim V$. Suppose $A : V \to W$ is a conformal injection, and let $B : V \to V$ be a conformal isomorphism. Then $|AB| = \frac{1}{\sqrt{n}} |A| \, |B|$.
\end{cor}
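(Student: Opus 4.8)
The plan is to reduce everything to the preceding Lemma~\ref{lemma:conf-isom} and the multiplicativity of the Frobenius norm under composition with isometries. First I would record the computation of $|AB|^2$ directly from the definition $|C|^2 = \tr(C^*C)$: since $B$ is a conformal isomorphism of $V$, Lemma~\ref{lemma:conf-isom} gives $B = \mu \hat B$ with $\hat B : V \to V$ an isometry (so $\hat B^* \hat B = I$) and $\mu^2 = \tfrac 1n |B|^2$. Similarly $A = \lambda \hat A$ with $\hat A : V \to W$ an isometric injection and $\lambda^2 = \tfrac 1n |A|^2$. Then
\[
|AB|^2 = \tr\big( (AB)^* (AB) \big) = \lambda^2 \mu^2 \, \tr\big( \hat B^* \hat A^* \hat A \hat B \big) = \lambda^2 \mu^2 \, \tr\big( \hat B^* \hat B \big) = \lambda^2 \mu^2 \, n,
\]
where the middle equality uses $\hat A^* \hat A = I$ (because $\hat A$ is an isometric injection) and the last uses $\hat B^* \hat B = I$, so $\tr(\hat B^* \hat B) = \tr(I) = n = \dim V$.

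Substituting $\lambda^2 = \tfrac 1n |A|^2$ and $\mu^2 = \tfrac 1n |B|^2$ yields $|AB|^2 = \tfrac 1n |A|^2 |B|^2$, and taking square roots gives $|AB| = \tfrac{1}{\sqrt n} |A|\,|B|$, which is the claim. An equivalent and perhaps cleaner packaging: note $AB = \lambda\mu\, (\hat A \hat B)$ and $\hat A \hat B : V \to W$ is itself an isometric injection (composition of isometric injections), hence by the final statement of Lemma~\ref{lemma:conf-isom} applied to the conformal injection $AB$ with scale factor $\lambda\mu$, we get $|AB|^2 = n(\lambda\mu)^2 = n \cdot \tfrac 1n |A|^2 \cdot \tfrac 1n |B|^2 \cdot n$; one should be slightly careful here to track the factors of $n$ correctly, but it lands on the same answer.

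There is essentially no obstacle: the only point requiring a moment's thought is making sure the hypotheses line up, namely that $B$ being a conformal \emph{isomorphism} (not merely injection) of $V$ is what licenses writing $B = \mu\hat B$ with $\hat B$ an \emph{isometry} of $V$ (so that $\hat B^* \hat B = I$ with $\dim V$ worth of trace), and that $A$ being a conformal \emph{injection} is exactly what gives $\hat A^* \hat A = I$ so the adjoint telescopes correctly. One also wants $\lambda, \mu > 0$ so that $|A|, |B| \neq 0$ and the scale factors are unambiguously their positive square roots, consistent with the convention in Lemma~\ref{lemma:conf-isom}. Everything else is the routine trace manipulation above, so I would present the short computation and be done.
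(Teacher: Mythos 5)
Your argument is correct and takes essentially the same route as the paper: both reduce everything to Lemma~\ref{lemma:conf-isom} and a short trace computation, with your explicit factorizations $A = \lambda\hat A$ and $B = \mu\hat B$ being a cosmetic variant of the paper's direct substitution $A^*A = \lambda^2 I$, $B^*B = \mu^2 I$ followed by cyclicity of the trace. The only slip is in your final parenthetical ``equivalent packaging,'' where the correct count is $n(\lambda\mu)^2 = n\cdot\tfrac1n|A|^2\cdot\tfrac1n|B|^2 = \tfrac1n|A|^2|B|^2$ with no extra factor of $n$ --- but you flag the need for care there yourself, and the main computation above it is clean.
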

\begin{proof}
By Lemma~\ref{lemma:conf-isom} we have $A^* A = \lambda^2 I$ and $B^* B = \mu^2 I$, where $n \lambda^2 = |A|^2$ and $n \mu^2 = |B|^2$. Moreover since $B$ is invertible we have $B^* B = B B^*$. Thus we have
\begin{align*}
|AB|^2 & = \tr \big( (AB)^* (AB) \big) = \tr (B^* A^* A B) = \tr (A^* A B B^*) \\
& = \tr (\lambda^2 \mu^2 I) = n \lambda^2 \mu^2 = \tfrac{1}{n} |A|^2 |B|^2
\end{align*}
as claimed.
\end{proof}

For $1 \leq r \leq \dim V$, let $\Lambda^r A : \Lambda^r V \to \Lambda^r W$ be the $r$th exterior power of $A$, defined by
\begin{equation*}
(\Lambda^r A)(v_1 \wedge \cdots \wedge v_r) = (Av_1) \wedge \cdots \wedge (Av_r)
\end{equation*}
on decomposable elements and extended linearly to all of $\Lambda^r V$. It is immediate that $(\Lambda^r A^*) = (\Lambda^r A)^*$ and that $\Lambda^r (A_1 A_2) = (\Lambda^r A_1)(\Lambda^r A_2)$.

\begin{lemma} \label{lemma:eigen}
Let $B : V \to V$ be a positive self-adjoint linear map. This means that $\langle B v, w \rangle = \langle v, B w \rangle$ and $\langle B v, v \rangle \geq 0$ with equality if and only if $v = 0$. Let $0 < r < n = \dim V$.

Suppose that $\Lambda^r B : \Lambda^r V \to \Lambda^r V$ is an isometry on decomposable elements. This means that
\begin{equation} \label{eq:eigen-hyp}
\langle Bv_1 \wedge \cdots \wedge Bv_r, v_1 \wedge \cdots \wedge v_r \rangle = |v_1 \wedge \cdots \wedge v_r|^2 \qquad \text{ for all $v_1, \ldots, v_r \in V$.}
\end{equation}
Then $B$ is is the identity map on $V$.
\end{lemma}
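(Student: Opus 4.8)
The plan is to diagonalize $B$ and read off the consequences of \eqref{eq:eigen-hyp} on decomposable elements built from an eigenbasis. Since $B$ is self-adjoint, the spectral theorem gives an orthonormal basis $e_1, \ldots, e_n$ of $V$ with $B e_i = \lambda_i e_i$, and positivity forces $\lambda_i > 0$ for all $i$. For any $r$-element index set $\{i_1 < \cdots < i_r\}$, apply \eqref{eq:eigen-hyp} to $v_j = e_{i_j}$: the left-hand side becomes
\[
\langle \lambda_{i_1} e_{i_1} \wedge \cdots \wedge \lambda_{i_r} e_{i_r},\, e_{i_1} \wedge \cdots \wedge e_{i_r} \rangle = \lambda_{i_1} \cdots \lambda_{i_r},
\]
since $e_{i_1} \wedge \cdots \wedge e_{i_r}$ is a unit vector in $\Lambda^r V$, while the right-hand side is $1$. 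Hence $\lambda_{i_1} \cdots \lambda_{i_r} = 1$ for every $r$-element subset of $\{1, \ldots, n\}$.

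Next I use the hypothesis $0 < r < n$, i.e. $1 \le r \le n-1$, to conclude that all the $\lambda_i$ coincide. Fix any two distinct indices $p \neq q$. Because $r \le n-1$ there is an $r$-element set $S$ containing $p$ but not $q$ (and because $r \ge 1$ such an $S$ is nonempty and contains $p$); then $S' := (S \setminus \{p\}) \cup \{q\}$ is also an $r$-element set. Comparing $\prod_{i \in S} \lambda_i = 1$ with $\prod_{i \in S'} \lambda_i = 1$ and cancelling the common factor $\prod_{i \in S \setminus \{p\}} \lambda_i > 0$ yields $\lambda_p = \lambda_q$. Since $p, q$ were arbitrary, there is a single value $\lambda > 0$ with $\lambda_i = \lambda$ for all $i$, and then $\lambda^r = 1$ forces $\lambda = 1$. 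Therefore $B e_i = e_i$ for every $i$, i.e. $B$ is the identity map on $V$.

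There is no serious obstacle here: the argument is essentially a pigeonhole/cancellation on the eigenvalues. The two points worth flagging in the write-up are (i) that self-adjointness is what produces the real orthonormal eigenbasis, while positivity is needed both to make the cancellation step legitimate and to rule out $\lambda = -1$ when $r$ is even, and (ii) that the restriction $r < n$ is genuinely used — for $r = n$ the hypothesis only gives $\det B = 1$, which certainly does not imply $B = I$.
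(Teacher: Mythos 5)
Your proof is correct and follows essentially the same route as the paper's: diagonalize $B$ via the spectral theorem, apply \eqref{eq:eigen-hyp} to eigenbasis vectors to get $\mu_{i_1}\cdots\mu_{i_r}=1$ for all strictly increasing $r$-tuples, and use positivity with $0<r<n$ to force all eigenvalues to equal $1$. The only difference is that you spell out the swap-one-index cancellation step explicitly, which the paper leaves implicit.
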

\begin{proof}
By the spectral theorem, there is an orthonormal basis $\{ v_1, \ldots, v_n \}$ of $V$ consisting of eigenvectors of $B$, where the eigenvalues $\mu_1, \ldots, \mu_n$ are \textit{strictly positive real numbers}. Let $1 \leq i_1 < \cdots < i_r \leq n$. From $B v_j = \mu_j v_j$, we have that $(Bv_{i_1}) \wedge \cdots \wedge (Bv_{i_r}) = (\mu_{i_1} \cdots \mu_{i_r}) v_{i_1} \wedge \cdots \wedge v_{i_r}$. By hypothesis~\eqref{eq:eigen-hyp} it follows that $\mu_{i_1} \cdots \mu_{i_r} = 1$ for all such strictly increasing multi-indices. The positivity of the $\mu_j$'s now implies that $\mu_j = 1$ for all $j$, so $Bv = v$ for all $v \in V$. Note that we definitely need $0 < r < n$ to obtain the conclusion.
\end{proof}

\begin{lemma}[Hadamard's inequality on $\Lambda^r A$] \label{lemma:hadamard}
Let $1 < r \leq n = \dim V$. Let $A : V \to W$ be nonzero. Then we have
\begin{equation} \label{eq:hadamardonA}
|\Lambda^r A|^2 \leq n^{-r} \binom{n}{r} |A|^{2r},
\end{equation}
with equality if and only if $A$ is a conformal injection in the sense of Lemma~\ref{lemma:conf-isom}.
\end{lemma}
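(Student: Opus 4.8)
The plan is to use the singular value decomposition of $A$ and reduce the inequality \eqref{eq:hadamardonA} to the arithmetic--geometric mean inequality applied to the squared singular values. Write $A^*A \colon V \to V$; since it is self-adjoint and positive semidefinite, the spectral theorem gives an orthonormal basis $\{v_1, \dots, v_n\}$ of $V$ with $A^*A v_j = \sigma_j^2 v_j$, where $\sigma_1, \dots, \sigma_n \geq 0$ are the singular values of $A$. Then $|A|^2 = \tr(A^*A) = \sum_{j=1}^n \sigma_j^2$, and the vectors $Av_j$ are pairwise orthogonal in $W$ with $|Av_j|^2 = \sigma_j^2$. For a strictly increasing multi-index $1 \leq i_1 < \cdots < i_r \leq n$, the element $(\Lambda^r A)(v_{i_1} \wedge \cdots \wedge v_{i_r}) = (Av_{i_1}) \wedge \cdots \wedge (Av_{i_r})$ has squared norm $\sigma_{i_1}^2 \cdots \sigma_{i_r}^2$ by orthogonality, and these are pairwise orthogonal in $\Lambda^r W$ as the index ranges over all such multi-indices. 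Since $\{v_{i_1} \wedge \cdots \wedge v_{i_r}\}$ is an orthonormal basis of $\Lambda^r V$, we conclude
$$|\Lambda^r A|^2 = \sum_{1 \leq i_1 < \cdots < i_r \leq n} \sigma_{i_1}^2 \cdots \sigma_{i_r}^2 = e_r(\sigma_1^2, \dots, \sigma_n^2),$$
the $r$-th elementary symmetric polynomial in the squared singular values.

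Next I would invoke Maclaurin's inequality (a standard consequence of AM--GM): for nonnegative reals $t_1, \dots, t_n$,
$$\binom{n}{r}^{-1} e_r(t_1, \dots, t_n) \leq \left( \frac{t_1 + \cdots + t_n}{n} \right)^r,$$
with equality (when the left side is positive, i.e.\ $A \neq 0$ so not all $t_j$ vanish) if and only if $t_1 = \cdots = t_n$. Applying this with $t_j = \sigma_j^2$ and using $\sum \sigma_j^2 = |A|^2$ gives
$$|\Lambda^r A|^2 = e_r(\sigma_1^2, \dots, \sigma_n^2) \leq \binom{n}{r} \left( \frac{|A|^2}{n} \right)^r = n^{-r}\binom{n}{r}|A|^{2r},$$
which is \eqref{eq:hadamardonA}.

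Finally, for the equality case: equality holds precisely when $\sigma_1^2 = \cdots = \sigma_n^2 =: \lambda^2$, i.e.\ when $A^*A = \lambda^2 I$. Since $A \neq 0$ forces $\lambda > 0$, Lemma~\ref{lemma:conf-isom} identifies this condition with $A$ being a conformal injection, and there $\lambda^2 = \frac{1}{n}|A|^2$, consistent with the above. This completes the argument. The only mild subtlety, and the step I would be most careful about, is justifying the orthogonality claims: that $\{Av_{i_1} \wedge \cdots \wedge Av_{i_r}\}$ is an orthogonal set in $\Lambda^r W$ with the stated norms — this follows from $\langle Av_i, Av_j \rangle = \langle A^*A v_i, v_j \rangle = \sigma_i^2 \delta_{ij}$ together with the standard formula for the inner product of decomposable $r$-vectors as a Gram determinant, which here is diagonal. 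Everything else is routine. (If one prefers to avoid citing Maclaurin directly, it can be derived in a line: $e_r$ is a sum of $\binom{n}{r}$ terms each of which is $\leq \bigl(\frac{t_1+\cdots+t_n}{n}\bigr)^r$ by AM--GM on the $r$ factors — though this weaker bound actually requires a touch more care for the equality case, so citing Maclaurin is cleanest.)
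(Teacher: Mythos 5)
Your proposal is correct and follows essentially the same approach as the paper: both diagonalize $A^*A$ via the spectral theorem (your $\sigma_j^2$ are the paper's eigenvalues $\mu_j$), deduce $|\Lambda^r A|^2 = e_r(\mu_1,\dots,\mu_n)$, and conclude by Maclaurin's inequality. The only cosmetic difference is that the paper obtains the $e_r$ formula by computing $\tr(\Lambda^r(A^*A))$, whereas you compute the norms of the decomposable images $(Av_{i_1})\wedge\cdots\wedge(Av_{i_r})$ directly; these are interchangeable derivations of the same identity.
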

\begin{proof}
Let $B = A^* A$, which is a nonnegative self-adjoint linear map. By the spectral theorem, there is an orthonormal basis $\{ v_1, \ldots, v_n \}$ of $V$ consisting of eigenvectors of $B$, where the eigenvalues $\mu_1, \ldots, \mu_n$ are nonnegative. Then $\{ v_{i_1} \wedge \cdots \wedge v_{i_r} : 1 \leq i_1 < \cdots < i_r \leq n \}$ is an orthonormal basis of $\Lambda^r V$ and $(\Lambda^r B)(v_{i_1} \wedge \cdots \wedge v_{i_r}) = (\mu_{i_1} \cdots \mu_{i_r}) v_{i_1} \wedge \cdots \wedge v_{i_r}$. Applying~\eqref{eq:matrix-norm} to $\Lambda^r A$ gives
\begin{align*}
| \Lambda^r A |^2 & = \tr\big( (\Lambda^r A)^* (\Lambda^r A) \big) = \tr \big(\Lambda^r (A^*A) \big) = \tr (\Lambda^r B) \\
& = \sum_{1 \leq i_1 < \cdots < i_r \leq n} (\mu_{i_1} \cdots \mu_{ir}).
\end{align*}
McLaurin's inequality, which is a generalization of the arithmetic-geometric mean inequality to other symmetric polynomials, says that if $\mu_1, \ldots, \mu_n$ are nonnegative, then
\begin{equation*}
\Bigg( \frac{1}{\binom{n}{r}} \sum_{1 \leq i_1 < \cdots < i_r \leq n} (\mu_{i_1} \cdots \mu_{ir}) \Bigg)^{\frac{1}{r}} \leq \Bigg( \frac{1}{n} \sum_{j=1}^n \mu_j \Bigg) \qquad \text{with equality iff $\mu_1 = \cdots = \mu_n$.}
\end{equation*}
Using this inequality and $\tr B = \tr (A^* A) = |A|^2$, we get that
\begin{equation*}
| \Lambda^r A |^2 \leq \binom{n}{r} \Bigg( \frac{1}{n} \sum_{j=1}^n \mu_j \Bigg)^r = \binom{n}{r} \Big( \frac{1}{n} \tr B \Big)^r = \binom{n}{r} n^{-r} |A|^{2r}
\end{equation*}
with equality if and only if $B = A^* A = \mu I$ for some $\mu \geq 0$, where $n \mu = |A|^2$. Since $A$ is nonzero, we deduce that $\mu > 0$. The result now follows by Lemma~\ref{lemma:conf-isom}.
\end{proof}

\begin{cor} \label{cor:hadamard-n}
Let $A : V \to W$ be nonzero, and let $n = \dim V$. Then we have
\begin{equation*}
|\Lambda^n A| \leq \frac{1}{(\sqrt{n})^n} |A|^{n},
\end{equation*}
with equality if and only if $A$ is a conformal injection in the sense of Lemma~\ref{lemma:conf-isom}.
\end{cor}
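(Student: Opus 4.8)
The plan is to obtain Corollary~\ref{cor:hadamard-n} as the special case $r = n$ of Hadamard's inequality, Lemma~\ref{lemma:hadamard}. Assume first $n = \dim V \geq 2$, so that $r = n$ is permitted in Lemma~\ref{lemma:hadamard}. Substituting $r = n$ into~\eqref{eq:hadamardonA} and using $\binom{n}{n} = 1$ gives $|\Lambda^n A|^2 \leq n^{-n}|A|^{2n}$; taking positive square roots yields $|\Lambda^n A| \leq (\sqrt{n})^{-n}|A|^n$, as claimed. The equality case transfers directly: by Lemma~\ref{lemma:hadamard}, equality holds precisely when $A$ is a conformal injection in the sense of Lemma~\ref{lemma:conf-isom}.

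For completeness I would spell out the underlying mechanism, since at $r = n$ the McLaurin inequality used in the proof of Lemma~\ref{lemma:hadamard} collapses to the ordinary arithmetic--geometric mean inequality. Writing $B = A^*A$ and letting $\mu_1, \dots, \mu_n \geq 0$ be its eigenvalues, formula~\eqref{eq:matrix-norm} applied to $\Lambda^n A$ gives $|\Lambda^n A|^2 = \tr(\Lambda^n B) = \mu_1 \cdots \mu_n$, while $|A|^2 = \tr B = \mu_1 + \cdots + \mu_n$; AM--GM then gives $\mu_1 \cdots \mu_n \leq \big( n^{-1}(\mu_1 + \cdots + \mu_n) \big)^n = n^{-n}|A|^{2n}$, with equality if and only if all $\mu_j$ are equal, i.e. $A^*A = \mu I$ with $\mu > 0$ (using $A \neq 0$), which by Lemma~\ref{lemma:conf-isom} is exactly the condition that $A$ be a conformal injection.

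Finally I would dispose of the excluded case $n = 1$. Here $\Lambda^1 A = A$ and $(\sqrt{1})^1 = 1$, so the inequality reads $|A| \leq |A|$; moreover any nonzero linear map out of a $1$-dimensional space satisfies $A^*A = |A|^2 I$ and hence is a conformal injection, so the equality statement holds trivially as well. There is no real obstacle in this proof: the content is entirely contained in Lemma~\ref{lemma:hadamard}, and the only point requiring a word of care is the degenerate case $n = 1$, which falls outside the hypothesis $r > 1$ of that lemma but is immediate by inspection.
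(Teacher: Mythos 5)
Your proof is correct and follows the same route as the paper, which simply takes the square root of inequality~\eqref{eq:hadamardonA} in the case $r = n$ of Lemma~\ref{lemma:hadamard}. The extra remarks you add — that McLaurin specializes to AM--GM when $r = n$, and that the degenerate case $n = 1$ (excluded from Lemma~\ref{lemma:hadamard} by the hypothesis $r > 1$) holds trivially — are sound but not part of the paper's one-line argument.
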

\begin{proof}
This is obtained by taking square roots of both sides of~\eqref{eq:hadamardonA} in the special case $r=n$.
\end{proof}

\begin{cor} \label{cor:hadamard-vectors}
Let $n = \dim V$ and let $v_1, \ldots, v_n$ be nonzero vectors in $V$. Then we have
\begin{equation} \label{eq:hadamard-vectors}
| v_1 \wedge \cdots \wedge v_n | \leq |v_1| \cdots |v_n| \qquad \text{with equality iff $v_1, \ldots, v_n$ are orthogonal}.
\end{equation}
\end{cor}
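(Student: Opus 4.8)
The plan is to reduce to the case of unit vectors and then invoke Corollary~\ref{cor:hadamard-n}. First I would dispose of the scaling: writing $v_i = |v_i|\, w_i$ with $w_i := v_i / |v_i|$ a unit vector, multilinearity of the wedge product gives $v_1 \wedge \cdots \wedge v_n = (|v_1| \cdots |v_n|)\, w_1 \wedge \cdots \wedge w_n$, so the asserted inequality~\eqref{eq:hadamard-vectors} is equivalent to $|w_1 \wedge \cdots \wedge w_n| \leq 1$, with equality exactly when the $w_i$ form an orthonormal set.

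Next I would manufacture a linear map to which Corollary~\ref{cor:hadamard-n} applies. Fix an orthonormal basis $\{e_1, \ldots, e_n\}$ of $V$ and let $A : V \to V$ be the linear map determined by $A e_l = w_l$. By~\eqref{eq:matrix-norm} we have $|A|^2 = \sum_{l=1}^n |A e_l|^2 = \sum_{l=1}^n |w_l|^2 = n$. Since $\Lambda^n V$ is one-dimensional with unit basis vector $e_1 \wedge \cdots \wedge e_n$, the definition of the matrix norm gives $|\Lambda^n A| = |(\Lambda^n A)(e_1 \wedge \cdots \wedge e_n)| = |w_1 \wedge \cdots \wedge w_n|$. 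Now Corollary~\ref{cor:hadamard-n} yields $|w_1 \wedge \cdots \wedge w_n| = |\Lambda^n A| \leq (\sqrt{n})^{-n} |A|^n = (\sqrt{n})^{-n}\, n^{n/2} = 1$, which is the desired bound.

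For the equality statement I would trace through the equality clause of Corollary~\ref{cor:hadamard-n}: equality forces $A$ to be a conformal injection, i.e.\ $A^* A = \lambda^2 I$, and since $A$ maps $V$ into $V$ with $|A|^2 = n$, Lemma~\ref{lemma:conf-isom} gives $\lambda^2 = \tfrac{1}{n}|A|^2 = 1$, so $A$ is an isometry. Then $\{w_l\} = \{A e_l\}$ is an orthonormal set, hence $\{v_l\}$ is an orthogonal set. Conversely, if the $v_l$ are orthogonal, the $w_l$ are orthonormal, $A$ is an isometry, and equality holds in Corollary~\ref{cor:hadamard-n}, hence in~\eqref{eq:hadamard-vectors}.

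The only genuinely delicate point — and really the reason for the normalization step — is that applying Corollary~\ref{cor:hadamard-n} directly to the unnormalized vectors would produce the bound $(\sqrt{n})^{-n}\big(\sum_l |v_l|^2\big)^{n/2}$, which by the arithmetic–geometric mean inequality is $\geq |v_1| \cdots |v_n|$ and thus points in the wrong direction. Rescaling each $v_i$ to unit length is precisely what converts Corollary~\ref{cor:hadamard-n}, an estimate phrased in terms of $|A|$, into Hadamard's inequality in product form. Everything else is routine bookkeeping with the matrix norm and with multilinearity of $\Lambda^n$.
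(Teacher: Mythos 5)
Your proof is correct and follows the same strategy as the paper: normalize to unit vectors, build a linear map $A$ sending an orthonormal basis to those unit vectors so that $|A|^2 = n$, identify $|\Lambda^n A|$ with the wedge norm via~\eqref{eq:matrix-norm}, and invoke Corollary~\ref{cor:hadamard-n} together with Lemma~\ref{lemma:conf-isom} for the equality case. The closing remark about why applying Corollary~\ref{cor:hadamard-n} to the unnormalized vectors would point the wrong way is a nice observation, but the argument itself matches the paper's.
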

\begin{proof}
By dividing both sides by $|v_1| \cdots |v_n|$, we can assume that $v_1, \ldots, v_n$ are all \textit{unit vectors}, and we need to prove that
\begin{equation*}
| v_1 \wedge \cdots \wedge v_n |^2 \leq 1 \qquad \text{with equality iff $v_1, \ldots, v_n$ are orthonormal}.
\end{equation*}
Let $\{ u_1, \ldots, u_n \}$ be an orthonormal basis of $V$ and define a linear map $A : V \to V$ by $Au_i = v_i$. By~\eqref{eq:matrix-norm} we have
\begin{equation*}
|A|^2 = \sum_{i=1}^n | Au_i |^2 = \sum_{i=1}^n | v_i |^2 = n.
\end{equation*}
 Then we have
\begin{equation*}
|v_1 \wedge \cdots \wedge v_n|^2 = |(Au_1) \wedge \cdots (Au_n)|^2 = |(\Lambda^n A)(u_1 \wedge \cdots \wedge u_n)|^2.
\end{equation*}
Since the single element $u_1 \wedge \cdots \wedge u_n$ is an orthonormal basis for $\Lambda^n V$, equation~\eqref{eq:matrix-norm} applied to $\Lambda^n A$ gives $|(\Lambda^n A)(u_1 \wedge \cdots \wedge u_n)|^2 = |\Lambda^n A|^2$. Using this and Corollary~\ref{cor:hadamard-n} we have
\begin{equation*}
|v_1 \wedge \cdots \wedge v_n|^2 = |\Lambda^n A|^2 \leq \frac{1}{n^n} |A|^{2n} = \frac{1}{n^n} n^n = 1,
\end{equation*}
with equality if and only if $A$ is a conformal injection. Note that by Lemma~\ref{lemma:conf-isom}, since $|A|^2 = n$, the conformal factor must be $1$ and thus $A$ is an isometric injection. But by the definition of $A$ this means equality occurs if and only if $v_1, \ldots, v_n$ are orthonormal.
\end{proof}

\subsection{Vector cross products} \label{sec:VCP}

Let $(V, \langle \cdot, \cdot \rangle)$ be an $n$-dimensional Euclidean space.

\begin{defn} \label{defn:VCP}
Let $1 \leq k \leq n-1$. A $k$-fold \textit{vector cross product} $P$ on $V$ is an element $P \in \Lambda^k (V^*) \otimes V$ that satisfies the following two properties:
\begin{equation} \label{eq:Porthogonal}
P(v_1 \wedge \cdots \wedge v_k) \quad \text{is orthogonal to $v_1, \ldots, v_k$},
\end{equation}
and
\begin{equation} \label{eq:Pmetric}
| P(v_1 \wedge \cdots \wedge v_k) |^2 = |v_1 \wedge \cdots \wedge v_k|^2.
\end{equation}
Note that equation~\eqref{eq:Porthogonal} is equivalent to the statement that the covariant $(k+1)$-tensor $\alpha_P$ defined by
\begin{equation} \label{eq:alphaskew}
\alpha_P(v_1, \ldots, v_k, v_{k+1}) = \langle P(v_1 \wedge \cdots \wedge v_k), v_{k+1} \rangle \qquad \text{is totally skew-symmetric}.
\end{equation}
The $(k+1)$-form $\alpha_P$ is called the \textit{calibration form} associated to $P$. It is discussed in $\S$\ref{sec:calibrations}.
\end{defn}

\begin{ex} \label{ex:complex}
Consider the case $k=1$. Conditions~\eqref{eq:Porthogonal} and~\eqref{eq:Pmetric} say that $P : V \to V$ satisfies $\langle Pv, v \rangle = 0$ and $|Pv|^2 = |v|^2$. Polarizing these two equations gives $\langle Pv, w \rangle = - \langle v, Pw \rangle$, so $P$ is skew-adjoint, and $\langle Pv, Pw \rangle = \langle v, w \rangle$, so $P$ is an isometry. Thus we have
\begin{equation*}
\langle P^2 v, w \rangle = - \langle Pv, Pw \rangle = - \langle v, w \rangle. 
\end{equation*}
Since this holds for all $v,w$, we have $P^2 = - I$ and thus $P$ is an \textit{orthogonal complex structure} on $V$. In particular then $n = \dim V$ must be even.
\end{ex}

\begin{ex} \label{ex:hodgestar}
Consider the case $k=n-1$. Let $u_1, \ldots, u_n$ be an orthonormal basis for $V$. Conditions~\eqref{eq:Porthogonal} and~\eqref{eq:Pmetric} say that $P : \Lambda^{n-1} V \to V$ satisfies $\langle P(u_1 \wedge \cdots \wedge u_{n-1}), u_i \rangle = 0$ for $1 \leq i \leq n-1$, so $P(u_1 \wedge \cdots \wedge u_{n-1})$ must be a multiple of $u_n$, and $|P(u_1 \wedge \cdots \wedge u_{n-1})|^2 = |u_1 \wedge \cdots \wedge u_{n-1}|^2 = 1$, so $P(u_1 \wedge \cdots \wedge u_{n-1}) = \pm u_n$. Let $\vol = \pm u_1 \wedge \cdots \wedge u_n$ be the orientation for $V$ induced by the ordered orthonormal basis $\{ u_1, \ldots, u_{n-1}, \pm u_n \}$. Then $P = \ast$ is the Hodge star operator on $\Lambda^{n-1} V$ corresponding to the inner product and this orientation. The Hodge star is an isometry.
\end{ex}

In general, condition~\eqref{eq:Pmetric} says that $P : \Lambda^k (V) \to V$ is length-preserving on the \textit{decomposable} elements of $\Lambda^k (V)$. When $k=1$ or $k=n-1$, any $k$-vector is decomposable, so the vector cross $P$ is an honest isometry in the cases of Examples~\ref{ex:complex} and~\ref{ex:hodgestar}.

The fundamental identities~\eqref{eq:Porthogonal} and~\eqref{eq:Pmetric} relating a vector cross product to the inner product are very strong constraints. In fact, there are only four possible types of vector cross products, the two we have already discussed and two \textit{exceptional} types. This is the \textit{Brown--Gray classification}~\cite{BG} given in Table~\ref{table:VCP}. The two exceptional vector cross products are the most interesting, as they are related to \textit{associative submanifolds of $\G$-manifolds} and to \textit{Cayley submanifolds of $\Spin{7}$-manifolds}, respectively.

\begin{table}[H]
\begin{center}
\begin{tabular}{|c||c|c|c|c||c|l|} \hline
Type & $n$ & $k$ & $P$ & $\alpha$ & Name of structure \\ \hline
I & $n$ & $n-1$ & $\ast$ & $\vol$ & Orientation \\
II & $2m$ & $1$ & $J$ & $\omega$ & Orthogonal complex structure \\
III & $7$ & $2$ & $\times$ & $\varphi$ & $\G$-structure \\
IV & $8$ & $3$ & $P$ & $\Phi$ & $\Spin{7}$-structure \\ \hline
\end{tabular}
\end{center}
\caption{The Brown--Gray classification of vector cross products.} \label{table:VCP}
\end{table}

Despite this simple classification, almost all the properties of vector cross products and of Smith maps that we establish in the rest of this section and in $\S$\ref{sec:VCP-maps} are consequences of the defining properties~\eqref{eq:Porthogonal} and~\eqref{eq:Pmetric}, so the statements and proofs are identical for all four types of vector cross products.

\begin{prop} \label{prop:VCP-identity}
Let $P$ be a $k$-fold vector cross product on $(V, \langle \cdot, \cdot \rangle)$. Let $u_1, \ldots, u_{k-1}$ be \textit{linearly independent} vectors in $V$ and let $w \in V$ be arbitrary. Let $U$ be the $(k-1)$-dimensional subspace of $V$ spanned by $u_1, \ldots, u_{k-1}$. Then the following identity holds:
\begin{equation} \label{eq:VCP-identity}
P \big(u_1 \wedge \cdots \wedge u_{k-1} \wedge P(u_1 \wedge \cdots \wedge u_{k-1} \wedge w) \big) = - |u_1 \wedge \cdots \wedge u_{k-1}|^2 \pi_{U^\perp} w,
\end{equation}
where $\pi_{U^{\perp}}$ is orthogonal projection onto the subspace $U^{\perp}$.
\end{prop}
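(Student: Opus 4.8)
The plan is to reduce the identity to the defining properties~\eqref{eq:Porthogonal} and~\eqref{eq:Pmetric} by testing both sides against an arbitrary vector $z \in V$ and using the total skew-symmetry of $\alpha_P$ from~\eqref{eq:alphaskew}. Write $\Omega = u_1 \wedge \cdots \wedge u_{k-1}$ for brevity, and set $\xi = P(u_1 \wedge \cdots \wedge u_{k-1} \wedge w) = P(\Omega \wedge w)$. The left-hand side is $P(\Omega \wedge \xi)$, so for any $z$ we have, using~\eqref{eq:alphaskew},
\[
\langle P(\Omega \wedge \xi), z \rangle = \alpha_P(u_1, \ldots, u_{k-1}, \xi, z) = -\alpha_P(u_1, \ldots, u_{k-1}, z, \xi) = -\langle P(\Omega \wedge z), \xi \rangle = -\langle P(\Omega \wedge z), P(\Omega \wedge w) \rangle.
\]
So everything comes down to computing the inner product $\langle P(\Omega \wedge z), P(\Omega \wedge w) \rangle$ and showing it equals $|\Omega|^2 \langle \pi_{U^\perp} z, w\rangle$, since $\langle \pi_{U^\perp} z, w\rangle = \langle \pi_{U^\perp} z, \pi_{U^\perp} w\rangle = \langle z, \pi_{U^\perp} w\rangle$.

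For this inner product I would first reduce to the case where $z, w \in U^\perp$. Decompose $z = z' + z''$ with $z' \in U$, $z'' = \pi_{U^\perp} z \in U^\perp$, and similarly for $w$. Since $z' $ is a linear combination of $u_1, \ldots, u_{k-1}$, the wedge $\Omega \wedge z' = 0$, so $P(\Omega \wedge z) = P(\Omega \wedge z'')$ and likewise for $w$; thus $\langle P(\Omega \wedge z), P(\Omega \wedge w)\rangle = \langle P(\Omega \wedge z''), P(\Omega \wedge w'')\rangle$, and it suffices to prove $\langle P(\Omega \wedge a), P(\Omega \wedge b)\rangle = |\Omega|^2 \langle a, b\rangle$ for $a, b \in U^\perp$. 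By polarization this follows from the single-vector statement $|P(\Omega \wedge a)|^2 = |\Omega|^2 |a|^2$ for $a \in U^\perp$, which is immediate from~\eqref{eq:Pmetric} together with the identity $|\Omega \wedge a|^2 = |\Omega|^2 |a|^2$ valid when $a \perp U$ (this is the equality case structure of Hadamard's inequality, Corollary~\ref{cor:hadamard-vectors}, or a direct Gram-determinant computation: the Gram matrix of $u_1, \ldots, u_{k-1}, a$ is block diagonal).

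Assembling: for arbitrary $z$,
\[
\langle P(\Omega \wedge \xi), z\rangle = -\langle P(\Omega \wedge z), P(\Omega \wedge w)\rangle = -|\Omega|^2 \langle \pi_{U^\perp} z, \pi_{U^\perp} w\rangle = -|\Omega|^2 \langle z, \pi_{U^\perp} w\rangle = \langle -|\Omega|^2 \pi_{U^\perp} w, z\rangle,
\]
and since $z$ is arbitrary the identity~\eqref{eq:VCP-identity} follows. The only place requiring care — and the main (mild) obstacle — is justifying $|\Omega \wedge a|^2 = |\Omega|^2 |a|^2$ for $a \in U^\perp$ cleanly; everything else is a formal manipulation with~\eqref{eq:alphaskew} and~\eqref{eq:Pmetric}. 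One should also note where linear independence of the $u_i$ is used: it guarantees $U$ genuinely has dimension $k-1$ so that $\pi_{U^\perp}$ is the stated projection and $\Omega \neq 0$; the identity in fact degenerates correctly (both sides vanish) if the $u_i$ are dependent, but the clean statement is cleanest under the independence hypothesis.
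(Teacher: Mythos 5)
Your proposal is correct and takes essentially the same approach as the paper: pair against an arbitrary test vector, use the total skew-symmetry of $\alpha_P$ to move the outer $P$ onto the test slot, and then evaluate $\langle P(\Omega \wedge z), P(\Omega \wedge w)\rangle$ via the polarized form of~\eqref{eq:Pmetric} together with the block-diagonal structure of the Gram matrix when the last vector is orthogonal to $U$. The only cosmetic difference is order of operations — you project onto $U^\perp$ before polarizing, whereas the paper polarizes~\eqref{eq:Pmetric} directly and then replaces $w$ by $\pi_{U^\perp}w$ inside the Gram determinant — but the mathematical content is identical.
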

\begin{proof}
We first polarize equation~\eqref{eq:Pmetric} in $v_k$ to obtain
\begin{equation} \label{eq:Pmetric-polarized}
\langle P(u_1 \wedge \cdots \wedge u_{k-1} \wedge v), P(u_1 \wedge \cdots \wedge u_{k-1} \wedge w) \rangle = \langle u_1 \wedge \cdots \wedge u_{k-1} \wedge v, u_1 \wedge \cdots \wedge u_{k-1} \wedge w \rangle.
\end{equation}
Let $U = \spa \{u_1, \ldots, u_{k-1} \}$. Since $w = \pi_U w + \pi_{U^{\perp}} w$ and $u_1 \wedge \cdots \wedge u_{k-1} \wedge (\pi_U w) = 0$, we can replace $w$ with $\pi_{U^{\perp}} w$ in the right hand side of equation~\eqref{eq:Pmetric-polarized}. Thus the right hand side is
\begin{align} \nonumber
\langle u_1 \wedge \cdots \wedge u_{k-1} \wedge v, u_1 \wedge \cdots \wedge u_{k-1} \wedge (\pi_{U^{\perp}} w) \rangle & = \det \begin{pmatrix} \langle u_i, u_j \rangle & \langle u_i, \pi_{U^{\perp}} w \rangle \\ \langle v, u_i \rangle & \langle v, \pi_{U^{\perp}} w \rangle \end{pmatrix} \\ \nonumber
& = \det \begin{pmatrix} \langle u_i, u_j \rangle & 0 \\ \langle v, u_i \rangle & \langle v, \pi_{U^{\perp}} w \rangle \end{pmatrix} \\ \label{eq:VCP-identity-RHS}
& = |u_1 \wedge \cdots \wedge u_{k-1}|^2 \langle \pi_{U^{\perp}} w, v \rangle.
\end{align}
Using the definition and skew-symmetry of the calibration form $\alpha_P$ from~\eqref{eq:alphaskew} the left hand side of~\eqref{eq:Pmetric-polarized} becomes
\begin{align} \nonumber
\alpha_P \big( u_1, \ldots, u_{k-1}, v, P(u_1 \wedge \cdots \wedge u_{k-1} \wedge w) \big) & = - \alpha_P \big( u_1, \ldots, u_{k-1}, P(u_1 \wedge \cdots \wedge u_{k-1} \wedge w), v \big) \\ \label{eq:VCP-identity-LHS}
& = - \langle P\big( u_1 \wedge \cdots \wedge u_{k-1} \wedge P(u_1 \wedge \cdots \wedge u_{k-1} \wedge w) \big), v \rangle.
\end{align}
Since~\eqref{eq:VCP-identity-LHS} and~\eqref{eq:VCP-identity-RHS} agree for all $v \in V$, the identity~\eqref{eq:VCP-identity} holds.
\end{proof}

\begin{rmk} \label{rmk:k=1}In the particular case when $k=1$, Proposition~\ref{prop:VCP-identity} simply says $P^2 (u) = -u$ for all $u \in V$, which we already knew from the discussion in Example~\ref{ex:complex}. But for $k>1$ it is nontrivial.
\end{rmk}

Proposition~\ref{prop:VCP-identity} has two important corollaries.

\begin{cor} \label{cor:VCP-surjective}
Let $v, u_1 \in V$ be orthogonal vectors. Then there exists a decomposable element of $\Lambda^k V$ of the form $u_1 \wedge \cdots \wedge u_k$ such that $v = P(u_1 \wedge \cdots \wedge u_k)$.
\end{cor}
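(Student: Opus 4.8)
\textbf{Proof proposal for Corollary~\ref{cor:VCP-surjective}.}

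The plan is to use the identity~\eqref{eq:VCP-identity} from Proposition~\ref{prop:VCP-identity} directly, after setting up a suitable auxiliary frame. Given orthogonal vectors $v, u_1 \in V$ with, say, $u_1 \neq 0$ (the case $u_1 = 0$ can be handled separately or one may simply assume $u_1$ nonzero as the statement implicitly does), I would first extend $u_1$ to a collection of linearly independent vectors $u_1, \ldots, u_{k-1}$ spanning a $(k-1)$-dimensional subspace $U$ with $v \in U^\perp$. This is possible because $v \perp u_1$: just choose $u_2, \ldots, u_{k-1}$ inside $\{u_1, v\}^\perp \oplus \spa\{u_1\}$ appropriately — more carefully, pick $u_2, \ldots, u_{k-1}$ to be any vectors making $\{u_1, \ldots, u_{k-1}\}$ linearly independent while keeping all of them orthogonal to $v$, which is possible since $v^\perp$ has dimension $n-1 \geq k-1$ and already contains $u_1$.

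Next, I would set $u_k := -|u_1 \wedge \cdots \wedge u_{k-1}|^{-2}\, P(u_1 \wedge \cdots \wedge u_{k-1} \wedge v)$. Applying Proposition~\ref{prop:VCP-identity} with $w = v$ and noting that $\pi_{U^\perp} v = v$ (since $v \in U^\perp$), the identity~\eqref{eq:VCP-identity} reads
\begin{equation*}
P\big(u_1 \wedge \cdots \wedge u_{k-1} \wedge P(u_1 \wedge \cdots \wedge u_{k-1} \wedge v)\big) = -|u_1 \wedge \cdots \wedge u_{k-1}|^2\, v,
\end{equation*}
which, after dividing by $-|u_1 \wedge \cdots \wedge u_{k-1}|^2$ and using multilinearity of $P$ to pull the scalar into the last slot, gives exactly $P(u_1 \wedge \cdots \wedge u_{k-1} \wedge u_k) = v$. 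It remains only to observe that $u_1 \wedge \cdots \wedge u_k$ is decomposable by construction, which is automatic.

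The only genuine subtlety — and the one step worth being careful about — is verifying that the element $u_1 \wedge \cdots \wedge u_k$ is actually what we want without hidden degeneracies: one should check that $|u_1 \wedge \cdots \wedge u_{k-1}|^2 \neq 0$, which holds by the linear independence of $u_1, \ldots, u_{k-1}$, so the rescaling defining $u_k$ makes sense. (We do not need $u_k$ to be nonzero or orthogonal to the others — the conclusion only asserts existence of \emph{some} decomposable element mapping to $v$, though in fact $u_k \neq 0$ since $P$ preserves lengths on decomposables and $v \neq 0$ when $v \neq 0$; the case $v = 0$ is trivial, taking any decomposable $k$-vector built from $u_1$ and orthogonal companions.) Thus the corollary follows immediately from Proposition~\ref{prop:VCP-identity}, with the frame-extension argument being the only thing requiring a sentence of justification.
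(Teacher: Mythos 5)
Your proof is correct and takes essentially the same route as the paper: extend $u_1$ to a linearly independent $(k-1)$-tuple inside $v^\perp$, apply the identity~\eqref{eq:VCP-identity} with $w=v$ (where $\pi_{U^\perp}v = v$), and read off $u_k$. The only cosmetic difference is that the paper first normalizes $v$, $u_1$ to unit length and picks $u_2,\ldots,u_{k-1}$ orthonormal, so that $|u_1\wedge\cdots\wedge u_{k-1}|=1$ and the rescaling factor you carry along disappears.
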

\begin{proof}[Proof of Corollary~\ref{cor:VCP-surjective}]
By the linearity of $P$, we can without loss of generality assume that $v$ and $u_1$ both have unit length. Choose orthonormal vectors $u_2, \ldots, u_{k-1}$ in $V$ that are orthogonal to both $v$ and $u_1$. We can always do this because $(k-2)+2 = k < n = \dim V$. Let $U = \operatorname{span}\{ u_1, \ldots, u_{k-1} \}$. By construction we have $\pi_{U^{\perp}} v = v$. Thus the fundamental identity~\eqref{eq:VCP-identity} gives
\begin{equation*}
P \big(u_1 \wedge \cdots \wedge u_{k-1} \wedge P(u_1 \wedge \cdots \wedge u_{k-1} \wedge v) \big) = - v.
\end{equation*}
Taking $u_k = - P(u_1 \wedge \cdots \wedge u_{k-1} \wedge v)$ completes the proof.
\end{proof}

\begin{rmk} \label{rmk:VCP-surjective}
Corollary~\ref{cor:VCP-surjective} says that not only is the linear map $P : \Lambda^k V \to V$ always surjective, but that we can in fact always choose a preimage of $v$ that is of the decomposable form $u_1 \wedge \cdots \wedge u_k$, where $u_1$ is any nonzero vector orthogonal to $v$.
\end{rmk}

\begin{cor} \label{cor:VCP-identity}
Consider the same hypotheses as in Proposition~\ref{prop:VCP-identity}. Then the following identity holds:
\begin{equation} \label{eq:VCP-identity2}
\begin{aligned}
& P \Big( u_1 \wedge \cdots \wedge u_{k-1} \wedge P \big(u_1 \wedge \cdots \wedge u_{k-1} \wedge P(u_1 \wedge \cdots \wedge u_{k-1} \wedge w) \big) \Big) \\
& \qquad = - |u_1 \wedge \cdots \wedge u_{k-1}|^2 P(u_1 \wedge \cdots \wedge u_{k-1} \wedge w).
\end{aligned}
\end{equation}
\end{cor}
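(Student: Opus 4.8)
The plan is to deduce this directly from Proposition~\ref{prop:VCP-identity} by a single substitution, with essentially no extra computation. Set $w' := P(u_1 \wedge \cdots \wedge u_{k-1} \wedge w) \in V$. Proposition~\ref{prop:VCP-identity} holds for the \emph{same} linearly independent vectors $u_1, \ldots, u_{k-1}$ and for an \emph{arbitrary} vector in the last slot, so I may apply~\eqref{eq:VCP-identity} with $w$ replaced by $w'$. Its left-hand side then becomes precisely the triple composition appearing on the left of~\eqref{eq:VCP-identity2}, and the identity reads
\[
P \big( u_1 \wedge \cdots \wedge u_{k-1} \wedge P(u_1 \wedge \cdots \wedge u_{k-1} \wedge w') \big) = -|u_1 \wedge \cdots \wedge u_{k-1}|^2\, \pi_{U^\perp} w'.
\]

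The second step is to observe that $\pi_{U^\perp} w' = w'$. Indeed, by the orthogonality property~\eqref{eq:Porthogonal} of the vector cross product, applied with $v_1 = u_1, \ldots, v_{k-1} = u_{k-1}$ and $v_k = w$, the vector $w' = P(u_1 \wedge \cdots \wedge u_{k-1} \wedge w)$ is orthogonal to each of $u_1, \ldots, u_{k-1}$, hence orthogonal to $U = \spa\{u_1, \ldots, u_{k-1}\}$. Therefore $w' \in U^\perp$, so $\pi_{U^\perp} w' = w'$. Substituting this back into the displayed identity and recalling the definition of $w'$ gives exactly~\eqref{eq:VCP-identity2}.

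Since the conclusion is a one-line consequence of Proposition~\ref{prop:VCP-identity} and~\eqref{eq:Porthogonal}, there is no genuine obstacle here; the only point requiring a moment's thought is to confirm that the hypotheses of Proposition~\ref{prop:VCP-identity} survive the substitution, which they do, as $u_1, \ldots, u_{k-1}$ remain unchanged (still linearly independent) and the slot previously occupied by $w$ may be filled by any element of $V$, in particular by $w'$.
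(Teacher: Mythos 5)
Your argument is correct. The hypotheses of Proposition~\ref{prop:VCP-identity} allow an arbitrary $w \in V$ in the last slot while $u_1, \ldots, u_{k-1}$ stay fixed, so the substitution $w \mapsto w' = P(u_1 \wedge \cdots \wedge u_{k-1} \wedge w)$ is legitimate, and the orthogonality property~\eqref{eq:Porthogonal} gives $w' \in U^\perp$, hence $\pi_{U^\perp} w' = w'$, which collapses the right-hand side to exactly what is wanted.

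This is a slightly different manipulation from the paper's. The paper starts from~\eqref{eq:VCP-identity} applied to the \emph{original} $w$, wedges both sides with $u_1 \wedge \cdots \wedge u_{k-1}$, applies $P$, and then simplifies the resulting right-hand side using the observation that $P(u_1 \wedge \cdots \wedge u_{k-1} \wedge \pi_{U^\perp} w) = P(u_1 \wedge \cdots \wedge u_{k-1} \wedge w)$, because the $U$-component of $w$ dies in the wedge. You instead start from~\eqref{eq:VCP-identity} applied to a \emph{different} input $w'$ and use the fact that $\pi_{U^\perp}$ acts trivially on $w'$ because $w'$ is already orthogonal to $U$. Both routes feed on the same two ingredients (Proposition~\ref{prop:VCP-identity} and the orthogonality built into $P$), but your version is a pure substitution followed by a pointwise simplification, with no intermediate wedging or reapplication of $P$ to an identity between vectors; this is marginally cleaner. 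Neither approach has a hidden gap; they are genuinely parallel.
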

\begin{proof}
Let $y = \pi_{U^{\perp}} w$. Then $v - y = \pi_U w \in U$. Thus, by multlinearity and the property~\eqref{eq:Porthogonal} of vector cross products, we have
\begin{equation} \label{eq:VCP-identity-temp}
P(u_1 \wedge \cdots \wedge u_{k-1} \wedge y) = P(u_1 \wedge \cdots \wedge u_{k-1} \wedge w).
\end{equation}
Take the wedge product of both sides of~\eqref{eq:VCP-identity} with $u_1 \wedge \cdots \wedge u_{k-1}$ and then apply the vector cross product $P$ to both sides. The observation~\eqref{eq:VCP-identity-temp} therefore establishes~\eqref{eq:VCP-identity2}.
\end{proof}

\subsection{Calibration forms} \label{sec:calibrations}

Before we define Smith maps and Gray maps we review the closely related but somewhat more general notion of a \textit{calibration} on $V$. This is used crucially in the generalized calibration inequality of Theorem~\ref{thm:generalized-calib}. The theory of calibrations was introduced in~\cite{HL} by Harvey--Lawson. We emphasize that in this section we are only considering the linear algebraic (pointwise) aspects of the theory.

\begin{defn} \label{defn:calibration}
Let $\alpha$ be a $(k+1)$-form on $V$, where $k+1 \leq n = \dim V$. We say that $\alpha$ is a \textit{calibration} if
\begin{equation} \label{eq:comassone}
\alpha(v_1, \ldots v_{k+1}) \leq |v_1 \wedge \cdots \wedge v_{k+1}| \quad \text{for all $v_1, \ldots, v_{k+1} \in V$.}
\end{equation}
We say that the $(k+1)$-form $\alpha$ has \textit{comass one}.
\end{defn}

\begin{lemma} \label{lemma:calibration-equivalent}
The inequality~\eqref{eq:comassone} is equivalent to the inequality
\begin{equation} \label{eq:comassone-alt}
\alpha(u_1, \ldots u_{k+1}) \leq 1 \quad \text{for all \textit{orthonormal vectors} $u_1, \ldots, u_{k+1} \in V$.}
\end{equation}
\end{lemma}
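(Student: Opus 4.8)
The plan is to prove both directions of the equivalence. For the forward direction, assume $\alpha$ satisfies~\eqref{eq:comassone}. If $u_1, \ldots, u_{k+1}$ are orthonormal, then by Corollary~\ref{cor:hadamard-vectors} we have $|u_1 \wedge \cdots \wedge u_{k+1}| = 1$, so~\eqref{eq:comassone} immediately yields $\alpha(u_1, \ldots, u_{k+1}) \leq 1$, which is~\eqref{eq:comassone-alt}.

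For the reverse direction, assume~\eqref{eq:comassone-alt} and let $v_1, \ldots, v_{k+1} \in V$ be arbitrary. I first reduce to the case where the $v_i$ are linearly independent: if they are linearly dependent, then $v_1 \wedge \cdots \wedge v_{k+1} = 0$, so the right-hand side of~\eqref{eq:comassone} is zero, while the left-hand side $\alpha(v_1, \ldots, v_{k+1})$ is also zero since $\alpha$ is alternating and the arguments are dependent; hence~\eqref{eq:comassone} holds trivially. So assume $v_1, \ldots, v_{k+1}$ are linearly independent, spanning a $(k+1)$-dimensional subspace $W \subseteq V$. Apply the Gram--Schmidt process to obtain an orthonormal basis $u_1, \ldots, u_{k+1}$ of $W$; the change-of-basis matrix $T$ expressing the $v_i$ in terms of the $u_j$ is upper triangular (with respect to the ordering), so $v_1 \wedge \cdots \wedge v_{k+1} = (\det T)\, u_1 \wedge \cdots \wedge u_{k+1}$, giving $|v_1 \wedge \cdots \wedge v_{k+1}| = |\det T|$ since $|u_1 \wedge \cdots \wedge u_{k+1}| = 1$. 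On the other hand, $\alpha$ restricted to $W$ is a top-degree form on a $(k+1)$-dimensional space, so by multilinearity $\alpha(v_1, \ldots, v_{k+1}) = (\det T)\, \alpha(u_1, \ldots, u_{k+1})$. Combining these with the hypothesis $\alpha(u_1, \ldots, u_{k+1}) \leq 1$, and noting that we may flip the sign of one $u_i$ if needed to arrange $\det T \geq 0$ (which changes neither $|\det T|$ nor the orthonormality), we conclude $\alpha(v_1, \ldots, v_{k+1}) = |\det T| \,\alpha(u_1, \ldots, u_{k+1}) \leq |\det T| = |v_1 \wedge \cdots \wedge v_{k+1}|$, establishing~\eqref{eq:comassone}.

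The only mild subtlety—the ``hard part,'' such as it is—lies in handling signs correctly in the reverse direction: the hypothesis~\eqref{eq:comassone-alt} bounds $\alpha$ on orthonormal frames from above by $1$, but an orthonormal frame can have either orientation, so one must either invoke~\eqref{eq:comassone-alt} for both orientations or, as above, choose the sign of the Gram--Schmidt basis to make the relevant determinant nonnegative. Either way this is routine. No deep input is needed beyond Corollary~\ref{cor:hadamard-vectors} and the elementary fact that the Gram determinant of a linearly independent set equals the squared norm of the corresponding wedge product.
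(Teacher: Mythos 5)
Your proof is correct and takes essentially the same approach as the paper: both reduce to the observation that $\alpha(v_1,\ldots,v_{k+1})$ and $|v_1\wedge\cdots\wedge v_{k+1}|$ depend only on the oriented $(k+1)$-plane they span, so one may replace the $v_i$'s by a (suitably oriented) orthonormal basis of $\spa\{v_1,\ldots,v_{k+1}\}$; you phrase this via Gram--Schmidt and the determinant of the change-of-basis matrix, while the paper writes the relationship as $u_1\wedge\cdots\wedge u_{k+1}=t\,v_1\wedge\cdots\wedge v_{k+1}$ with $t>0$. (Minor remark: invoking Corollary~\ref{cor:hadamard-vectors} for the forward direction is overkill, since $|u_1\wedge\cdots\wedge u_{k+1}|=1$ for orthonormal vectors is immediate from the definition of the induced inner product on $\Lambda^{k+1}V$.)
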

\begin{proof}
Both sides of~\eqref{eq:comassone} vanish if $\{ v_1, \ldots, v_{k+1} \}$ is linearly dependent so we may assume it is linearly independent. Moreover, recall that $\Lambda^k V^* \cong (\Lambda^k V)^*$. That is, $\alpha(v_1, \ldots, v_{k+1}) = \alpha(v_1 \wedge \cdots \wedge v_{k+1})$. Hence, both sides of~\eqref{eq:comassone} depend only on the \textit{oriented} $(k+1)$-plane $u_1 \wedge \cdots \wedge u_{k+1} \in \Lambda^k V$ where $\{ u_1, \ldots, u_{k+1} \}$ is an ordered orthonormal basis for $\spa \{ v_1, \ldots, v_{k+1} \}$ inducing the same orientation. That is, $u_1 \wedge \cdots \wedge u_k = t v_1 \wedge \cdots \wedge v_{k+1}$ for some $t > 0$. Thus we have
\begin{align*}
\alpha(u_1, \ldots, u_{k+1}) = t \alpha(v_1, \ldots, v_{k+1}) & \leq t |v_1 \wedge \cdots \wedge v_{k+1}| \\
& = |u_1 \wedge \cdots \wedge u_{k+1}| = 1,
\end{align*}
which is what we wanted to show.
\end{proof}

\begin{defn} \label{defn:calibrated-plane}
A $(k+1)$-dimensional \textit{oriented} subspace $L$ of $V$ is called \textit{calibrated} with respect to $\alpha$ if the maximum in~\eqref{eq:comassone-alt} is achieved on $L$. That is, if $\alpha(u_1, \ldots, u_{k+1}) = 1$ for any \textit{oriented} orthonormal basis $\{ u_1, \ldots, u_{k+1} \}$ of $L$. Equivalently, if $L$ is an oriented $k$-dimensional subspace of $V$ with volume form $\vol_L$, then $L$ is calibrated with respect to $\alpha$ if and only if $\alpha|_L = \vol_L$.
\end{defn}

The set $\{ u_1 \wedge \cdots \wedge u_k \in \Lambda^k V : u_1, \ldots, u_k \text{ are orthonormal} \}$ of unit-length decomposable $k$-vectors in $V$ is compact, as it is precisely the Grassmanian $G^+(k, V)$ of oriented $k$-planes in $V$. Hence, any nonzero $(k+1)$-form can be suitably rescaled to satisfy the comass one condition~\eqref{eq:comassone} to obtain a calibration. However, not all calibrations admit a rich class of calibrated $k$-planes. One of the most important classes of such calibrations arise from vector cross products. In fact, any vector cross product $P$ on $V$ induces a calibration $\alpha_P$, originally introduced in~\eqref{eq:alphaskew}, as the following result demonstrates.

\begin{prop} \label{prop:VCP-calibration}
Let $P$ be a $k$-fold vector cross product on $V$. The element $\alpha_P \in \otimes^{k+1} V^*$ defined by
\begin{equation} \label{eq:alphaP}
\alpha_P(v_1, \ldots, v_k, v_{k+1}) = \langle P(v_1 \wedge \cdots \wedge v_k), v_{k+1} \rangle
\end{equation}
is a calibration on $V$. An oriented $(k+1)$-dimensional subspace $L$ of $V$ is calibrated with respect to $\alpha_P$ if and only if $P(u_1 \wedge \cdots \wedge u_k) = u_{k+1}$ whenever $\{ u_1, \ldots, u_{k+1} \}$ is an \textit{oriented} orthonormal basis for $L$.
\end{prop}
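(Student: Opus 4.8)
The plan is to prove two things: first, that $\alpha_P$ satisfies the comass-one inequality so that it is a calibration; second, that the equality/calibrated condition is exactly the stated cross-product condition. For the first part, by Lemma~\ref{lemma:calibration-equivalent} it suffices to check that $\alpha_P(u_1, \ldots, u_{k+1}) \leq 1$ for all orthonormal $u_1, \ldots, u_{k+1}$. By the definition \eqref{eq:alphaP} and the Cauchy--Schwarz inequality we have
\[
\alpha_P(u_1, \ldots, u_{k+1}) = \langle P(u_1 \wedge \cdots \wedge u_k), u_{k+1} \rangle \leq |P(u_1 \wedge \cdots \wedge u_k)|\,|u_{k+1}| = |u_1 \wedge \cdots \wedge u_k| \cdot 1,
\]
where the last equality uses the metric property \eqref{eq:Pmetric} of $P$ together with $|u_{k+1}| = 1$; and $|u_1 \wedge \cdots \wedge u_k| = 1$ since the $u_i$ are orthonormal (Corollary~\ref{cor:hadamard-vectors}). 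This gives the comass-one bound. One subtlety to note (but it is automatic): $\alpha_P$ is totally skew-symmetric, hence genuinely a $(k+1)$-form, by \eqref{eq:alphaskew}, which is part of the definition of a vector cross product.

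For the second part, I would trace through exactly when equality holds in the chain above. Fix an oriented orthonormal basis $\{u_1, \ldots, u_{k+1}\}$ of $L$. The Cauchy--Schwarz step is an equality if and only if $u_{k+1}$ is a nonnegative scalar multiple of $P(u_1 \wedge \cdots \wedge u_k)$; and since both $u_{k+1}$ and $P(u_1 \wedge \cdots \wedge u_k)$ are unit vectors (the latter by \eqref{eq:Pmetric}), this forces $P(u_1 \wedge \cdots \wedge u_k) = u_{k+1}$. Conversely, if $P(u_1 \wedge \cdots \wedge u_k) = u_{k+1}$ for one oriented orthonormal basis, then $\alpha_P(u_1, \ldots, u_{k+1}) = \langle u_{k+1}, u_{k+1}\rangle = 1$, so $L$ is calibrated. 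The only remaining point is the word ``whenever'': once $L$ is calibrated, the condition $P(u_1 \wedge \cdots \wedge u_k) = u_{k+1}$ must hold for \emph{every} oriented orthonormal basis of $L$. This follows because $\alpha_P|_L = \vol_L$ (Definition~\ref{defn:calibrated-plane}), so $\alpha_P(u_1, \ldots, u_{k+1}) = 1$ for every such basis, and then the equality analysis above applies to each of them.

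I expect the main (mild) obstacle to be purely bookkeeping: being careful that the string of inequalities collapses to an \emph{iff} statement rather than just one direction, and correctly handling the quantifier over orthonormal bases of $L$ in the last clause. There is no hard analysis here — everything reduces to Cauchy--Schwarz, the defining property \eqref{eq:Pmetric}, and the observation that unit vectors proportional with a nonnegative factor are equal. I would also remark in passing that Corollary~\ref{cor:VCP-surjective} guarantees calibrated planes through any prescribed direction exist, so this calibration is nontrivial, though that is not strictly needed for the statement as phrased.
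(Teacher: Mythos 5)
Your proof is correct, and the conclusion matches the paper's, but you take a slightly more economical route for the comass bound. The paper starts from arbitrary \emph{unit} vectors $u_1, \ldots, u_{k+1}$, so after Cauchy--Schwarz and \eqref{eq:Pmetric} it still has the factor $|u_1 \wedge \cdots \wedge u_k|$, which it then controls with Hadamard's inequality, Corollary~\ref{cor:hadamard-vectors}; equality then requires equality in \emph{both} Cauchy--Schwarz and Hadamard, and the Hadamard equality case is what forces orthonormality of $u_1, \ldots, u_k$. You instead invoke Lemma~\ref{lemma:calibration-equivalent} to reduce to \emph{orthonormal} inputs from the start, at which point $|u_1 \wedge \cdots \wedge u_k| = 1$ is immediate and Hadamard disappears; the equality analysis then rests on Cauchy--Schwarz alone. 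Both are valid. Your version is a little leaner and makes the equality discussion cleaner; the paper's version has the minor advantage of re-deriving (rather than assuming) the orthonormality in the equality case, which it uses to observe that the $u_i$ span a $(k+1)$-dimensional $L$ and induce the correct orientation. You are also more explicit than the paper about the ``whenever'' quantifier — that once $L$ is calibrated, the identity $P(u_1 \wedge \cdots \wedge u_k) = u_{k+1}$ holds for \emph{every} oriented orthonormal basis, because $\alpha_P|_L = \vol_L$ forces $\alpha_P(u_1, \ldots, u_{k+1}) = 1$ for each such basis and the saturation argument then applies basis by basis. The paper leaves this as ``the converse is clear.'' No gaps.
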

\begin{proof}
We first observe that by~\eqref{eq:Porthogonal}, the covariant $(k+1)$-tensor $\alpha_P$ is totally skew-symmetric and hence a $(k+1)$-form. Let $u_1, \dots, u_{k+1}$ be unit vectors in $V$. By property~\eqref{eq:Pmetric}, the Cauchy--Schwarz inequality, and Hadamard's inequality~\eqref{eq:hadamard-vectors}, we have
\begin{align*}
\alpha_P(u_1, \ldots, u_k, u_{k+1}) & \leq |P(u_1 \wedge \cdots \wedge u_k)| \, |u_{k+1}| \\
& = |u_1 \wedge \cdots \wedge u_k| \, |u_{k+1}| \\
& \leq |u_1| \cdots |u_k| \, |u_{k+1}| = 1.
\end{align*}
Thus $\alpha_P$ is indeed a calibration. Equality is achieved if and only if we have equality in \textit{both} the Cauchy--Schwarz and the Hadamard inequalities. This means that $\{ u_1, \ldots, u_k \}$ must be \textit{orthonormal} and that $u_{k+1} = P(u_1 \wedge \cdots \wedge u_k)$. In particular by~\eqref{eq:Porthogonal} the vector $u_{k+1}$ is also orthogonal to $u_1, \ldots, u_k$ and thus $\{ u_1, \ldots, u_{k+1} \}$ is an orthonormal basis for the $(k+1)$-dimensional subspace $L$ that it spans. Since $\alpha(u_1, \ldots, u_k, u_{k+1}) = 1$, the ordered orthonormal basis $\{ u_1, \ldots, u_{k+1} \}$ induces the given orientation on $L$. The converse is clear.
\end{proof}

The form $\alpha_P$ in~\eqref{eq:alphaP} is called the \textit{calibration form associated to to the vector cross product $P$}.

Despite Proposition~\ref{prop:VCP-calibration}, there are many more interesting calibrations, admitting a rich class of calibrated subspaces, that do not arise from vector cross products. The most well-studied calibrations are those introduced by Harvey--Lawson~\cite{HL} that include all those associated to vector cross products plus several others. They are summarized in Table~\ref{table:HL}.

\begin{table}[H]
\begin{center}
\begin{tabular}{|c|c|c|c||c|} \hline
$n$ & $k+1$ & $\alpha$ & Calibrated subspaces & Associated VCP \\ \hline
$n$ & $n$ & $\vol$ & Entire space & Hodge star \\
$2m$ & $2$ & $\omega$ & Complex lines & Orthogonal complex structure \\
$2m$ & $2r$ & $\tfrac{1}{r!} \omega^r$ & Complex $r$-planes & NONE \\
$2m$ & $m$ & $\real(e^{i \theta} \Omega)$ & Special Lagrangian $m$-planes & NONE \\
$7$ & $3$ & $\varphi$ & Associative $3$planes & $2$-fold VCP of a $\G$-structure \\
$7$ & $4$ & $\psi = \ast \varphi$ & Coassociative $4$-planes & NONE \\
$8$ & $4$ & $\Phi$ & Cayley $4$-planes & $3$-fold VCP of a $\Spin{7}$-structure \\ \hline
\end{tabular}
\end{center}
\caption{The calibrations discussed in Harvey--Lawson~\cite{HL}.} \label{table:HL}
\end{table}

Another very interesting class of calibrations~\cite{BH} is related to the quaternions, but appeared after~\cite{HL}.

It is interesting to note from Table~\ref{table:HL} the particular case when $V$ is equipped with an orthogonal complex structure $J$ and the associated $2$-form $\omega$ is defined by $\omega(v,w) = \langle Jv, w \rangle$. In this case, the complex $r$-planes in $V$ are calibrated by $\frac{1}{r!} \omega^r$, which is the classical Wirtinger inequality, but only the case $r=1$ is associated to a vector cross product.

\subsection{Smith maps and Gray maps} \label{sec:VCP-maps}

Let $P$ be a $k$-fold vector cross product on the $n$-dimensional Euclidean space $V$ with associated calibration $(k+1)$-form $\alpha_P$. Further, let $Q$ be a $k$-fold vector cross product on the $m$-dimensional Euclidean space $W$ with associated calibration $(k+1)$-form $\alpha_Q$, for \textit{the same $k$}. We \textit{do not assume} that $m = n$, although this is a special case.

Let $A : V \to W$ be a linear map. We consider a special type of such a linear map, which we call a Smith map. This is a linear map that is in a particular sense ``compatible'' with the vector cross products $P, Q$ on $V, W$ respectively.

\begin{defn} \label{defn:smith}
Let $\Lambda^k A : \Lambda^k V \to \Lambda^k W$ be the $k$th exterior power $A$. We say that $A$ is a \textit{Smith map} if is satisfies the equation
\begin{equation} \label{eq:smith}
Q (\Lambda^k A) = \lambda^{k-1} A P
\end{equation}
for some \textit{positive} constant $\lambda$. Note that both sides of~\eqref{eq:smith} are linear maps from $\Lambda^k V$ to $W$. Explicitly, $A$ is a Smith map iff for all $v_1, \ldots, v_k \in V$, we have $Q(Av_1 \wedge \cdots \wedge Av_k) = \lambda^{k-1} A\big( P(v_1 \wedge \cdots \wedge v_k) \big)$. Thus we also say that a Smith map is \textit{conformally vector cross product preserving}.
\end{defn}

\begin{rmk} \label{rmk:normA-determined}
We show in Theorem~\ref{thm:smith-conformal} and Proposition~\ref{prop:complex-conformal} that if $k>1$ or $(k=1,n=2)$ then the constant $\lambda$ in~\eqref{eq:smith} is not arbitrary if $A$ is nonzero.
\end{rmk}

\begin{defn} \label{defn:gray}
A special case of Smith maps corresponds to $\lambda = 1$. Then equation~\eqref{eq:smith} becomes $Q (\Lambda^k A) = A P$. Such a map is called a \textit{Gray map} and is \textit{vector cross product preserving}.
\end{defn}

We now establish the fundamental properties of Smith maps.

\begin{thm} \label{thm:smith-conformal}
Let $k>1$ and let $A : V \to W$ be a Smith map. Then either $A = 0$ or $A$ is a conformal injection in the sense of Lemma~\ref{lemma:conf-isom}. Thus if $A \neq 0$ then $\lambda$ is necessarily given by $\lambda = (\dim V)^{-\frac{1}{2}} |A|$.
\end{thm}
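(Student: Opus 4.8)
The plan is to reduce the statement to Lemma~\ref{lemma:eigen}. Assume $A \neq 0$; then we must show $A$ is a conformal injection, i.e.\ $A^*A = \lambda^2 I$ for some $\lambda > 0$. The idea is to extract from the Smith equation~\eqref{eq:smith} a statement of exactly the form~\eqref{eq:eigen-hyp}, applied to the self-adjoint operator $B := A^*A$ on $V$ (after normalizing out a scalar). First I would record that $B = A^*A$ is positive self-adjoint \emph{provided} $A$ is injective --- this injectivity is itself something to be established, so the first substantive step is to rule out a nontrivial kernel. If $v \in \ker A$ is nonzero, complete it to $v, u_2, \ldots, u_k$ linearly independent and apply the Smith equation to $v \wedge u_2 \wedge \cdots \wedge u_k$: the left side $Q(Av \wedge Au_2 \wedge \cdots \wedge Au_k)$ vanishes, so $AP(v \wedge u_2 \wedge \cdots \wedge u_k) = 0$, which feeds the kernel back into itself; using Corollary~\ref{cor:VCP-surjective} (surjectivity of $P$ onto any vector orthogonal to a chosen $u_1$) one shows $\ker A$ would have to be everything, contradicting $A \neq 0$. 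So $A$ is injective and $B = A^*A$ is positive self-adjoint.

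Next, compute $|\Lambda^k A|$ two ways. Taking Frobenius norms on both sides of~\eqref{eq:smith} and using property~\eqref{eq:Pmetric} of $Q$ (that $Q$ is length-preserving on decomposables) together with the surjectivity of $P$ onto decomposable preimages (Remark~\ref{rmk:VCP-surjective}), one gets that $\Lambda^k A$ is, up to the factor $\lambda^{k-1}$, ``length-preserving on decomposables'' in a suitable sense. More precisely, for any decomposable $v_1 \wedge \cdots \wedge v_k$ choose --- via Corollary~\ref{cor:VCP-surjective} --- a \emph{different} decomposable representative mapping to $P(v_1 \wedge \cdots \wedge v_k)$; applying $Q$'s norm identity yields
\[
|A v_1 \wedge \cdots \wedge A v_k| = \lambda^{k-1}\, |A P(v_1 \wedge \cdots \wedge v_k)| = \lambda^{k-1} |v_1 \wedge \cdots \wedge v_k|,
\]
where the last equality uses~\eqref{eq:Pmetric} for $Q$ on the right-hand decomposable and~\eqref{eq:Pmetric} for $P$. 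Expanding $|Av_1 \wedge \cdots \wedge Av_k|^2 = \langle (\Lambda^k B)(v_1 \wedge \cdots \wedge v_k), v_1 \wedge \cdots \wedge v_k\rangle$ then gives exactly
\[
\langle (\Lambda^k B)(v_1 \wedge \cdots \wedge v_k),\, v_1 \wedge \cdots \wedge v_k \rangle = \lambda^{2(k-1)} |v_1 \wedge \cdots \wedge v_k|^2
\]
for all $v_1, \ldots, v_k \in V$.

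Now set $\widetilde{B} := \lambda^{-2(k-1)/k}\, B$ (so that $\Lambda^k \widetilde{B} = \lambda^{-2(k-1)} \Lambda^k B$); then $\widetilde B$ is positive self-adjoint and $\Lambda^k \widetilde B$ is an isometry on decomposables in the sense of~\eqref{eq:eigen-hyp}. Since the hypotheses $1 < k < n = \dim V$ are in force (we are in the range where vector cross products exist and $k > 1$ by assumption, and $k < n$ always), Lemma~\ref{lemma:eigen} applies and forces $\widetilde B = I$, i.e.\ $A^*A = \lambda^{2(k-1)/k} I$. Writing $\mu^2 := \lambda^{2(k-1)/k}$, Lemma~\ref{lemma:conf-isom} shows $A$ is a conformal injection with conformal factor $\mu$, and moreover $\mu^2 = \tfrac{1}{n}|A|^2$. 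Finally, one checks the scalar bookkeeping closes up: comparing $A^*A = \mu^2 I$ with the original normalization pins down $\lambda$, and in fact Corollary~\ref{cor:hadamard-n} (equality case of Hadamard on $\Lambda^n A$, or simply re-running the norm computation with $k$ replaced appropriately) forces $\lambda = \mu = (\dim V)^{-1/2}|A|$, which is the claimed value. The main obstacle is the bookkeeping with the exponents of $\lambda$ --- making sure the scalar extracted from $A^*A = cI$ is consistent with the $\lambda^{k-1}$ in~\eqref{eq:smith} --- together with the careful use of Corollary~\ref{cor:VCP-surjective} to legitimately replace decomposable representatives when transferring the norm identity from $Q$'s side to $P$'s side; the rest is a direct appeal to Lemmas~\ref{lemma:eigen} and~\ref{lemma:conf-isom}.
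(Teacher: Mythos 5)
Your first step (ruling out a nontrivial kernel via Corollary~\ref{cor:VCP-surjective}) is fine and essentially matches the paper. The second step, however, contains a genuine gap. You write
\[
|A v_1 \wedge \cdots \wedge A v_k| = \lambda^{k-1}\, |A P(v_1 \wedge \cdots \wedge v_k)| = \lambda^{k-1}\, |v_1 \wedge \cdots \wedge v_k|,
\]
and the second equality is unjustified: it asserts $|AP(v_1 \wedge \cdots \wedge v_k)| = |P(v_1 \wedge \cdots \wedge v_k)|$, i.e.\ that $A$ preserves the norm of that particular vector. You cannot invoke~\eqref{eq:Pmetric} for this --- that property only controls how $P$ and $Q$ interact with their own inner products, not how $A$ rescales vectors. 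Knowing that $A$ preserves vector lengths is \emph{strictly stronger} than the conformal conclusion you are after, so the step is circular. Choosing a different decomposable $u_1\wedge\cdots\wedge u_k$ with $P(u_1\wedge\cdots\wedge u_k)=P(v_1\wedge\cdots\wedge v_k)$ only tells you that $|\Lambda^k A|$ takes equal values on those two decomposables; it does not tell you what that value is.

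This gap also shows up in the arithmetic. If the displayed equality held, you would get $\langle (\Lambda^k B)\xi, \xi\rangle = \lambda^{2(k-1)}|\xi|^2$ for decomposable $k$-vectors with $B=A^*A$, so Lemma~\ref{lemma:eigen} (with $r=k$) forces $A^*A = \lambda^{2(k-1)/k} I$. But the correct conclusion is $A^*A=\lambda^2 I$, and the two conformal factors $\lambda^{2(k-1)/k}$ and $\lambda^2$ disagree for all $k>1$. (Indeed, if $A$ is conformal with factor $\lambda$, the true scaling is $|\Lambda^k A(\xi)| = \lambda^{k}|\xi|$, not $\lambda^{k-1}|\xi|$.) Your closing remark that the ``scalar bookkeeping closes up'' and that Corollary~\ref{cor:hadamard-n} pins down $\lambda$ cannot repair this, because the inconsistency is not a normalization choice but a sign that the hypotheses of Lemma~\ref{lemma:eigen} have not actually been verified.

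The paper sidesteps the circularity by never working directly with $\Lambda^k$. It applies the triple-iterated identity of Corollary~\ref{cor:VCP-identity}, pushes $\lambda^{3(k-1)}A$ through all three occurrences of $P$ using the Smith equation, and compares with the same identity on $(W,Q)$ applied to $Au_1,\ldots,Au_{k-1},Av$. Because the same unknown vector $Q(Au_1\wedge\cdots\wedge Au_{k-1}\wedge Av)$ appears on both sides, the unknown rescaling of $A$ on single vectors cancels out, leaving the clean relation
\[
|Au_1 \wedge \cdots \wedge Au_{k-1}|^{2} = \lambda^{2(k-1)}\, |u_1 \wedge \cdots \wedge u_{k-1}|^{2}
\]
on $\Lambda^{k-1}$. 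Setting $B = \lambda^{-2}A^*A$ and applying Lemma~\ref{lemma:eigen} with $r = k-1$ (which requires $k>1$) gives $B=I$, i.e.\ $A^*A=\lambda^2 I$, exactly as the theorem claims. It is this drop from $\Lambda^k$ to $\Lambda^{k-1}$ --- together with the cancellation built into the triple-$P$ identity --- that your sketch is missing.
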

\begin{proof}
Suppose that $A$ is \textit{not} injective. Then there exists nonzero $u_1 \in V$ with $A u_1 = 0$. We want to show that $A v = 0$ for all $v \in V$. We can assume $v$ is nonzero and orthogonal to $u_1$. By Corollary~\ref{cor:VCP-surjective} we can find $u_2, \ldots, u_k \in V$ such that $v = P(u_1 \wedge \cdots \wedge u_k)$. Applying $\lambda^{k-1} A$ to both sides and using the fact that $A$ is a Smith map, we find that
\begin{equation*}
\lambda^{k-1} A v = \lambda^{k-1} A \big( P(u_1 \wedge \cdots \wedge u_k ) \big) = Q (A u_1 \wedge \cdots \wedge A u_k ) = 0.
\end{equation*}
Since $\lambda > 0$, we deduce that $Av = 0$. Because $v$ was arbitrary we conclude that $A = 0$ whenever $A$ is not injective.

From now on assume $A$ is nonzero and thus injective. Let $u_1, \ldots, u_{k-1}, v$ be linearly independent vectors in $V$. From Corollary~\ref{cor:VCP-identity} for $(V, P)$, we have that
\begin{equation*}
\begin{aligned}
& P \Big( u_1 \wedge \cdots \wedge u_{k-1} \wedge P \big(u_1 \wedge \cdots \wedge u_{k-1} \wedge P(u_1 \wedge \cdots \wedge u_{k-1} \wedge v) \big) \Big) \\
& \qquad = - |u_1 \wedge \cdots \wedge u_{k-1}|^2 P(u_1 \wedge \cdots \wedge u_{k-1} \wedge v).
\end{aligned}
\end{equation*}
We apply $\lambda^{3(k-1)} A$ to both sides of the above and use the Smith equation~\eqref{eq:smith} repeatedly. This gives
\begin{equation} \label{eq:smith-conformal-temp}
\begin{aligned}
& Q \Big( Au_1 \wedge \cdots \wedge Au_{k-1} \wedge Q \big(Au_1 \wedge \cdots \wedge Au_{k-1} \wedge Q(Au_1 \wedge \cdots \wedge Au_{k-1} \wedge Av) \big) \Big) \\
& \qquad = - |u_1 \wedge \cdots \wedge u_{k-1}|^2 \lambda^{2(k-1)} Q(Au_1 \wedge \cdots \wedge Au_{k-1} \wedge Av).
\end{aligned}
\end{equation}
Since $A$ is injective, the vectors $Au_1, \ldots Au_{k-1}, Av$ are linearly independent in $W$. From Corollary~\ref{cor:VCP-identity} for $(W, Q)$, we have that
\begin{equation} \label{eq:smith-conformal-temp2}
\begin{aligned}
& Q \Big( Au_1 \wedge \cdots \wedge Au_{k-1} \wedge Q \big(Au_1 \wedge \cdots \wedge Au_{k-1} \wedge Q(Au_1 \wedge \cdots \wedge Au_{k-1} \wedge Av) \big) \Big) \\
& \qquad = - |Au_1 \wedge \cdots \wedge Au_{k-1}|^2 Q(Au_1 \wedge \cdots \wedge Au_{k-1} \wedge Av).
\end{aligned}
\end{equation}
From~\eqref{eq:Pmetric} and the linear independence of $Au_1, \ldots, Au_{k-1}, Av$ we have $Q(Au_1 \wedge \cdots \wedge Au_{k-1} \wedge Av) \neq 0$. Comparing equations~\eqref{eq:smith-conformal-temp} and~\eqref{eq:smith-conformal-temp2} therefore gives
\begin{equation} \label{eq:smith-conformal-temp3}
|Au_1 \wedge \cdots \wedge Au_{k-1}|^2 = \lambda^{2(k-1)} |u_1 \wedge \cdots \wedge u_{k-1}|^2
\end{equation}
whenever $u_1, \ldots, u_{k-1}$ are linearly independent, and hence by multilinearity for all $u_1, \ldots, u_{k-1}$. Let $A^* : W \to V$ be the adjoint map, and let $B = \lambda^{-2} A^* A : V \to V$. The map $B$ is self-adjoint and positive. Equation~\eqref{eq:smith-conformal-temp3} says
\begin{align*}
|u_1 \wedge \cdots \wedge u_{k-1}|^2 & = \lambda^{-2(k-1)} \det \langle Au_i, Au_j \rangle \\
& = \det \langle \lambda^{-2} A^* A u_i, u_j \rangle = \langle Bu_1 \wedge \cdots \wedge Bu_{k-1}, u_1 \wedge \cdots u_{k-1} \rangle.
\end{align*}
We can thus apply Lemma~\ref{lemma:eigen} (this is where we need the hypothesis that $k>1$) to conclude that $B$ is the identity, so $A^* A = \lambda^2 I$. From Lemma~\ref{lemma:conf-isom} we conclude that $A$ is conformal with conformal factor $\lambda = (\dim V)^{-\frac{1}{2}} |A|$ as claimed.
\end{proof}

\begin{cor} \label{cor:gray}
For $k > 1$, the nonzero Gray maps are precisely the nonzero Smith maps that are also isometric injections.
\end{cor}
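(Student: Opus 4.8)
The plan is to deduce this directly from Definition~\ref{defn:gray} and Theorem~\ref{thm:smith-conformal}, treating the two inclusions separately; no new computation should be needed.

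First I would handle the forward inclusion. Let $A \neq 0$ be a Gray map, so by Definition~\ref{defn:gray} it satisfies $Q(\Lambda^k A) = AP$, which is precisely the Smith equation~\eqref{eq:smith} with $\lambda = 1$; hence $A$ is in particular a nonzero Smith map. Since $k > 1$, Theorem~\ref{thm:smith-conformal} applies and tells us $A$ is a conformal injection with conformal factor $\lambda = (\dim V)^{-1/2} |A|$. But the value of $\lambda$ in~\eqref{eq:smith} is not ambiguous once $A \neq 0$: because $A$ is injective and $P$ is surjective (Corollary~\ref{cor:VCP-surjective}), and because $Q$ is injective on nonzero decomposable elements by~\eqref{eq:Pmetric}, the map $AP$ is nonzero, so the scalar $\lambda^{k-1}$ — and hence the positive number $\lambda$ — is uniquely determined by the equation. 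Therefore $\lambda = 1$, and Lemma~\ref{lemma:conf-isom} now gives that $A$ is an isometric injection.

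For the reverse inclusion, suppose $A \neq 0$ is a Smith map that is also an isometric injection. Then $A^* A = I$, so by~\eqref{eq:matrix-norm} we have $|A|^2 = \tr(A^* A) = \dim V$. Applying Theorem~\ref{thm:smith-conformal} once more, the conformal factor appearing in~\eqref{eq:smith} must be $\lambda = (\dim V)^{-1/2} |A| = 1$. Substituting $\lambda = 1$ into~\eqref{eq:smith} yields $Q(\Lambda^k A) = AP$, which is exactly the defining equation of a Gray map in Definition~\ref{defn:gray}. This completes both inclusions.

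There is essentially no serious obstacle here; the only point requiring a moment's care is the one flagged in Remark~\ref{rmk:normA-determined}, namely that for $k > 1$ the constant $\lambda$ in the Smith equation is not free but is pinned down to $(\dim V)^{-1/2}|A|$ once $A \neq 0$. Once that is invoked via Theorem~\ref{thm:smith-conformal}, the corollary reduces to the elementary fact from Lemma~\ref{lemma:conf-isom} that a conformal injection has conformal factor $1$ if and only if it is an isometric injection.
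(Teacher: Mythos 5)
Your proof is correct and follows exactly the route the paper has in mind: the paper's own proof of this corollary is the single line ``This is immediate from Theorem~\ref{thm:smith-conformal},'' which you have simply unpacked (using Definition~\ref{defn:gray} and Lemma~\ref{lemma:conf-isom} in the way intended). Your extra remark justifying the uniqueness of $\lambda$ via $AP \neq 0$ is a redundancy — this uniqueness is already asserted in the statement of Theorem~\ref{thm:smith-conformal} — but it is not an error.
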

\begin{proof}
This is immediate from Theorem~\ref{thm:smith-conformal}.
\end{proof}

The case when $k=1$ is special, as it requires a restriction on the dimension of the domain $V$. It is well known but we include it for completeness. The precise statement is as follows.
\begin{prop} \label{prop:complex-conformal}
Let $k=1$. Let $A : V \to W$ be a Smith map. Then the conclusion of Theorem~\ref{thm:smith-conformal} always holds if and only if $\dim V = 2$.
\end{prop}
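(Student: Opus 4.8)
The crux is that when $k=1$ the exponent $\lambda^{k-1}$ in the Smith equation~\eqref{eq:smith} is $\lambda^{0}=1$, so the equation reads simply $QA = AP$: for $k=1$ a Smith map is nothing but a $\mathbb{C}$-linear map between the complex vector spaces $(V,P)$ and $(W,Q)$, where $P$ and $Q$ are orthogonal complex structures by Example~\ref{ex:complex}. In particular the very existence of a $1$-fold vector cross product forces $\dim V$ (and $\dim W$) to be even, so the dichotomy being tested is: $\dim V = 2$ versus $\dim V \in \{4, 6, \dots\}$. With this reformulation, the proposition just asks in which (even) dimensions every $\mathbb{C}$-linear $A$ is forced to be either zero or a conformal injection.

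For the implication ($\Leftarrow$), suppose $\dim V = 2$. Pick a unit vector $e_1 \in V$ and set $e_2 := P e_1$; by Example~\ref{ex:complex}, $e_2 \perp e_1$ and $|e_2| = 1$, so $\{e_1, e_2\}$ is an orthonormal basis of $V$. Put $v := A e_1$, so that $A e_2 = A(P e_1) = Q(A e_1) = Qv$. Since $Q$ is a skew-adjoint isometry (again Example~\ref{ex:complex}), $|A e_2| = |Qv| = |v|$ and $\langle A e_1, A e_2 \rangle = \langle v, Qv\rangle = 0$. Reading off the matrix of $A^*A$ in the basis $\{e_1, e_2\}$ gives $A^*A = |v|^2 I$, so by Lemma~\ref{lemma:conf-isom} either $v = 0$, whence $A = 0$, or $v \neq 0$ and $A$ is a conformal injection with $\lambda = |v| = (\dim V)^{-1/2}|A|$. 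Either way the conclusion of Theorem~\ref{thm:smith-conformal} holds.

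For ($\Rightarrow$) I would simply exhibit a counterexample whenever $\dim V = 2r$ with $r \geq 2$. Identify $(V, P)$ isometrically with $\mathbb{C}^r$ equipped with its standard Hermitian structure, take $(W, Q) = (V, P)$, and let $A \colon \mathbb{C}^r \to \mathbb{C}^r$ be the $\mathbb{C}$-linear map $A(z_1, z_2, z_3, \dots, z_r) = (z_1, 2z_2, z_3, \dots, z_r)$. Then $QA = AP$ automatically, so $A$ is a Smith map, and $A \neq 0$; but $A^*A$ has eigenvalues $1$ and $4$, hence is not a scalar multiple of the identity, so $A$ is not a conformal injection by Lemma~\ref{lemma:conf-isom}. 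Thus the conclusion of Theorem~\ref{thm:smith-conformal} fails for every $\dim V \geq 4$, which combined with the previous paragraph proves the claim. There is no real obstacle in the argument; the only steps that need attention are recognizing that $\lambda$ disappears from~\eqref{eq:smith} when $k=1$ (so the problem is purely about $\mathbb{C}$-linearity), and keeping track that $\dim V$ is necessarily even so that the excluded cases are exactly $4, 6, \dots$.
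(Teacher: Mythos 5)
Your proof is correct and takes essentially the same approach as the paper's: reduce the $k=1$ Smith condition to complex linearity, show $A^*A$ is forced to be scalar when $\dim V = 2$, and exhibit a non-conformal $\mathbb{C}$-linear map when $\dim V \geq 4$. The paper handles both directions via the explicit block-matrix form of $\mathbb{C}$-linear maps and merely asserts the existence of a counterexample when $r > 1$, whereas you give a basis-independent argument using the skew-adjoint isometry properties of $Q$ for the forward direction and a concrete counterexample for the converse — a stylistic but not substantive difference.
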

\begin{proof}
Let $\dim V = n$ and $\dim W = m$. By Example~\ref{ex:complex}, the maps $P = V \to V$ and $Q : W \to W$ are both orthogonal complex structures, and moreover $n = \dim V = 2r$ and $m = \dim W = 2s$. We can choose orthonormal bases of the form $\{ u_1, \ldots, u_p, Pu_1, \ldots, Pu_r \}$ and $\{ e_1, \ldots, e_s, Qe_1, \ldots, Qe_s\}$ of $V$ and $W$, respectively. With respect to such bases, the matrices for $P$ and $Q$ are
\begin{equation*}
P = \begin{pmatrix} 0_{r \times r} & -I_{r \times r} \\ I_{r \times r} & 0_{r \times r} \end{pmatrix}, \qquad Q = \begin{pmatrix} 0_{s \times s} & -I_{s \times s} \\ I_{s \times s} & 0_{s \times s} \end{pmatrix}.
\end{equation*}
The Smith equation~\eqref{eq:smith} in this case is $A P = Q A$. It follows easily from this equation that with respect to these bases, the $2s \times 2r$ matrix for $A$ must be of the block diagonal form
\begin{equation*}
A = \begin{pmatrix} B & -C \\ C & B \end{pmatrix}
\end{equation*}
where $B, C$ are $s \times r$ matrices. Because we are using orthonormal bases, the matrix of the adjoint $A^*$ is just the transpose. Hence we have
\begin{equation*}
A^* A = \begin{pmatrix} B^T & C^T \\ -C^T & B^T \end{pmatrix} \begin{pmatrix} B & -C \\ C & B \end{pmatrix} = \begin{pmatrix} B^T B + C^T C & C^T B - B^T C \\ -(C^T B - B^T C) & B^T B + C^T C \end{pmatrix}.
\end{equation*}
By Lemma~\ref{lemma:conf-isom}, if $A$ is nonzero then it is a conformal injection if and only if $A^* A = \lambda^2 I$ for some $\lambda > 0$. If $\dim V = 2$, so $r=1$, then $B, C$ are $s \times 1$ column vectors, and $B^T C$ is $1 \times 1$. Thus in this case
\begin{equation*}
A^T A = \begin{pmatrix} |B|^2 + |C|^2 & 0 \\ 0 & |B|^2 + |C|^2 \end{pmatrix} = (|B|^2 + |C|^2) I.
\end{equation*}
Thus a nonzero Smith map $A$ is indeed a conformal injection in this case. However, if $r > 1$ it is clear that there exist choices of $B, C$ which do not yield conformal injections.
\end{proof}

Next we investigate the properties of Smith maps under composition and inversion.

\begin{prop} \label{prop:smith-comp}
The composition of Smith maps is a Smith map. If a Smith map is invertible, then its inverse is a Smith map.
\end{prop}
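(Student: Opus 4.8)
The plan is to reduce both claims to the functoriality identity $\Lambda^k(A_1 A_2) = (\Lambda^k A_1)(\Lambda^k A_2)$ recorded above, combined with repeated use of the Smith equation~\eqref{eq:smith}. No analysis is involved; it is pure bookkeeping of compositions, so the ``main obstacle'' is really only keeping track of which side $\Lambda^k$ acts on.

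\emph{Composition.} Suppose $A \colon (V,P) \to (W,Q)$ and $A' \colon (W,Q) \to (W',Q')$ are Smith maps with positive factors $\lambda$ and $\lambda'$, so that $Q(\Lambda^k A) = \lambda^{k-1}AP$ and $Q'(\Lambda^k A') = (\lambda')^{k-1}A'Q$. I would write $\Lambda^k(A'A) = (\Lambda^k A')(\Lambda^k A)$ and substitute first the Smith equation for $A'$ (precomposed with $\Lambda^k A$) and then the one for $A$, obtaining
$$Q'\big(\Lambda^k(A'A)\big) = (\lambda')^{k-1}\, A'\, Q(\Lambda^k A) = \lambda^{k-1}(\lambda')^{k-1}\, A'AP = (\lambda'\lambda)^{k-1}\,(A'A)P,$$
so $A'A$ is a Smith map with factor $\lambda'\lambda > 0$. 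The only point requiring care is that $\Lambda^k A'$ sits on the left and $\Lambda^k A$ on the right; the degenerate possibility $A'A = 0$ is automatically allowed, since the zero map satisfies~\eqref{eq:smith} with any positive constant.

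\emph{Inverse.} Suppose $A \colon (V,P) \to (W,Q)$ is an invertible Smith map with factor $\lambda$; then $\dim W = \dim V$ and $P, Q$ are $k$-fold vector cross products for the same $k$, so the assertion ``$A^{-1}$ is a Smith map'' makes sense (for $A^{-1}\colon (W,Q)\to(V,P)$). Starting from $Q(\Lambda^k A) = \lambda^{k-1}AP$, I would multiply on the left by $A^{-1}$ to get $A^{-1}Q(\Lambda^k A) = \lambda^{k-1}P$, and then precompose with $\Lambda^k(A^{-1})$, using $(\Lambda^k A)\big(\Lambda^k(A^{-1})\big) = \Lambda^k(AA^{-1}) = \mathrm{id}_{\Lambda^k W}$, to arrive at $A^{-1}Q = \lambda^{k-1}\, P\,\Lambda^k(A^{-1})$, i.e. $P\big(\Lambda^k(A^{-1})\big) = (\lambda^{-1})^{k-1}A^{-1}Q$. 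Hence $A^{-1}$ is a Smith map with factor $\lambda^{-1} > 0$.

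I do not anticipate any genuine difficulty; as a sanity check one can observe that for $k>1$ the factors are consistent with Theorem~\ref{thm:smith-conformal} and Corollary~\ref{cor:conf-isom-comp} (a nonzero Smith map being then a conformal injection with $\lambda = (\dim V)^{-1/2}|A|$, so that $|A'A| = \tfrac{1}{\sqrt{\dim V}}|A'|\,|A|$), while the case $k=1$ is covered as well, with $\lambda^{k-1}=1$ reducing the Smith condition to $QA = AP$ throughout.
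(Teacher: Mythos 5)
Your proof is correct and follows essentially the same route as the paper: both parts reduce to the functoriality identity $\Lambda^k(A_1A_2) = (\Lambda^k A_1)(\Lambda^k A_2)$ and a direct substitution of the Smith equation, with the scaling factors multiplying (resp.\ inverting). The only cosmetic difference is that you phrase the inverse case for a map $V \to W$ whereas the paper's proof specializes to $V \to V$; this is harmless and if anything better matches the stated proposition.
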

\begin{proof}
Let $(V, P)$, $(W,Q)$, and $(U, R)$ be Euclidean spaces equipped with $k$-fold vector cross products $P, Q, R$, respectively. Let $A : V \to W$ and $B : W \to U$ be Smith maps. Then we have
\begin{equation} \label{eq:smithcomptemp}
Q (\Lambda^k A) = \lambda^{k-1} A P, \qquad \qquad R (\Lambda^k B) = \mu^{k-1} B Q
\end{equation}
for some $\lambda, \mu > 0$. Define $C = B A : V \to W$. Since $\Lambda^k(B A) = (\Lambda^k B) (\Lambda^k A)$, from~\eqref{eq:smithcomptemp} we have
\begin{align*}
R (\Lambda^k C) & = R (\Lambda^k B) (\Lambda^k A) = \mu^{k-1} B Q (\Lambda^k A) \\
& = \mu^{k-1} \lambda^{k-1} B A P = (\mu \lambda)^{k-1} C P = \rho^{k-1} C P
\end{align*}
where $\rho = \lambda \mu > 0$. Thus $C$ is a Smith map.
 
Now suppose that $A : V \to V$ is an invertible Smith map. The scaling of $A^{-1}$ is obviously inverse to the scaling of $A$. But we need to show that the inverse is still conformally cross-product preserving. From $(\Lambda^k A)^{-1} = (\Lambda^k A^{-1})$ we have
\begin{align*}
P (\Lambda^k A) = \lambda^{k-1} A P \quad & \Longleftrightarrow \quad P = \lambda^{k-1} A P (\Lambda^k A^{-1}) \\
& \Longleftrightarrow \quad (\lambda^{-1})^{k-1}A^{-1} P = P (\Lambda^k A^{-1}).
\end{align*}
Thus $A^{-1}$ is a Smith map.
\end{proof}

\begin{cor} \label{cor:gray-comp}
The composition of Gray maps is a Gray map. If a Gray map is invertible, then its inverse is a Gray map.
\end{cor}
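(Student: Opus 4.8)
The plan is to deduce this immediately from Proposition~\ref{prop:smith-comp} by tracking the scaling factor. Recall from Definition~\ref{defn:gray} that a Gray map is precisely a Smith map whose scaling constant $\lambda$ in~\eqref{eq:smith} equals $1$. So the strategy is simply to specialize the proof of Proposition~\ref{prop:smith-comp} to this value of $\lambda$ and observe that the resulting scalings are again $1$.

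Concretely, for the composition statement: let $A \colon V \to W$ and $B \colon W \to U$ be Gray maps, so that they are Smith maps with scalings $\lambda = 1$ and $\mu = 1$ respectively. By Proposition~\ref{prop:smith-comp}, the composite $C = BA$ is a Smith map, and the computation in that proof shows its scaling constant is $\rho = \lambda\mu = 1 \cdot 1 = 1$. Hence $C$ is a Gray map. For the inverse statement: if $A \colon V \to V$ is an invertible Gray map, then it is an invertible Smith map with scaling $\lambda = 1$, so by Proposition~\ref{prop:smith-comp} its inverse $A^{-1}$ is a Smith map; the same proof identifies the scaling of $A^{-1}$ as $\lambda^{-1} = 1$, so $A^{-1}$ is a Gray map.

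There is no real obstacle here: the corollary is a direct specialization, and the only thing to check is the bookkeeping of the scaling factor, which is already carried out explicitly inside the proof of Proposition~\ref{prop:smith-comp}. (One could alternatively phrase the inverse case using Corollary~\ref{cor:gray} when $k>1$ — a nonzero Gray map is an isometric injection, and the inverse of an isometry is an isometry — but the scaling-factor argument works uniformly in $k$ and is cleaner.)
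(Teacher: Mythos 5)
Your proof is correct and reduces to Proposition~\ref{prop:smith-comp}, the same key ingredient the paper uses, but the route to the conclusion is slightly different. The paper cites Corollary~\ref{cor:gray}, which characterizes nonzero Gray maps (for $k>1$) as nonzero Smith maps that are isometric injections, and then upgrades ``Smith'' to ``Gray'' by noting that compositions and inverses of isometric injections are again isometric injections. You instead track the scalar $\lambda$ through the bookkeeping already present in the proof of Proposition~\ref{prop:smith-comp}: $\rho = \lambda\mu = 1$ for composites and $\lambda^{-1} = 1$ for inverses. Your argument is marginally cleaner in that Corollary~\ref{cor:gray} carries the hypothesis $k>1$ and excludes the zero map, whereas the direct scaling-factor bookkeeping sidesteps both caveats. (For $k=1$ the factor $\lambda^{k-1}$ drops out of~\eqref{eq:smith} entirely, so there is nothing to check; Gray maps then coincide with Smith maps and the statement is already immediate from Proposition~\ref{prop:smith-comp} alone.) Both arguments are valid; yours is the more uniform one.
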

\begin{proof}
This is immediate from Proposition~\ref{prop:smith-comp} and Corollary~\ref{cor:gray}.
\end{proof}

\subsection{Smith maps and calibrations} \label{sec:smith-calib}

In this section we investigate relations between Smith maps and calibrations. In a precise sense that we explain, the image of a Smith map is calibrated and conversely, any calibrated subspace is the image of a Smith map in many different ways. The consequences for manifolds are discussed in $\S$\ref{sec:smith-maps-manifolds}.

\begin{lemma} \label{lemma:smith-calib}
Let $A : V \to W$ be a Smith map. Then we have
\begin{equation} \label{eq:smith-calib}
A^* \alpha_Q = \lambda^{k+1} \alpha_P. 
\end{equation}
\end{lemma}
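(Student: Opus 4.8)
The plan is to unwind both sides of \eqref{eq:smith-calib} by evaluating on an arbitrary tuple of vectors and using the definitions of the induced calibration forms together with the Smith equation \eqref{eq:smith}. Concretely, fix $v_1, \ldots, v_{k+1} \in V$ and compute
\[
(A^* \alpha_Q)(v_1, \ldots, v_{k+1}) = \alpha_Q(Av_1, \ldots, Av_{k+1}) = \langle Q(Av_1 \wedge \cdots \wedge Av_k), Av_{k+1} \rangle.
\]
Now apply the Smith equation in the form $Q(Av_1 \wedge \cdots \wedge Av_k) = \lambda^{k-1} A\bigl(P(v_1 \wedge \cdots \wedge v_k)\bigr)$ to rewrite the inner slot, obtaining $\lambda^{k-1} \langle A(P(v_1 \wedge \cdots \wedge v_k)), Av_{k+1}\rangle$. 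The remaining task is to move the two copies of $A$ across the inner product, which is exactly where the conformality of $A$ enters.

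The key step is therefore: by Theorem~\ref{thm:smith-conformal} (recalling we are in the regime $k > 1$, or more generally whenever the Smith equation forces conformality), $A$ is either zero or a conformal injection with $A^* A = \lambda^2 I$; in the zero case both sides of \eqref{eq:smith-calib} vanish trivially (note $\alpha_P$ is a fixed form and $\lambda$ is still a well-defined positive scalar, but $A^*$ kills everything), so assume $A \neq 0$. Then $\langle A x, A y \rangle = \lambda^2 \langle x, y\rangle$ for all $x, y \in V$, so
\[
\lambda^{k-1} \langle A(P(v_1 \wedge \cdots \wedge v_k)), Av_{k+1}\rangle = \lambda^{k-1} \cdot \lambda^2 \langle P(v_1 \wedge \cdots \wedge v_k), v_{k+1}\rangle = \lambda^{k+1}\, \alpha_P(v_1, \ldots, v_{k+1}).
\]
Since $v_1, \ldots, v_{k+1}$ were arbitrary, this is precisely \eqref{eq:smith-calib}.

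I expect no serious obstacle here; this is a short computation. The one point requiring a little care is the zero-map case, where one should simply observe that $A = 0$ forces $A^* = 0$ and hence $A^*\alpha_Q = 0 = \lambda^{k+1}\alpha_P$ reads $0 = 0$ only if one is careful about what $\lambda$ means — but in fact the cleaner route is to note that when $A = 0$, $A^*\alpha_Q = 0$, and the identity \eqref{eq:smith-calib} is vacuous or should be interpreted with the convention that it holds for the (irrelevant) scalar $\lambda$; alternatively, one restricts the statement to nonzero $A$, matching the hypotheses under which $\lambda$ is meaningfully determined. A secondary caveat worth flagging is the case $k = 1$: there the conformality of $A$ is \emph{not} automatic (Proposition~\ref{prop:complex-conformal}), so \eqref{eq:smith-calib} as stated should be understood to require either $k > 1$ or the additional hypothesis $\dim V = 2$ — under that hypothesis the same computation goes through verbatim once $A^*A = \lambda^2 I$ is known.
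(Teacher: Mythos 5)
Your computation is exactly the paper's own proof of Lemma~\ref{lemma:smith-calib}: expand $(A^*\alpha_Q)(v_1,\ldots,v_{k+1})$, substitute $Q(\Lambda^k A) = \lambda^{k-1} A P$, and pull the two copies of $A$ across the inner product via $\langle Ax, Ay\rangle = \lambda^2\langle x, y\rangle$. The only difference is that you make the dependence on Theorem~\ref{thm:smith-conformal} (and the resulting caveats about $A=0$ and $k=1$) explicit, whereas the paper leaves the conformality $A^*A = \lambda^2 I$ implicit in the penultimate equality; your flagging of those edge cases is a reasonable refinement, not a departure.
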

\begin{proof}
Let $v_1, \ldots, v_{k+1} \in V$ be arbitrary. We compute
\begin{align*}
A^* \alpha_Q (v_1, \ldots, v_{k+1}) & = \alpha_Q (Av_1, \ldots, Av_{k+1}) = \langle Q (\Lambda^k A) (v_1 \wedge \cdots \wedge v_k), Av_{k+1} \rangle \\
& = \langle \lambda^{k-1} A P(v_1 \wedge \cdots \wedge v_k), Av_{k+1} \rangle = \lambda^{k-1} \lambda^2 \langle P(v_1 \wedge \cdots \wedge v_k), v_{k+1} \rangle \\
& = \lambda^{k+1} \alpha_P (v_1, \ldots, v_{k+1})
\end{align*}
and hence~\eqref{eq:smith-calib} holds.
\end{proof}

We have seen that if $A$ is a nonzero Smith map and either $k>1$ or $(k=1,n=2)$ so that either Theorem~\ref{thm:smith-conformal} or Proposition~\ref{prop:complex-conformal} holds, then $A$ is a conformal injection with conformal factor $\lambda > 0$ where $\lambda^2 = \frac{1}{n} |A|^2$ and $n = \dim V$. Moreover, by Lemma~\ref{lemma:smith-calib}, any Smith map satisfies~\eqref{eq:smith-calib}. In fact, when the vector cross product $P$ on the domain $V$ is of Type I from Table~\ref{table:VCP}, then these two conditions together are equivalent to the Smith map equation~\eqref{eq:smith}. The precise statement is proved in Proposition~\ref{prop:smith-conf-calib} below. First we need some definitions.

\begin{defn} \label{defn:conformal-calib}
Let $V, W$ be Euclidean spaces with calibration $(k+1)$-forms $\alpha_V, \alpha_W$, respectively. Let $A : V \to W$ be a nonzero linear map. Suppose that the following two conditions both hold:
\begin{enumerate}[(i)]
\item $A$ is a conformal injection, necessarily with conformal factor $\lambda = \frac{1}{\sqrt{\dim V}} |A|$. Equivalently we write $A^* g_W = \lambda^2 g_V$ where $g_V, g_W$ are the Euclidean inner products on $V, W$, respectively;
\item $A^* \alpha_W = \lambda^{k+1} \alpha_V$.
\end{enumerate}
Then we say that $A$ is \textit{conformally calibrating}.
\end{defn}

\begin{rmk} \label{rmk:G2spin7}
In two particular cases, condition (ii) of Definition~\ref{defn:conformal-calib} automatically implies condition (i). These are the cases when $V = W$ and $\alpha_P = \alpha_Q$ is a calibration induced from a vector cross product $P=Q$ of either Type III or Type IV from Table~\ref{table:VCP}. The fact that $A^* \alpha_P = \lambda^{k+1} \alpha_P$ implies $A^* g_V = \lambda^2 g_V$ in these two cases is well-known. See~\cite[Sections 3.1 and 5.1]{K1}, for example. The reason this happens is that a $\G$-structure or a $\Spin{7}$-structure determines the Euclidean inner product uniquely.
\end{rmk}

\begin{ass} \label{assume:typeI}
From now on, we restrict to the case when the vector cross product $P$ on the domain $V$ is of Type I from Table~\ref{table:VCP}.
\end{ass}

\begin{defn} \label{defn:nomenclature}
An oriented Euclidean $n$-space $(V, g_V, \ast_V, \vol_V)$ is an $n$-dimensional Euclidean space $(V, g_V)$ equipped with an orientation which induces a Hodge star operator $\ast_V$. The associated calibration $n$-form is the volume form $\vol_V$. By Example~\ref{ex:hodgestar} this is the same as saying $(V, g_V)$ is equipped with a vector cross product of Type I from Table~\ref{table:VCP}. In~\cite{Sm} such a structure $(V, g_V, \ast_V, \vol_V)$ is called a `conformal $n$-triad' but we do not use this terminology.

An $(n-1)$-fold VCP space $(W, g_W, Q, \alpha_Q)$ is a Euclidean space $(W, g_W)$ equipped with an $(n-1)$-fold vector cross product $Q$ on $W$ and its associated calibration $n$-form $\alpha_Q$. Note that if $\dim W=n$ then an $(n-1)$-fold VCP space is an oriented Euclidean $n$-space by Table~\ref{table:VCP}.
\end{defn}

\begin{prop} \label{prop:smith-conf-calib}
Let $(V, g_V, \ast, \vol_V)$ be an oriented Euclidean $n$-space. Let $(W, g_W, Q, \alpha_Q)$ be an $(n-1)$-fold VCP space. Let $A : V \to W$ be a nonzero linear map. Then $A$ is a Smith map in the sense of equation~\eqref{eq:smith} if and only if $A$ is conformally calibrating in the sense of Definition~\ref{defn:conformal-calib}.
\end{prop}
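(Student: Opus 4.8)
The plan is to prove the two implications separately, the ``only if'' direction being essentially a repackaging of results already established and the ``if'' direction carrying the real content. For the forward direction, suppose $A \neq 0$ satisfies the Smith equation~\eqref{eq:smith}, which with $k = n-1$ and $P = \ast$ reads $Q(\Lambda^{n-1}A) = \lambda^{n-2} A \ast$ for some $\lambda > 0$. I would first obtain condition (i) of Definition~\ref{defn:conformal-calib}: if $n > 2$ then $k = n-1 > 1$ and Theorem~\ref{thm:smith-conformal} shows $A$ is a conformal injection with $\lambda = (\dim V)^{-1/2}|A|$; if $n = 2$ then $k = 1$ and $\dim V = 2$, so the same conclusion follows from Proposition~\ref{prop:complex-conformal}. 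Condition (ii) is then immediate from Lemma~\ref{lemma:smith-calib}, which gives $A^*\alpha_Q = \lambda^{k+1}\alpha_P = \lambda^{n}\,\vol_V$, using that the calibration form of the Type~I cross product $\ast$ is $\vol_V$ (Example~\ref{ex:hodgestar}, Definition~\ref{defn:nomenclature}). Hence $A$ is conformally calibrating.

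For the reverse direction, suppose $A$ is conformally calibrating. By condition (i) and Lemma~\ref{lemma:conf-isom}, write $A = \lambda \hat A$ with $\hat A$ an isometric injection and $\lambda = (\dim V)^{-1/2}|A| > 0$; in particular, fixing an oriented orthonormal basis $\{e_1, \dots, e_n\}$ of $V$, the set $\{\hat A e_1, \dots, \hat A e_n\}$ is orthonormal in $W$. Since $\dim \Lambda^{n-1}V = n$, the vectors $\omega_i$ ($1 \le i \le n$), where $\omega_i$ denotes the wedge of the $e_j$ with $j \neq i$ in increasing order, form a basis of $\Lambda^{n-1}V$; moreover $\ast\,\omega_i = \sgn(\tau_i)\,e_i$, where $\tau_i \in S_n$ is the permutation with $(\tau_i(1), \dots, \tau_i(n)) = (1, \dots, i-1, i+1, \dots, n, i)$, so that $\sgn(\tau_i) = (-1)^{n-i}$. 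Thus it suffices to verify that $Q(\Lambda^{n-1}A)$ and $\lambda^{n-2}A\ast$ agree on each $\omega_i$.

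The crux is the identity $Q\big(\hat A e_{\sigma(1)} \wedge \cdots \wedge \hat A e_{\sigma(n-1)}\big) = \sgn(\sigma)\,\hat A e_{\sigma(n)}$, valid for every $\sigma \in S_n$. To prove it, observe that by the metric property~\eqref{eq:Pmetric} of $Q$ the left-hand side is a unit vector of $W$, while condition (ii) of Definition~\ref{defn:conformal-calib}, evaluated on the tuple $(A e_{\sigma(1)}, \dots, A e_{\sigma(n)})$ and combined with the $n$-linearity of $\alpha_Q$ and its defining formula~\eqref{eq:alphaP}, yields
$$\big\langle Q\big(\hat A e_{\sigma(1)} \wedge \cdots \wedge \hat A e_{\sigma(n-1)}\big),\, \hat A e_{\sigma(n)}\big\rangle \;=\; \vol_V\big(e_{\sigma(1)}, \dots, e_{\sigma(n)}\big) \;=\; \sgn(\sigma).$$
Since $\sgn(\sigma) = \pm 1$ and $\hat A e_{\sigma(n)}$ is also a unit vector, the equality case of the Cauchy--Schwarz inequality forces the claimed identity. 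Applying it with $\sigma = \tau_i$ and using $(\Lambda^{n-1}A)(\omega_i) = \lambda^{n-1}\,\hat A e_{\tau_i(1)} \wedge \cdots \wedge \hat A e_{\tau_i(n-1)}$ together with $A e_i = \lambda\hat A e_i$, we get
\begin{align*}
Q\big((\Lambda^{n-1}A)(\omega_i)\big) &= \lambda^{n-1}\,Q\big(\hat A e_{\tau_i(1)} \wedge \cdots \wedge \hat A e_{\tau_i(n-1)}\big) = \lambda^{n-1}\sgn(\tau_i)\,\hat A e_i \\
&= \lambda^{n-2}\sgn(\tau_i)\,A e_i = \lambda^{n-2} A(\ast\,\omega_i),
\end{align*}
which completes the proof.

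I expect the only genuinely nontrivial point to be that condition (ii) is \emph{exactly} strong enough to pin down $Q$ of the transported orthonormal frame. A priori, $Q\big(\hat A e_{\sigma(1)} \wedge \cdots \wedge \hat A e_{\sigma(n-1)}\big)$ is merely known to be a unit vector orthogonal to the transported $(n-1)$-plane, and since $\dim W$ may strictly exceed $n$ this leaves an entire sphere of candidates; the calibration identity collapses this to a single vector through the Cauchy--Schwarz equality argument. The remaining work is routine sign bookkeeping (Hodge star versus permutation sign), together with the minor point of invoking Proposition~\ref{prop:complex-conformal} rather than Theorem~\ref{thm:smith-conformal} in the degenerate case $n = 2$ of the forward direction.
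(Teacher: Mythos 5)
Your proof is correct and takes essentially the same approach as the paper's: both directions follow the same route, and the heart of the reverse implication in each case is the observation that $Q$ applied to the $\hat A$-transported orthonormal frame must equal the transported Hodge dual, forced by pairing the calibration identity with the equality case of Cauchy--Schwarz. The only cosmetic difference is that the paper checks this on an arbitrary orthonormal $(n-1)$-tuple $\{u_1,\dots,u_{n-1}\}$ after a short reduction, while you check it on the basis $\{\omega_i\}$ of $\Lambda^{n-1}V$ coming from a fixed oriented orthonormal basis and track the permutation sign explicitly.
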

\begin{proof}
We first note that necessarily we have either $k=n-1>1$ or else $(k=1,n=2)$. In these cases we have already shown that any Smith map is conformally calibrating. Thus assume that $A$ is conformally calibrating. That is,
\begin{equation} \label{eq:smith-conf-calib-temp-0}
A^* g_W = \lambda^2 g_V, \qquad A^* \alpha_Q = \lambda^{k+1} \alpha_P = \lambda^n \vol_V,
\end{equation}
and we need to prove that
\begin{equation} \label{eq:smith-conf-calib-temp}
Q (\Lambda^{n-1} A) (v_1 \wedge \cdots \wedge v_{n-1}) = \lambda^{n-2} A P (v_1 \wedge \cdots \wedge v_{n-1})
\end{equation}
for all $v_1, \ldots, v_{n-1} \in V$. Both sides of~\eqref{eq:smith-conf-calib-temp} vanish if $\{ v_1, \ldots, v_{n-1} \}$ is linearly dependent so we may assume it is linearly independent, so both sides of~\eqref{eq:smith-conf-calib-temp} depend only on the $(n-1)$-plane $u_1 \wedge \cdots \wedge u_{n-1} \in \Lambda^{n-1} V$ where $\{ u_1, \ldots, u_{n-1} \}$ is an orthonormal basis for $\spa \{ v_1, \ldots, v_{n-1} \}$. That is, $u_1 \wedge \cdots \wedge u_{n-1} = t v_1 \wedge \cdots \wedge v_{n-1}$ for some nonzero $t \in \R$. Thus~\eqref{eq:smith-conf-calib-temp} holds for all $v_1, \ldots, v_{n-1} \in V$ if and only if
\begin{equation} \label{eq:smith-conf-calib-temp-b}
Q (\Lambda^{n-1} A) (u_1 \wedge \cdots \wedge u_{n-1}) = \lambda^{n-2} A P (u_1 \wedge \cdots \wedge u_{n-1})
\end{equation}
holds for all \textit{orthonormal} $u_1, \ldots, u_{n-1} \in V$. By Lemma~\ref{lemma:conf-isom} we know $A = \lambda \hat A$ where $\lambda > 0$ and $\hat A : V \to W$ is an isometric injection. Therefore~\eqref{eq:smith-conf-calib-temp-0} becomes
\begin{equation} \label{eq:smith-conf-calib-temp-2}
\hat A^* g_W = g_V, \qquad \hat A^* \alpha_Q = \vol_V.
\end{equation}
Let $u_1, \ldots, u_{n-1}$ be orthonormal. Let $u_n = P (u_1 \wedge \cdots \wedge u_{n-1})$. Then $\{ u_1, \ldots, u_n \}$ is an oriented orthonormal basis for $V$ since $P = \ast$ is the Hodge star operator. Using~\eqref{eq:smith-conf-calib-temp-2} we compute
\begin{align} \nonumber
1 & = \vol_V (u_1, \ldots u_n ) = (\hat A^* \alpha_Q) (u_1, \ldots, u_n) \\ \nonumber
& = \alpha_Q( \hat A u_1, \ldots, \hat A u_{n-1}, \hat A u_n ) \\ \label{eq:smith-conf-calib-temp-3}
& = g_W \big( Q (\Lambda^{n-1} \hat A)(u_1 \wedge \cdots \wedge u_{n-1}), \hat A u_n ).
\end{align}
Since $\hat A$ is an isometric injection, $\hat A u_n$ and $(\Lambda^{n-1} \hat A)(u_1 \wedge \cdots \wedge u_{n-1})$ are unit vectors in $W$ and $\Lambda^{n-1} W$, respectively. Since $Q$ is a vector cross product, property~\eqref{eq:Pmetric} says that $Q (\Lambda^{n-1} \hat A)(u_1 \wedge \cdots \wedge u_{n-1})$ is also a unit vector in $W$. Then Cauchy--Schwarz applied to the equality~\eqref{eq:smith-conf-calib-temp-3} gives
\begin{equation} \label{eq:image-calib}
Q (\Lambda^{n-1} \hat A)(u_1 \wedge \cdots \wedge u_{n-1}) = \hat A u_n = \hat A P(u_1 \wedge \cdots \wedge u_{n-1}).
\end{equation}
Multiplying both sides by $\lambda^{n-1}$ gives equation~\eqref{eq:smith-conf-calib-temp-b} as required.
\end{proof}

The above result has a number of important corollaries.

\begin{cor} \label{cor:image-calib}
Let $(V, g_V, \ast, \vol_V)$ be an oriented Euclidean $n$-space. Let $(W, g_W, Q, \alpha_Q)$ be an $(n-1)$-fold VCP space. Let $A : V \to W$ be a nonzero Smith map. Then $A$ induces an orientation on its image $L = A(V)$ such that $L$ is calibrated with respect to $\alpha_Q$.
\end{cor}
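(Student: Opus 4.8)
The plan is to leverage the conformality of Smith maps together with the naturality identity for calibration forms from Lemma~\ref{lemma:smith-calib}. Since here $k = n-1$, we are necessarily in a case where either $k > 1$ or $(k=1, n=2)$, so Theorem~\ref{thm:smith-conformal} (respectively Proposition~\ref{prop:complex-conformal}) applies and the nonzero Smith map $A$ is a conformal injection: writing $A = \lambda \hat A$ with $\lambda = (\dim V)^{-1/2}|A| > 0$ and $\hat A : V \to W$ an isometric injection, the image $L := A(V) = \hat A(V)$ is an $n$-dimensional subspace of $W$. First I would endow $L$ with the orientation obtained by pushing forward the given orientation on $V$: an ordered basis of $L$ of the form $(Av_1, \dots, Av_n)$ is declared positively oriented exactly when $(v_1, \dots, v_n)$ is positively oriented in $V$. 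This is well defined because $A$ is an injective linear map, hence a linear isomorphism onto $L$, so its determinant relative to oriented bases of $V$ and $L$ has a fixed sign.

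Next I would run the key computation. Under Assumption~\ref{assume:typeI} the domain VCP is $P = \ast$ of Type I, so $\alpha_P = \vol_V$ and $k+1 = n$; Lemma~\ref{lemma:smith-calib} then gives $A^* \alpha_Q = \lambda^n \vol_V$. Pick any positively oriented orthonormal basis $(u_1, \dots, u_n)$ of $V$. Since $\hat A$ is an isometric injection, the vectors $e_i := \hat A u_i$ form an orthonormal basis of $L$, which is positively oriented by the construction of the previous paragraph, and $A u_i = \lambda e_i$. Evaluating the naturality identity on $(u_1, \dots, u_n)$ and using multilinearity of $\alpha_Q$,
$$\lambda^n = \lambda^n \vol_V(u_1, \dots, u_n) = (A^*\alpha_Q)(u_1, \dots, u_n) = \alpha_Q(Au_1, \dots, Au_n) = \lambda^n \, \alpha_Q(e_1, \dots, e_n).$$
Since $\lambda > 0$, this forces $\alpha_Q(e_1, \dots, e_n) = 1$.

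Finally I would conclude. By Proposition~\ref{prop:VCP-calibration}, $\alpha_Q$ is a calibration, so it never exceeds $1$ on orthonormal frames; we have just exhibited a positively oriented orthonormal frame of $L$ on which it equals $1$. Because $\alpha_Q|_L$ is an $n$-form on the $n$-dimensional space $L$, its value on a positively oriented orthonormal basis is independent of the basis chosen (any two such differ by an element of $\mathrm{SO}(n)$, which has determinant $1$), so $\alpha_Q(w_1, \dots, w_n) = 1$ for every positively oriented orthonormal basis $(w_1,\dots,w_n)$ of $L$. By Definition~\ref{defn:calibrated-plane}, $L$ equipped with this orientation is calibrated by $\alpha_Q$, which is exactly the assertion.

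As for difficulty, there is essentially no analytic obstacle: the statement is a short linear-algebra consequence of results already established. The only points requiring a moment's care are (a) checking that the push-forward orientation on $L$ is well defined and that being calibrated does not depend on the choice of oriented orthonormal basis, and (b) confirming that the conformal factor $\lambda$ appearing in Lemma~\ref{lemma:smith-calib} coincides with the one in $A = \lambda \hat A$, which is guaranteed by Theorem~\ref{thm:smith-conformal}. An essentially equivalent alternative route would be to invoke equation~\eqref{eq:image-calib} from the proof of Proposition~\ref{prop:smith-conf-calib} directly: it already asserts $Q(\Lambda^{n-1}\hat A)(u_1 \wedge \cdots \wedge u_{n-1}) = \hat A P(u_1 \wedge \cdots \wedge u_{n-1})$, i.e. that $Q$ sends the frame of $L$ coming from $(u_1, \dots, u_{n-1})$ to the vector completing it to the pushed-forward frame, which is precisely the characterization of calibrated planes in Proposition~\ref{prop:VCP-calibration}.
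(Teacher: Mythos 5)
Your argument is correct, and it differs from the paper's in which intermediate result it leans on. The paper's proof, after introducing the pushed-forward orientation on $L$ via $\hat A$, quotes equation~\eqref{eq:image-calib} from inside the proof of Proposition~\ref{prop:smith-conf-calib}, namely $Q\big((\hat A u_1)\wedge\cdots\wedge(\hat A u_{n-1})\big) = \hat A u_n$, and then applies Proposition~\ref{prop:VCP-calibration} (the $Q$-compatibility characterization of calibrated planes). You instead invoke Lemma~\ref{lemma:smith-calib} to get $A^*\alpha_Q = \lambda^n\vol_V$, evaluate on an oriented orthonormal frame of $V$, and deduce $\alpha_Q(e_1,\dots,e_n)=1$, concluding via Definition~\ref{defn:calibrated-plane}. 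These are equivalent criteria by Proposition~\ref{prop:VCP-calibration}, so the substance is the same; what your route buys is independence from the internal structure of the proof of Proposition~\ref{prop:smith-conf-calib}, at the mild cost of re-deriving information that \eqref{eq:image-calib} already packages. You explicitly flag that alternative route at the end, which is exactly what the paper uses. One small point worth making explicit in a final write-up: you do need $\lambda > 0$ (so that the orientation push-forward is well posed and the cancellation $\lambda^n = \lambda^n\,\alpha_Q(e_1,\dots,e_n)$ is valid), and this is supplied by the hypothesis $A \neq 0$ together with Theorem~\ref{thm:smith-conformal}, as you note.
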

\begin{proof}
Since $A = \lambda \hat A$ where $\hat A$ is an isometric injection, the image $L = A(V) = \hat A (V)$ is isometric to $V$ and thus inherits an induced orientation by declaring that $\{ \hat A u_1, \ldots, \hat A u_n \}$ is oriented (and necessarily orthonomal) whenever $\{ u_1, \ldots, u_n \}$ is an oriented orthonormal basis for $V$. Now equation~\eqref{eq:image-calib} in the proof of Proposition~\ref{prop:smith-conf-calib} says that
\begin{equation*}
Q \big( (\hat A u_1) \wedge \cdots \wedge (\hat A u_{n-1}) \big) = \hat A u_n.
\end{equation*}
Then Proposition~\ref{prop:VCP-calibration} applied with $P$ replaced by $Q$ and $u_i$ replaced by $\hat A u_i$ says that $L$ is calibrated with respect to $\alpha_Q$.
\end{proof}

\begin{cor} \label{cor:conformals}
Let $(V, g_V, \ast, \vol_V)$ be an oriented Euclidean $n$-space. A nonzero Smith map $A : V \to V$ is precisely an orientation preserving conformal isomorphism.
\end{cor}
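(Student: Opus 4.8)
The plan is to read the statement off from Proposition~\ref{prop:smith-conf-calib}, applied in the special case where the $(n-1)$-fold VCP space $(W, g_W, Q, \alpha_Q)$ is taken to be $(V, g_V, \ast, \vol_V)$ itself, so that $Q = \ast$ and $\alpha_Q = \vol_V$. In this case $k+1 = n$ and $\alpha_P = \vol_V$, so a nonzero linear map $A : V \to V$ is \emph{conformally calibrating} in the sense of Definition~\ref{defn:conformal-calib} precisely when (i) $A$ is a conformal injection, necessarily with factor $\lambda = \tfrac{1}{\sqrt{n}}|A|$, and (ii) $A^*\vol_V = \lambda^n \vol_V$. By Proposition~\ref{prop:smith-conf-calib}, this is equivalent to $A$ being a Smith map. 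Hence it suffices to identify the conformally calibrating endomorphisms of $V$ with the orientation-preserving conformal isomorphisms.

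For the forward implication, suppose $A \neq 0$ is a Smith map, so it satisfies (i) and (ii). Condition (i) says $A$ is a conformal injection; since $A$ is then injective (Lemma~\ref{lemma:conf-isom}) and $\dim V < \infty$, it is bijective, hence a conformal isomorphism. For orientation-preservation, one uses the elementary identity $A^*\vol_V = (\det A)\,\vol_V$ valid for any endomorphism $A$ of $V$: comparing with (ii) gives $\det A = \lambda^n > 0$, so $A$ preserves orientation.

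For the converse, let $A : V \to V$ be an orientation-preserving conformal isomorphism, so $A^* g_V = \lambda^2 g_V$ for some $\lambda > 0$. Since $|A|^2 = \tr(A^*A) = \tr(\lambda^2 I) = n\lambda^2$, the factor is automatically $\lambda = \tfrac{1}{\sqrt{n}}|A|$, which is exactly condition (i). Writing $A = \lambda R$ with $R$ orthogonal, orientation-preservation forces $\det R = 1$, so $A^*\vol_V = (\det A)\,\vol_V = \lambda^n(\det R)\,\vol_V = \lambda^n \vol_V$, which is condition (ii). Thus $A$ is conformally calibrating, and Proposition~\ref{prop:smith-conf-calib} shows $A$ is a Smith map; it is of course nonzero.

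I do not expect a genuine obstacle here: the content is entirely a matter of unwinding Definition~\ref{defn:conformal-calib} in the case $W = V$, and the only point requiring a moment's care is the bookkeeping identity $A^*\vol_V = (\det A)\,\vol_V$ together with the (automatic) matching of the conformal factor $\lambda$ on the two sides of condition~(ii).
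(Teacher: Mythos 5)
Your proof is correct and follows essentially the same route as the paper: both apply Proposition~\ref{prop:smith-conf-calib} with $W = V$, $Q = \ast$ and then unwind the two conditions of Definition~\ref{defn:conformal-calib}. The paper treats the final identification with ``orientation-preserving conformal isomorphism'' as immediate; you merely spell out that step via $A^*\vol_V = (\det A)\vol_V$, which is a reasonable elaboration and not a different method.
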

\begin{proof}
Apply Proposition~\ref{prop:smith-conf-calib} with $W = V$ and $Q = P = \ast$. Then $A : V \to V$ is a nonzero Smith map if and only if it satisfies $A^* g_V = \lambda^2 g_V$ and $A^* \vol_V = \lambda^n \vol_V$, with $n \lambda = |A| > 0$. This precisely means that $A$ is an orientation preserving conformal isomorphism.
\end{proof}

\begin{cor} \label{cor:conformal-invariance}
Let $(V, g_V, \ast, \vol_V)$ be an oriented Euclidean $n$-space. Let $(W, g_W, Q, \alpha_Q)$ be an $(n-1)$-fold VCP space. Let $A : V \to W$ be a Smith map. Then for any orientation preserving conformal isomorphism $B : V \to V$, the composition $AB : V \to W$ is a Smith map.
\end{cor}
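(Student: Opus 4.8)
The plan is to recognize the conformal isomorphism $B$ as itself a Smith map and then invoke the composition property already established. First I would apply Corollary~\ref{cor:conformals} with $W = V$ and $Q = P = \ast$: since $B \colon V \to V$ is, by hypothesis, an orientation-preserving conformal isomorphism, that corollary tells us precisely that $B$ is a nonzero Smith map from $(V, g_V, \ast, \vol_V)$ to itself (with some conformal factor $\mu > 0$ satisfying $n\mu = |B|$).

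Next I would observe that $A \colon V \to W$ is a Smith map between Euclidean spaces carrying $k$-fold vector cross products with the \emph{same} $k = n-1$ (namely $\ast$ on $V$ and $Q$ on $W$), and similarly $B \colon V \to V$ is a Smith map for the $(n-1)$-fold vector cross product $\ast$. Therefore Proposition~\ref{prop:smith-comp} applies directly to the composite $AB \colon V \to W$ and yields that $AB$ is a Smith map, with conformal factor $\lambda\mu$ where $\lambda$ is the factor attached to $A$. (If $A = 0$, then $AB = 0$ is trivially a Smith map, so this edge case needs no separate argument.)

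I do not anticipate any genuine obstacle: the statement is an immediate synthesis of Corollary~\ref{cor:conformals} (conformal isomorphisms of $V$ are Smith maps) and Proposition~\ref{prop:smith-comp} (composition of Smith maps is Smith). The only point worth a sentence of care is confirming that the relevant value of $k$ coincides on both factors — which it does, since both $\ast$ on $V$ and $Q$ on $W$ are $(n-1)$-fold vector cross products — so that Proposition~\ref{prop:smith-comp} is indeed applicable as stated. One could alternatively unwind the argument by hand using $\Lambda^{n-1}(AB) = (\Lambda^{n-1}A)(\Lambda^{n-1}B)$ together with the two Smith equations, but routing through Proposition~\ref{prop:smith-comp} keeps the proof to a single line.
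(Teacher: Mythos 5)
Your proof is correct and follows exactly the paper's own one-line argument: invoke Corollary~\ref{cor:conformals} to recognize $B$ as a Smith map, then apply Proposition~\ref{prop:smith-comp}. (Minor typo: the conformal factor satisfies $\sqrt{n}\,\mu = |B|$, not $n\mu = |B|$, but this parenthetical remark plays no role in the argument.)
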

\begin{proof}
This is immediate from Corollary~\ref{cor:conformals} and Proposition~\ref{prop:smith-comp}.
\end{proof}

\begin{rmk} \label{rmk:conformal-invariance}
Corollary~\ref{cor:conformal-invariance} is very important, as it implies that the notion of a Smith map between appropriate Riemannian manifolds is a \textit{conformally invariant} notion. This is discussed in $\S$\ref{sec:smith-maps-defn}.
\end{rmk}

In Corollary~\ref{cor:image-calib} we showed that the image of a nonzero Smith map is calibrated. In fact, a kind of converse holds, which is Proposition~\ref{prop:smith-calibrated} below.

\begin{lemma} \label{lemma:gray-calibrated}
Let $(W, g_W, Q, \alpha_Q)$ be an $(n-1)$-fold VCP space. Let $L$ be an $n$-dimensional oriented subspace, and let $\iota : L \to W$ be the linear inclusion. Equip $L$ with the induced inner product $g_L = \iota^* g_W$ and volume form $\vol_L$. Then $L$ is calibrated with respect to $\alpha_Q$ if and only if $\iota : L \to W$ is a Gray map.
\end{lemma}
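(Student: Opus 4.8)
The plan is to deduce this directly from Proposition~\ref{prop:smith-conf-calib}, taking the domain space there to be $L$ itself. First I would record the relevant structures on $L$. By hypothesis $g_L = \iota^* g_W$, and $L$ carries the given orientation with volume form $\vol_L$, which determines a Hodge star $\ast_L$; by Example~\ref{ex:hodgestar} and Definition~\ref{defn:nomenclature}, $(L, g_L, \ast_L, \vol_L)$ is an oriented Euclidean $n$-space, with associated calibration $n$-form $\alpha_{\ast_L} = \vol_L$. Meanwhile $(W, g_W, Q, \alpha_Q)$ is by hypothesis an $(n-1)$-fold VCP space. So Proposition~\ref{prop:smith-conf-calib} applies to the nonzero linear map $\iota : L \to W$ (it is nonzero because it is injective).

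Next I would note that, since $g_L = \iota^* g_W$, the inclusion $\iota$ is an isometric injection; by Lemma~\ref{lemma:conf-isom} it is therefore a conformal injection with conformal factor $\lambda = 1$ (equivalently $|\iota|^2 = n = \dim L$, so $\lambda = \tfrac{1}{\sqrt{n}}|\iota| = 1$). In particular, for this map the Smith equation~\eqref{eq:smith} and the Gray condition of Definition~\ref{defn:gray} say the same thing, since the scale constant is forced to equal $1$; thus ``$\iota$ is a Gray map'' and ``$\iota$ is a Smith map'' are interchangeable statements.

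The heart of the argument is then just to unwind Definition~\ref{defn:conformal-calib}. By Proposition~\ref{prop:smith-conf-calib}, $\iota$ is a Smith map if and only if it is conformally calibrating, that is, if and only if (i) $\iota^* g_W = \lambda^2 g_L$ and (ii) $\iota^* \alpha_Q = \lambda^{n}\alpha_{\ast_L}$. With $\lambda = 1$, condition (i) is automatic, being exactly the defining relation $g_L = \iota^* g_W$; and, using $\lambda = 1$ and $\alpha_{\ast_L} = \vol_L$, condition (ii) becomes $\iota^* \alpha_Q = \vol_L$, i.e.\ $\alpha_Q|_L = \vol_L$. By Definition~\ref{defn:calibrated-plane} this is precisely the statement that the oriented subspace $L$ is calibrated with respect to $\alpha_Q$. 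Chaining the equivalences yields: $L$ is calibrated with respect to $\alpha_Q$ $\Longleftrightarrow$ $\iota$ is conformally calibrating $\Longleftrightarrow$ $\iota$ is a Smith map $\Longleftrightarrow$ $\iota$ is a Gray map.

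I do not anticipate any real obstacle; the only points needing care are the verification that Proposition~\ref{prop:smith-conf-calib} genuinely applies — i.e.\ that $L$ with its induced data is a bona fide oriented Euclidean $n$-space and that $\iota \neq 0$ — both of which are immediate. If one preferred an argument not invoking Proposition~\ref{prop:smith-conf-calib}, one could instead proceed directly from Proposition~\ref{prop:VCP-calibration}: choose an oriented orthonormal basis $\{u_1, \dots, u_n\}$ of $L$, so that $\{\iota u_1, \dots, \iota u_n\}$ is orthonormal in $W$ and $\ast_L(u_1 \wedge \cdots \wedge u_{n-1}) = u_n$; then $L$ is calibrated iff $\alpha_Q(\iota u_1, \dots, \iota u_n) = 1$, which by Proposition~\ref{prop:VCP-calibration} (applied with $P$ replaced by $Q$) holds iff $Q(\iota u_1 \wedge \cdots \wedge \iota u_{n-1}) = \iota u_n = \iota\big(\ast_L(u_1 \wedge \cdots \wedge u_{n-1})\big)$; finally, since both sides of the Gray equation depend only on the decomposable element $u_1 \wedge \cdots \wedge u_{n-1}$, this identity on an orthonormal basis extends by multilinearity to give $Q(\Lambda^{n-1}\iota) = \iota\,\ast_L$, i.e.\ $\iota$ is a Gray map.
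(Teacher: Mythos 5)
Your proof is correct and follows essentially the same route as the paper's: both reduce the statement to Proposition~\ref{prop:smith-conf-calib} applied to $\iota$, observing that since $g_L = \iota^*g_W$ forces the conformal factor $\lambda = 1$, the Smith and Gray conditions coincide and the calibration condition $\iota^*\alpha_Q = \vol_L$ drops out. Your explicit verification that $\lambda$ is forced to equal $1$ (rather than merely asserting ``Gray iff Smith with $\lambda=1$'') and the alternative argument via Proposition~\ref{prop:VCP-calibration} are both sound elaborations of the same idea.
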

\begin{proof}
The map $\iota : L \to W$ is a Gray map if and only if it is a Smith map with $\lambda = 1$. By Proposition~\ref{prop:smith-conf-calib} we deduce that $\iota$ is a Gray map if and only if $\iota^* \alpha_Q = \vol_L$, which is precisely the condition that $L$ is calibrated with respect to $\alpha_Q$.
\end{proof}

More generally, we have the following result.

\begin{prop} \label{prop:smith-calibrated}
Let $(W, g_W, Q, \alpha_Q)$ be an $(n-1)$-fold VCP space. Let $V$ be a $n$-dimensional real vector space. Let $A : V \to W$ be a linear injection. Equip $V$ with the inner product $g_V = A^* g_W$. Then the following are equivalent:
\begin{enumerate}[(i)]
\item The image $L=A(V)$ admits an orientation for which $L$ is calibrated with respect to $\alpha_Q$.
\item The space $V$ admits an orientation such that $A : V \to W$ is a Gray map, with respect to $\ast_{g_V}$.
\end{enumerate}
\end{prop}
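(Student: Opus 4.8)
The plan is to reduce Proposition~\ref{prop:smith-calibrated} to the already-settled case of an inclusion (Lemma~\ref{lemma:gray-calibrated}) by factoring $A$ through its image. Since $g_V = A^* g_W$ and $A$ is injective, $A$ is an isometric injection; write $L = A(V)$, equip it with $g_L = \iota^* g_W$, where $\iota : L \to W$ is the inclusion, and let $\bar A : V \to L$ be the corestriction of $A$, so that $A = \iota \circ \bar A$ with $\bar A$ an isometric isomorphism. The whole argument will turn on shuttling orientations back and forth across the isometry $\bar A$, together with the observation that since $\dim L = n$, the oriented Euclidean space $(L, g_L, \ast_{g_L}, \vol_L)$ is itself an $(n-1)$-fold VCP space (Type I of Table~\ref{table:VCP}), so that Proposition~\ref{prop:smith-conf-calib} may legitimately be applied with $L$ playing the role of the target.

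For (i) $\Rightarrow$ (ii): given an orientation on $L$ for which $L$ is calibrated by $\alpha_Q$, I would transport it to $V$ via $\bar A$, so that $\bar A$ becomes an orientation-preserving isometry and $V$ acquires a Hodge star $\ast_{g_V}$ and volume form $\vol_V$. Then $\bar A^* g_L = g_V$ and $\bar A^* \vol_L = \vol_V$, which is exactly the statement that $\bar A$ is conformally calibrating in the sense of Definition~\ref{defn:conformal-calib} with conformal factor $1$; by Proposition~\ref{prop:smith-conf-calib}, $\bar A$ is a Smith map with $\lambda = 1$, hence a Gray map. Meanwhile Lemma~\ref{lemma:gray-calibrated} gives that $\iota : L \to W$ is a Gray map, precisely because $L$ is calibrated by $\alpha_Q$. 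Since the composition of Gray maps is a Gray map (Corollary~\ref{cor:gray-comp}), $A = \iota \circ \bar A$ is a Gray map with respect to $\ast_{g_V}$, which is (ii).

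For the converse (ii) $\Rightarrow$ (i): a Gray map is in particular a nonzero Smith map (with $\lambda = 1$), so Corollary~\ref{cor:image-calib} applies verbatim and produces an orientation on $L = A(V)$ for which $L$ is calibrated with respect to $\alpha_Q$. This is exactly (i).

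The only genuinely delicate point is the orientation bookkeeping: one must check that the orientation transported to $V$ from $L$ is indeed the one inducing the Hodge star $\ast_{g_V}$ appearing in the statement, and that $(L, g_L)$ together with that transported orientation qualifies as an $(n-1)$-fold VCP space so Proposition~\ref{prop:smith-conf-calib} is applicable. Everything else is a direct appeal to the results already proved in this section, so I do not expect any real obstacle beyond keeping the orientations straight.
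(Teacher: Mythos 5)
Your proof is correct and follows the paper's argument in all essentials: you factor $A = \iota \circ \bar A$ through the image (the paper writes $A_1$ for your $\bar A$), observe that $\bar A$ becomes an orientation-preserving isometry once orientations are matched up, and then invoke Lemma~\ref{lemma:gray-calibrated} together with the composition law for Gray maps. The only cosmetic difference is that you split the two implications and use Corollary~\ref{cor:image-calib} for (ii)~$\Rightarrow$~(i), whereas the paper handles the biconditional in one stroke by noting that invertibility of $A_1$ makes ``$\iota$ Gray'' equivalent to ``$A$ Gray''; this does not change the substance of the argument.
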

\begin{proof}
Let $L = A(V)$ and give $L$ the induced inner product $g_L = \iota^* g_W$ where $\iota : L \to W$ is the inclusion. Let $A_1$ denote the map $A$ with codomain $L = A(V)$. That is, $A_1 : V \to L$ is given by $A_1 (v) = A(v)$. Then $A_1 : V \to L$ is a linear isomorphism, and $A = \iota A_1$, so $A^* = A_1^* \iota^*$. Hence $g_V = A^* g_W = A_1^* g_L$, so $A_1 : V \to L$ is an isometry, and any orientation $\vol_L$ on $L$ compatible with $g_L$ corresponds to a unique orientation $A_1^* \vol_L$ on $V$ compatible with $g_V = A_1^* g_L$. Thus $A_1 : V \to L$ is an invertible Gray map with respect to the Hodge star operators $\ast_{g_V}, \ast_{g_L}$ on $V, L$, respectively.

By Lemma~\ref{lemma:gray-calibrated}, condition (i) is equivalent to the statement that $\iota : L \to W$ is a Gray map with respect to some orientation $\vol_L$ on $L$ compatible with $g_L$. Using $A = \iota A_1$ and the invertibility of $A_1$, we deduce from Corollary~\ref{cor:gray-comp} that $\iota$ is a Gray map if and only if $A$ is a Gray map.
\end{proof}

\begin{rmk} \label{rmk:smith-calibrated}
Suppose $V$ is an $n$-dimensional real vector space and $A : V \to W$ is a linear injection where $(W, g_W, Q, \alpha_Q)$ is an $(n-1)$-fold VCP space, and that the image $L = A(V)$ admits an orientation for which $L$ is calibrated with respect to $\alpha_Q$. Then Proposition~\ref{prop:smith-calibrated} gives the structure of an oriented Euclidean $n$-space on $V$ such that $A : V \to W$ is a Gray map. But then Corollaries~\ref{cor:conformals} and~\ref{cor:conformal-invariance} say that by precomposing $A$ with any orientation preserving conformal isomorphism $B$ of $(V, g_V, \ast, \vol_V)$, we obtain another Smith map $AB : V \to W$ whose image is calibrated.
\end{rmk}

\subsection{A generalized calibration inequality} \label{sec:generalized-calib}

We now establish the fundamental \textit{generalized calibration inequality} that is a crucial ingredient for the \textit{energy identity} in $\S$\ref{sec:energy-identity}.

\begin{thm}[Generalized calibration inequality] \label{thm:generalized-calib}
Let $(V, g_V, \ast, \vol_V)$ be an oriented Euclidean $n$-space. Let $(W, g_W, Q, \alpha_Q)$ be an $(n-1)$-fold VCP space. Let $A : V \to W$ be a linear map. Let $\{ u_1, \ldots, u_n \}$ be an oriented orthonormal basis for $V$. Then we have
\begin{equation} \label{eq:generalized-calib}
\left( \frac{1}{\sqrt{n}} \right)^n |A|^n - (A^* \alpha_Q)(u_1, \ldots, u_n) \geq 0,
\end{equation}
with equality if and only if $A : V \to W$ is a Smith map.
\end{thm}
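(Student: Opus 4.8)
\emph{Proof plan.} The strategy is to obtain~\eqref{eq:generalized-calib} by composing two one-sided bounds already available above and then to read off the equality case from their equality criteria together with Proposition~\ref{prop:smith-conf-calib}. First I would note that, since $\{u_1 \wedge \cdots \wedge u_n\}$ is an orthonormal basis of the one-dimensional space $\Lambda^n V$, equation~\eqref{eq:matrix-norm} applied to $\Lambda^n A$ gives $|Au_1 \wedge \cdots \wedge Au_n| = |(\Lambda^n A)(u_1 \wedge \cdots \wedge u_n)| = |\Lambda^n A|$. Since $\alpha_Q$ is a calibration (Proposition~\ref{prop:VCP-calibration}), the comass-one bound~\eqref{eq:comassone} yields
\begin{equation*}
(A^*\alpha_Q)(u_1, \ldots, u_n) = \alpha_Q(Au_1, \ldots, Au_n) \leq |Au_1 \wedge \cdots \wedge Au_n| = |\Lambda^n A|,
\end{equation*}
and Corollary~\ref{cor:hadamard-n} gives $|\Lambda^n A| \leq (\sqrt{n})^{-n}|A|^n$. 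Chaining these proves~\eqref{eq:generalized-calib}, the case $A = 0$ being trivial.

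For the equality statement, the implication ``$A$ Smith $\Rightarrow$ equality'' is immediate: if $A \neq 0$ is a Smith map, then by Theorem~\ref{thm:smith-conformal} (when $k = n-1 > 1$) or Proposition~\ref{prop:complex-conformal} (when $n = 2$) it is a conformal injection with $\lambda = (\sqrt n)^{-1}|A|$, and Lemma~\ref{lemma:smith-calib} gives $A^*\alpha_Q = \lambda^{k+1}\alpha_P = \lambda^n \vol_V$, so that $(A^*\alpha_Q)(u_1, \ldots, u_n) = \lambda^n = (\sqrt n)^{-n}|A|^n$ (and $A = 0$ is trivial). Conversely, if equality holds in~\eqref{eq:generalized-calib} and $A \neq 0$, then equality must hold in \emph{both} links of the chain above: equality in Corollary~\ref{cor:hadamard-n} forces $A$ to be a conformal injection, and equality in the comass bound then forces $(A^*\alpha_Q)(u_1, \ldots, u_n) = \lambda^n$; since $\Lambda^n V^*$ is one-dimensional and $\vol_V(u_1, \ldots, u_n) = 1$, this says $A^*\alpha_Q = \lambda^n \vol_V = \lambda^n\alpha_P$, that is, $A$ is conformally calibrating in the sense of Definition~\ref{defn:conformal-calib}. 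Proposition~\ref{prop:smith-conf-calib} then identifies $A$ as a Smith map.

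I do not expect a serious obstacle, since the substantive content has already been packaged into Corollary~\ref{cor:hadamard-n}, Proposition~\ref{prop:VCP-calibration}, Lemma~\ref{lemma:smith-calib}, and Proposition~\ref{prop:smith-conf-calib}. The one point that needs care is the equality analysis: one must verify that equality in~\eqref{eq:generalized-calib} genuinely forces equality simultaneously in the Hadamard step and in the comass step (with no compensating slack between them), which is automatic here because the two bounds are composed in series; and one should remember that for $n = 2$ the conformal-injection property of a nonzero Smith map comes from Proposition~\ref{prop:complex-conformal} rather than Theorem~\ref{thm:smith-conformal}.
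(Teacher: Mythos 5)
Your proposal is correct and follows essentially the same route as the paper: the same chain of inequalities (comass-one bound followed by Hadamard's inequality for $\Lambda^n A$), the same equality analysis reducing to the conformally calibrating conditions via Lemma~\ref{lemma:hadamard} and the one-dimensionality of $\Lambda^n V^*$, and the same final appeal to Proposition~\ref{prop:smith-conf-calib}. The only cosmetic difference is that you prove the two directions of the equality case somewhat asymmetrically (invoking Lemma~\ref{lemma:smith-calib} directly for the forward direction), whereas the paper derives both directions at once by showing equality holds iff $A$ is conformally calibrating.
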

\begin{proof}
If $A$ is zero then equality holds in~\eqref{eq:generalized-calib} trivially, and the zero map trivially satisfies~\eqref{eq:smith}. Thus assume that $A$ is nonzero. Let $u_1, \ldots, u_n$ be an oriented orthonormal basis of $V$. We compute
\begin{align} \nonumber
(A^* \alpha_Q)(u_1, \ldots, u_n) & = \alpha_Q (Au_1, \ldots, Au_n) & & \\ \label{eq:calib-ineq-1}
& \leq |(Au_1) \wedge \cdots \wedge (Au_n)| & & \text{by~\eqref{eq:comassone}, since $\alpha_Q$ is a calibration} \\ \nonumber
& = |(\Lambda^n A)(u_1 \wedge \cdots \wedge u_n)| & & \\ \nonumber
& = |\Lambda^n A| & & \text{by~\eqref{eq:matrix-norm}, since $\{ u_1 \wedge \cdots \wedge u_n \}$ is o.n.\ basis for $\Lambda^n V$} \\ \label{eq:calib-ineq-2}
& \leq \left( \frac{1}{\sqrt{n}} \right)^n |A|^n & & \text{by Hadamard's inequality~\eqref{eq:hadamardonA} with $r=n$}
\end{align}
Thus the inequality~\eqref{eq:generalized-calib} has been established. Equality holds if and only if equality holds in both~\eqref{eq:calib-ineq-1} and~\eqref{eq:calib-ineq-2}. By Lemma~\ref{lemma:hadamard}, equality holds in~\eqref{eq:calib-ineq-2} if and only if condition (i) of Definition~\ref{defn:conformal-calib} holds. But then $A = \lambda \hat A$ where $\hat A : V \to W$ is an isometric injection. Thus $\{ \hat A u_1, \ldots, \hat A u_n \}$ is an orthonormal basis for the $n$-dimensional subspace $\im A$ of $W$. By Corollary~\ref{cor:hadamard-vectors} applied to $\im A$ we have
\begin{equation*}
|(Au_1) \wedge \cdots \wedge (Au_n)| = \lambda^n |(\hat Au_1) \wedge \cdots \wedge (\hat Au_n)| = \lambda^n.
\end{equation*}
Thus, given equality in~\eqref{eq:calib-ineq-2}, equality also holds in~\eqref{eq:calib-ineq-1} if and only if $(A^* \alpha_Q)(u_1, \ldots, u_n) = \lambda^n$, and this holds if and only if $A^* \alpha_Q = \lambda^n \vol_V$, which is condition (ii) of Definition~\ref{defn:conformal-calib}. Thus $A : V \to W$ is conformally calibrating. Using Proposition~\ref{prop:smith-conf-calib}, we conclude that equality holds in~\eqref{eq:generalized-calib} if and only if $A$ is a Smith map.
\end{proof}

\begin{rmk} \label{rmk:generalized-calib}
We remark that Theorem~\ref{thm:generalized-calib} generalizes Proposition~\ref{prop:VCP-calibration} in two important ways:
\begin{itemize} \setlength\itemsep{-1mm}
\item It allows general injective linear maps $A : V \to W$. This is the passage from subspaces to maps.
\item It allows the freedom that $g_V$ is only conformal to $A^* g_W$, not necessarily isometric. This is the passage from Gray maps to Smith maps.
\end{itemize}
The consequences for manifolds are discussed in $\S$\ref{sec:smith-maps-manifolds}.
\end{rmk}

\section{Smith maps between manifolds} \label{sec:smith-maps-manifolds}

In this section we apply the linear algebraic results of $\S$\ref{sec:linear} to maps between manifolds equipped with the appropriate geometric structures. The most important result of this section is that, if $d \alpha = 0$, then a Smith map $u : (\Sigma^n, g, \ast, \vol) \to (M^m, h, Q, \alpha)$ is a minimizer of the $n$-energy in its homology class, and thus is also an $n$-harmonic map.

\subsection{Definition and basic properties of Smith maps between manifolds} \label{sec:smith-maps-defn}

Let $(\Sigma, g)$ be an oriented Riemannian $n$-manifold, with Riemannian volume form $\vol$. At each $x \in \Sigma$, the associated Hodge star operator $\ast_x$ is an $(n-1)$-fold vector cross product on $T_x \Sigma$ of Type I from Table~\ref{table:VCP} and $\vol_x$ is its associated calibration $n$-form. Thus $(T_x \Sigma, g_x, \ast_x, \vol_x)$ is an oriented Euclidean $n$-space for each $x \in \Sigma$, as in Definition~\ref{defn:nomenclature}.

Let $(M, h)$ be a Riemannian $m$-manifold equipped with an $(n-1)$-fold vector cross product $Q$ and its associated calibration $n$-form $\alpha$. This means that $Q$ is a smooth section of $\Lambda^{n-1} T^* M \otimes TM$ and $\alpha$ is a smooth $n$-form on $M$ such that, at each $y \in M$, the map $Q_y : \Lambda^{n-1} T_y M \to T_y M$ is a vector cross product with associated calibration $n$-form $\alpha_y$. Thus $(T_y M, h_y, Q_y, (\alpha_Q)_y)$ is an $(n-1)$-fold VCP space, as in Definition~\ref{defn:nomenclature}.

\begin{defn} \label{defn:smith-map-smooth}
Let $u : \Sigma \to M$ be a smooth map. We say that $u$ is a \textit{Smith map} if the differential $(d u)_x : T_x \Sigma \to T_{u(x)} M$ is a Smith map for all $x \in \Sigma$ in the sense of Definition~\ref{defn:smith}. Explicitly, a Smith map satisfies the equation
\begin{equation} \label{eq:smith-u}
Q (\Lambda^{n-1} d u) = \frac{1}{(\sqrt{n})^{n-2}} |d u|^{n-2} (d u) \ast
\end{equation}
where $\ast$ is the Hodge star operator on $\Sigma$. Both sides of~\eqref{eq:smith-u} are smooth sections of $\Lambda^{n-1} T^* \Sigma \otimes u^* TM$. In~\cite{Sm} the Smith maps are called \textit{multiholomorphic maps}, which is reasonable given Example~\ref{ex:smith-u-II} in $\S$\ref{sec:smith-examples}.
\end{defn}

A point $x \in \Sigma$ where $(d u)_x = 0$ is called a \textit{critical point} of $u$. We define
\begin{equation*}
\crit_u = \{ x \in \Sigma \mid (d u)_x = 0 \},
\end{equation*}
the set of all critical points of $u$, also called the \textit{critical locus} of $u$.

\begin{prop} \label{prop:smith-conf-calib-maps}
A smooth map $u : \Sigma \to M$ is a Smith map if and only if both of the following conditions hold:
\begin{itemize} \setlength\itemsep{-1mm}
\item $u$ is \textit{weakly conformal}. (This means that $u^* h = \frac{1}{n} |d u|^2 g$.)
\item $u^* \alpha = \frac{1}{(\sqrt{n})^{n}} |d u|^{n} \vol$.
\end{itemize}
We call such maps ``conformally calibrating''. Thus a map is Smith if and only if it is conformally calibrating.
\end{prop}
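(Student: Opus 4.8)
The plan is to obtain the statement by applying the linear-algebraic Proposition~\ref{prop:smith-conf-calib} pointwise. By Definition~\ref{defn:smith-map-smooth}, $u$ is a Smith map exactly when $(du)_x : T_x\Sigma \to T_{u(x)}M$ is a Smith map for every $x \in \Sigma$, and, as recorded at the start of this section, for each $x$ the source $(T_x\Sigma, g_x, \ast_x, \vol_x)$ is an oriented Euclidean $n$-space while the target $(T_{u(x)}M, h_{u(x)}, Q_{u(x)}, (\alpha_Q)_{u(x)})$ is an $(n-1)$-fold VCP space. So it suffices to transcribe, at each point, the equivalence furnished by Proposition~\ref{prop:smith-conf-calib} into the two tensorial identities in the statement.

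First I would treat a point $x$ where $(du)_x \neq 0$. Proposition~\ref{prop:smith-conf-calib} says that $(du)_x$ is a Smith map if and only if it is conformally calibrating in the sense of Definition~\ref{defn:conformal-calib}, i.e.\ if and only if $(du)_x^* h_{u(x)} = \lambda_x^2\, g_x$ and $(du)_x^* (\alpha_Q)_{u(x)} = \lambda_x^{n}\, \vol_x$, where necessarily $\lambda_x^2 = \tfrac1n |(du)_x|^2$. Substituting this forced value of $\lambda_x$, the first identity becomes $(u^*h)_x = \tfrac1n |(du)_x|^2\, g_x$ and the second becomes $(u^*\alpha)_x = \tfrac{1}{(\sqrt{n})^{n}} |(du)_x|^{n}\, \vol_x$, which are precisely the pointwise values of the two displayed equations in the proposition (using that $(u^*h)_x = (du)_x^* h_{u(x)}$ and $(u^*\alpha)_x = (du)_x^* (\alpha_Q)_{u(x)}$).

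Next I would dispose of the critical locus $\crit_u$. At a point $x$ with $(du)_x = 0$, the zero differential trivially solves the Smith equation~\eqref{eq:smith}, so $u$ is a Smith map at $x$; and both $(u^*h)_x$ and $(u^*\alpha)_x$ vanish, as do $\tfrac1n|(du)_x|^2 g_x$ and $\tfrac{1}{(\sqrt{n})^{n}}|(du)_x|^{n}\vol_x$, so the two identities also hold at $x$. Hence at critical points both sides of the claimed equivalence are automatically true, and there is nothing to check. Combining the two cases over all $x \in \Sigma$ then yields: $u$ is a Smith map $\iff$ $(du)_x$ is a Smith map for every $x$ $\iff$ the two pointwise identities hold for every $x$ $\iff$ $u^*h = \tfrac1n|du|^2 g$ and $u^*\alpha = \tfrac{1}{(\sqrt{n})^{n}}|du|^{n}\vol$ as an identity of symmetric $2$-tensors and of $n$-forms on $\Sigma$, respectively.

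I do not anticipate any genuine obstacle here: the entire content is the localization of Proposition~\ref{prop:smith-conf-calib}. The only point requiring a moment's care is that that proposition is stated for \emph{nonzero} linear maps, so one must separately observe — as above — that a vanishing differential is simultaneously a Smith map and "conformally calibrating" in the degenerate sense that both displayed identities hold with zero on each side; this is immediate. Everything else is bookkeeping.
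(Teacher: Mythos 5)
Your proof is correct and follows exactly the route the paper intends: the paper's proof of this proposition is the one-line remark ``This is immediate from Proposition~\ref{prop:smith-conf-calib},'' and you have simply spelled out the pointwise application of that result. Your explicit treatment of the critical locus is a worthwhile clarification, since Proposition~\ref{prop:smith-conf-calib} and Definition~\ref{defn:conformal-calib} are stated only for nonzero linear maps, but it does not change the underlying approach.
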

\begin{proof}
This is immediate from Proposition~\ref{prop:smith-conf-calib}.
\end{proof}

\begin{rmk} \label{rmk:conf-inv-alt}
The alternate characterization of Smith maps in Proposition~\ref{prop:smith-conf-calib-maps} makes it easy to see that \emph{precomposition} of a Smith map $u: \Sigma \to M$ by an orientation preserving weakly conformal map $F : \Sigma \to \Sigma$ yields another Smith map $u \circ F : \Sigma \to M$. See Proposition~\ref{prop:smith-u-conf-inv}.
\end{rmk}

Note that both of the above conditions are trivially satisfied at any critical point of $u$. The first condition expresses the fact that $(d u)_x : T_x \Sigma \to T_{u(x)} M$ is a conformal injection at any noncritical point.

Smith maps have several basic properties which are immediate from the results of $\S$\ref{sec:smith-calib}.

\begin{prop} \label{prop:smith-u-image}
Let $u : \Sigma \to M$ be a Smith map. Let $\Sigma^{\circ} = \Sigma \setminus \crit_u$ be the open set on which $u$ has no critical points. Then for any $x \in \Sigma^{\circ}$, the image $(d u)_x (T_x \Sigma)$ is an $n$-dimensional subspace of $T_{u(x)} M$ that is calibrated with respect to $\alpha_Q$. That is, $u(\Sigma^{\circ})$ is an immersed calibrated submanifold of $M$. Here the orientation of $u(\Sigma^{\circ})$ is the one naturally induced by the injection $(d u)_x$ for each $x \in \Sigma^{\circ}$.
\end{prop}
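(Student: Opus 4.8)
The plan is to deduce this proposition by applying the pointwise linear-algebraic statements of \S\ref{sec:smith-calib}, in particular Corollary~\ref{cor:image-calib}, at each noncritical point of $u$. First I would record that $\Sigma^{\circ} = \Sigma \setminus \crit_u$ is open: since $u$ is smooth, $du$ is continuous, so $\crit_u = \{x : (du)_x = 0\}$ is closed.

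Next, fix $x \in \Sigma^{\circ}$. By Definition~\ref{defn:smith-map-smooth}, the linear map $(du)_x : T_x\Sigma \to T_{u(x)}M$ is a Smith map in the sense of Definition~\ref{defn:smith}, and it is nonzero because $x \notin \crit_u$. Here $(T_x\Sigma, g_x, \ast_x, \vol_x)$ is an oriented Euclidean $n$-space and $(T_{u(x)}M, h_{u(x)}, Q_{u(x)}, (\alpha_Q)_{u(x)})$ is an $(n-1)$-fold VCP space, in the terminology of Definition~\ref{defn:nomenclature}. Since $k = n-1$, either $k > 1$ (when $n \geq 3$) and Theorem~\ref{thm:smith-conformal} applies, or $k = 1$ with $\dim T_x\Sigma = n = 2$ and Proposition~\ref{prop:complex-conformal} applies; in either case $(du)_x$ is a conformal injection. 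In particular $(du)_x$ is injective, so $u|_{\Sigma^{\circ}}$ is an immersion and the image $(du)_x(T_x\Sigma)$ is an $n$-dimensional subspace $L_x \subset T_{u(x)}M$.

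Now I would invoke Corollary~\ref{cor:image-calib}: applied to the nonzero Smith map $(du)_x$, it gives that $L_x$ carries a distinguished orientation --- the one induced by $(du)_x$, i.e.\ by declaring $\{(du)_x u_1, \ldots, (du)_x u_n\}$ oriented whenever $\{u_1, \ldots, u_n\}$ is an oriented orthonormal basis of $T_x\Sigma$ --- with respect to which $L_x$ is calibrated by $(\alpha_Q)_{u(x)}$. Since this holds at every $x \in \Sigma^{\circ}$ and the orientation on $L_x$ is precisely the pushforward under the immersion of the given orientation on $\Sigma$, the tangent planes of the immersed submanifold $u(\Sigma^{\circ})$ are calibrated by $\alpha_Q$ at every point; hence $u(\Sigma^{\circ})$ is an immersed calibrated submanifold, with the asserted orientation.

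I do not anticipate a genuine obstacle here: the entire content has already been isolated into the linear algebra of \S\ref{sec:linear}, and the proposition is essentially the statement of Corollary~\ref{cor:image-calib} applied fibrewise. The only points requiring a little care are (i) the bifurcation between the $k > 1$ case and the $k = 1$, $n = 2$ case, which is exactly why the two earlier results Theorem~\ref{thm:smith-conformal} and Proposition~\ref{prop:complex-conformal} were stated separately, and (ii) checking that the pointwise orientations on the images $L_x$ assemble into the smooth pushforward orientation, which is immediate from smoothness of $du$ and orientability of $\Sigma$.
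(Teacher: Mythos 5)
Your proof is correct and is essentially the paper's: the paper simply notes that the proposition is immediate from Corollary~\ref{cor:image-calib} applied fibrewise at each noncritical point. The additional remarks you make (openness of $\Sigma^\circ$, invoking Theorem~\ref{thm:smith-conformal} or Proposition~\ref{prop:complex-conformal} for injectivity, and the compatibility of pointwise orientations) are sound elaborations of the same argument.
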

\begin{proof}
This is immediate from Corollary~\ref{cor:image-calib}.
\end{proof}

\begin{prop} \label{prop:smith-u-calib}
Let $Z$ be a smooth oriented $n$-manifold, and let $(M, h, Q, \alpha_Q)$ be as above. Let $u : Z \to M$ be a smooth immersion. Give the image $u(Z)$ the orientation induced by the injection $(d u)_z$ for each $z \in Z$. If $u(Z)$ is calibrated with respect to $\alpha_Q$, then let $g = u^* h$, which is a Riemannian metric on $Z$. With respect to $g$ and the orientation on $Z$, the map $u : Z \to M$ is a Smith map.
\end{prop}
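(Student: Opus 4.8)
The plan is to reduce the statement to the pointwise linear-algebra already established, and in particular to verify the two conditions in the characterization of Smith maps given by Proposition~\ref{prop:smith-conf-calib-maps}. I would argue at a single point $z \in Z$ and then note that all the constructions involved vary smoothly with $z$.

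First I would observe that, since $u$ is an immersion, the differential $(du)_z : T_z Z \to T_{u(z)} M$ is injective for every $z$, so $\crit_u = \varnothing$. By the very definition $g = u^* h$, the map $(du)_z : (T_z Z, g_z) \to (T_{u(z)} M, h_{u(z)})$ is an isometric injection; taking $\lambda = 1$ in Lemma~\ref{lemma:conf-isom} then yields $|du|^2_g \equiv n$, and $u^* h = g = \tfrac{1}{n} |du|^2_g\, g$. Thus $u$ is weakly conformal, which is the first condition in Proposition~\ref{prop:smith-conf-calib-maps}.

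Next I would check the calibration condition, which with $|du|_g \equiv \sqrt{n}$ reduces to $u^* \alpha_Q = \vol$. Evaluating $(u^* \alpha_Q)_z$ on an oriented $g_z$-orthonormal basis $\{ e_1, \ldots, e_n \}$ of $T_z Z$ gives $(\alpha_Q)_{u(z)}\big( (du)_z e_1, \ldots, (du)_z e_n \big)$. Since $(du)_z$ is an isometric injection, $\{ (du)_z e_1, \ldots, (du)_z e_n \}$ is an orthonormal basis of the $n$-plane $L_z := (du)_z(T_z Z) \subset T_{u(z)} M$, and by the definition of the orientation on $u(Z)$ induced by $(du)_z$, this ordered basis is positively oriented on $L_z$. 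The hypothesis that $u(Z)$ is calibrated by $\alpha_Q$, read fiberwise, says precisely that $L_z$ is a calibrated subspace in the sense of Definition~\ref{defn:calibrated-plane}, i.e. $\alpha_Q|_{L_z} = \vol_{L_z}$, so the value above equals $1$. Hence $(u^* \alpha_Q)_z$ and $\vol_z$ agree on oriented $g_z$-orthonormal bases, and being $n$-forms on an $n$-dimensional space they coincide. This gives $u^* \alpha_Q = \vol = \tfrac{1}{(\sqrt{n})^n}|du|^n_g\,\vol$, the second condition of Proposition~\ref{prop:smith-conf-calib-maps}, and so $u$ is a Smith map (with $\lambda \equiv 1$ it is in fact a Gray map).

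I do not expect a serious obstacle; the only point requiring care is the bookkeeping of orientations, namely ensuring that the orientation with respect to which $u(Z)$ is calibrated is exactly the one pushed forward from the given orientation of $Z$ by $(du)_z$, so that $u^*\alpha_Q$ matches $\vol$ and not $-\vol$. A secondary clarification worth stating is that when $u$ is merely an immersion, $u(Z)$ need not be embedded, so ``calibrated with respect to $\alpha_Q$'' is to be understood as the fiberwise condition that each $L_z = (du)_z(T_z Z)$ is a calibrated subspace; with this reading the argument goes through verbatim. Alternatively, one can apply Proposition~\ref{prop:smith-calibrated} directly at each $z$ and then check that the orientation it produces on $T_z Z$ is the given one, which is automatic since in its proof that orientation is $A_1^* \vol_{L_z}$ and $\vol_{L_z}$ is by construction the image under $A_1$ of the given orientation of $T_z Z$.
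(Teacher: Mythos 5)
Your proof is correct and takes essentially the same route as the paper: the paper's own proof is a one-line citation of Proposition~\ref{prop:smith-calibrated}, which you unwind explicitly via the conformally calibrating characterization of Proposition~\ref{prop:smith-conf-calib-maps}, and you also note the direct citation as an alternative in your closing paragraph. Both are pointwise reductions to the same linear algebra, and your bookkeeping of the orientation is exactly what is needed.
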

\begin{proof}
This is immediate from Proposition~\ref{prop:smith-calibrated}.
\end{proof}

\begin{prop} \label{prop:smith-u-conf-inv}
Let $u : \Sigma \to M$ be a Smith map. Let $F : \Sigma \to \Sigma$ be an orientation preserving conformal diffeomorphism. Then the composition $u \circ F : \Sigma \to M$ is a Smith map.
\end{prop}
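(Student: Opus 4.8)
The plan is to reduce everything to pointwise linear algebra via the chain rule. For any $x \in \Sigma$ we have $(d(u\circ F))_x = (du)_{F(x)} \circ (dF)_x$ as a linear map $T_x\Sigma \to T_{u(F(x))}M$. Since $F$ is an orientation-preserving conformal diffeomorphism, $(dF)_x : T_x\Sigma \to T_{F(x)}\Sigma$ is a nonzero orientation-preserving conformal linear isomorphism between oriented Euclidean $n$-spaces, each carrying its Hodge star as the Type I vector cross product. First I would note that such a map is itself a Smith map: writing $(dF)_x^* g_{F(x)} = \lambda^2 g_x$ with $\lambda > 0$, the orientation-preserving property forces $(dF)_x^* \vol_{F(x)} = \lambda^n \vol_x$, so $(dF)_x$ is conformally calibrating in the sense of Definition~\ref{defn:conformal-calib} and hence a Smith map by Proposition~\ref{prop:smith-conf-calib} (this is the obvious extension of Corollary~\ref{cor:conformals} to an isomorphism between two oriented Euclidean $n$-spaces rather than an automorphism). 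Since $(du)_{F(x)}$ is a Smith map by hypothesis, Proposition~\ref{prop:smith-comp} — applied with the three spaces $T_x\Sigma$, $T_{F(x)}\Sigma$, $T_{u(F(x))}M$ and their respective vector cross products — shows that the composite $(d(u\circ F))_x$ is a Smith map. As this holds at every $x \in \Sigma$, Definition~\ref{defn:smith-map-smooth} gives that $u\circ F$ is a Smith map. At critical points of $u\circ F$ both sides of~\eqref{eq:smith-u} vanish, so nothing special is needed there.

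Alternatively, I would give a direct manifold-level argument using the characterization in Proposition~\ref{prop:smith-conf-calib-maps}. Writing $F^* g = \rho^2 g$ for a positive function $\rho$, Lemma~\ref{lemma:conf-isom} yields $|dF|^2 = n\rho^2$ pointwise, and then Corollary~\ref{cor:conf-isom-comp} gives $|d(u\circ F)| = (|du|\circ F)\,\rho$. Consequently $(u\circ F)^* h = F^*(u^*h) = \tfrac{1}{n}(|du|^2\circ F)\,\rho^2\, g = \tfrac{1}{n}|d(u\circ F)|^2 g$, so $u\circ F$ is weakly conformal; and since $F$ is orientation-preserving conformal, $F^*\vol = \rho^n\vol$, so $(u\circ F)^*\alpha = F^*(u^*\alpha) = \tfrac{1}{(\sqrt n)^n}(|du|^n\circ F)\,\rho^n\,\vol = \tfrac{1}{(\sqrt n)^n}|d(u\circ F)|^n\vol$. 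By Proposition~\ref{prop:smith-conf-calib-maps} this is precisely the statement that $u\circ F$ is a Smith map.

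There is no substantive obstacle in this proposition: its content is carried entirely by the linear-algebraic facts already established — that orientation-preserving conformal isomorphisms are exactly the nonzero Smith maps between oriented Euclidean $n$-spaces (Corollary~\ref{cor:conformals} / Proposition~\ref{prop:smith-conf-calib}) and that Smith maps are closed under composition (Proposition~\ref{prop:smith-comp}). The only minor care required is to record that $(dF)_x$ relates two a priori distinct tangent spaces, so one invokes Proposition~\ref{prop:smith-conf-calib} rather than Corollary~\ref{cor:conformals} verbatim; and, in the second approach, to track the Jacobian of the conformal map $F$ correctly so that $F^*\vol = \rho^n\vol$ (positivity of the Jacobian is exactly where orientation-preserving is used).
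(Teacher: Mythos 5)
Your first argument is exactly the paper's route: reduce pointwise via the chain rule, recognize $(dF)_x$ as a nonzero Smith map between oriented Euclidean $3$-spaces, and conclude by closure of Smith maps under composition (Proposition~\ref{prop:smith-comp}); the paper packages this as Corollary~\ref{cor:conformal-invariance}, and your observation that one should invoke Proposition~\ref{prop:smith-conf-calib} rather than the automorphism-only Corollary~\ref{cor:conformals} is a welcome clarification of a detail the paper glosses over. Your second, manifold-level argument via pullbacks is likewise already anticipated in Remark~\ref{rmk:conf-inv-alt}, so both versions are correct and aligned with the paper.
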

\begin{proof}
This is immediate from Corollary~\ref{cor:conformal-invariance}.
\end{proof}

Thus the image of any Smith map is a calibrated submanifold (away from the critical points), and any calibrated submanifold is the image of a Smith map, in many different ways, since the Smith equation~\eqref{eq:smith-u} is conformally invariant in the sense of Proposition~\ref{prop:smith-u-conf-inv}.

Therefore, for those calibrations that correspond to a vector cross product, calibrated submanifolds are in some sense equivalent to Smith maps. We observe that Gray maps also have these two properties with respect to calibrated submanifolds. However, the crucial difference is that the Smith equation is conformally invariant, whereas the Gray equation is not. Conformal invariance is a fundamental feature of those geometric partial differential equations that exhibit phenomena of removable singularities, compactness, and bubbling. This is why Smith maps are much more preferable than Gray maps.

We close this section with another demonstration of this conformal invariance, which is instructive. Let $u : \Sigma \to M$ be a smooth map. Define the \textit{Smith operator} $\smith$ to be the operator that takes $u$ to
\begin{equation*}
\smith u = Q (\Lambda^{n-1} d u) - \frac{1}{(\sqrt{n})^{n-2}} |d u|^{n-2} (d u) \ast.
\end{equation*}
(In~\cite{Sm} this operator is called the \textit{multi-Cauchy-Riemann operator}.) We observe that $\smith u$ is a section of $\Lambda^{n-1} T^* \Sigma \otimes u^* TM$ and that $\smith u = 0$ if and only if $u$ is a Smith map. Let $F : \Sigma \to \Sigma$ be a smooth map. This corresponds to the special case when $(M, h, Q, \alpha_Q) = (\Sigma, g, \ast, \vol)$. In this case the Smith operator applied to $F$ is
\begin{equation*}
\smith F = \ast (\Lambda^{n-1} d F) - \frac{1}{(\sqrt{n})^{n-2}} |d F|^{n-2} (d F) \ast.
\end{equation*}
Now consider the composition $u \circ F : \Sigma \to M$. Using $d (u \circ F) = (d u)(d F)$ and Corollary~\ref{cor:conf-isom-comp} (which holds trivially at critical points) we compute
\begin{align*}
\smith(u \circ F) & = Q \big( \Lambda^{n-1} \big( (d u)(d F) \big) \big) - \frac{1}{(\sqrt{n})^{n-2}} |(d u)(d F)|^{n-2} (d u) (d F) \ast \\
& = Q (\Lambda^{n-1} d u) (\Lambda^{n-1} d F) - \frac{1}{(\sqrt{n})^{n-2} (\sqrt{n})^{n-2}} |d u|^{n-2} |d F|^{n-2} (d u) (d F) \ast \\
& = Q (\Lambda^{n-1} d u) (\Lambda^{n-1} d F) - \frac{1}{(\sqrt{n})^{n-2}} |d u|^{n-2} (d u) \ast (\Lambda^{n-1} d F) \\
& \qquad {} + \frac{1}{(\sqrt{n})^{n-2}} |d u|^{n-2} (d u) \ast (\Lambda^{n-1} d F) - \frac{1}{(\sqrt{n})^{n-2} (\sqrt{n})^{n-2}} |d u|^{n-2} |d F|^{n-2} (d u) (d F) \ast \\
& = (\smith u) (\Lambda^{n-1} d F) + \frac{1}{(\sqrt{n})^{n-2}} |d u|^{n-2} (d u) (\smith F).
\end{align*}
Thus we see explicitly that if $u$ is a Smith map and $F$ is also a Smith map (which for $F$ means it is an orientation preserving conformal diffeomorphism) then $u \circ F$ is also a Smith map.

\begin{rmk} \label{rmk:conf-inv-again}
Here is yet another way to see the conformal invariance of the Smith equation~\eqref{eq:smith-u}, which shows explicitly the necessity of the particular power of $|du|$ on the right hand side. Let $g$ be a metric on $\Sigma$ and let $\widetilde{g} = f^2 g$ be another metric in the same conformal class, where $f$ is a positive function. Let $\mathcal V_g \in \Lambda^n (T\Sigma)$ denote the volume form \emph{on tangent vectors} with respect to $g$. Then $\mathcal V_{\widetilde{g}} = f^{-n} \mathcal V_g$, and consequently the Hodge star on $\Lambda^k (T\Sigma)$ with respect to $\widetilde{g}$ is $\ast_{\widetilde{g}} = f^{2k-n} \ast_{g}$. Moreover, we also have $|du|^2_{\widetilde{g}} = f^{-2} |du|^2_{g}$. Putting these all together with $k = n-1$, we deduce that $|du|^{n-2}_{\widetilde{g}} \ast_{\widetilde{g}} = |du|^{n-2}_g \ast_{g}$ and thus the right hand side of~\eqref{eq:smith-u} is independent of the conformal class of $g$.
\end{rmk}

\subsection{Four classes of Smith maps} \label{sec:smith-examples}

In this section we consider the four classes of Smith maps, corresponding to the four types of vector cross product $Q$ on $M$ from Table~\ref{table:VCP}. We also consider the Smith equation~\eqref{eq:smith-u} explicitly in local coordinates for the four classes. In all these cases we have a smooth Smith map $u : (\Sigma, g, \ast, \vol) \to (M, h, Q, \alpha)$. Define $\Sigma^0 = \Sigma \setminus \crit_u$, where $\crit_u$ is the critical locus of $u$.

\begin{ex} \label{ex:smith-u-I}
Suppose that $Q$ is of type I from Table~\ref{table:VCP}. Then $Q = \ast_M$ is the Hodge star operator corresponding to the metric $h$ and an orientation on $M$. Furthermore, $\alpha_Q = \vol_M$ is the associated volume form. Since $n-1=m-1$, we have $n=m$. The Smith equation~\eqref{eq:smith-u} in this case becomes
\begin{equation*}
\ast_M (\Lambda^{n-1} d u) = \frac{1}{(\sqrt{n})^{n-2}} |d u|^{n-2} (d u) \ast_\Sigma.
\end{equation*}
This equation is equivalent to the pair of equations $u^* h = \frac{1}{n} |d u|^2 g$ and $u^* \vol_M = \frac{1}{(\sqrt{n})^n} |d u|^n \vol_\Sigma$. Thus in this case a Smith map $u : \Sigma \to M$ is an orientation preserving weakly conformal diffeomorphism. The image $u(\Sigma^{\circ})$ is an open submanifold of $M$. 
\end{ex}

\begin{ex} \label{ex:smith-u-II}
Suppose that $Q$ is of type II from Table~\ref{table:VCP}. Then necessarily $m$ is even and $Q = J_M$ is an orthogonal almost complex structure on $(M, h)$, which need not be integrable. Furthermore, $\alpha_Q = \omega_M$ is the associated K\"ahler form, which need not be parallel nor even closed. Since $n-1=1$, we have $n=2$. Thus $\ast = J_\Sigma$ is simultaneously of both Type I and Type II, where $J_\Sigma$ is an orthogonal almost complex structure on $(\Sigma, g)$. In particular $(\Sigma, g, \ast, \vol)$ is a complex $1$-dimensional K\"ahler manifold, as a $U(1)$-structure is always torsion-free. The Smith equation~\eqref{eq:smith-u} in this case becomes
\begin{equation} \label{eq:smith-u-II}
J_M (d u) = (d u) J_\Sigma.
\end{equation}
Note that since the Smith equation depends only on the conformal class of the metric $g$ on $\Sigma$, in this case it really depends only on the \emph{underlying Riemann surface} $(\Sigma, J_\Sigma)$. In particular, in this case a Smith map $u : \Sigma \to M$ is a $J$-holomorphic map. The image $u(\Sigma^{\circ})$ is a $J$-holomorphic curve in $M$, also called a $1$-dimensional almost complex submanifold. This Smith equation is the classical Cauchy--Riemann equation.
\end{ex}

We remark that if $n=2$ then Example~\ref{ex:smith-u-I} is a special case of Example~\ref{ex:smith-u-II} corresponding to $m=2$.

For our purposes, the two most important Smith maps are the exceptional cases corresponding to the vector cross products of type III and IV from Table~\ref{table:VCP}.

\begin{ex} \label{ex:smith-u-III}
Suppose that $Q$ is of type III from Table~\ref{table:VCP}. Then necessarily $n=3$, $m=7$ and $Q$ is a $2$-fold vector cross product corresponding to a $\G$-structure $\varphi = \alpha_Q$, which is the associated calibration $3$-form. This $\G$-structure need not be torsion-free. The Smith equation~\eqref{eq:smith-u} in this case becomes
\begin{equation} \label{eq:smith-u-III}
Q (\Lambda^{2} d u) = \frac{1}{\sqrt{3}} |d u| (d u) \ast.
\end{equation}
The image $u(\Sigma^{\circ})$ is an \textit{associative submanifold} of $M$ and this Smith equation is called the \textit{associative Smith equation}.
\end{ex}

\begin{ex} \label{ex:smith-u-IV}
Suppose that $Q$ is of type IV from Table~\ref{table:VCP}. Then necessarily $n=4$, $m=8$ and $Q$ is a $3$-fold vector cross product corresponding to a $\Spin{7}$-structure $\Phi = \alpha_Q$, which is the associated calibration $4$-form. This $\Spin{7}$-structure need not be torsion-free. The Smith equation~\eqref{eq:smith-u} in this case becomes
\begin{equation*}
Q (\Lambda^{3} d u) = \frac{1}{2} |d u|^2 (d u) \ast.
\end{equation*}
The image $u(\Sigma^{\circ})$ is a \textit{Cayley submanifold} of $M$ and this Smith equation is called the \textit{Cayley Smith equation}.
\end{ex}

In Section~\ref{sec:smith-relations} we discuss relations between different classes of Smith maps, which is nontrivial.

We now express these equations in local coordinates. Let $(x^1, \ldots, x^n)$ be local coordinates for $\Sigma$ and let $(y^1, \ldots, y^m)$ be local coordinates for $M$. Let $u : \Sigma \to M$ be a smooth map. In terms of these coordinates we have $y^a = y^a (x^1, \ldots, x^n)$ for $1 \leq a \leq m$. The section $d u$ of $T^* \Sigma \otimes u^* TM$ becomes
\begin{equation*}
(d u)_x = \frac{\partial u^a}{\partial x^i}\Big|_x d x^i|_x \otimes \frac{\partial}{\partial y^a}\Big|_{u(x)}.
\end{equation*}
This means that $(d u)_x \frac{\partial}{\partial x^i}\big|_x = \frac{\partial u^a}{\partial x^i}\big|_x \frac{\partial}{\partial y^a}\big|_{u(x)}$. Write
\begin{equation*}
\vol_\Sigma = \frac{1}{n!} \mu_{i_1 \cdots i_n} d x^{i_1} \wedge \cdots \wedge d x^{i_n} \qquad \text{and} \qquad \alpha_Q = \frac{1}{n!} \alpha_{b_1 \cdots b_n} d y^{b_1} \wedge \cdots \wedge d y^{b_n}
\end{equation*}
where $1 \leq i_k \leq n$ and $1 \leq b_k \leq m$ for all $k$. Then by~\eqref{eq:alphaP} the Hodge star $\ast$ on $\Lambda^{n-1} T\Sigma$ and the vector cross product $Q$ on $\Lambda^{n-1} TM$ are given by
\begin{align*}
\ast \Big( \frac{\partial}{\partial x^{i_1}} \wedge \cdots \wedge \frac{\partial}{\partial x^{i_{n-1}}} \Big) & = \mu_{i_1 \cdots i_{n-1} j} g^{jl} \frac{\partial}{\partial x^l}, \\
Q \Big( \frac{\partial}{\partial y^{b_1}} \wedge \cdots \wedge \frac{\partial}{\partial y^{b_{n-1}}} \Big) & = \alpha_{b_1 \cdots b_{n-1} c} h^{ca} \frac{\partial}{\partial y^a}.
\end{align*}
Then the equation~\eqref{eq:smith-u} becomes
\begin{equation*}
\alpha_{b_1 \cdots b_{n-1} c} \frac{\partial u^{b_1}}{\partial x^{i_1}} \cdots \frac{\partial u^{b_{n-1}}}{\partial x^{i_{n-1}}} h^{c a} = \frac{1}{(\sqrt{n})^{n-2}} |d u|^{n-2} \frac{\partial u^{a}}{\partial x^l} \mu_{i_1 \cdots i_{n-1} j} g^{jl}.
\end{equation*}
which simplifies to
\begin{equation} \label{eq:smith-coordinates}
\alpha_{b_1 \cdots b_{n-1} c} \frac{\partial u^{b_1}}{\partial x^{i_1}} \cdots \frac{\partial u^{b_{n-1}}}{\partial x^{i_{n-1}}} = \frac{1}{(\sqrt{n})^{n-2}} |d u|^{n-2} \frac{\partial u^{b}}{\partial x^l} \mu_{i_1 \cdots i_{n-1} j} g^{jl} h_{bc},
\end{equation}
where $|d u|$ is given in the coordinates by
\begin{equation} \label{eq:du-coordinates}
|d u|^2 = \frac{\partial u^a}{\partial x^i} \frac{\partial u^b}{\partial x^j} h_{ab} g^{ij}. 
\end{equation}

For the cases of type II, III, IV in Examples~\ref{ex:smith-u-II},~\ref{ex:smith-u-III},~\ref{ex:smith-u-IV}, respectively, equation~\eqref{eq:smith-coordinates} becomes
\begin{align*}
\omega_{ab} \frac{\partial u^a}{\partial x^i} & = \frac{\partial u^a}{\partial x^l} \mu_{i j} g^{jl} h_{ab}, & \text{(classical Cauchy--Riemann equation)}, \\
\varphi_{abc} \frac{\partial u^a}{\partial x^i} \frac{\partial u^b}{\partial x^j} & = \frac{1}{\sqrt{3}} |d u| \frac{\partial u^a}{\partial x^l} \mu_{i j k} g^{kl} h_{ac} & \text{(associative Smith equation)}, \\
\Phi_{abcd} \frac{\partial u^a}{\partial x^i} \frac{\partial u^b}{\partial x^j} \frac{\partial u^c}{\partial x^k} & = \frac{1}{2} |d u|^2 \frac{\partial u^a}{\partial x^p} \mu_{i j k l} g^{lp} h_{ad} & \text{(Cayley Smith equation)},
\end{align*}
where $\mu$ is the Riemannian volume form on $(\Sigma, g)$ and $|d u|$ is given by~\eqref{eq:du-coordinates}.

\subsection{Relations between classes of Smith maps} \label{sec:smith-relations}

Let us recall the following two product constructions of associative and Cayley submanifolds.

\textbf{(A.)} Let $(Y^6, J, \omega, \Upsilon)$ be a Calabi-Yau $3$-fold with K\"ahler form $\omega$ and holomorphic volume form $\Upsilon$. Define $M^7 = Y^6 \times S^1$ and
\begin{equation} \label{eq:varphi}
\varphi = \real(\Upsilon) + d\theta \wedge \omega
\end{equation}
where $\theta$ is the standard periodic ``coordinate'' on $S^1$. Then $\varphi$ is a torsion-free $\G$-structure on $M^7$. Moreover, if $\Sigma^2$ is a complex submanifold of $Y^6$ then $\Sigma^2 \times S^1$ is an associative submanifold of $M^7$.

\textbf{(B.)} Let $(Y^7, \varphi)$ be a torsion-free $\G$ manifold with associative $3$-form $\varphi$ and Hodge dual coassociative $4$-form $\psi = \ast_{\varphi} \varphi$. Define $M^8 = Y^7 \times S^1$ and $\Phi = d\theta \wedge \varphi + \psi$ where $\theta$ is as before. Then $\Phi$ is a torsion-free $\Spin{7}$-structure on $M^8$. Moreover, if $\Sigma^3$ is an associative submanifold of $Y^7$, then $\Sigma^3 \times S^1$ is a Cayley submanifold of $M^8$.

In light of the relationships \textbf{(A.)} and \textbf{(B.)} between complex, associative, and Cayley submanifolds, it is natural to ask whether there exist analogous relationships between Smith maps of types II, III, and IV. In this section, we focus on the relationship between Smith maps of types II and III. We use this discussion in \S\ref{sec:uv-stuff} to clarify the (non-)relation between bubbling of $J$-holomorphic curves and bubbling of associative Smith maps.

To begin, let $v: \Sigma^2 \to Y^6$ be a Smith map of type II as in Example~\ref{ex:smith-u-II}, where $\Sigma^2$ is a Riemann surface equipped with a conformal class of metrics $[g_2]$ and volume form $\vol_2$, and where $Y^6$ is a Calabi-Yau $3$-fold with Riemannian metric $h_6$ and data $(J, \omega, \Upsilon)$ as above. By Proposition~\ref{prop:smith-conf-calib-maps}, we have
\begin{equation} \label{eq:v-mu}
v^* h_6 = \mu^2 g_2, \qquad v^* \omega = \mu^2 \vol_2.
\end{equation}
where we are writing
\begin{equation} \label{eq:v-mu-defn}
\mu^2 = \tfrac{1}{2} |dv|^2
\end{equation}
for ease of notation, and where $|dv|^2$ is computed with respect to $g_2$ on $\Sigma$ and $h_6$ on $Y$. We remark that by the first equation in~\eqref{eq:v-mu}, or by direct computation, the expression $|dv|^2 g_2$ is independent of the choice of representative metric in the conformal class $[g_2]$.

Next, equip $M^7 = Y^6 \times S^1$ with the $\G$-structure $\varphi = \real(\Upsilon) + d\theta \wedge \omega$ as above. The induced Riemannian metric $h_7$ on $M^7$ is simply the product metric
\begin{equation} \label{eq:h7}
h_7 = h_6 + (d\theta)^2
\end{equation}
where $\theta$ denotes the angle coordinate in $Y^6 \times S^1$. Since the image $v(\Sigma)$ is a complex curve in $Y$, it follows that $v(\Sigma) \times S^1$ is an associative submanifold of $M^7 = Y^6 \times S^1$.

Finally, we equip the $3$-manifold $\Sigma \times S^1$ with a Riemannian metric $g_3$, and let $\vol_3$ denote the corresponding volume form. We let $\phi$ denote the angle coordinate for the $S^1$ factor of $\Sigma \times S^1$.

We may now consider maps $u : \Sigma \times S^1 \to Y \times S^1$ of the form
\begin{equation} \label{eq:uvdefn}
u(x, \phi) = ( v(x), f(x, \phi) )
\end{equation}
where $f : \Sigma \times S^1 \to S^1$ is a smooth function $\theta = f(x, \phi)$. For simplicity, we suppose that
\begin{equation*}
f' := \frac{\partial f}{\partial \phi}
\end{equation*}
nonvanishing, so that $u$ as defined in~\eqref{eq:uvdefn} is an immersion whenever $v$ is an immersion. The particular form~\eqref{eq:uvdefn} of $u$ is motivated by the desire to have the image $u(\Sigma \times S^1)$ be an open subset of the associative submanifold $v(\Sigma) \times S^1$. Note that the choice $f(z,\phi) = \phi$ is permissible, which corresponds to $u = v \times \mathrm{Id}_{S^1}$, but for later use in \S\ref{sec:uv-stuff} we need to consider the general form in~\eqref{eq:uvdefn}. See Remark~\ref{rmk:fn-nontrivial}.

We aim to understand when such maps $u$ are associative Smith. To this end, note that using~\eqref{eq:h7}, followed by the first part of~\eqref{eq:v-mu}, we see that
\begin{equation} \label{eq:metric-condition}
u^* h_7 = u^* (h_6 + (d\theta)^2) = v^* h_6 + (u^* d\theta)^2 = \mu^2 g_2 + (d (u^* \theta))^2 = \mu^2 g_2 + (df)^2.
\end{equation}
Similarly, using~\eqref{eq:varphi}, followed by the second part of~\eqref{eq:v-mu}, we see that
\begin{align} \nonumber
u^* \varphi = u^* ( \real(\Upsilon) + d\theta \wedge \omega) & = v^* \real(\Upsilon) + (d (u^* \theta)) \wedge v^* \omega \\ \nonumber
& = 0 + df \wedge (\mu^2 \vol_2) \\ \label{eq:vol-condition}
& = \mu^2 f' d\phi \wedge \vol_2
\end{align}
using that $df = f'd\phi + d_{\Sigma} f$ and $(d_{\Sigma} f) \wedge \vol_2 = 0$. Finally, to streamline notation, we write
\begin{equation} \label{eq:u-lambda-defn}
\lambda^2 = \tfrac{1}{3} |du|^2
\end{equation}
where $|du|^2$ is computed with respect to $g_3$ on $\Sigma \times S^1$ and $h_7$ on $Y \times S^1$. 

\begin{prop} \label{prop:g3-vol3}
The map $u$ of~\eqref{eq:uvdefn} is an associative Smith map if and only if the metric $g_3$ and volume form $\vol_3$ on $\Sigma \times S^1$ satisfy
\begin{equation} \label{eq:g3}
\lambda^2 g_3 = \mu^2 g_2 + (df)^2
\end{equation}
and
\begin{equation} \label{eq:vol3}
\lambda^3 \vol_3 = \mu^2 f' d\phi \wedge \vol_2.
\end{equation}
\end{prop}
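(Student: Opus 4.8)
The plan is to reduce the statement directly to the characterization of Smith maps given in Proposition~\ref{prop:smith-conf-calib-maps}, applied with $n = 3$ and target the $(n-1)$-fold VCP space $(M^7, h_7, Q, \varphi)$. That proposition says that a smooth map $u \colon \Sigma\times S^1 \to Y\times S^1$ is an associative Smith map if and only if it is weakly conformal, $u^* h_7 = \tfrac{1}{3}|du|^2 g_3$, and satisfies the calibration identity $u^*\varphi = \tfrac{1}{(\sqrt{3})^3}|du|^3\,\vol_3$. Here $|du|$ is taken with respect to $g_3$ and $h_7$, consistent with~\eqref{eq:u-lambda-defn}.

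First I would rewrite these two conditions in terms of $\lambda$. By the definition $\lambda^2 = \tfrac{1}{3}|du|^2$ in~\eqref{eq:u-lambda-defn}, the weak conformality condition reads exactly $u^* h_7 = \lambda^2 g_3$. For the calibration condition, since $\lambda > 0$ we have $\lambda^3 = (\lambda^2)^{3/2} = \big(\tfrac{1}{3}|du|^2\big)^{3/2} = \tfrac{1}{(\sqrt{3})^3}|du|^3$, so the calibration identity reads exactly $u^*\varphi = \lambda^3\,\vol_3$.

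Next I would substitute the two pullback formulas already established for maps of the form~\eqref{eq:uvdefn}. Computation~\eqref{eq:metric-condition} gives $u^* h_7 = \mu^2 g_2 + (df)^2$, and computation~\eqref{eq:vol-condition} gives $u^*\varphi = \mu^2 f'\,d\phi\wedge\vol_2$; crucially, both of these identities were derived using only the form~\eqref{eq:uvdefn} of $u$ together with the hypothesis that $v$ is a type II Smith map (via~\eqref{eq:v-mu}), and make no reference to whether $u$ itself is Smith. Combining, the weak conformality condition becomes $\lambda^2 g_3 = \mu^2 g_2 + (df)^2$, which is~\eqref{eq:g3}, and the calibration condition becomes $\lambda^3\,\vol_3 = \mu^2 f'\,d\phi\wedge\vol_2$, which is~\eqref{eq:vol3}. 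Since the ``if and only if'' of Proposition~\ref{prop:smith-conf-calib-maps} matches each of its two conditions to one of these two displayed equations, this proves the claim.

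There is essentially no serious obstacle here: the proof is bookkeeping, and the only points requiring care are (i) tracking the powers of $|du|$ so that $\tfrac{1}{(\sqrt{3})^3}|du|^3$ is correctly identified with $\lambda^3$, and (ii) observing that~\eqref{eq:metric-condition} and~\eqref{eq:vol-condition} are unconditional consequences of the ansatz~\eqref{eq:uvdefn}, so that no circularity is introduced when feeding them into the Smith criterion. One might also remark that~\eqref{eq:g3} and~\eqref{eq:vol3} are not logically independent, since $\vol_3$ is determined by $g_3$ together with an orientation on $\Sigma\times S^1$; the proposition simply records both forms of the constraint for later use in~$\S$\ref{sec:uv-stuff}.
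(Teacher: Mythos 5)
Your proof is correct and follows exactly the paper's argument: invoke Proposition~\ref{prop:smith-conf-calib-maps} to rewrite the Smith condition as $u^*h_7 = \lambda^2 g_3$ and $u^*\varphi = \lambda^3 \vol_3$, then substitute~\eqref{eq:metric-condition} and~\eqref{eq:vol-condition}. The paper states this more tersely, but the route is identical.
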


\begin{proof} By Proposition~\ref{prop:smith-conf-calib-maps}, the map $u$ is associative Smith if and only if it satisfies
\begin{equation} \label{eq:u-lambda}
u^* h_7 = \lambda^2 g_3, \qquad u^* \varphi = \lambda^3 \vol_3.
\end{equation}
The result now follows from \eqref{eq:metric-condition} and \eqref{eq:vol-condition}.
\end{proof}

\subsection{The fundamental energy identity for Smith maps} \label{sec:energy-identity}

In this section we establish the fundamental energy identity for Smith maps, when $\Sigma$ is \textit{compact} and the calibration form $\alpha = \alpha_Q$ on $M$ is \textit{closed}. We explain why $d \alpha = 0$ is necessary for the energy identity, which says that the $L^{n}$-energy of a Smith map is in some sense a topological invariant when $\partial \Sigma = \varnothing$.

The energy identity, which is our Theorem~\ref{thm:energy-identity} below, originally appeared in the unpublished preprint~\cite[Proposition 6.5]{Sm}. The way we present it here, the proof is almost immediate due to our parcelling out the preliminary linear algebraic results in $\S$\ref{sec:linear}.

Let $\Sigma^n$ and $M^m$ be as in $\S$\ref{sec:smith-maps-defn}, and let $u$ be any smooth map. Further, throughout this section we suppose that $\Sigma$ is \textit{compact}, so that we may integrate over $\Sigma$.

\begin{defn} \label{defn:n-energy}
The \textit{$n$-energy} of $u$, denoted $E_n (u)$, is defined to be
\begin{equation*}
E_n (u) = \frac{1}{(\sqrt{n})^n} \int_\Sigma |d u|^n \vol.
\end{equation*}
Thus, up to a factor which is chosen for later convenience, we observe that $E_n (u)$ is essentially $\| d u \|_n^n$, where $\| d u \|_n$ is the $L^n$-norm of $u$.
\end{defn}

\begin{thm}[The energy identity for Smith maps] \label{thm:energy-identity}
When the $n$-energy $E_n$ is written as
\begin{equation*}
E_n (u) = \int_\Sigma \left( \frac{1}{(\sqrt{n})^n} |d u|^n \vol - u^* \alpha \right) + \int_\Sigma u^* \alpha,
\end{equation*}
then the first term is \textit{always nonegative} and vanishes if and only if $u$ is a Smith map. 

Moreover, if $d \alpha = 0$ and $\partial \Sigma = \varnothing$, then the second term is a topological invariant, depending only on the cohomology class $[\alpha] \in H^n (M, \R)$ of $\alpha$ in $M$ and on the homology class $u_* [\Sigma] \in H_n(M)$, which is the image of the fundamental class $[\Sigma] \in H_n (\Sigma)$ by the pushforward homomorphism $u_* : H_n (\Sigma) \to H_n (M)$ induced by the map $u : \Sigma \to M$.

Consequently, when $d \alpha = 0$, a smooth map $u : \Sigma \to M$ is a Smith map if and only if its $n$-energy is given by
\begin{equation*}
E_n (u) = \int_\Sigma u^* \alpha = [\alpha] \cdot u_* [\Sigma].
\end{equation*}
\end{thm}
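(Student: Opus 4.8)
The plan is to prove the three assertions of Theorem~\ref{thm:energy-identity} in sequence, since each feeds into the next, and the core of the argument is already packaged into the generalized calibration inequality of Theorem~\ref{thm:generalized-calib}.

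\textbf{Step 1: Nonnegativity and the equality case of the first term.} First I would observe that for any smooth map $u : \Sigma \to M$ and any $x \in \Sigma$, choosing an oriented orthonormal basis $\{u_1,\dots,u_n\}$ of $T_x\Sigma$ gives
\begin{equation*}
\frac{1}{(\sqrt{n})^n}|du|^n(x) - (u^*\alpha)(u_1,\dots,u_n) = \left(\frac{1}{\sqrt{n}}\right)^n|(du)_x|^n - \big((du)_x^*\alpha_Q\big)(u_1,\dots,u_n) \geq 0
\end{equation*}
by applying Theorem~\ref{thm:generalized-calib} to the linear map $A = (du)_x : T_x\Sigma \to T_{u(x)}M$, noting that $\frac{1}{(\sqrt n)^n}|du|^n\vol - u^*\alpha$ is precisely this pointwise difference times $\vol$. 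Integrating over the compact manifold $\Sigma$ gives nonnegativity of the first term. For the equality case: the integrand is a continuous nonnegative function, so the integral vanishes if and only if the integrand vanishes at every $x\in\Sigma$, which by the equality clause of Theorem~\ref{thm:generalized-calib} holds if and only if $(du)_x$ is a Smith map for every $x$, i.e.\ $u$ is a Smith map in the sense of Definition~\ref{defn:smith-map-smooth}.

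\textbf{Step 2: Topological invariance of the second term.} Assuming $d\alpha = 0$ and $\partial\Sigma = \varnothing$, the $n$-form $u^*\alpha$ on the closed oriented $n$-manifold $\Sigma$ is closed, so by Stokes's theorem $\int_\Sigma u^*\alpha$ depends only on the de Rham cohomology class $[u^*\alpha] = u^*[\alpha] \in H^n(\Sigma;\R)$. Since $H^n(\Sigma;\R)\cong\R$ is generated (via integration) by the fundamental class, pairing with cohomology is dual to the pushforward on homology, and one has $\int_\Sigma u^*\alpha = \langle u^*[\alpha], [\Sigma]\rangle = \langle [\alpha], u_*[\Sigma]\rangle =: [\alpha]\cdot u_*[\Sigma]$. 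Thus the second term depends only on $[\alpha]\in H^n(M;\R)$ and $u_*[\Sigma]\in H_n(M)$.

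\textbf{Step 3: Combining.} Finally, when $d\alpha = 0$: if $u$ is a Smith map then by Step~1 the first term of the displayed decomposition of $E_n(u)$ vanishes, so $E_n(u) = \int_\Sigma u^*\alpha$, which by Step~2 equals $[\alpha]\cdot u_*[\Sigma]$. Conversely, if $E_n(u) = \int_\Sigma u^*\alpha$, then the first term vanishes, so by the equality case in Step~1, $u$ is a Smith map. I do not anticipate any serious obstacle: the entire analytic content has been absorbed into Theorem~\ref{thm:generalized-calib}, and the only mild point requiring care is the passage between the pointwise inequality and its integrated form (ensuring the integrand is genuinely a continuous function on $\Sigma$ and that vanishing of a nonnegative integral forces pointwise vanishing), together with the standard but worth-stating fact that $\int_\Sigma u^*\alpha$ is a homotopy/cohomology invariant via Stokes.
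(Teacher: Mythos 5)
Your proposal is correct and follows essentially the same route as the paper's proof: both reduce the first claim to the pointwise generalized calibration inequality of Theorem~\ref{thm:generalized-calib} applied to $A = (du)_x$, with the equality case giving the characterization of Smith maps, and both invoke Stokes's theorem for the homological invariance of $\int_\Sigma u^*\alpha$. The only difference is that you spell out the orthonormal-basis identification $u^*\alpha = (u^*\alpha)(u_1,\dots,u_n)\,\vol$ and the continuity argument showing a vanishing integral of a continuous nonnegative function forces pointwise vanishing, steps the paper leaves implicit.
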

\begin{proof}
The generalized calibration inequality Theorem~\ref{thm:generalized-calib} applied to $A = d u$ says that
\begin{equation}\label{eq:generalized-calib-2}
\frac{1}{(\sqrt{n})^n} |d u|^{n} \vol - u^* \alpha \geq 0
\end{equation}
with equality if and only if $u : \Sigma \to M$ is a Smith map. If $d \alpha = 0$, then by Stokes's Theorem, $\int_\Sigma u^* \alpha = [\alpha] \cdot u_* [\Sigma]$ depends only on the cohomology class $[\alpha] \in H^n(M, \R)$.
\end{proof}

\begin{rmk} \label{rmk:energy-works-sobolev}
The first part of Theorem~\ref{thm:energy-identity} actually holds for maps $u \in W^{1, n}(\Sigma; M)$ since in this case the inequality~\eqref{eq:generalized-calib-2} holds almost everywhere on $\Sigma$ and vanishes almost everywhere if it integrates to zero. Recall that a map $u \in W^{1, n}(\Sigma; M)$ is said to be a Smith map if its weak derivative $du$ satisfies the Smith equation almost everywhere on $\Sigma$.
\end{rmk}

\begin{cor} \label{cor:energy-identity}
Let $d \alpha = 0$. If a Smith map $u : \Sigma \to M$ exists, then it is an absolute minimizer of the $n$-energy $E_n$ amongst all smooth maps $v : \Sigma \to M$ representing the same homology class in $H_n (M)$ as $u$.
\end{cor}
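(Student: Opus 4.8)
The plan is to obtain the statement as a direct consequence of the energy identity, Theorem~\ref{thm:energy-identity}, applied both to the given Smith map $u$ and to an arbitrary competitor. Throughout this section $\Sigma$ is assumed compact; since the conclusion refers to the homology classes $u_*[\Sigma], v_*[\Sigma] \in H_n(M)$, I would state explicitly at the outset that $\partial\Sigma = \varnothing$, so that the fundamental class $[\Sigma] \in H_n(\Sigma)$ is defined and Stokes's theorem incurs no boundary term. (For $\Sigma$ with boundary one would instead phrase matters with relative classes, but that is not needed here.)

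First I would record the two consequences of Theorem~\ref{thm:energy-identity} that carry the argument. For \emph{any} smooth $v : \Sigma \to M$, integrating the generalized calibration inequality~\eqref{eq:generalized-calib-2} over $\Sigma$ gives $E_n(v) \geq \int_\Sigma v^*\alpha$, with equality precisely when $v$ is a Smith map. And because $d\alpha = 0$ and $\partial\Sigma = \varnothing$, Stokes's theorem shows that $\int_\Sigma v^*\alpha = [\alpha] \cdot v_*[\Sigma]$ depends only on $[\alpha] \in H^n(M;\R)$ and on $v_*[\Sigma] \in H_n(M)$. Now, given a smooth $v$ with $v_*[\Sigma] = u_*[\Sigma]$, combining these two facts yields $E_n(v) \geq \int_\Sigma v^*\alpha = [\alpha]\cdot v_*[\Sigma] = [\alpha]\cdot u_*[\Sigma] = \int_\Sigma u^*\alpha$, while the equality clause applied to the Smith map $u$ gives $E_n(u) = \int_\Sigma u^*\alpha$. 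Hence $E_n(v) \geq E_n(u)$, which is the asserted minimality. I would also add, using the equality case of Theorem~\ref{thm:generalized-calib}, that $E_n(v) = E_n(u)$ forces $v$ itself to be a Smith map, so that in fact the $n$-energy minimizers in a fixed homology class are exactly the Smith maps representing it.

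There is no genuine obstacle here: the proof is a two-line corollary of Theorem~\ref{thm:energy-identity}. The only points requiring a moment's care are formal — ensuring the homological pairing $[\alpha] \cdot (\,\cdot\,)$ is well defined (whence the hypothesis $\partial\Sigma = \varnothing$, or the passage to relative homology) and that the pointwise inequality~\eqref{eq:generalized-calib-2} may legitimately be integrated against competitors of the stated regularity, both of which are already handled in Theorem~\ref{thm:energy-identity} and Remark~\ref{rmk:energy-works-sobolev}.
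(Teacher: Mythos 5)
Your proof is correct and follows exactly the route the paper intends: the paper's proof of Corollary~\ref{cor:energy-identity} is simply ``This is immediate from Theorem~\ref{thm:energy-identity},'' and your argument spells out precisely that deduction — comparing $E_n(v) \geq \int_\Sigma v^*\alpha = [\alpha]\cdot v_*[\Sigma] = [\alpha]\cdot u_*[\Sigma] = E_n(u)$. Your added remarks about $\partial\Sigma = \varnothing$ and the characterization of the equality case are consistent with the paper's hypotheses and Theorem~\ref{thm:generalized-calib}.
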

\begin{proof}
This is immediate from Theorem~\ref{thm:energy-identity}.
\end{proof}

Of course, when $n=2$, so that we are in the case of Type II from Table~\ref{table:VCP}, then this result is classical in the theory of $J$-holomorphic maps, and can be found, for example, in McDuff--Salamon~\cite{MS}.

\begin{rmk} \label{rmk:smith-energy}
A consequence of Theorem~\ref{thm:energy-identity} and Proposition~\ref{prop:smith-conf-calib} is that if $u : \Sigma \to M$ is a Smith map with no critical points, then the $n$-energy $E_n (u)$ equals the volume of $\Sigma$ with respect to the metric $u^* h = \frac{1}{n} | d u |^2 g$ on $\Sigma$. In particular, if $u$ is a Gray map with no critical points, then $E_n (u)$ equals the volume of $(\Sigma, g)$. (See also Proposition~\ref{conformalcalibrated} and Remark~\ref{conformalcalibrated-rmk} for the geometric measure theory analogue of this in the setting of currents.)
\end{rmk}

\subsection{Smith maps and $n$-harmonic maps} \label{sec:smith-n-harmonic}

The theory of $p$-harmonic maps has been extensively studied. Some possible references for this theory (certainly not exhaustive) include~\cite{DF, F, Ha-Li, MY, NVV, T}. Let $(\Sigma, g)$ and $(M, h)$ be Riemannian manifolds. A smooth map $u : \Sigma \to M$ is $p$-harmonic if is satisfies
\begin{equation} \label{eq:n-harmonic}
\Div ( |d u|^{p-2} d u ) = 0,
\end{equation}
where $\Div$ is the Riemannian divergence on $(M, g)$ taking a section of $T^*\Sigma \otimes u^* TM$ to a section of $u^* TM$. When $\Sigma$ is compact, the $p$-harmonic maps are the critical points of the $p$-energy $\int_\Sigma |d u|^p \vol$.

Suppose that $(\Sigma^n, g, \ast, \vol)$ and $(M^m, h, Q, \alpha_Q)$ are as in $\S$\ref{sec:smith-maps-defn} and suppose that $d \alpha = 0$, where we write $\alpha$ for $\alpha_Q$. Then by Corollary~\ref{cor:energy-identity}, since a Smith map $u : \Sigma \to M$ is an absolute minimizer of the $n$-energy in its homology class, it is certainly a critical point, and thus it is an $n$-harmonic map. (Here $p = n = \dim \Sigma$.)

In this section we show explicitly that Smith maps satisfy~\eqref{eq:n-harmonic} when $d \alpha = 0$. This argument is interesting because we cannot just differentiate the Smith equation to obtain the $n$-harmonic map equation. Rather, we also need to use the fact that a Smith map is conformally calibrating.

\begin{lemma} \label{lemma:smith-n-harmonic}
Let $Q$ be a $k$-fold vector cross product on $(M,h)$ with associated calibration $(k+1)$-form $\alpha$. Let $\nabla$ be the Levi-Civita connection of $h$. Then we have
\begin{align} \label{eq:lemma-VCP-Y-1}
(\nabla_V \alpha) (W_1, \ldots, W_{k+1}) & = h \big( (\nabla_V Q)(W_1, \ldots, W_k), W_{k+1} \big), \\ \label{eq:lemma-VCP-Y-2}
h \big( (\nabla_V Q)(W_1, \ldots, W_k), Q(W_1, \ldots, W_k) \big) & = 0,
\end{align}
for all smooth vector fields $V, W_1, \ldots, W_{k+1}$ on $M$.
\end{lemma}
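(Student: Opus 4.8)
The plan is to obtain both identities by covariantly differentiating, along $V$, the two pointwise relations that \emph{define} $\alpha = \alpha_Q$ and that constrain $Q$ --- namely~\eqref{eq:alphaP} and the metric property~\eqref{eq:Pmetric} --- and then exploiting that $\nabla$ is metric-compatible. Since all terms in~\eqref{eq:lemma-VCP-Y-1} and~\eqref{eq:lemma-VCP-Y-2} are pointwise and $C^\infty(M)$-multilinear in $V, W_1, \ldots, W_{k+1}$, it suffices to verify them at an arbitrary point for arbitrary smooth vector fields.

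For~\eqref{eq:lemma-VCP-Y-1}: viewing $Q$ as a bundle map $\Lambda^k TM \to TM$ and writing $\flat \colon TM \to T^*M$ for the metric musical isomorphism, the definition~\eqref{eq:alphaP} says precisely that $\alpha$, regarded as a bundle map $\Lambda^k TM \to T^*M$, equals $\flat \circ Q$. As $\nabla$ is the Levi-Civita connection we have $\nabla \flat = 0$, so the Leibniz rule gives $\nabla_V \alpha = \flat \circ (\nabla_V Q)$, which unwinds to exactly~\eqref{eq:lemma-VCP-Y-1}. Equivalently, one differentiates $\alpha(W_1, \ldots, W_{k+1}) = h(Q(W_1 \wedge \cdots \wedge W_k), W_{k+1})$ directly, distributing $\nabla_V$ via metric-compatibility on the right-hand side and the Leibniz rule for the tensors $\alpha$ and $Q$, and then cancels the $k+1$ terms involving $\nabla_V W_j$ on the two sides against each other by applying~\eqref{eq:alphaP} once more.

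For~\eqref{eq:lemma-VCP-Y-2}: set $\omega := W_1 \wedge \cdots \wedge W_k \in \Gamma(\Lambda^k TM)$. Property~\eqref{eq:Pmetric} reads $h(Q(\omega), Q(\omega)) = h(\omega, \omega)$ as functions on $M$. Differentiating along $V$, using metric-compatibility on both $TM$ and $\Lambda^k TM$ together with $\nabla_V(Q(\omega)) = (\nabla_V Q)(\omega) + Q(\nabla_V \omega)$, yields
\begin{equation*}
h\big((\nabla_V Q)(\omega),\, Q(\omega)\big) + h\big(Q(\nabla_V \omega),\, Q(\omega)\big) = h(\nabla_V \omega,\, \omega).
\end{equation*}
So it remains to show $h(Q(\nabla_V \omega), Q(\omega)) = h(\nabla_V \omega, \omega)$. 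Expanding $\nabla_V \omega = \sum_{i=1}^{k} W_1 \wedge \cdots \wedge \nabla_V W_i \wedge \cdots \wedge W_k$ and, in the $i$-th summand, moving $\nabla_V W_i$ to the last slot (and correspondingly $W_i$ in $\omega$, picking up the same sign), each summand and $\omega$ now share their first $k-1$ factors; the polarized metric identity~\eqref{eq:Pmetric-polarized} then applies to each such pair, and summing over $i$ gives the desired equality. This proves~\eqref{eq:lemma-VCP-Y-2}.

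The two differentiations are routine; the only step that is not completely automatic is the last one, where one must notice that even though $\nabla_V \omega$ is \emph{not} decomposable, it is a sum of decomposable $k$-vectors each of which differs from $\omega$ in a single slot, so that the \emph{polarized} form~\eqref{eq:Pmetric-polarized} of~\eqref{eq:Pmetric}, rather than~\eqref{eq:Pmetric} itself, is exactly what makes the unwanted term cancel. I expect this to be the only point meriting explicit comment in the write-up.
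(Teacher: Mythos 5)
Your proposal is correct and follows essentially the same route as the paper's proof: differentiate the defining relation~\eqref{eq:alphaP} along $V$ to obtain~\eqref{eq:lemma-VCP-Y-1}, and differentiate the metric property~\eqref{eq:Pmetric} to obtain~\eqref{eq:lemma-VCP-Y-2}, using $\nabla h = 0$ throughout. The only point where you add genuine content is in spelling out why $h(Q(\nabla_V\omega), Q(\omega)) = h(\nabla_V\omega, \omega)$ even though $\nabla_V\omega$ is not decomposable — by decomposing it into $k$ decomposable summands each differing from $\omega$ in one slot and invoking the polarized form~\eqref{eq:Pmetric-polarized} on each — and that observation is indeed correct and fills in a step the paper's terse proof leaves implicit.
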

\begin{proof}
The metric $h$ is parallel with respect to $\nabla$. Equation~\eqref{eq:lemma-VCP-Y-1} is a consequence of applying $\nabla_V$ to both sides of
\begin{equation*}
\alpha(W_1, \ldots, W_{k+1}) = h\big( Q(W_1, \ldots, W_k), W_{k+1} \big),
\end{equation*}
which is the definition of $\alpha$ from $Q$ as in~\eqref{eq:alphaP}. Similarly~\eqref{eq:lemma-VCP-Y-2} is obtained by applying $\nabla_V$ to both sides of
\begin{equation*}
h \big( Q(W_1, \ldots, W_k), Q(W_1, \ldots, W_k) \big) = h (W_1 \wedge \cdots \wedge W_k, W_1 \wedge \cdots \wedge W_k),
\end{equation*}
which is the fundamental property~\eqref{eq:Pmetric} of a vector cross product.
\end{proof}

\begin{cor} \label{cor:smith-n-harmonic}
Let $u : \Sigma \to M$ be a smooth Smith map as in $\S$\ref{sec:smith-maps-defn}. Then we have $u^* (\nabla_V \alpha) = 0$ for any smooth vector field $V$ on $M$.
\end{cor}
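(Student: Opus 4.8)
The plan is to reduce the statement to the two pointwise identities of Lemma~\ref{lemma:smith-n-harmonic} (with $k = n-1$), using the fact that the differential of a Smith map has calibrated image away from its critical locus. First I would observe that $u^*(\nabla_V\alpha)$ is an $n$-form on the $n$-manifold $\Sigma$, so it suffices to show that its single component vanishes at each point $x \in \Sigma$. At a critical point $(du)_x = 0$, so $(u^*(\nabla_V\alpha))_x = 0$ trivially, and it remains to treat a noncritical point.

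Fix a noncritical point $x$ and an oriented orthonormal basis $\{e_1, \ldots, e_n\}$ of $T_x\Sigma$. Since $u$ is Smith, $(du)_x$ is a conformal injection by Theorem~\ref{thm:smith-conformal}, with conformal factor $\lambda = \tfrac{1}{\sqrt{n}}|du|_x > 0$; hence $w_i := \lambda^{-1}(du)_x e_i$ form an oriented orthonormal basis of the image $L := (du)_x(T_x\Sigma) \subset T_{u(x)}M$, where the orientation on $L$ is the one induced by $(du)_x$ (see Corollary~\ref{cor:image-calib}). By Proposition~\ref{prop:smith-u-image}, $L$ is calibrated with respect to $\alpha = \alpha_Q$, so Proposition~\ref{prop:VCP-calibration} (applied with $P$ replaced by $Q$) gives $Q(w_1 \wedge \cdots \wedge w_{n-1}) = w_n$. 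Evaluating and using~\eqref{eq:lemma-VCP-Y-1},
\[
(u^*(\nabla_V\alpha))_x(e_1, \ldots, e_n) = \lambda^n (\nabla_V\alpha)_{u(x)}(w_1, \ldots, w_n) = \lambda^n\, h\big((\nabla_V Q)(w_1, \ldots, w_{n-1}),\, Q(w_1 \wedge \cdots \wedge w_{n-1})\big),
\]
which vanishes by~\eqref{eq:lemma-VCP-Y-2}. Since $V$ and the noncritical point $x$ were arbitrary, this gives $u^*(\nabla_V\alpha) = 0$.

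I do not expect a real obstacle here; the only point needing care is that Lemma~\ref{lemma:smith-n-harmonic} is phrased in terms of vector fields on $M$, whereas we are applying it to the fixed tangent vectors $w_1, \ldots, w_n \in T_{u(x)}M$. This is harmless because $\nabla_V Q$ and $\nabla_V\alpha$ are tensors in their form arguments, so one may extend the $w_i$ to arbitrary local vector fields near $u(x)$ and then evaluate pointwise; likewise $(\nabla_V\alpha)_{u(x)}$ depends only on $V_{u(x)}$, so the conclusion holds for every smooth vector field $V$.
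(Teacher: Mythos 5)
Your argument is essentially identical to the paper's proof: both pass to a noncritical point, form the orthonormal basis $w_i = \lambda^{-1}(du)_x e_i$ of the calibrated image, invoke Proposition~\ref{prop:VCP-calibration} to get $w_n = Q(w_1\wedge\cdots\wedge w_{n-1})$, and then apply Lemma~\ref{lemma:smith-n-harmonic}. Your explicit note that critical points are handled trivially (where the paper silently divides by $\lambda$) is a small improvement in care, but the route is the same.
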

\begin{proof}
Fix $x \in \Sigma$ and let $e_1, \ldots, e_n$ be an oriented orthonormal frame for $T_x \Sigma$. Since $u$ is a Smith map, it follows from Proposition~\ref{prop:smith-conf-calib} that $u_* e_1, \ldots, u_* e_n$ are all orthogonal and of the same length $\lambda = \frac{1}{\sqrt{n}} |(d u)_x|$, and that they span a calibrated subspace $L = u_* (T_x \Sigma)$ of $T_{u(x)} M$. Then the vectors $f_i = \frac{1}{\lambda} u_* e_i$ for $i = 1, \ldots, n$ form an oriented orthonormal basis of $L$. By Proposition~\ref{prop:VCP-calibration} we deduce that
\begin{equation*}
f_n = Q(f_1, \ldots, f_{n-1}).
\end{equation*}
Using the above equation as well as~\eqref{eq:lemma-VCP-Y-1} and~\eqref{eq:lemma-VCP-Y-2}, we compute
\begin{align*}
\lambda^{-n} (u^* \nabla_V \alpha)(e_1, \ldots, e_n) & = \lambda^{-n} (\nabla_V \alpha)(u_* e_1, \ldots, u_* e_n) \\
& = (\nabla_V \alpha)(f_1, \ldots, f_{n-1}, f_n) \\
& = h \big( (\nabla_V Q)(f_1, \ldots, f_{n-1}), f_n \big) \\
& = h \big( (\nabla_V Q)(f_1, \ldots, f_{n-1}), Q(f_1, \ldots, f_{n-1}) \big) \\
& = 0,
\end{align*}
which is equivalent to $u^* (\nabla_V \alpha) = 0$.
\end{proof}

Now we show that a Smith map is $n$-harmonic if $d \alpha = 0$. Because the Hodge star $\ast$ on $\Lambda^{n-1} T \Sigma$ is invertible, with $\ast^{-1} = (-1)^{n-1} \ast$, the Smith equation~\eqref{eq:smith-u} is equivalent to
\begin{equation*}
Q (\Lambda^{n-1} d u) \, \ast = (-1)^{n-1} \frac{1}{(\sqrt{n})^{n-2}} |d u|^{n-2} (d u).
\end{equation*}
Here both sides of the above equation are sections of $T^* \Sigma \otimes u^* TM$. Thus, in order to show that a Smith map satisfies the $n$-harmonic map equation~\eqref{eq:n-harmonic}, we need to show that $\Div (Q (\Lambda^{n-1} d u) \, \ast) = 0$.

\begin{prop} \label{prop:smith-n-harmonic}
If $d \alpha = 0$, then $\Div (Q (\Lambda^{n-1} d u) \, \ast) = 0$, so any Smith map is an $n$-harmonic map.
\end{prop}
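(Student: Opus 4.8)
The plan is to sidestep differentiating the Smith equation and instead recognize $Q(\Lambda^{n-1}du)$ as a pulled-back bundle-valued form, reducing the claim to a pointwise identity on $M$ that uses $d\alpha = 0$ together with Corollary~\ref{cor:smith-n-harmonic}. First I would observe that, under the identification $\Lambda^{n-1}T^*\Sigma\otimes u^*TM = \Omega^{n-1}(\Sigma;u^*TM)$, the section $Q(\Lambda^{n-1}du)$ is exactly the pullback $u^*Q$ of the $TM$-valued $(n-1)$-form $Q\in\Omega^{n-1}(M;TM)$, and that $Q(\Lambda^{n-1}du)\ast$ is, up to an overall sign, the Hodge dual $\ast\bigl(Q(\Lambda^{n-1}du)\bigr)\in\Omega^1(\Sigma;u^*TM)$ — the VCP $\ast$ on $\Lambda^{n-1}T\Sigma$ corresponds under musical isomorphisms to the Hodge star on $(n-1)$-forms. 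Since the pullback connection on $u^*TM$ is metric, for a $u^*TM$-valued $1$-form $s$ the divergence $\Div s$ equals (up to sign) the codifferential $\delta^\nabla s$, and $\delta^\nabla = \pm\,\ast\, d^\nabla\,\ast$; hence $\Div s = 0$ if and only if $d^\nabla(\ast s) = 0$. Applying this with $s = Q(\Lambda^{n-1}du)\ast$, whose Hodge dual is $\pm Q(\Lambda^{n-1}du)$, it suffices to prove that the $u^*TM$-valued $(n-1)$-form $Q(\Lambda^{n-1}du) = u^*Q$ is $d^\nabla$-closed.

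By naturality of the exterior covariant derivative under pullback (using the pullback connection), $d^\nabla(u^*Q) = u^*\bigl(d^\nabla Q\bigr)$, where $d^\nabla Q\in\Omega^n(M;TM)$ is formed with the Levi-Civita connection of $h$. Because that connection is torsion-free, $d^\nabla Q$ is the antisymmetrization of $\nabla Q$; pairing it with an arbitrary vector $Z$, using $\nabla h = 0$ and the identity $h\bigl((\nabla_V Q)(W_1,\dots,W_{n-1}),W_n\bigr) = (\nabla_V\alpha)(W_1,\dots,W_n)$ from Lemma~\ref{lemma:smith-n-harmonic}, a short index computation gives, for all vectors $Y_1,\dots,Y_n,Z$ at a point of $M$,
\[
h\bigl((d^\nabla Q)(Y_1,\dots,Y_n),\,Z\bigr) = (d\alpha)(Y_1,\dots,Y_n,Z) - (-1)^n(\nabla_Z\alpha)(Y_1,\dots,Y_n).
\]
With $d\alpha = 0$ the first term drops out. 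I would then pull back along $u$: to evaluate $u^*(d^\nabla Q)$ on an oriented orthonormal frame $e_1,\dots,e_n$ of $T_x\Sigma$ one sets $Y_j = du(e_j)$, and for every $Z\in T_{u(x)}M$ the right-hand side becomes a multiple of $(\nabla_Z\alpha)(du(e_1),\dots,du(e_n)) = \bigl(u^*(\nabla_Z\alpha)\bigr)(e_1,\dots,e_n)$, which vanishes by Corollary~\ref{cor:smith-n-harmonic} (after extending $Z$ to a vector field on $M$). As $Z$ is arbitrary and $h$ is nondegenerate, $(d^\nabla Q)(du(e_1),\dots,du(e_n)) = 0$, so $u^*(d^\nabla Q)$ — a top-degree form on $\Sigma^n$ — vanishes at $x$, trivially so at points of $\crit_u$. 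Therefore $d^\nabla\bigl(Q(\Lambda^{n-1}du)\bigr) = 0$, hence $\Div\bigl(Q(\Lambda^{n-1}du)\ast\bigr) = 0$, and by the Smith equation in the form $Q(\Lambda^{n-1}du)\ast = (-1)^{n-1}(\sqrt n)^{-(n-2)}|du|^{n-2}du$ we conclude $\Div(|du|^{n-2}du) = 0$, i.e.\ $u$ is $n$-harmonic.

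The step I expect to be the main obstacle is the displayed identity relating $d^\nabla Q$, $d\alpha$, and $\nabla\alpha$: this is exactly the point at which, as already noted, one cannot simply differentiate the Smith equation, because doing so leaves uncontrolled first-order terms in $Q$ — equivalently, the torsion of the $\G$- or $\Spin{7}$-structure. These terms are killed only by combining the hypothesis $d\alpha = 0$ with the vanishing $u^*(\nabla_V\alpha) \equiv 0$ from Corollary~\ref{cor:smith-n-harmonic}, which in turn relies on the image of a Smith map being $\alpha$-calibrated. The only genuinely fiddly part is tracking the signs and index ranges in the antisymmetrization; everything else in the argument is formal.
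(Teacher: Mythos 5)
Your proof is correct, and it re-derives exactly the two cancellations that drive the paper's coordinate computation; the core analytic content — that only $d\alpha$ and a single $\nabla_Z\alpha$ term survive, the first killed by hypothesis and the second by $u^*(\nabla_Z\alpha)\equiv 0$ from Corollary~\ref{cor:smith-n-harmonic} — is identical. What is genuinely different is the packaging. The paper expands $\Div\bigl(Q(\Lambda^{n-1}du)\ast\bigr)$ in Riemannian normal coordinates and kills the second-derivative terms by pairing the symmetric $\partial^2 u$ against the antisymmetric $\mu_{i_1\cdots i_n}$, then massages the surviving $\nabla\alpha$ contraction into $d\alpha$ plus a remainder. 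You instead observe $Q(\Lambda^{n-1}du)=u^*Q$, trade $\Div$ for $d^{\nabla}$ via Hodge duality, use naturality $d^{\nabla}(u^*Q)=u^*(d^{\nabla}Q)$, and isolate the clean pointwise identity
\begin{equation*}
h\bigl((d^{\nabla}Q)(Y_1,\dots,Y_n),Z\bigr)=(d\alpha)(Y_1,\dots,Y_n,Z)-(-1)^n(\nabla_Z\alpha)(Y_1,\dots,Y_n)
\end{equation*}
on $M$, which the paper keeps implicit. Your route makes structurally transparent why the two hypotheses $d\alpha=0$ and the calibrated-image condition are each needed, at the cost of asking the reader to accept (or verify) naturality of $d^{\nabla}$ under pullback and the identification $\Div=\pm\ast d^{\nabla}\ast$ for bundle-valued $1$-forms; the paper's hands-on computation avoids invoking any such machinery. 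Both arguments ultimately rest on the same two lemmas, namely Lemma~\ref{lemma:smith-n-harmonic} and Corollary~\ref{cor:smith-n-harmonic}.
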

\begin{proof}
First consider an arbitrary section $B$ of $T^* \Sigma \otimes u^* TM$. Using local coordinates as in the end of $\S$\ref{sec:smith-examples} we can write $B = B^a_j d x^j \otimes \big( \frac{\partial}{\partial y^a} \circ u \big)$, which means that
\begin{equation*}
B_x = B^a_j(x) \, d x^j|_x \otimes \frac{\partial}{\partial y^a} \Big|_{u(x)}.
\end{equation*}
Now we consider the particular case $B = Q (\Lambda^{n-1} d u) \, \ast$, where $\ast : T \Sigma \to \Lambda^{n-1} T \Sigma$. Writing $\vol = \frac{1}{n!} \mu_{i_1 \cdots \mu_{i_n}} d x^{i_1} \wedge \cdots \wedge d x^{i_n}$ and $\alpha = \frac{1}{n!} \alpha_{b_1 \cdots b_n} d y^{b_1} \wedge \cdots \wedge d y^{b_n}$, we can compute that
\begin{equation} \label{eq:Bspecial}
B^a_j = \frac{1}{(n-1)!} \mu_{i_1 \cdots i_{n-1} j} g^{i_1 l_1} \cdots g^{i_{n-1} l_{n-1}} \frac{\partial u^{b_1}}{\partial x^{l_1}} \cdots \frac{\partial u^{b_{n-1}}}{\partial x^{l_{n-1}}} \alpha_{b_1 \cdots b_{n-1} c} h^{ca}.
\end{equation}
The connection $\nabla$ on $T^* \Sigma \otimes u^* TM$ is the tensor product of the Levi-Civita connection of $g$ on $T^* \Sigma$ and the pullback by $u$ of the Levi-Civita connection of $h$ on $TM$. Using the fact that $g$, $h$, and $\vol$ are all parallel, applying $g^{ij} \nabla_i$ to~\eqref{eq:Bspecial} gives
\begin{align*}
(\Div B)^a & = g^{ij} (\nabla_i B)^a_j \\
& = \frac{1}{(n-1)!} \mu_{i_1 \cdots i_{n-1} j} g^{ij} g^{i_1 l_1} \cdots g^{i_{n-1} l_{n-1}} h^{ca} \nabla_i \Big( \frac{\partial u^{b_1}}{\partial x^{l_1}} \cdots \frac{\partial u^{b_{n-1}}}{\partial x^{l_{n-1}}} \alpha_{b_1 \cdots b_{n-1} c} \Big).
\end{align*}
If we choose $(x^1, \ldots, x^n)$ to be Riemannian normal coordinates for $g$ centred at $x \in \Sigma$, and $(y^1, \ldots, y^m)$ to be Riemannian normal coordinates for $h$ centred at $u(x) \in M$, then the covariant derivative $\nabla_i$ evaluated \textit{at the point $x$} is the same as the partial derivative $\frac{\partial}{\partial x^i}$ at $x$, since the Christoffel symbols vanish. Thus, at $x$, we have
\begin{align*}
(\Div B)^a & = \frac{1}{(n-1)!} \mu_{i_1 \cdots i_{n-1} j} g^{ij} g^{i_1 l_1} \cdots g^{i_{n-1} l_{n-1}} h^{ca} \Big( \sum_{k=1}^{n-1} \frac{\partial^2 u^{b_k}}{\partial x^{l_k} \partial x^i} \frac{\partial u^b_{i_1}}{\partial x^{l_1}} \cdots \widehat{\frac{\partial u^b_{i_k}}{\partial x^{l_k}}} \cdots \frac{\partial u^{b_{n-1}}}{\partial x^{l_{n-1}}} \alpha_{b_1 \cdots b_{n-1} c} \Big) \\
& \qquad {} + \frac{1}{(n-1)!} \mu_{i_1 \cdots i_{n-1} j} g^{ij} g^{i_1 l_1} \cdots g^{i_{n-1} l_{n-1}} h^{ca} \frac{\partial u^{b_1}}{\partial x^{l_1}} \cdots \frac{\partial u^{b_{n-1}}}{\partial x^{l_{n-1}}} \Big( \nabla_{\! \! \frac{\partial}{\partial x^i}} \alpha_{b_1 \cdots b_{n-1} c}\Big).
\end{align*}
Since $\frac{\partial^2 u^{b_k}}{\partial x^{l_k} \partial x^i}$ is symmetric in $l_k, i$ and $\mu_{i_1 \cdots i_{n-1} j}$ is skew in $i_k, j$, all the terms in the first line above vanish. For the covariant derivative of $\alpha$, we use $\nabla_{\! \! \frac{\partial}{\partial x^i}} = \frac{\partial u^{b_n}}{\partial x^{i}} \nabla_{\! \! \frac{\partial \, \, \, \, \, }{\partial y^{b_n}}}$, which we write as $ \frac{\partial u^{b_n}}{\partial x^{i}} \nabla_{b_n}$ for simplicity. We thus have that at the point $x$, 
\begin{equation*}
(\Div B)^a = \frac{1}{(n-1)!} \mu_{i_1 \cdots i_{n-1} j} g^{ij} g^{i_1 l_1} \cdots g^{i_{n-1} l_{n-1}} h^{ca} \frac{\partial u^{b_1}}{\partial x^{l_1}} \cdots \frac{\partial u^{b_{n-1}}}{\partial x^{l_{n-1}}} \frac{\partial u^{b_n}}{\partial x^i} \Big( \nabla_{b_n} \alpha_{b_1 \cdots b_{n-1} c} \Big).
\end{equation*}
Relabel $j \to i_n$ and $i \to l_n$. We have
\begin{equation*}
(\Div B)^a = \frac{1}{(n-1)!} \mu_{i_1 \cdots i_n} g^{i_1 l_1} \cdots g^{i_n l_n} h^{ca} \frac{\partial u^{b_1}}{\partial x^{l_1}} \cdots \frac{\partial u^{b_n}}{\partial x^{l_n}} \Big( \nabla_{b_n} \alpha_{b_1 \cdots b_{n-1} c} \Big).
\end{equation*}
It follows by the skew-symmetry of $\mu_{i_1 \cdots i_n}$ that if we interchange any two of $b_1, \ldots, b_n$ in the last factor $ \nabla_{b_n} \alpha_{b_1 \cdots b_{n-1} c}$, then the right hand side above will change sign. Consequently, we can write
\begin{equation} \label{eq:smith-n-harmonic-2}
(\Div B)^a = \frac{1}{(n-1)!} \mu_{i_1 \cdots i_n} g^{i_1 l_1} \cdots g^{i_n l_n} h^{ca} \frac{\partial u^{b_1}}{\partial x^{l_1}} \cdots \frac{\partial u^{b_n}}{\partial x^{l_n}} \Big( \frac{1}{n} \sum_{k=1}^n \nabla_{b_k} \alpha_{b_1 \cdots b_{k-1} c b_{k+1} \cdots b_n} \Big).
\end{equation}
Now we use the hypothesis that $d \alpha = 0$. Since $d$ is the skew-symmetrization of $\nabla$, we have
\begin{equation*}
0 = (d \alpha)_{cb_1 \cdots b_n} = \nabla_c \alpha_{b_1 \ldots b_n} - \sum_{k=1}^n \nabla_{b_k} \alpha_{b_1 \cdots b_{k-1} c b_{k+1} \cdots b_n}.
\end{equation*}
Substituting the above into~\eqref{eq:smith-n-harmonic-2} gives
\begin{equation} \label{eq:smith-n-harmonic-3}
(\Div B)^a = \frac{1}{n!} \mu_{i_1 \cdots i_n} g^{i_1 l_1} \cdots g^{i_n l_n} h^{ca} \frac{\partial u^{b_1}}{\partial x^{l_1}} \cdots \frac{\partial u^{b_n}}{\partial x^{l_n}} \nabla_{c} \alpha_{b_1 \cdots b_n}.
\end{equation}
We observe that Corollary~\ref{cor:smith-n-harmonic} with $V = \frac{\partial}{\partial y^c}$ evaluated on $W_1, \ldots, W_n$ with $W_k = \frac{\partial \, \, \, }{\partial x^{l_k}}$ says that
\begin{equation*}
0 = (u^* \nabla_c \alpha)\big( \frac{\partial \, \, \, }{\partial x^{l_1}}, \ldots, \frac{\partial \, \, \, }{\partial x^{l_n}} \big) = \frac{\partial u^{b_1}}{\partial x^{l_1}} \cdots \frac{\partial u^{b_n}}{\partial x^{l_n}} \nabla_{c} \alpha_{b_1 \cdots b_n}.
\end{equation*}
Substituting the above into~\eqref{eq:smith-n-harmonic-3} gives $\Div B = 0$, as claimed.
\end{proof}

\subsection{Smith maps via currents}

Most of this section can be read independently from the rest of the paper, with the exception of Corollary~\ref{energyminimizing}, which is used later. We give two proofs of Corollary~\ref{energyminimizing}, one using the geometric measure theory framework of this section, and another using the energy identity of Theorem~\ref{thm:energy-identity}. The reader who is not interested in geometric measure theory can skip this entire section with the exception of Corollary~\ref{energyminimizing} and its second proof.

Here we continue using the notation of $\S$\ref{sec:smith-maps-defn}, and we assume that the $n$-form $\alpha$ associated with the vector cross product $Q$ on $M$ is closed. Also, we emphasize that we allow $\Sigma$ to have \emph{nonempty boundary}.

Recall that in $\S$\ref{sec:smith-maps-defn} and $\S$\ref{sec:energy-identity} the following result is effectively proved.
\begin{prop}\label{prop:smith-manifold-summary}
Suppose $\Sigma$ is compact, possibly with boundary, and let $u: \Sigma \to M$ be a Lipschitz map. Then 
\begin{equation}\label{eq:energy-identity2}
\int_{\Sigma}u^{\ast}\alpha \leq \frac{1}{(\sqrt{n})^n}\int_{\Sigma}|du|^{n} \vol.
\end{equation}
Moreover, we have the following two equivalent characterizations for when equality holds:
\begin{enumerate}
\item[(a)] $u$ is a Smith map.
\item[(b)] $u$ is conformally calibrating; that is, $u^{\ast}h = \frac{1}{n}|du|^{2}g$ and $u^{\ast}\alpha = \frac{1}{(\sqrt{n})^{n}}|du|^{n}\vol$.
\end{enumerate}
\end{prop}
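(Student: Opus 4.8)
The plan is to obtain \eqref{eq:energy-identity2} by integrating the pointwise generalized calibration inequality of Theorem~\ref{thm:generalized-calib}. Since $u$ is Lipschitz, Rademacher's theorem provides a full-measure subset of $\Sigma$ on which the differential $(du)_x \colon T_x\Sigma \to T_{u(x)}M$ exists, and on which $x \mapsto (du)_x$ is measurable and essentially bounded; in particular both $|du|^n\,\vol$ and $u^*\alpha$ are well-defined $L^\infty$ $n$-forms on the oriented manifold $\Sigma$. At such a point $x$ I would pick an oriented orthonormal basis $\{u_1,\dots,u_n\}$ of $(T_x\Sigma, g_x)$, so that $\vol_x(u_1,\dots,u_n)=1$ and hence $(u^*\alpha)_x = \big((du)_x^*\alpha_{u(x)}\big)(u_1,\dots,u_n)\,\vol_x$, and then apply Theorem~\ref{thm:generalized-calib} to the linear map $A=(du)_x$. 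This yields the pointwise inequality of $n$-forms
\[
\frac{1}{(\sqrt{n})^n}\,|du|_x^n\,\vol_x \;-\; (u^*\alpha)_x \;\ge\; 0,
\]
valid for almost every $x$, with equality at $x$ precisely when $(du)_x$ is a Smith map in the sense of Definition~\ref{defn:smith}. Integrating this over $\Sigma$ gives \eqref{eq:energy-identity2}.

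For the equality discussion, I would use that a nonnegative $L^1$ density on $\Sigma$ integrates to zero if and only if it vanishes almost everywhere. Hence equality holds in \eqref{eq:energy-identity2} if and only if $\frac{1}{(\sqrt{n})^n}|du|_x^n\,\vol_x = (u^*\alpha)_x$ for a.e. $x\in\Sigma$, which by the equality clause of Theorem~\ref{thm:generalized-calib} is exactly the assertion that $(du)_x$ is a Smith map for a.e. $x$ --- i.e. that $u$ is a Smith map in the almost-everywhere sense recorded in Remark~\ref{rmk:energy-works-sobolev}. That is characterization (a). Characterization (b) then follows by invoking the pointwise equivalence of Proposition~\ref{prop:smith-conf-calib} at a.e. $x$ (the zero map being covered trivially): a linear map $A\colon T_x\Sigma\to T_{u(x)}M$ is Smith if and only if it is conformally calibrating, i.e. $A^*h_{u(x)} = \tfrac{1}{n}|A|^2 g_x$ and $A^*\alpha_{u(x)} = \tfrac{1}{(\sqrt{n})^n}|A|^n\vol_x$. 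Reading these two identities off $A=(du)_x$ for a.e. $x$ gives $u^*h = \tfrac{1}{n}|du|^2 g$ and $u^*\alpha = \tfrac{1}{(\sqrt{n})^n}|du|^n\vol$ as $L^\infty$ tensors, which is (b).

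I do not anticipate a substantive obstacle here: the entire analytic content is already packaged in the linear-algebraic Theorem~\ref{thm:generalized-calib}, whose proof has been given. The only care required is routine measure-theoretic bookkeeping for Lipschitz maps --- existence and measurability of $du$ almost everywhere, and the observation that $u^*h$ and $u^*\alpha$ are legitimate $L^\infty$ tensors so that both sides of \eqref{eq:energy-identity2} and of the a.e.\ equality statement make sense. As noted in Remark~\ref{rmk:energy-works-sobolev}, the same argument in fact works for $u\in W^{1,n}(\Sigma;M)$.
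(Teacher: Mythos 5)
Your argument is correct and follows the same route the paper takes: the proposition is just the integrated form of the pointwise generalized calibration inequality (Theorem~\ref{thm:generalized-calib}), combined with the linear-algebraic equivalence of ``Smith'' and ``conformally calibrating'' from Proposition~\ref{prop:smith-conf-calib}. The paper's text simply remarks that the result ``is effectively proved'' in $\S$\ref{sec:smith-maps-defn} and $\S$\ref{sec:energy-identity}, leaving exactly the measure-theoretic bookkeeping you spell out (Rademacher's theorem for Lipschitz maps, a.e.\ existence and boundedness of $du$, and the observation that a nonnegative $L^1$ density with zero integral vanishes a.e.) unstated.
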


The main purpose of this section is to complete the inequality~\eqref{eq:energy-identity2} by showing that it is part of a longer string of inequalities which involves the volume of the image of $\Sigma$ under $u$, which has to be understood as a \emph{current} when $\crit_u$ is nonempty. In addition, we show that, in the equality case, the image current is calibrated by $\alpha$. Below we very briefly recall some relevant definitions from the theory of currents. For a general introduction to the subject, see for instance~\cite[Chapter 6]{Si}.

Following standard practice in geometric measure theory, we assume that $\Sigma$ and $M$ are isometrically embedded into $\R^{l}$ and $\R^{d}$, respectively, in which case the $n$-dimensional Hausdorff measure $H^{n}$ restricts to the volume measure $\vol$ on $\Sigma$. Next, to $\Sigma$ we associate an integral current $\current{\Sigma}$ given by 
\begin{equation*}
\current{\Sigma}(\omega) := \int_{\Sigma} \langle \omega_x, \xi_x \rangle dH^{n}(x) \quad \text{ for all }\omega \in \mathcal{D}^{n}(\R^{l}),
\end{equation*}
where $\mathcal{D}^{n}(\R^{l})$ denotes the space of all smooth compactly supported $n$-forms on $\R^{l}$, and $\xi_x \in \Lambda^{n}\R^{l}$ is the unit simple $n$-vector giving the orientation on $T_{x}\Sigma$ for each $x \in \Sigma$. (Here $T_{x}\Sigma$ is considered a subspace of $\R^{l}$ via the embedding $\Sigma \to \R^{l}$.)

Given a Lipschitz map $u: \Sigma \to M$, we regard it as a map into $\R^{d}$ and define the pushforward $u_{\#}\current{\Sigma}$ of $\current{\Sigma}$ by $u$, which is an integral current supported in $M$, by
\begin{equation*}
u_{\#}\current{\Sigma}(\eta) := \int_{\Sigma} \langle \eta_{u(x)}, \Lambda^{n}(du)_x \xi_x \rangle dH^{n}(x) \quad \text{ for all }\eta \in \mathcal{D}^{n}(\R^{d}).
\end{equation*}
Note that the right hand side is just $\current{\Sigma}(u^{\ast}\eta)$ when $u$ is smooth. Also, strictly speaking, on the right we should write $\overline{u}$ instead $u$, where $\overline{u}: \R^{l} \to \R^{d}$ is any compactly supported extension of $u$. (The choice of extension does not affect the definition of $u_{\#}\current{\Sigma}$.) 

The mass of the current $u_{\#}\current{\Sigma}$ is, by definition,
\begin{equation*}
\mathbf{M}(u_{\#}\current{\Sigma}) = \sup\left\{ u_{\#}\current{\Sigma}(\eta)\ \big|\ \eta \in \mathcal{D}^{n}(\R^{d}), \|\eta_y \|\leq 1 \text{ for all }y \in \R^{d} \right\},
\end{equation*}
where $\| \cdot \|$ denotes the comass norm of a covector. 

To pair the current $u_{\#}\current{\Sigma}$ with the calibrating $n$-form $\alpha$, we need to extend the latter to a compactly supported form on $\R^{d}$. Specifically, take $\delta_{0}$ sufficiently small so that the neighborhood
\begin{equation*}
\cN_{2\delta_{0}}(M) := \{x \in \R^{d}\ \rvert\ \dist_{\R^{d}}(x, M) < 2\delta_{0}\}
\end{equation*}
is strictly contained in a tubular neighborhood of $M$ in $\R^{d}$, and let $\pi: \cN_{2\delta_{0}}(M) \to M$ be the nearest-point projection. In addition, fix a cutoff function $\zeta$ which is identically $1$ on $\cN_{\frac{3\delta_{0}}{2}}(M)$ and vanishes outside of $\cN_{2\delta_{0}}(M)$. 
\begin{lemma} \label{extendG2}
The $n$-form $\tilde{\alpha}$ on $\R^{d}$ defined by $\tilde{\alpha} = \zeta \pi^{\ast}\alpha$ has the following properties.
\begin{enumerate}[(a)]
\item $d\tilde{\alpha} = 0$ on $\cN_{\frac{3\delta_{0}}{2}}(M)$.
\item For $y \in \R^{d}$, the comass norm of $\tilde{\alpha}_y$ satisfies $\|\tilde{\alpha}_y\| \leq 1$.
\item For $y \in M$ and $\tau \in \Lambda^{n}T_{y}M$, we have $\langle \tilde{\alpha}_y, \tau \rangle = \langle \alpha_y, \tau \rangle$, where on the left hand side we view $\tau$ as an element of $\Lambda^{n}\R^{d}$ via the embedding $M \to \R^{d}$.
\end{enumerate}
\end{lemma}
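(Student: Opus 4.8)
The plan is to check the three properties in increasing order of difficulty; (a) and (c) are essentially formal, and all of the content sits in (b).

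\emph{Properties (a) and (c).} On $\cN_{3\delta_0/2}(M)$ the cutoff $\zeta$ is identically $1$, so there $\tilde\alpha = \pi^{\ast}\alpha$; since we are assuming $d\alpha = 0$ on $M$ throughout this section, $d\tilde\alpha = \pi^{\ast}(d\alpha) = 0$ on $\cN_{3\delta_0/2}(M)$, which is (a). For (c), fix $y \in M$. Then $y \in \cN_{3\delta_0/2}(M)$, so $\zeta(y) = 1$; moreover, because $\pi$ is the nearest-point projection, $\pi(y) = y$ and $(d\pi)_y \colon \R^d \to T_yM$ is the orthogonal projection associated to the splitting $\R^d = T_yM \oplus (T_yM)^{\perp}$. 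In particular $(d\pi)_y$ restricts to the identity on $T_yM$, so $\Lambda^n(d\pi)_y$ fixes every $\tau \in \Lambda^nT_yM$, whence $\langle\tilde\alpha_y, \tau\rangle = \langle(\pi^{\ast}\alpha)_y, \tau\rangle = \langle\alpha_{\pi(y)}, \Lambda^n(d\pi)_y\tau\rangle = \langle\alpha_y, \tau\rangle$, as required.

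\emph{Property (b).} Since $\tilde\alpha$ vanishes off $\cN_{2\delta_0}(M)$ and $0 \le \zeta \le 1$, it suffices to bound the comass of $(\pi^{\ast}\alpha)_y$ for $y \in \cN_{2\delta_0}(M)$. Write $y = x + \nu$ with $x = \pi(y) \in M$, $\nu \in (T_xM)^{\perp}$ and $|\nu| = \dist_{\R^d}(y, M) < 2\delta_0$. Working in Fermi coordinates on the tubular neighborhood one finds that $(d\pi)_y$ is the orthogonal projection of $\R^d$ onto $T_xM$ followed by $(\mathrm{Id} - S_\nu)^{-1}$, where $S_\nu$ is the shape operator of $M$ at $x$ in the direction $\nu$. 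Since $M$ is compact its second fundamental form is uniformly bounded, so $\|S_\nu\|_{\mathrm{op}} \le \Lambda|\nu| < 2\Lambda\delta_0$ for a constant $\Lambda$ depending only on the embedding $M \hookrightarrow \R^d$; hence $\|(d\pi)_y\|_{\mathrm{op}} \le (1 - 2\Lambda\delta_0)^{-1}$. For any unit simple $n$-vector $\xi = v_1 \wedge \cdots \wedge v_n$ at $y$ (the $v_i$ orthonormal) we then obtain, using that $\alpha$ is a calibration on $M$ (Proposition~\ref{prop:VCP-calibration}),
\begin{equation*}
\langle(\pi^{\ast}\alpha)_y, \xi\rangle = \langle\alpha_x,\ (d\pi)_y v_1 \wedge \cdots \wedge (d\pi)_y v_n\rangle \le |(d\pi)_y v_1 \wedge \cdots \wedge (d\pi)_y v_n| \le \|(d\pi)_y\|_{\mathrm{op}}^{\,n} \le (1 - 2\Lambda\delta_0)^{-n}.
\end{equation*}
Taking the supremum over $\xi$ bounds $\|(\pi^{\ast}\alpha)_y\|$, and therefore $\|\tilde\alpha_y\|$, by $(1 - 2\Lambda\delta_0)^{-n}$.

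\emph{The main obstacle.} The delicate point is exactly this last estimate with the clean constant $1$: the argument above only delivers $(1 - 2\Lambda\delta_0)^{-n}$, which is strictly larger than $1$ — a reflection of the elementary fact that the nearest-point projection onto a non-convex submanifold need not be $1$-Lipschitz. Since property (c) pins down $\tilde\alpha|_M = \alpha$ and so rules out an overall rescaling of $\tilde\alpha$, bridging the gap to $\|\tilde\alpha_y\| \le 1$ is the step I would scrutinize most carefully: either one verifies that the subsequent uses of the lemma only ever take $\delta_0 \to 0$, so that a factor $(1 - 2\Lambda\delta_0)^{-n} \to 1$ is harmless and the estimate can be read in that asymptotic sense, or one produces a sharper comass bound (the only leverage being the degeneracy of $(d\pi)_y$ in the normal directions, which gives no slack in the worst case of an $n$-vector tangent to $M$). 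The one other place needing care, though it is routine, is the Fermi-coordinate identification of $(d\pi)_y$ recorded above, since that is where the geometry of the embedding — as opposed to bookkeeping — genuinely enters.
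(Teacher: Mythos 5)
Your proofs of (a) and (c) coincide with the paper's, which says only that these parts ``can be verified by direct computation.'' For (b), the paper's entire argument is the chain
\begin{equation*}
\left| \langle \tilde{\alpha}_y, \tau \rangle \right|
= \zeta(y) \left| \langle \alpha_{\pi(y)}, \Lambda^{n} (d\pi)_y \tau \rangle \right|
\leq \|\alpha_{\pi(y)}\| \, | \Lambda^{n} (d\pi)_y \tau |
\leq | \Lambda^{n} (d\pi)_y \tau |,
\end{equation*}
followed by the bare assertion $\| \tilde{\alpha}_y \| \leq | \Lambda^{n} (d\pi)_y | \leq 1$. That last inequality is exactly the step you flag, and it is \emph{not} a general fact: a nearest-point projection onto a non-convex submanifold is not $1$-Lipschitz, and your Fermi-coordinate identification $(d\pi)_y = (\mathrm{Id} - S_\nu)^{-1}\circ P$ together with the resulting bound $(1-2\Lambda\delta_0)^{-n}$ is the honest estimate. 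For a fixed $\delta_0 > 0$ this factor genuinely can exceed $1$ at points on the concave side of $M$, so the paper's proof, as written, leaves the same gap you describe; your objection is correct and your ``main obstacle'' paragraph is the right diagnosis.

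What rescues the section is that in every subsequent use --- Lemma~\ref{pmepointwise}, Proposition~\ref{conformalcalibrated}, and Corollaries~\ref{minimizingproperty} and \ref{energyminimizing} --- the form $\tilde{\alpha}$ is only ever paired with integral currents that are pushforwards of Lipschitz maps \emph{into $M$}, so only the comass of $\tilde{\alpha}_y$ at points $y \in M$ enters (the mass of a current supported in $M$ is computed against test forms restricted to an arbitrary neighborhood of $M$). The paper already signals this: the hypothesis of Lemma~\ref{pmepointwise} is $\|\eta_y\| \le 1$ for all $y \in M$, not all $y \in \R^d$. And for $y \in M$ one has $\pi(y) = y$, hence $\nu = 0$ and $S_\nu = 0$ in your formula, so $(d\pi)_y$ is genuinely the orthogonal projection of $\R^d$ onto $T_yM$ and is norm-nonincreasing; your own Hadamard step then gives $\|\tilde{\alpha}_y\| \le 1$ with no extra factor. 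Your suggestion of ``$\delta_0 \to 0$'' is not directly available since $\delta_0$ is fixed once and for all, but the spirit is right: the clean fix is either to restrict (b) to $y \in M$, or to weaken it to $\|\tilde{\alpha}_y\| \le (1-C\delta_0)^{-n}$ for general $y$ --- both are exactly what your proposal produces, and both suffice for everything downstream.
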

\begin{proof}
Statements (a) and (c) can be verified by direct computation. For (b), take a unit simple $n$-vector $\tau \in \bigwedge\nolimits^{n}\R^{d}$. Then by the definition of $\tilde{\alpha}$ we have
\begin{align*}
\left| \langle \tilde{\alpha}_y, \tau \rangle \right| & = \zeta(y) \left| \langle \alpha_{\pi(y)}, \Lambda^{n} (d\pi)_y \tau \rangle \right|\\
& \leq \|\alpha_{\pi(y)}\| \, | \Lambda^{n} (d\pi)_y \tau | \leq | \Lambda^{n} (d\pi)_y \tau |,
\end{align*}
where we used that fact that $\|\alpha_{\pi(y)}\| \leq 1$ in the last inequality. Therefore, taking the \textit{supremum} over all unit simple $n$-vectors $\tau$, we obtain
\begin{equation*}
\| \tilde{\alpha}_y \| \leq | \Lambda^{n} (d\pi)_y | \leq 1,
\end{equation*}
which implies the desired estimate.
\end{proof}

\begin{lemma} \label{pmepointwise} 
Let $u : \Sigma \to M$ be a Lipschitz map. For any $x \in \Sigma$ such that $(du)_x$ exists, and any $\eta \in \mathcal{D}^{n}(\R^{d})$ with $\|\eta_y\| \leq 1$ for all $y \in M$, we have
\begin{equation*}
\langle \eta_{u(x)}, \Lambda^{n} (du)_x \xi_x \rangle \leq | \Lambda^{n} (du)_x \xi_x | \leq \frac{1}{(\sqrt{n})^{n}}|(du)_x|^{n}.
\end{equation*}
\end{lemma}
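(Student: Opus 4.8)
The plan is to reduce both inequalities to purely linear-algebraic facts already established in Section~\ref{sec:linear}, applied pointwise to the linear map $A = (du)_x : T_x\Sigma \to T_{u(x)}M$. Fix $x \in \Sigma$ at which the derivative exists. Pick an oriented orthonormal basis $\{e_1,\ldots,e_n\}$ of $T_x\Sigma$, so that $\xi_x = e_1 \wedge \cdots \wedge e_n$ is the unit simple $n$-vector orienting $T_x\Sigma$, and $\Lambda^n(du)_x\,\xi_x = (Ae_1)\wedge\cdots\wedge(Ae_n) \in \Lambda^n\R^d$.

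For the \textbf{first inequality}, the key point is that $\eta$ has comass norm $\|\eta_y\| \le 1$ for all $y \in M$, and $u(x) \in M$. The pairing $\langle \eta_{u(x)}, \Lambda^n(du)_x\,\xi_x\rangle$ is the evaluation of a covector of comass $\le 1$ on the $n$-vector $\Lambda^n(du)_x\,\xi_x$; by the very definition of comass (the dual norm to the mass/Euclidean norm on simple $n$-vectors), this pairing is bounded by the mass of the $n$-vector, which for a simple $n$-vector equals its Euclidean (Hilbert--Schmidt) norm $|\Lambda^n(du)_x\,\xi_x|$. Strictly, I should note that $\Lambda^n(du)_x\,\xi_x$ is simple (being the wedge of the vectors $Ae_i$), so mass equals Euclidean norm here; this gives the left inequality $\langle \eta_{u(x)}, \Lambda^n(du)_x\,\xi_x\rangle \le |\Lambda^n(du)_x\,\xi_x|$.

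For the \textbf{second inequality}, I apply Corollary~\ref{cor:hadamard-n} directly to the linear map $A = (du)_x : T_x\Sigma \to \R^d$ (with $\dim T_x\Sigma = n$): since $\{e_1 \wedge \cdots \wedge e_n\}$ is an orthonormal basis of $\Lambda^n T_x\Sigma$, equation~\eqref{eq:matrix-norm} applied to $\Lambda^n A$ gives $|\Lambda^n(du)_x\,\xi_x| = |(\Lambda^n A)(e_1 \wedge \cdots \wedge e_n)| = |\Lambda^n A|$, and Corollary~\ref{cor:hadamard-n} yields $|\Lambda^n A| \le \frac{1}{(\sqrt n)^n}|A|^n = \frac{1}{(\sqrt n)^n}|(du)_x|^n$. (When $A = 0$ both sides vanish and the inequality is trivial, so the hypothesis of Corollary~\ref{cor:hadamard-n} that $A$ be nonzero is harmless.) Chaining the two bounds gives the claim. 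I do not expect a serious obstacle here; the only point requiring mild care is the first inequality, namely justifying that pairing a unit-comass covector against a simple $n$-vector is controlled by the Euclidean norm of that $n$-vector — but this is immediate from the definition of comass as the norm dual to mass, together with the fact that mass and Euclidean norm agree on simple $n$-vectors.
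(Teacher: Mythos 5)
Your proof is correct and follows essentially the same route as the paper: the paper simply says the lemma is ``immediate from the proof of Theorem~\ref{thm:generalized-calib},'' and that proof is exactly the two-step chain you unpack here, namely the comass bound (replacing the specific calibration $\alpha_Q$ by the arbitrary unit-comass form $\eta$) for the first inequality and Hadamard's inequality (Corollary~\ref{cor:hadamard-n}, together with the identification $|\Lambda^n A(\xi_x)| = |\Lambda^n A|$ via equation~\eqref{eq:matrix-norm}) for the second.
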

\begin{proof}
This is immediate from the proof of the generalized calibration inequality, Theorem~\ref{thm:generalized-calib}.
\end{proof}

We now give the precise version of the main result of this section along with two corollaries.
\begin{prop} \label{conformalcalibrated}
Let $u : \Sigma \to M$ be a Lipschitz map. 
\begin{enumerate}[(a)]
\item We have
\begin{equation} \label{pmeinequality}
u_{\#}\current{\Sigma}(\tilde{\alpha}) \leq \mathbf{M}(u_{\#}\current{\Sigma}) \leq \int_{\Sigma}| \Lambda^{n} (du)_x \xi_x | dH^{n}(x) \leq \frac{1}{(\sqrt{n})^{n}}\int_{\Sigma}|du|^{n}dH^{n}.
\end{equation}
\item If $u$ is a Smith map, then all the inequalities in~\eqref{pmeinequality} become equalities. In particular, the integral current $u_{\#}\current{\Sigma}$ is calibrated by $\tilde{\alpha}$. That is, it is a positive $\tilde{\alpha}$-current in the sense of~\cite[Section II.4]{HL}.
\end{enumerate}
\end{prop}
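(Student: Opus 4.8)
The plan is to verify the chain~\eqref{pmeinequality} one inequality at a time using Lemmas~\ref{extendG2} and~\ref{pmepointwise}, and then, when $u$ is a Smith map, to identify its leftmost and rightmost terms; since a finite chain of inequalities with equal endpoints must consist entirely of equalities, this will give part~(b) at once.

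For part~(a): because $M$ is compact, $\tilde{\alpha} = \zeta\,\pi^{\ast}\alpha$ is a compactly supported smooth $n$-form on $\R^{d}$, and by Lemma~\ref{extendG2}(b) it satisfies $\|\tilde{\alpha}_{y}\| \leq 1$ for all $y$, so it is an admissible competitor in the supremum defining $\mathbf{M}(u_{\#}\current{\Sigma})$, which gives the first inequality $u_{\#}\current{\Sigma}(\tilde{\alpha}) \leq \mathbf{M}(u_{\#}\current{\Sigma})$. For the other two, fix any $\eta \in \mathcal{D}^{n}(\R^{d})$ with $\|\eta_{y}\| \leq 1$ for all $y$. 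By Rademacher's theorem $(du)_{x}$ exists for a.e.\ $x$, and there Lemma~\ref{pmepointwise} yields $\langle \eta_{u(x)}, \Lambda^{n}(du)_{x}\xi_{x}\rangle \leq |\Lambda^{n}(du)_{x}\xi_{x}| \leq \tfrac{1}{(\sqrt{n})^{n}}|(du)_{x}|^{n}$; integrating against $dH^{n}$ (which restricts to $\vol$ on $\Sigma$) and taking the supremum over such $\eta$ produces both $\mathbf{M}(u_{\#}\current{\Sigma}) \leq \int_{\Sigma}|\Lambda^{n}(du)_{x}\xi_{x}|\,dH^{n}(x)$ and $\int_{\Sigma}|\Lambda^{n}(du)_{x}\xi_{x}|\,dH^{n}(x) \leq \tfrac{1}{(\sqrt{n})^{n}}\int_{\Sigma}|du|^{n}\,dH^{n}$.

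For part~(b), assume $u$ is a Smith map. By the equality characterization in Proposition~\ref{prop:smith-manifold-summary}, $u$ is conformally calibrating, so $u^{\ast}\alpha = \tfrac{1}{(\sqrt{n})^{n}}|du|^{n}\vol$ a.e., hence $\int_{\Sigma}u^{\ast}\alpha = \tfrac{1}{(\sqrt{n})^{n}}\int_{\Sigma}|du|^{n}\,dH^{n}$ equals the rightmost term of~\eqref{pmeinequality}. On the other hand, for a.e.\ $x$ the simple $n$-vector $\Lambda^{n}(du)_{x}\xi_{x}$ lies in $\Lambda^{n}T_{u(x)}M$ with $u(x) \in M$, so Lemma~\ref{extendG2}(c) gives $\langle \tilde{\alpha}_{u(x)}, \Lambda^{n}(du)_{x}\xi_{x}\rangle = \langle \alpha_{u(x)}, \Lambda^{n}(du)_{x}\xi_{x}\rangle$, which expanded in an oriented orthonormal frame is exactly the density of $u^{\ast}\alpha$ with respect to $\vol$ at $x$; integrating, $u_{\#}\current{\Sigma}(\tilde{\alpha}) = \int_{\Sigma}u^{\ast}\alpha$, the leftmost term. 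The two endpoints thus coincide, so~\eqref{pmeinequality} is a chain of equalities. Finally, writing $T = u_{\#}\current{\Sigma}$, the equality $T(\tilde{\alpha}) = \mathbf{M}(T)$ together with $\|\tilde{\alpha}_{y}\| \leq 1$ forces, by the equality case of the mass--comass duality (see~\cite[Section~II.4]{HL}), the orienting unit $n$-vector $\vec{T}$ of $T$ to satisfy $\langle \tilde{\alpha}, \vec{T}\rangle = 1$ for $\|T\|$-a.e.\ point; since $\supp T \subseteq M$ and $d\tilde{\alpha} = 0$ near $M$ by Lemma~\ref{extendG2}(a), this is precisely the statement that $T$ is a positive $\tilde{\alpha}$-current.

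The only genuinely delicate point is the measure-theoretic bookkeeping for Lipschitz (rather than smooth) $u$ --- that $(du)_{x}$, the pullback $u^{\ast}\alpha$, and the defining integrand of $u_{\#}\current{\Sigma}$ are all defined and mutually compatible for a.e.\ $x$ --- together with the invocation of the equality case of the mass--comass duality for the last assertion; both are standard in geometric measure theory but deserve to be stated with care.
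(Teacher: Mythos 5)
Your proof is correct and follows essentially the same route as the paper: part (a) integrates Lemma~\ref{pmepointwise} and takes the supremum, and part (b) pins down the equality case by appealing to Proposition~\ref{prop:smith-manifold-summary}. The one thing you make explicit that the paper leaves terse is the identification $u_{\#}\current{\Sigma}(\tilde{\alpha}) = \int_{\Sigma} u^{\ast}\alpha$ via Lemma~\ref{extendG2}(c), which is indeed the small step needed to see that the two endpoints of the chain coincide; the paper compresses this into ``follows directly from Proposition~\ref{prop:smith-manifold-summary},'' so your version is a slightly more careful rendering of the same argument.
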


\begin{rmk} \label{conformalcalibrated-rmk}
The third term in~\eqref{pmeinequality} coincides with the mass of the pushforward of $\Sigma$ as a varifold via the map $u$.
\end{rmk}

\begin{proof}[Proof of Proposition~\ref{conformalcalibrated}]
We begin with part (a). Applying Lemma~\ref{pmepointwise} to any $\eta \in \mathcal{D}^{n}(\R^{d})$ with $\| \eta_y \| \leq 1$ everywhere, integrating over $\Sigma$, and then taking the supremum over all such $\eta$, we see that
\begin{equation*}
\mathbf{M}(u_{\#}\current{\Sigma}) \leq \int_{\Sigma}| \Lambda^{n} (du)_x \xi_x |dH^{n}(x) \leq \frac{1}{(\sqrt{n})^{n}}\int_{\Sigma}|du|^{n}dH^{n}.
\end{equation*}
To finish the proof of (a), we simply note that $u_{\#}\current{\Sigma}(\tilde{\alpha}) \leq \mathbf{M}(u_{\#}\current{\Sigma})$ by the definition of $\mathbf{M}(u_{\#}\current{\Sigma})$, because $\|\tilde\alpha_y\| \leq 1$ for all $y \in \R^{d}$.

The first claim of part (b) follows directly from Proposition~\ref{prop:smith-manifold-summary}. The second part holds because any integral current $T$ supported in $M$ and satisfying $T(\tilde{\alpha}) = \mathbf{M}(T)$ must be a positive $\tilde{\alpha}$-current.
\end{proof}

\begin{cor} \label{minimizingproperty}
Let $A, B$ be compact subsets of $\Sigma$ with smooth boundary and let $u: A \to M$ and $v : B \to M$ be Lipschitz maps, with $u$ being a Smith map. Suppose further that $u_{\#}\current{A} + v_{\#}\current{B} = \partial S$ for some $(n + 1)$-current supported in $M$. Then
\begin{equation*}
\int_{A}|du|^{n}dH^{n} \leq \int_{B}|dv|^{n}dH^{n}.
\end{equation*}
\end{cor}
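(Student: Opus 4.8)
The plan is to pair the closed calibrating form $\tilde{\alpha}$ from Lemma~\ref{extendG2} against the current identity $u_{\#}\current{A} + v_{\#}\current{B} = \partial S$, and then read off the conclusion from the inequality chain of Proposition~\ref{conformalcalibrated}. First I would note that, since $\tilde{\alpha} \in \mathcal{D}^{n}(\R^{d})$, the definition of the boundary of a current gives $(\partial S)(\tilde{\alpha}) = S(d\tilde{\alpha})$. By hypothesis $S$ is supported in $M$, while $d\tilde{\alpha}$ vanishes on the open neighborhood $\cN_{\frac{3\delta_{0}}{2}}(M)$ of $M$ by Lemma~\ref{extendG2}(a); hence $S(d\tilde{\alpha}) = 0$. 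Therefore, pairing both sides of the current identity with $\tilde{\alpha}$,
\[
u_{\#}\current{A}(\tilde{\alpha}) \;=\; -\, v_{\#}\current{B}(\tilde{\alpha}).
\]

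Next I would evaluate the two sides separately. Since $u \colon A \to M$ is a Smith map and $A$ is a compact manifold with boundary (embedded in $\R^{l}$, and hence playing the role of $\Sigma$), Proposition~\ref{conformalcalibrated}(b) says that all inequalities in~\eqref{pmeinequality} are equalities for $u$, so in particular
\[
u_{\#}\current{A}(\tilde{\alpha}) \;=\; \frac{1}{(\sqrt{n})^{n}} \int_{A} |du|^{n}\, dH^{n}.
\]
For the other side, $\|{-\tilde{\alpha}_y}\| = \|\tilde{\alpha}_y\| \leq 1$ for all $y \in \R^{d}$ by Lemma~\ref{extendG2}(b), so $-\tilde{\alpha}$ is admissible in the supremum defining the mass $\mathbf{M}(v_{\#}\current{B})$; combining this with the first two inequalities of~\eqref{pmeinequality} (part (a) of Proposition~\ref{conformalcalibrated}) applied to $v$ gives
\[
-\, v_{\#}\current{B}(\tilde{\alpha}) \;\leq\; \mathbf{M}(v_{\#}\current{B}) \;\leq\; \frac{1}{(\sqrt{n})^{n}} \int_{B} |dv|^{n}\, dH^{n}.
\]

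Chaining the three displays yields $\frac{1}{(\sqrt{n})^{n}}\int_{A}|du|^{n}\,dH^{n} \leq \frac{1}{(\sqrt{n})^{n}}\int_{B}|dv|^{n}\,dH^{n}$, and cancelling the constant $(\sqrt{n})^{-n}$ finishes the proof. I do not anticipate a real obstacle: the only mildly delicate points are checking that "$S$ supported in $M$'' suffices to kill the pairing $S(d\tilde{\alpha})$ — it does, because $d\tilde{\alpha}$ vanishes on an entire open neighborhood of $M \supseteq \supp S$ by Lemma~\ref{extendG2}(a), not merely on $M$ itself — and keeping the sign bookkeeping straight, i.e. absorbing the minus sign into the symmetric estimate $|v_{\#}\current{B}(\tilde{\alpha})| \leq \mathbf{M}(v_{\#}\current{B})$ rather than allowing it to reverse an inequality.
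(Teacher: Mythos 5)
Your proof is correct and is essentially the paper's proof: both are the standard calibration argument, pairing the closed form $\tilde{\alpha}$ against $u_{\#}\current{A} + v_{\#}\current{B} = \partial S$, eliminating $S$ via Stokes (using Lemma~\ref{extendG2}(a)), and then using the equality case of~\eqref{pmeinequality} for $u$ alongside the mass bound for $v$. The only difference is that you unpack the step ``calibrated currents are mass-minimizing in their homology class'' explicitly, whereas the paper cites it as a known fact; your version is a touch more self-contained but the mechanism is identical.
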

\begin{proof}[Proof of Corollary~\ref{minimizingproperty}]
By Proposition~\ref{conformalcalibrated}(b), the current $u_{\#}\current{A}$ is calibrated by $\tilde{\alpha}$ and hence is mass-minimizing among all currents homologous to it. Since $-v_{\#}\current{B}$ is homologous to $u_{\#}\current{A}$ by assumption, we get 
\begin{equation*}
\mathbf{M}(u_{\#}\current{A}) \leq \mathbf{M}(-v_{\#}\current{B}) = \mathbf{M}(v_{\#}\current{B}) \leq \frac{1}{(\sqrt{n})^{n}}\int_{B}|dv|^{n}dH^{n},
\end{equation*}
where the last inequality follows from~\eqref{pmeinequality}. We complete the proof by noting that 
\begin{equation*}
\mathbf{M}(u_{\#}\current{A}) = \frac{1}{(\sqrt{n})^{n}}\int_{A}|du|^{n}dH^{n},
\end{equation*}
because of Proposition~\ref{conformalcalibrated}(b).
\end{proof}

The following result is used in $\S$\ref{sec:energy-annuli} to derive an estimate crucial to the proofs of Theorems~\ref{thm:no-energy-loss} and~\ref{thm:zero-neck-length}. As mentioned at the beginning of this section, we give two proofs, the second of which depends only on Theorem~\ref{thm:energy-identity} and does not use the geometric measure theory framework above.

\begin{cor} \label{energyminimizing}
Suppose $\Sigma = \partial W$ where $W$ is a compact oriented Riemannian $(n + 1)$-manifold isometrically embedded in $\R^{l}$, and let $u: A \to M$ be a Lipschitz Smith map on a compact set $A \subseteq \Sigma$ with smooth boundary. Suppose moreover that $u$ has a Lipschitz extension $f: \Sigma \to M$, which extends further to a Lipschitz map $F : W \to M$. Then we have 
\begin{equation*}
\frac{1}{(\sqrt{n})^{n}}\int_{A}|du|^{n}dH^{n} \leq \frac{1}{(\sqrt{n})^{n}}\int_{\Sigma\setminus A} |df|^{n}dH^{n}.
\end{equation*}
\end{cor}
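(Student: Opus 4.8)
The plan is to give two proofs, both resting on a single observation: since $\Sigma = \partial W$, the pulled-back form $f^{\ast}\alpha$ integrates to zero over $\Sigma$, because $f$ extends to $F \colon W \to M$ and $d\alpha = 0$. The first proof extracts the statement from the current-theoretic machinery already developed; the second extracts it from the energy identity.

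\emph{First proof (via currents).} I would simply invoke Corollary~\ref{minimizingproperty}. Put $B = \overline{\Sigma \setminus A}$, which is again a compact subset of $\Sigma$ with smooth boundary $\partial B = \partial A$, and let $v = f|_{B}$. Since $A \cap B = \partial A$ has $H^{n}$-measure zero, $\current{A} + \current{B} = \current{\Sigma}$ as integral currents, hence $u_{\#}\current{A} + v_{\#}\current{B} = f_{\#}\current{\Sigma}$. Because $\partial\current{W} = \current{\Sigma}$ and $F|_{\Sigma} = f$, one has $\partial(F_{\#}\current{W}) = F_{\#}\partial\current{W} = F_{\#}\current{\Sigma} = f_{\#}\current{\Sigma}$; thus $S := F_{\#}\current{W}$ is an $(n+1)$-current supported in $M$ with $u_{\#}\current{A} + v_{\#}\current{B} = \partial S$. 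Corollary~\ref{minimizingproperty} then gives $\int_{A}|du|^{n}\,dH^{n} \leq \int_{B}|dv|^{n}\,dH^{n} = \int_{\Sigma \setminus A}|df|^{n}\,dH^{n}$, and dividing by $(\sqrt{n})^{n}$ finishes the proof.

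\emph{Second proof (via the energy identity).} Since $u$ is a Smith map on $A$, Proposition~\ref{prop:smith-manifold-summary}(b) gives $\tfrac{1}{(\sqrt{n})^{n}}|du|^{n}\vol = u^{\ast}\alpha = f^{\ast}\alpha$ on $A$, so $\tfrac{1}{(\sqrt{n})^{n}}\int_{A}|du|^{n}\,dH^{n} = \int_{A}f^{\ast}\alpha$. As $f = F|_{\partial W}$, the class $f_{\ast}[\Sigma]$ vanishes in $H_{n}(M)$, so by the energy identity (Theorem~\ref{thm:energy-identity}, extended to Lipschitz maps by smooth approximation) $\int_{\Sigma}f^{\ast}\alpha = [\alpha]\cdot f_{\ast}[\Sigma] = 0$; hence $\int_{A}f^{\ast}\alpha = -\int_{\Sigma \setminus A}f^{\ast}\alpha$. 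Finally, $-\alpha$ is again a calibration (swap two arguments in~\eqref{eq:comassone}), so the pointwise inequality proved in Theorem~\ref{thm:generalized-calib}, applied to $-\alpha$ and $(df)_{x}$, gives $-f^{\ast}\alpha \leq \tfrac{1}{(\sqrt{n})^{n}}|df|^{n}\vol$ almost everywhere on $\Sigma \setminus A$. Integrating and chaining the identities above yields $\tfrac{1}{(\sqrt{n})^{n}}\int_{A}|du|^{n}\,dH^{n} = -\int_{\Sigma \setminus A}f^{\ast}\alpha \leq \tfrac{1}{(\sqrt{n})^{n}}\int_{\Sigma \setminus A}|df|^{n}\,dH^{n}$.

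\emph{Main obstacle.} The geometric content is slight; the delicate points are purely technical. In the first proof I must justify $\partial(F_{\#}\current{W}) = f_{\#}\current{\Sigma}$ and the fact that the common boundary $\partial A$ contributes nothing, which I would handle by citing the standard facts about boundaries and pushforwards of integral currents (e.g.~\cite{Si}). In the second proof, the analogous issue is validity of Stokes' theorem, equivalently of Theorem~\ref{thm:energy-identity}, for merely Lipschitz maps, again dispatched by smooth approximation. The one genuinely useful remark — and the reason the second proof can dispense with geometric measure theory — is that a calibration $\alpha$ makes the calibration inequality \emph{two-sided}, since $-\alpha$ is also a calibration.
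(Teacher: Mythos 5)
Your proposal is correct and follows essentially the same route as the paper, which also presents exactly these two proofs: the first via Corollary~\ref{minimizingproperty} with $B = \overline{\Sigma\setminus A}$, $v = f|_{B}$, and $S = F_{\#}\current{W}$, and the second via the energy identity applied to the calibration $-\alpha$ together with the observation that $\int_{\Sigma}f^{\ast}\alpha = \int_{W}F^{\ast}(d\alpha) = 0$ by Stokes's theorem. The only cosmetic difference is that you phrase the vanishing of $\int_{\Sigma}f^{\ast}\alpha$ homologically ($f_{\ast}[\Sigma] = 0$ in $H_{n}(M)$) whereas the paper applies Stokes directly on $W$; these are the same computation.
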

\begin{proof}[Proof of Corollary~\ref{energyminimizing}]
This follows at once from Corollary~\ref{minimizingproperty} with $A$ being the same, $B = \overline{\Sigma\setminus A}$, and $v = f\rvert_{B}$, and $S = F_{\#}\current{W}$. (Note that $\partial S = f_{\#}\current{\Sigma} = u_{\#}\current{A} + v_{\#}\current{B}$.)

We also give a second proof, using the energy identity of Theorem~\ref{thm:energy-identity}. By the energy identity applied to the calibration $-\alpha$ and the map $f$ with domain $\Sigma \setminus A$, we have 
\begin{equation*}
\frac{1}{(\sqrt{n})^{n}}\int_{\Sigma\setminus A}|df|^{n}dH^{n} \geq -\int_{\Sigma \setminus A}f^{\ast}\alpha.
\end{equation*}
On the other hand, recalling that $f\rvert_{A} = u$ is a Smith map and using the equality case of the energy identity (with the calibration $\alpha$), we get
\begin{equation*}
\frac{1}{(\sqrt{n})^{n}}\int_{A}|df|^{n}dH^{n} = \int_{A}f^{\ast}\alpha = \int_{\Sigma}f^{\ast}\alpha - \int_{\Sigma \setminus A}f^{\ast}\alpha.
\end{equation*}
Combining the two relations above, we arrive at
\begin{equation*}
\frac{1}{(\sqrt{n})^{n}}\int_{A}|du|^{n}dH^{n} \leq \int_{\Sigma}f^{\ast}\alpha + \frac{1}{(\sqrt{n})^{n}}\int_{\Sigma \setminus A}|df|^{n}dH^{n}.
\end{equation*}
To finish, it suffices to prove that $\int_{\Sigma}f^{\ast}\alpha = 0$. For that, note that by Stokes's theorem we have
\begin{equation*}
\int_{\Sigma}f^{\ast}\alpha = \int_{\Sigma}F^{\ast}\alpha = \int_{W}F^{\ast}(d\alpha) = 0,
\end{equation*}
as claimed.
\end{proof}

\section{Analytic aspects of Smith maps} \label{sec:int-reg-cpt}

\textbf{Note.} Beginning in this section and for the remainder of the paper, for concreteness we restrict to the case of associative Smith maps $u \colon (\Sigma^3, g, \ast, \mu) \to (M^7, h, J, \varphi)$, although all of our results also apply to Cayley Smith maps after making the obvious modifications. 

Thus, $(\Sigma, g)$ is an oriented Riemannian $3$-manifold with volume form $\mu$ and Hodge star operator $\ast$, and $(M, \varphi)$ is a $7$-manifold with a $\G$-structure $\varphi$ inducing a Riemannian metric $h$ and an associated $2$-fold vector cross product which we denote by $J$ in analogy with almost complex structures. In addition, \emph{recall that $M$ is compact without boundary and that it is isometrically embedded into $(\R^{d}, g_{\euc})$.}

The present section is divided into five parts. In $\S$\ref{sec:smith-eq} we derive an explicit useful form of the Smith equation in local coordinates. In $\S$\ref{sec:int-reg-sub} we prove the $\ep$-regularity theorem, which is Theorem~\ref{thm:ep-reg}. In $\S$\ref{sec:mean-value-r-s} we establish a mean value inequality, interior regularity, and a removable singularity theorem. $\S$\ref{sec:basic-convergence} is devoted to the basic convergence result for a sequence of Smith maps with uniformly bounded $3$-energy, which gives $C^{1}$-subsequential convergence away from a set of isolated points and is important for the bubble tree construction in $\S$\ref{sec:bubble-tree}. Finally in $\S$\ref{sec:energy-lower-bound} we mention two well-known results giving positive lower bounds for the $3$-energy of maps from $S^{3}$ which are ``nontrivial'' in some sense.

\subsection{The Smith equation} \label{sec:smith-eq}

In this section and the next, Greek indices run from $1$ to $3$ and Latin indices run from $1$ to $d$, where $M$ is isometrically embedded into $(\R^d, g_{\euc})$. In particular we have global coordinates $y^1, \ldots, y^d$ on $\R^d$ and the vector fields $\paop{y^{1}}, \ldots, \paop{y^{d}}$ are orthonormal. We extend the vector cross product $J$ on $M$ to $\R^{d}$ as follows. Let $\delta_{0}$ be chosen such that
\begin{equation*}
\cN_{2\delta_{0}}(M) : = \{x \in \R^{d} \ \rvert\ \dist_{\R^{d}}(x, M) < 2\delta_{0}\}
\end{equation*}
is strictly contained in a tubular neighborhood of $M$ in $\R^{d}$, and let $\pi$ denote the nearest-point projection onto $M$. Moreover, let $\zeta$ be a cutoff function which is identically equal to $1$ on $\cN_{\frac{3\delta_{0}}{2}}(M)$ and vanishes outside of $\cN_{2\delta_{0}}(M)$. Then we define
\begin{equation*}
\tilde{J}_y (X, Y) = \begin{cases}
\zeta(y) J_{\pi(y)} \big( (d\pi)_{y} X, (d\pi)_{y} Y \big) & \text{if } y \in \cN_{2\delta_{0}}(M), \\
0 & \text{otherwise}.
\end{cases}
\end{equation*}
Note that $\tilde{J}_y$ is still bilinear and skew-symmetric for all $y \in \R^{d}$, and moreover 
\begin{equation*}
\tilde{J}_y (X, Y) = J_y (X, Y) \text{ whenever }y \in M\ \text{and }X, Y \in T_{y}M.
\end{equation*}
Note that $\tilde{J}$ is \emph{not} a vector cross product on $\R^d$. However, this does not affect any of the results where $\tilde{J}$ is used, as only the bilinearity and skew-symmetry are required whenever we use $\tilde{J}$.

We write $\tilde{J}$ in coordinates as
\begin{equation*}
J^{i}_{jk}(y) \paop{y^{i}} = \tilde{J}_y \Big( \paop{y^{j}}, \paop{y^{k}} \Big).
\end{equation*}
Note that we have dropped the tilde in the notation for the components.

In addition, because all of the results in $\S$\ref{sec:int-reg-sub} and $\S$\ref{sec:mean-value-r-s} are local, in these two sections we assume the domain is the ball $B(2) \subseteq \R^{3}$ equipped with a Riemannian metric $g = (g_{\alpha\beta})$. Moreover we assume $B(2)$ is given the standard orientation, so that $\partial_{1}, \partial_{2}, \partial_{3}$ is a positive basis everywhere. For a map $u: B(2) \to \R^{d}$ we write the components of its differential as
\begin{equation*}
u_{\alpha}^{i} \paop{y^{i}} = \pa{u}{x^{\alpha}} = : u_{\alpha}.
\end{equation*}

We need to rewrite the Smith equation~\eqref{eq:smith-u} in terms of the components of $\tilde{J}$ and $\mu$. It is more convenient to precompose both sides of~\eqref{eq:smith-u} with $\ast$. Thus we have (recall $n=3$ from now on) that
\begin{equation} \label{eq:smith-assoc}
\frac{1}{\sqrt{3}} |du| du = J (\Lambda^2 du) \ast.
\end{equation}
If we write $\partial_{\lambda}$ for $\paop{x^{\lambda}}$, then $\ast \partial_{\lambda} = \frac{1}{2}C_{\lambda}^{\alpha \beta} \partial_{\alpha} \wedge \partial_{\beta}$ for some functions $C_{\lambda}^{\alpha \beta} = - C_{\lambda}^{\beta \alpha}$. Using that $\ast$ is an isometry and $\ast^2 = 1$ in three dimensions, we find that
\begin{align*}
\mu_{\gamma \delta \lambda} & = g( \ast (\partial_{\gamma} \wedge \partial_{\delta}), \partial_{\lambda}) = g( \partial_{\gamma} \wedge \partial_{\delta}, \ast \partial_{\lambda}) \\
& = \tfrac{1}{2} C_{\lambda}^{\alpha \beta} g( \partial_{\gamma} \wedge \partial_{\delta}, \partial_{\alpha} \wedge \partial_{\beta} ) = \tfrac{1}{2} C_{\lambda}^{\alpha \beta} ( g_{\gamma \alpha} g_{\delta \beta} - g_{\gamma \beta} g_{\delta \alpha}) \\
& = C_{\lambda}^{\alpha \beta} g_{\gamma \alpha} g_{\delta \beta}.
\end{align*}
Thus we conclude that $C_{\lambda}^{\alpha \beta} = \mu_{\lambda \gamma \delta} g^{\gamma \alpha} g^{\delta \beta} = \mu^{\alpha \beta \gamma} g_{\alpha \lambda}$. The second expression is preferable, since $\mu^{\alpha \beta \gamma}$ are the components of the $3$-vector that is metric dual to the Riemannian volume form $\mu$. Thus
\begin{equation*}
\mu^{\alpha \beta \gamma} = \frac{1}{\sqrt{g}} \perm^{\alpha \beta \gamma},
\end{equation*}
where $\sqrt{g}$ denotes $\sqrt{\det ( g_{\alpha \beta} )}$ and $\perm^{\alpha \beta \gamma}$ is the permutation symbol on three letters. That is, $\perm^{\sigma(1) \sigma(2) \sigma(3)} = \sgn(\sigma)$ for $\sigma \in S_3$. Thus
\begin{equation} \label{eq:ast-coords}
\ast \partial_{\lambda} = \frac{1}{2 \sqrt{g}} \perm^{\alpha \beta \gamma} g_{\lambda \gamma} \partial_{\alpha} \wedge \partial_{\beta}.
\end{equation}

Using~\eqref{eq:ast-coords} and the notation defined above, equation~\eqref{eq:smith-assoc} becomes
\begin{align*}
\frac{1}{\sqrt{3}}|du| u^{i}_{\lambda} \paop{y^{i}} & = \left[J\circ (du \wedge du)\circ \ast \right](\partial_{\lambda}) \\
& = \frac{1}{2 \sqrt{g} } \left[ J \circ (du \wedge du) \right] (\perm^{\alpha \beta \gamma} g_{\lambda \gamma} \partial_{\alpha} \wedge \partial_{\beta}) \\
& = \frac{1}{2 \sqrt{g}} \perm^{\alpha \beta \gamma} g_{\lambda \gamma} J(u_{\alpha}, u_{\beta}) \\
& = \frac{1}{2 \sqrt{g} } \perm^{\alpha \beta \gamma} g_{\lambda \gamma} (J^{i}_{jk}\circ u) u_{\alpha}^{j} u_{\beta}^{k} \paop{y^{i}}.
\end{align*}
Finally, equating the $\paop{y^{i}}$ components of each side and skew-symmetrizing in $\alpha, \beta$, we conclude that
\begin{equation} \label{eq:smith-local-2}
\frac{1}{\sqrt{3}}|du| u^{i}_{\lambda} = \frac{1}{4 \sqrt{g} } \perm^{\alpha \beta \gamma} g_{\lambda \gamma} (J^{i}_{jk}\circ u) ( u_{\alpha}^{j} u_{\beta}^{k} - u_{\beta}^{j} u_{\alpha}^{k} ),
\end{equation}
which is the form of the Smith equation that we require in the next section.

\subsection{The $\ep$-regularity theorem} \label{sec:int-reg-sub}

The main result of this section, namely Theorem~\ref{thm:ep-reg}, is an $\ep$-regularity theorem in the spirit of Sacks--Uhlenbeck~\cite{SU}. The proof is accomplished in a number of stages. First, in Proposition~\ref{2ndorderSmith} we derive from the Smith equation~\eqref{eq:smith-local-2} a system of second order elliptic equations that resembles the $n$-harmonic map system. Then, combining the work of Uhlenbeck~\cite{U} and some results from harmonic analysis, which are collected in Appendix~\ref{sec:harmonic-analysis-appendix} (see also Remarks~\ref{hardyspacermk} and~\ref{rmk:compensation}), we prove a decay estimate on the $3$-energy under a smallness assumption in Proposition~\ref{energydecay}. From there and using standard theory, we derive $C^{0, \alpha}$-regularity for any $\alpha \in (0, 1)$ in Proposition~\ref{holderregularity}. Finally, choosing $\alpha$ sufficiently close to $1$ and adapting an argument from~\cite{DM}, which is itself based on a refinement~\cite{GiMa} of Uhlenbeck's work, allows us to prove Theorem~\ref{thm:ep-reg}.

We point out that the system~\eqref{2ndorderSmithequation2} we derive in Proposition~\ref{2ndorderSmith} belongs to the class of $n$-harmonic type systems studied by Mou--Wang in~\cite{MW}. Using the same harmonic analysis tools that we use below, but rather differently, they obtained a H\"older continuity result similar to Corollary~\ref{holderregularity} for some $\alpha \in (0, 1)$. They also obtained analogues of Theorem~\ref{thm:int-reg} (the first conclusion) and Proposition~\ref{prop:energy-gap} using arguments different from ours.

\begin{prop} \label{2ndorderSmith}
Let $u \in W^{1, 3}(B(2); M)$ be an associative Smith map. That is, $u$ satisfies the Smith equation~\eqref{eq:smith-local-2} almost everywhere on $B(2)$. Then the following hold:
\begin{enumerate}[(a)]
\item We have
\begin{equation} \label{2ndorderSmithequation}
\int_{B(2)}\langle |du|du, d\eta \rangle_{g} d\mu_{g} = -\frac{\sqrt{3}}{4}\int_{B(2)} \perm^{\alpha \beta \gamma} (u^{j} - \xi^{j}) \big( (J^{i}_{jk}\circ u)_{\alpha} u^{k}_{\beta} - (J^{i}_{jk}\circ u)_{\beta} u^{k}_{\alpha} \big) \eta_{\gamma}^{i} dx,
\end{equation}
for any constant vector $\xi \in \R^{d}$ and any $\eta \in W^{1, 3}_{0} \cap L^{\infty} (B(2); \R^{d})$.
\item We have
\begin{equation} \label{2ndorderSmithequation2}
\int_{B(2)}\langle |du|du, d\eta \rangle_{g} d\mu_{g} = -\frac{\sqrt{3}}{2}\int_{B(2)} \perm^{\alpha \beta \gamma} (J^{i}_{jk}\circ u)_{\gamma} u^{j}_{\alpha}u^{k}_{\beta} \eta^i dx.
\end{equation}
for any $\eta \in W^{1, 3}_{0} \cap L^{\infty} (B(2); \R^{d})$.
\end{enumerate}
\end{prop}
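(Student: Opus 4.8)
The plan is to take the Smith equation in the local form~\eqref{eq:smith-local-2}, multiply by a test one-form $\eta$, and integrate, converting the zeroth-order nonlinearity in $u$ on the right-hand side into a divergence-type expression via integration by parts. Concretely, I would start from~\eqref{eq:smith-local-2}, pair the $i$-th component against $\eta^i_\lambda$ (the relevant combination that produces $\langle |du|du, d\eta\rangle_g$ after contracting with the appropriate metric factors and the volume density $\sqrt{g}\,dx = d\mu_g$), and sum over $i,\lambda$. The left-hand side of~\eqref{eq:smith-local-2}, after multiplying by $\sqrt{g}$ and integrating against $\eta$, is exactly $\int_{B(2)}\langle |du|du, d\eta\rangle_g\,d\mu_g$ up to the constant $\tfrac{1}{\sqrt{3}}$, which I would track carefully. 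On the right-hand side one obtains an integrand of the schematic form $\perm^{\alpha\beta\gamma}(J^i_{jk}\circ u)\,u^j_\alpha u^k_\beta\,\eta^i_\gamma$ (after using $g_{\lambda\gamma}g^{\lambda\mu}$ to raise/lower and absorbing the factor $\tfrac{1}{\sqrt g}$ against the volume density).

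For part (a), the key step is integration by parts: since $\eta \in W^{1,3}_0 \cap L^\infty$, for any constant $\xi \in \R^d$ one may replace $\eta^i_\gamma$ by moving a derivative off the factor $(J^i_{jk}\circ u)\,u^j_\alpha$ using the product rule and the antisymmetry of $\perm^{\alpha\beta\gamma}$. The crucial observation is that the ``pure Hessian'' term $(J^i_{jk}\circ u)\,\partial_\gamma u^j_\alpha\cdot(\text{stuff})$ — more precisely the term where the derivative hits $u^j$ producing a second derivative $u^j_{\alpha\gamma}$ — vanishes by symmetry of $u^j_{\alpha\gamma}$ in $\alpha,\gamma$ against the antisymmetry of $\perm^{\alpha\beta\gamma}$ in those indices; this is exactly the ``compensation'' structure advertised in the introduction to this subsection. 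What survives is the term where $\partial_\gamma$ lands on $J^i_{jk}\circ u$, i.e.\ $(J^i_{jk}\circ u)_\gamma u^j_\alpha u^k_\beta$, but now paired against $(u^j-\xi^j)$ after one integration by parts moves a derivative from $u^j$ onto the rest; the constant $\xi^j$ may be freely inserted because its contribution integrates to zero (its derivative vanishes). Antisymmetrizing the resulting expression in $\alpha,\beta$ and bookkeeping the combinatorial factor produces the constant $-\tfrac{\sqrt3}{4}$ and the two-term structure $(J^i_{jk}\circ u)_\alpha u^k_\beta - (J^i_{jk}\circ u)_\beta u^k_\alpha$ in~\eqref{2ndorderSmithequation}.

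For part (b), one integrates by parts in the \emph{opposite} direction: starting again from the raw pairing $\int \perm^{\alpha\beta\gamma}(J^i_{jk}\circ u)\,u^j_\alpha u^k_\beta\,\eta^i_\gamma\,dx$, move the $\partial_\gamma$ off of $\eta^i$ and onto the product $(J^i_{jk}\circ u)\,u^j_\alpha u^k_\beta$. Again the terms where $\partial_\gamma$ hits $u^j_\alpha$ or $u^k_\beta$ vanish by the symmetric-vs-antisymmetric index cancellation (Schwarz plus antisymmetry of $\perm$), leaving only the term $\perm^{\alpha\beta\gamma}(J^i_{jk}\circ u)_\gamma u^j_\alpha u^k_\beta\,\eta^i$, with constant $-\tfrac{\sqrt3}{2}$ as in~\eqref{2ndorderSmithequation2}. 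Equivalently, one can deduce (b) directly from (a) by choosing $\xi = 0$ and integrating by parts once more, using the product rule on $(J^i_{jk}\circ u)_\alpha u^k_\beta u^j$ and exploiting the same antisymmetrization.

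The main obstacle — and the place requiring care rather than cleverness — is the justification that all these integrations by parts are legitimate for a merely $W^{1,3}$ map $u$ with test function only in $W^{1,3}_0 \cap L^\infty$: each product such as $(J^i_{jk}\circ u)_\gamma u^j_\alpha u^k_\beta$ lies in $L^1$ (three factors in $L^3$, since $D(J\circ u) \in L^3$ by the chain rule and smoothness of $J$, using $u\in L^\infty$ as $M$ is compact), so the pairings make sense, but one should approximate $u$ by smooth maps (or $\eta$ by smooth compactly supported ones) to carry out the manipulations rigorously and then pass to the limit. The cancellation of the Hessian terms is formally immediate for smooth $u$; for weak solutions it holds in the limit. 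I also expect mild bookkeeping friction in matching the constants $\tfrac{1}{\sqrt3}$, $\tfrac14$, $\tfrac12$ and the factors of $\sqrt{g}$ versus $d\mu_g$, but nothing conceptually deep.
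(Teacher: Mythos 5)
Your plan coincides with the paper's proof in method and in the points requiring care: pair the Smith equation~\eqref{eq:smith-local-2} against $\eta$, perform a single integration by parts, kill the pure second-derivative terms by the Schwarz rule against the antisymmetry of $\perm^{\alpha\beta\gamma}$, and make the formal manipulation rigorous by approximating $u$ with smooth maps $v_n$ (exactly the approximation the paper uses). The only blemishes are cosmetic index slips in your description of part (a): the derivative being moved is $\partial_\alpha$ off $u^j_\alpha$ (not $\partial_\gamma$), the ``dying'' Hessian is $u^k_{\alpha\beta}$ (symmetric in $\alpha,\beta$, the pair in which $\perm^{\alpha\beta\gamma}$ is contracted), and the surviving $J$-derivative carries the index $\alpha$ (antisymmetrized with the twin $\beta$-term), not $\gamma$ — but these relabel away and do not affect the argument.
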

\begin{proof}
The idea is to compute $\int_{B(2)}\langle |du|du, d\eta \rangle_{g} d\mu_{g}$ using the Smith equation~\eqref{eq:smith-local-2} and then integrate by parts with the help of smooth approximations. We only give the details for part (a), since part (b) is similar. 

For any $\eta$ as in the statement of the theorem, using~\eqref{eq:smith-local-2} and that $\xi$ is a constant vector, we have
\begin{align*}
\int_{B(2)}\langle |du|du, d\eta \rangle_{g} d\mu_{g} & = \int_{B(2)} |du|u_{\lambda}^{i}\eta^{i}_{\delta}g^{\lambda\delta} \sqrt{g} dx \\
& = \frac{\sqrt{3}}{4}\int_{B(2)} \perm^{\alpha \beta \gamma} (J^{i}_{jk}\circ u) ( u_{\alpha}^{j}u_{\beta}^{k} - u_{\beta}^{j}u_{\alpha}^{k} ) \eta_{\gamma}^{i} dx \\
& = \frac{\sqrt{3}}{4}\int_{B(2)} \perm^{\alpha \beta \gamma} (J^{i}_{jk}\circ u) \left( (u^{j} - \xi^{j})_{\alpha}u_{\beta}^{k} - (u^{j} - \xi^{j})_{\beta} u_{\alpha}^{k} \right) \eta_{\gamma}^{i} dx.
\end{align*}
We want to integrate by parts, but $u$ does not necessarily possess weak second derivatives. Thus we take a sequence $(v_{n})$ in $C^{\infty}(\overline{B(2)}; \R^{d})$ such that $\|v_{n} - u\|_{1, 3; B(2)} \to 0$ as $n \to \infty$, and consider
\begin{align*}
& \int_{B(2)} \perm^{\alpha \beta \gamma} (J^{i}_{jk}\circ u) \left( (u^{j} - \xi^{j})_{\alpha}v_{n, \beta}^{k} - (u^{j} - \xi^{j})_{\beta}v_{n, \alpha}^{k} \right) \eta_{\gamma}^{i} dx \\
& \qquad = \perm^{\alpha \beta \gamma} \int_{B(2)} (J^{i}_{jk}\circ u) (u^{j} - \xi^{j})_{\alpha}v_{n, \beta}^{k} \eta_{\gamma}^{i} dx - \perm^{\alpha \beta \gamma} \int_{B(2)}(J^{i}_{jk}\circ u) (u^{j} - \xi^{j})_{\beta}v_{n, \alpha}^{k}\eta_{\gamma}^{i} dx.
\end{align*}
The first term can be treated as follows.
\begin{align*}
& \int_{B(2)} (J^{i}_{jk}\circ u) (u^{j} - \xi^{j})_{\alpha}v_{n, \beta}^{k} \eta_{\gamma}^{i} dx\\
& \qquad = -\int_{B(2)} (u^{j} - \xi^{j}) \left( v^{k}_{n, \beta\alpha}\eta_{\gamma}^{i}(J^{i}_{jk}\circ u) + v^{k}_{n, \beta}\eta^{i}_{\gamma\alpha}(J^{i}_{jk}\circ u) + v^{k}_{n, \beta} \eta_{\gamma}^{i}(J^{i}_{jk}\circ u)_{\alpha} \right).
\end{align*}
Similarly, 
\begin{align*}
& \int_{B(2)} (J^{i}_{jk}\circ u) (u^{j} - \xi^{j})_{\beta}v_{n, \alpha}^{k} \eta_{\gamma}^{i} dx\\
& \qquad = -\int_{B(2)} (u^{j} - \xi^{j}) \left( v^{k}_{n, \beta\alpha}\eta_{\gamma}^{i}(J^{i}_{jk}\circ u) + v^{k}_{n, \alpha}\eta^{i}_{\beta\gamma}(J^{i}_{jk}\circ u) + v^{k}_{n, \alpha} \eta_{\gamma}^{i}(J^{i}_{jk}\circ u)_{\beta} \right).
\end{align*}
Subtracting this from the previous identity, multiplying by $\perm^{\alpha \beta \gamma}$, and summing over $\alpha, \beta, \gamma$, we arrive at 
\begin{align*}
& \int_{B(2)} \perm^{\alpha \beta \gamma} (J^{i}_{jk}\circ u) \left( (u^{j} - \xi^{j})_{\alpha}v_{n, \beta}^{k} - (u^{j} - \xi^{j})_{\beta}v_{n, \alpha}^{k} \right) \eta_{\gamma}^{i} dx\\
& \qquad = - \int_{B(2)} \perm^{\alpha \beta \gamma} (u^{j} - \xi^{j})\left( (J^{i}_{jk}\circ u)_{\alpha} v^{k}_{n, \beta} - (J^{i}_{jk}\circ u)_{\beta} v^{k}_{n, \alpha} \right) \eta_{\gamma}^{i} dx.
\end{align*}
Letting $n \to \infty$ completes the proof.
\end{proof}

\begin{rmk} \label{hardyspacermk}
Equation~\eqref{2ndorderSmithequation} is useful because, fixing any $i, j$, the vector field $X$ on $B(2)$ defined by
\begin{equation*}
X^{\gamma} = \perm^{\alpha \beta \gamma} \big( (J^{i}_{jk}\circ u)_{\alpha} u^{k}_{\beta} - (J^{i}_{jk}\circ u)_{\beta} u^{k}_{\alpha} \big) 
\end{equation*}
lies in $L^{\frac{3}{2}}(B(2))$ and furthermore has divergence zero in $B(2)$ in the sense of distributions. Hence, by Proposition~\ref{divcurlhardy}, we know that for any $\eta \in W^{1, 3}_{0}(B(1); \R^{d})$ and fixed $i,j$, the function
\begin{equation*}
\perm^{\alpha \beta \gamma} \big( (J^{i}_{jk}\circ u)_{\alpha} u^{k}_{\beta} - (J^{i}_{jk}\circ u)_{\beta} u^{k}_{\alpha} \big) \eta_{\gamma}^{i}
\end{equation*}
lies in the Hardy space $\cH^{1}(\R^{3})$ and has $\cH^{1}$-norm bounded by
\begin{equation*}
C\|X\|_{\frac{3}{2}; B(2)} \|D\eta\|_{3; B(1)} \leq C\|Du\|^{2}_{3; B(2)} \|D\eta\|_{3; B(1)}.
\end{equation*} 
\end{rmk}

Next, we establish the regularity of $W^{1, 3}$-Smith maps on $B(2)$. 
\begin{prop} \label{energydecay}
For all $\alpha \in (0, 1)$, there exist constants $\ep_{0}, \theta \in (0, \frac{1}{9})$, depending only on $\alpha, M, J$ and on the embedding $M \to \R^{d}$, such that if the metric $g$ on $B(2)$ satisfies
\begin{equation} \label{metricnearflat}
|g - g_{\euc}|_{0; B(2)} + |Dg|_{0; B(2)} \leq \ep_{0},
\end{equation}
and if $u: B(2) \to M$ is a $W^{1, 3}$ associative Smith map with
\begin{equation} \label{smallenergy}
\int_{B(2)} |Du|^{3}dx < \ep_{0},
\end{equation}
then we have
\begin{equation} \label{decayinequality}
\int_{B(\theta)}|Du|^{3}dx \leq \theta^{3\alpha} \int_{B(2)}|Du|^{3}dx.
\end{equation}
\end{prop}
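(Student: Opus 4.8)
The plan is a one-scale comparison of $u$ with a $3$-harmonic map, as in the $\ep$-regularity arguments of Sacks--Uhlenbeck~\cite{SU} and Giaquinta--Modica~\cite{GiMa}; the essential new ingredient is the ``compensation'' structure recorded in Remark~\ref{hardyspacermk}, which upgrades what would otherwise be a merely quadratic bound on the error term to a cubic one, and it is precisely this gain that makes the estimate close. Throughout, $C$ denotes a constant depending only on $M$, $J$ and the embedding $M \to \R^d$. By~\eqref{metricnearflat} we may treat $g$ as a small $C^1$-perturbation of $g_\euc$, so the pointwise norms $|\cdot|_g$, the element $d\mu_g$, and the monotonicity and ellipticity constants appearing below all differ from their Euclidean counterparts by factors tending to $1$ as $\ep_0 \to 0$; such discrepancies will be absorbed into errors of relative size $O(\ep_0)$.

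\textbf{The comparison map.} Let $h \in u + W^{1,3}_0(B(1);\R^d)$ be the minimizer of $w \mapsto \int_{B(1)}|Dw|^3\,dx$; it is unique by strict convexity, solves $\Div(|Dh|Dh)=0$ on $B(1)$, and satisfies $\int_{B(1)}|Dh|^3 \le \int_{B(1)}|Du|^3$. Post-composing with the nearest-point projection onto a Euclidean ball containing $M$ does not increase the energy, so $\|h\|_{L^\infty(B(1))} \le C$; hence $\eta := u - h$, extended by zero to $B(2)$, is an admissible test function in Proposition~\ref{2ndorderSmith}(a). By Uhlenbeck's regularity theorem~\cite{U}, $h \in C^{1,\beta_0}_\loc(B(1))$ with the interior gradient bound $\sup_{B(1/2)}|Dh|^3 \le C\fint_{B(1)}|Dh|^3$; consequently, for every $\theta \in (0,\tfrac12)$,
\begin{equation*}
\int_{B(\theta)}|Dh|^3\,dx \;\le\; C\,\theta^3\int_{B(1)}|Du|^3\,dx \;\le\; C\,\theta^3\int_{B(2)}|Du|^3\,dx.
\end{equation*}

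\textbf{The error estimate.} Let $P := \int_{B(1)}|D(u-h)|^3\,dx$. Applying Proposition~\ref{2ndorderSmith}(a) with $\eta = u - h$ and $\xi = \fint_{B(1)}u$, and subtracting the Euler--Lagrange identity $\int_{B(1)}\langle |Dh|Dh, D(u-h)\rangle\,dx = 0$, the monotonicity inequality $\langle |\zeta_1|\zeta_1 - |\zeta_2|\zeta_2,\,\zeta_1 - \zeta_2\rangle \ge c\,|\zeta_1 - \zeta_2|^3$ bounds the left-hand side below by $cP$, up to an $O(\ep_0)\|Du\|_{3;B(2)}^2\,P^{1/3}$ discrepancy coming from the mismatch of the $g$- and Euclidean bilinear forms. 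On the right-hand side we use the compensation phenomenon: for each fixed pair of indices the field $X^\gamma = \perm^{\alpha\beta\gamma}\big((J^i_{jk}\circ u)_\alpha u^k_\beta - (J^i_{jk}\circ u)_\beta u^k_\alpha\big)$ is divergence-free and lies in $L^{3/2}(B(2))$ with $\|X\|_{3/2;B(2)} \le C\|Du\|_{3;B(2)}^2$, so by Remark~\ref{hardyspacermk} the product $X^\gamma (u-h)^i_\gamma$ belongs to $\cH^1(\R^3)$ with norm $\le C\|Du\|_{3;B(2)}^2\|D(u-h)\|_{3;B(1)}$; pairing it against $u^j - \xi^j$ via $\cH^1$--$\mathrm{BMO}$ duality and using $\|u - \xi\|_{\mathrm{BMO}(B(1))} \le C\|Du\|_{3;B(1)}$ gives
\begin{equation*}
\Big| \int_{B(1)}\langle |Du|\,Du,\, D(u-h)\rangle_g\,d\mu_g \Big| \;\le\; C\,\|Du\|_{3;B(2)}^3\,\|D(u-h)\|_{3;B(1)} \;=\; C\,\|Du\|_{3;B(2)}^3\,P^{1/3}.
\end{equation*}
Hence $cP \le C\|Du\|_{3;B(2)}^3\,P^{1/3}$ (the $O(\ep_0)$-term being of the same or lower order), so $P^{2/3} \le C\|Du\|_{3;B(2)}^3$ and therefore, by~\eqref{smallenergy},
\begin{equation*}
P \;\le\; C\Big(\int_{B(2)}|Du|^3\Big)^{3/2} \;\le\; C\,\ep_0^{1/2}\int_{B(2)}|Du|^3\,dx.
\end{equation*}

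\textbf{Conclusion and main obstacle.} Combining the two displayed bounds,
\begin{equation*}
\int_{B(\theta)}|Du|^3 \;\le\; C\int_{B(\theta)}|Dh|^3 + C\,P \;\le\; \big(C\,\theta^3 + C\,\ep_0^{1/2}\big)\int_{B(2)}|Du|^3 .
\end{equation*}
Since $\alpha < 1$, we first fix $\theta \in (0,\tfrac19)$ so small that $C\theta^3 \le \tfrac12\theta^{3\alpha}$, then fix $\ep_0 \in (0,\tfrac19)$ so small that $C\ep_0^{1/2} \le \tfrac12\theta^{3\alpha}$ and all the perturbative estimates above are valid; this yields~\eqref{decayinequality}, with $\theta$ and $\ep_0$ depending only on $\alpha$, $M$, $J$ and the embedding. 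The crux is the passage from a quadratic to a cubic bound on the right-hand side of~\eqref{2ndorderSmithequation}: a direct H\"older estimate ($L^{3/2}\times L^3$ for $X$ against $D\eta$, times $L^\infty$ for $u-\xi$) produces only a factor $\|Du\|_{3;B(2)}^2$, which is too weak to absorb $P^{1/3}$ and close the estimate, whereas routing the argument through the divergence-free structure and $\cH^1$--$\mathrm{BMO}$ duality replaces the $L^\infty$ bound on $u-\xi$ by the small seminorm $\|Du\|_{3;B(1)}$ and supplies the missing power; checking the stability of Uhlenbeck's gradient bound and of the monotonicity constant under the metric perturbation is a routine but necessary side point.
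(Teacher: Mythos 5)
Your proof is essentially the same as the paper's: you introduce the homogeneous $3$-harmonic comparison map on $B(1)$ with the same boundary values, get an $L^\infty$ bound on it, invoke Uhlenbeck's interior gradient estimate for the decay at scale $\theta$, and estimate the difference $w = u - h$ by testing~\eqref{2ndorderSmithequation} against $w$ and using the divergence-free structure plus $\cH^1$--BMO duality to upgrade the error term to cubic order in $\|Du\|_{3;B(2)}$. The only cosmetic differences are that you obtain the $L^\infty$ bound by the uniqueness-plus-projection trick rather than the paper's component-wise $(v^i - \esssup u^i)_+$ test functions (both are fine), you use the $p$-Laplacian monotonicity $\langle |\zeta_1|\zeta_1 - |\zeta_2|\zeta_2, \zeta_1 - \zeta_2\rangle \ge c|\zeta_1 - \zeta_2|^3$ directly while the paper routes through~\cite[eq.~(10), p.~240]{DM}, and the final power of $\ep_0$ ($\ep_0^{1/2}$ versus $\ep_0$) differs by accounting conventions that don't affect the conclusion; you also elide the cutoff-function detail (Proposition~\ref{cutoffBMO}) needed to make the BMO seminorm of $u - \xi$ global before invoking Fefferman--Stein duality on $\R^3$, which is a harmless technicality.
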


\begin{rmk}
Note that if $\ep_{0} < \frac{1}{9}$ then, because the domain is 3-dimensional, condition~\eqref{metricnearflat} implies that
\begin{equation} \label{metriccomparable}
\frac{1}{2}|\xi|^{2} \leq g_{\alpha\beta}(x)\xi^{\alpha}\xi^{\beta} \leq 2|\xi|^{2}, \text{ for all }x \in B(2), \xi \in \R^{3}.
\end{equation}
Here the $|\cdot|$ denotes length measured with respect to the Euclidean metric $g_{\euc}$. We often use relation~\eqref{metriccomparable} to go back and forth between $g$ and $g_{\euc}$.
\end{rmk}

\begin{proof}[Proof of Proposition~\ref{energydecay}]
First note that the inequalities~\eqref{metriccomparable} mean that for any map $w \in W^{1, 3}(B(1); \R^{d})$ we have
\begin{equation} \label{energycomparable}
\frac{1}{8} \int_{A} |Dw|^{3}dx \leq \int_{A} |d w|_{g}^{3}d\mu_{g} \leq 8\int_{A}|D w|^{3}dx, \quad \text{ for all }A \subseteq B(1).
\end{equation}
Next we let $v$ be the unique minimizer of $\int_{B(1)}|Dw|^{3}dx$ amongst all maps $w \in W^{1, 3}(B(1); \R^{d})$ satisfying $w - u \in W^{1, 3}_{0}(B(1); \R^{d})$. Then $v$ satisfies
\begin{equation} \label{homogeneous3harmonic}
\int_{B(1)} (|Dv|Dv \cdot D\eta) dx = 0 \quad \text{ for all }\eta \in W^{1, 3}_{0}(B(1); \R^{d}).
\end{equation}
Fix $i \in \{ 1, \ldots, d \}$ and choose $\eta = (v^{i} - \esssup_{B(1)}u^{i} )_{+} \paop{y^{i}}$, where $(\cdot)_{+}$ denotes the positive part. This $\eta$ belongs to $W^{1, 3}_{0}(B(1); \R^{d})$ because $v - u $ does. Then~\eqref{homogeneous3harmonic} implies
\begin{equation} \label{esssupbound}
v^{i} \leq \esssup_{B(1)}u^{i} \text{ on }B(1), \quad \text{ for all }i = 1, \ldots, d.
\end{equation}
Similarly, we can prove that 
\begin{equation} \label{essinfbound}
v^{i} \geq \essinf_{B(1)}u^{i} \text{ on }B(1), \quad \text{ for all }i = 1, \ldots, d.
\end{equation}
Since $M$ is compact, the two sets of inequalities above imply that $v$ lies in $L^{\infty}(B(1); \R^{d})$. Moreover, by the $3$-energy minimizing property of $v$ we have
\begin{equation} \label{vminimizing}
\int_{B(1)} |Dv|^{3}dx \leq \int_{B(1)}|Du|^{3}dx.
\end{equation}
The following result due to Uhlenbeck~\cite{U} is the reason we introduced the map $v$. Specifically, her work tells us that $v \in C^{1, \gamma}_{\loc}(B(1); \R^{d})$, and satisfies 
\begin{equation} \label{UhlenbeckC1}
\sup_{B(\frac{1}{2})} |Dv|^{3} \leq C\int_{B(1)}|Dv|^{3}dx \quad \Big( \leq C\int_{B(1)}|Du|^{3}dx \Big),
\end{equation}
where $\gamma, C$ are universal constants. 

To establish the asserted energy decay, we compare the $3$-energy of $u$ with that of $v$ and use~\eqref{2ndorderSmithequation} to estimate the difference. To do that, we need a system of equations satisfied by $v$ which resembles~\eqref{2ndorderSmithequation}. Therefore we use~\eqref{homogeneous3harmonic} to deduce, for all $\eta \in W^{1, 3}_{0}(B(1); \R^{d})$, that
\begin{align} \nonumber
\int_{B(1)} \langle |dv|_{g}dv, d\eta \rangle \sqrt{g}dx & = \int_{B(1)} \big( \langle |dv|_{g}dv, d\eta \rangle\sqrt{g} - |Dv|Dv\cdot D\eta \big) dx \\
\nonumber & = \int_{B(1)} \big( \sqrt{g}|dv|_{g}\langle dv, d\eta \rangle - |Dv|Dv\cdot D\eta \big) dx \\
\nonumber & = \int_{B(1)} \big( (\sqrt{g}|dv|_{g} - |Dv|)g^{\alpha\beta}v_{\alpha}\cdot \eta_{\beta}+ |Dv|\left( g^{\alpha\beta} - g_{\euc}^{\alpha\beta}\right)v_{\alpha}\cdot \eta_{\beta} \big) dx \\
 \label{modified3harmonic} & = \int_{B(1)} (F^{\beta}\cdot\eta_{\beta}) dx,
\end{align}
where we defined $F^{\beta}= (\sqrt{g}|dv|_{g} - |Dv|)g^{\alpha\beta}v_{\alpha}+ |Dv|\left( g^{\alpha\beta} - g_{\euc}^{\alpha\beta}\right)v_{\alpha}$. Note that by the definition of $F$ and~\eqref{vminimizing}, for some universal constant $C$ we have
\begin{equation} \label{errortermestimate}
\|F\|_{\frac{3}{2}; B(1)} \leq C|g - g_{\euc}|_{0; B(1)} \|Dv\|^{2}_{3; B(1)}\leq C|g - g_{\euc}|_{0; B(1)} \|Du\|^{2}_{3; B(1)}.
\end{equation}
Now we test both~\eqref{2ndorderSmithequation} and~\eqref{modified3harmonic} against $w := u - v$, extended to be zero outside of $B(1)$, and consider the differences of the resulting identities to get
\begin{align}
\nonumber& \int_{B(1)} \langle |du|_{g}du - |dv|_{g}dv, dw \rangle d\mu_{g} \\
\nonumber& \qquad = - \frac{\sqrt{3}}{4}\int_{B(1)} \perm^{\alpha \beta \gamma} (u^{j} - \xi^{j})\left( (J^{i}_{jk}\circ u)_{\alpha} u^{k}_{\beta} - (J^{i}_{jk}\circ u)_{\beta} u^{k}_{\alpha} \right) w_{\gamma}^{i} dx - \int_{B(1)} ( F^{\beta}\cdot w_{\beta} ) dx \\
\label{eq:3-harmonic-diff}& \qquad = - \frac{\sqrt{3}}{4}\int_{B(1)} \perm^{\alpha \beta \gamma} (u^{j} - \xi^{j}) \zeta \left( (J^{i}_{jk}\circ u)_{\alpha} u^{k}_{\beta} - (J^{i}_{jk}\circ u)_{\beta} u^{k}_{\alpha} \right) w_{\gamma}^{i} dx - \int_{B(1)} ( F^{\beta}\cdot w_{\beta} ) dx,
\end{align}
where $\zeta$ is a cutoff function which is identically $1$ on $B(1)$ and vanishes outside of $B(\frac{3}{2})$. 

We next show how the equation~\eqref{eq:3-harmonic-diff} can be used to estimate $\|Dw\|_{3; B(1)}$. Note that the second term on the last line of~\eqref{eq:3-harmonic-diff} can be estimated by H\"older's inequality and~\eqref{errortermestimate}, yielding
\begin{equation}\label{eq:differenceterm-2}
 \int_{B(1)} ( F^{\beta}\cdot w_{\beta} ) dx \leq C\|F\|_{\frac{3}{2}; B(1)}\|Dw\|_{3; B(1)} \leq C|g - g_{\euc}|_{0; B(1)} \|Du\|^{2}_{3; B(1)}\|Dw\|_{3; B(1)}.
\end{equation}
To estimate the first term, we choose $\xi = \fint_{B(2)} udx$ and use Proposition~\ref{cutoffBMO} to deduce that 
\begin{equation}\label{eq:u-xi-BMO}
[(u - \xi) \zeta]_{BMO} \leq C\|Du\|_{3; B(2)}. 
\end{equation}
On the other hand, denoting by $X_{j}$ the ($\R^{3\times d}$-valued) function defined by
\[
(X_{j})_{i}^{\gamma}= \perm^{\alpha \beta \gamma} \big( (J^{i}_{jk}\circ u)_{\alpha} u^{k}_{\beta} - (J^{i}_{jk}\circ u)_{\beta} u^{k}_{\alpha} \big),
\]
we see by Remark~\ref{hardyspacermk} and Proposition~\ref{divcurlhardy} that
\begin{equation*}
\| X_{j} \cdot D w \|_{\cH^{1}} \leq C\|Du\|_{3; B(2)}^{2}\|Dw\|_{3; B(1)}.
\end{equation*}
Using this together with~\eqref{eq:u-xi-BMO} and Theorem~\ref{BMOdual}, we obtain
\begin{equation}\label{eq:differenceterm-1}
\int_{B(1)} \perm^{\alpha \beta \gamma} (u^{j} - \xi^{j}) \zeta \left( (J^{i}_{jk}\circ u)_{\alpha} u^{k}_{\beta} - (J^{i}_{jk}\circ u)_{\beta} u^{k}_{\alpha} \right) w_{\gamma}^{i} dx \leq C\|Du\|_{3; B(2)}^{3}\|Dw\|_{3; B(1)}.
\end{equation}
Putting~\eqref{eq:differenceterm-1} and~\eqref{eq:differenceterm-2} back into the equation~\eqref{eq:3-harmonic-diff} and rearranging, we obtain
\begin{equation*}
\int_{B(1)} \langle |du|_{g}du - |dv|_{g}dv, dw \rangle d\mu_{g} \leq C\big( \|Du\|_{3; B(2)} + |g - g_{\euc}|_{0; B(1)} \big)\|Du\|_{3; B(2)}^{2}\|Dw\|_{3; B(1)}.
\end{equation*}

Next, we show how the left-hand side controls $\|Dw\|_{3; B(1)}$. Specifically, we have
\begin{align*}
\int_{B(1)}|Dw|^{3}dx &\leq C\int_{B(1)}|dw|_{g}^{3}d\mu_{g}\\
&\leq C\int_{B(1)} \big(|du|_{g} + |dv|_{g}\big)|dw|_{g}^{2}d\mu_{g}\\
&\leq C\int_{B(1)} \langle |du|_{g}du - |dv|_{g}dv, dw \rangle d\mu_{g},
\end{align*}
where the first inequality above follows from~\eqref{metriccomparable}, and in the last inequality we used~\cite[equation (10) on page 240]{DM}. 
To sum up, we arrive at 
\begin{equation}\label{eq:diff-w13-estimate}
\int_{B(1)}|Dw|^{3}dx \leq C\big( \|Du\|_{3; B(2)} + |g - g_{\euc}|_{0; B(1)} \big)\|Du\|_{3; B(2)}^{2}\|Dw\|_{3; B(1)}.
\end{equation}
Cancelling a factor of $\|Dw\|_{3; B(1)}$ from both sides and recalling the assumptions~\eqref{metricnearflat} and~\eqref{smallenergy}, we obtain
\begin{equation*}
\int_{B(1)}|Dw|^{3}dx \leq C\ep_{0}\|Du\|_{3; B(2)}^{3}.
\end{equation*}
On the other hand, by~\eqref{UhlenbeckC1} and~\eqref{vminimizing}, for each $\theta \in (0, \frac{1}{9})$ we have
\begin{equation*}
\int_{B(\theta)}|Dv|^{3}dx \leq C\theta^{3}\int_{B(2)} |Du|^{3}dx.
\end{equation*}
Combining the two estimates above gives
\begin{equation} \label{almostdecay}
\int_{B(\theta)} |Du|^{3}dx \leq C\int_{B(\theta)} |Dv|^{3} + |Dw|^{3}dx \leq C(\theta^{3} + \ep_{0})\int_{B(2)}|Du|^{3}dx.
\end{equation}
Now fix $\theta \in (0, \frac{1}{9})$ such that 
\begin{equation*}
C\theta^{3} < \frac{1}{2}\theta^{3\alpha},
\end{equation*}
which is possible because $\alpha \in (0, 1)$. Then choose $\ep_{0} = \theta^{3}$. We have
\begin{equation*}
C(\theta^{3} + \ep_{0}) < 2C\theta^{3} < \theta^{3\alpha}.
\end{equation*}
By the above and~\eqref{almostdecay}, we get~\eqref{decayinequality} and the proof is complete.
\end{proof}

\begin{rmk}\label{rmk:compensation}
The Hardy space $\cH^{1}$ and the Fefferman--Stein identification of its dual space with $BMO$ (see~\cite{FS} and also our Theorem~\ref{BMOdual}) underlie a ``compensation phenomenon'' that shows up in many geometric PDEs to give solutions better regularity than afforded by standard theory. Examples include the $H$-system for CMC surfaces (see for example~\cite{BC, We}), the harmonic map system~\cite{Be, E, He}, $p$-harmonic maps into special targets~\cite{T, TW}, and the Cauchy--Riemann equation for $J$-holomorphic maps~\cite{Wa}. Proposition~\ref{energydecay} is another manifestation of this compensation phenomenon. 
\end{rmk}

\begin{cor} \label{holderregularity}
For all $\alpha \in (0, 1)$, let $\ep_{0}$ be as in Proposition~\ref{energydecay}. If $u \in W^{1, 3}(B(2); M)$ is an associative Smith map with respect to $g$ and if both~\eqref{metricnearflat} and~\eqref{smallenergy} hold, then for all $x_{0} \in B(\frac{3}{2})$ and $r \in (0, \frac{1}{4})$ we have
\begin{equation} \label{morreyestimate}
\int_{B(x_0; r)} |Du|^{3}dx \leq Kr^{3\alpha}\int_{B(2)}|Du|^{3}dx \text{ for all }x_{0} \in B(\tfrac{3}{2}), r \in (0, \tfrac{1}{4}).
\end{equation}
In particular, the map $u$ lies in $C^{0, \alpha}(B(\tfrac{3}{2}); \R^{d})$, with 
\begin{equation} \label{holderestimate}
[u]_{\alpha; B(\frac{3}{2})}^{3} \leq C\int_{B(2)}|Du|^{3}dx.
\end{equation}
The constants $K$ and $C$ above have the same dependence as $\ep_{0}$.
\end{cor}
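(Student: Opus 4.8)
The plan is to promote the energy-decay estimate of Proposition~\ref{energydecay}, which is stated only at the origin, to a uniform decay at every center $x_0 \in B(\tfrac32)$, iterate it in the radius to obtain the Morrey bound~\eqref{morreyestimate}, and then read off H\"older continuity by Morrey's Dirichlet growth theorem (equivalently, Campanato's characterization of H\"older spaces).

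First I would check that Proposition~\ref{energydecay} localizes. Fix $x_0 \in B(\tfrac32)$ and $\rho \in (0, \tfrac14)$, so that $\overline{B(x_0; 2\rho)} \subset B(2)$, and set $u_\rho(y) := u(x_0 + \rho y)$ and $g_\rho(y) := g(x_0 + \rho y)$ on $B(2)$. Each hypothesis of Proposition~\ref{energydecay} is preserved under this translation and dilation. The Smith equation~\eqref{eq:smith-local-2} is scale invariant, since both sides are homogeneous of degree $2$ in the first derivatives and $|du_\rho|^2_{g_\rho}(y) = \rho^2 |du|^2_g(x_0 + \rho y)$; hence $u_\rho$ is an associative Smith map with respect to $g_\rho$. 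One has $|g_\rho - g_{\euc}|_{0; B(2)} = \sup_{B(x_0; 2\rho)} |g - g_{\euc}| \le \ep_0$ and $|Dg_\rho|_{0; B(2)} = \rho\,\sup_{B(x_0; 2\rho)}|Dg| \le \ep_0$, so~\eqref{metricnearflat} holds for $g_\rho$ (the derivative term in fact improves). Finally, by the change of variables $x = x_0 + \rho y$ one has $\int_{B(2)} |Du_\rho|^3\,dy = \int_{B(x_0; 2\rho)} |Du|^3\,dx \le \int_{B(2)} |Du|^3\,dx < \ep_0$, so~\eqref{smallenergy} holds. Applying Proposition~\ref{energydecay} to $(u_\rho, g_\rho)$ and undoing the dilation gives the local decay
\begin{equation*}
\int_{B(x_0; \theta\rho)} |Du|^3\,dx \;\le\; \theta^{3\alpha} \int_{B(x_0; 2\rho)} |Du|^3\,dx \qquad \text{for all } x_0 \in B(\tfrac32),\ \rho \in (0, \tfrac14).
\end{equation*}

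Next I would iterate this in $\rho$. Setting $\phi(r) := \int_{B(x_0; r)} |Du|^3\,dx$, the estimate reads $\phi(\tfrac{\theta}{2}s) \le \theta^{3\alpha}\phi(s)$ for $s \in (0, \tfrac12)$, so a routine induction gives $\phi\big((\tfrac{\theta}{2})^k \cdot \tfrac12\big) \le \theta^{3\alpha k}\phi(\tfrac12)$ for all $k \ge 0$, and interpolating between consecutive scales yields $\phi(r) \le K\,r^{3\alpha}\,\phi(\tfrac12)$ for $r \in (0, \tfrac14)$. Here one wants the exponent to come out exactly $3\alpha$; this is arranged by choosing the constants in Proposition~\ref{energydecay} appropriately — its proof works for every sufficiently small $\theta$ with $\ep_0 = \theta^3$, and since $3 - 3\alpha > 0$ one may take $\theta$ small enough that the decay in fact holds with coefficient $(\tfrac{\theta}{2})^{3\alpha}$, which matches the radius ratio $\theta/2$ and makes the iteration produce the exponent $3\alpha$. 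Since $\phi(\tfrac12) \le \int_{B(2)}|Du|^3\,dx$ (as $B(x_0; \tfrac12) \subset B(2)$ for $x_0 \in B(\tfrac32)$), this is exactly~\eqref{morreyestimate}, with $K$ depending only on $\alpha, M, J$ and the embedding $M \to \R^d$.

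Finally, with $n = 3$ and $p = 3$ one has $3\alpha = n - p + p\alpha$, so~\eqref{morreyestimate} says precisely that $Du$ lies in the Morrey space $L^{3, 3\alpha}$ on $B(\tfrac32)$; Morrey's Dirichlet growth theorem (or a direct estimate of $|u(x) - u(y)|$ against annular averages of $Du$ via the Poincar\'e inequality) then gives $u \in C^{0,\alpha}(B(\tfrac32); \R^d)$ together with $[u]_{\alpha; B(\frac32)}^3 \le C\,K\int_{B(2)}|Du|^3\,dx$ for a purely dimensional constant $C$, which is~\eqref{holderestimate}. The step requiring genuine care is the localization — verifying at once that translation and dilation preserve the Smith equation, the near-flatness of $g$, and the smallness of the energy; the iteration lemma and the Morrey embedding are entirely standard.
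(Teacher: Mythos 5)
Your argument is correct and follows the paper's own route: translate and dilate to reduce the energy-decay estimate at any center $x_0$ to Proposition~\ref{energydecay} on $B(2)$, iterate the resulting scale-invariant decay in the radius, and conclude via Morrey's embedding. The only difference is cosmetic: you explicitly adjust $\theta$ so the decay coefficient becomes $(\theta/2)^{3\alpha}$ and the iteration reproduces the exponent $3\alpha$ on the nose, whereas the paper simply cites the standard iteration lemma (Gilbarg--Trudinger, Lemma~8.23), which silently absorbs this mismatch.
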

\begin{proof}
Choose any $x_{0} \in B(\frac{3}{2})$ and $r < \frac{1}{4}$ and consider the rescalings 
\begin{equation*}
\tilde{u}(x) = u(x_{0} + rx),\qquad \tilde{g}(x) = g(x_{0} + rx).
\end{equation*}
Then $(B(2), \tilde{g})$ and $(B(x_0; 2r), g)$ are conformal via $x \mapsto x_{0} + rx$, and by the conformal invariance of the Smith equation, we see that $\tilde{u}$ is a Smith map with respect to $\tilde{g}$. Furthermore, $\tilde{g}$ again satisfies~\eqref{metricnearflat} on $B(2)$, and 
\begin{equation*}
\int_{B(2)}|D\tilde{u}|^{3}dx = \int_{B(x_0; 2r)}|Du|^{3}dx \leq \ep_{0}.
\end{equation*}
Hence we may apply Proposition~\ref{energydecay} on $B(2)$ to $\tilde{u}$ and undo the rescaling to get
\begin{equation} \label{energydecayscaled}
\int_{B(x_0, \theta r )}|Du|^{3}dx \leq \theta^{3\alpha}\int_{B(x_0; 2r)} |Du|^{3}dx \quad \text{ for all }x_0 \in B(\tfrac{3}{2}), r \in (0, \tfrac{1}{4}).
\end{equation}
It is now fairly standard (compare with~\cite[Lemma 8.23]{GT}) to iterate~\eqref{energydecayscaled} to obtain
\begin{equation*}
\int_{B(x_0; r)} |Du|^{3}dx \leq Kr^{3\alpha}\int_{B(x_0; \frac{1}{2})}|Du|^{3}dx \quad \text{ for all }x_{0} \in B(\tfrac{3}{2}), r \in (0, \tfrac{1}{4}),
\end{equation*}
for some $K$ depending on $\theta$ and $\alpha$. This immediately gives~\eqref{morreyestimate}, and the second conclusion follows by Morrey's embedding (compare with~\cite[Chapter 7]{GT}).
\end{proof}

\begin{thm}[$\ep$-regularity] \label{thm:ep-reg}
Fix $\alpha \in (\frac{3}{4}, 1)$ and let $\ep_{0}$ be given by Proposition~\ref{energydecay} with this choice of $\alpha$. There exists a constant $\beta \in (0, 1)$, depending only on $\alpha, M, J$ and the embedding $M \to \R^{d}$, such that if $u \in W^{1, 3}(B(2); M)$ is an associative Smith map with respect to $g$ and if both~\eqref{metricnearflat} and~\eqref{smallenergy} hold, then the following hold.
\begin{enumerate}[(a)]
\item The map $u$ belongs to $C^{1, \beta}(B(1); \R^{d})$. Moreover $|u|_{1, \beta; B(1)}$ can be estimated in terms of $M, J$, the embedding $M \to \R^{d}$ and $\|Du\|_{3; B(2)}$.
\item In addition, $u$ is smooth on the (open) set 
\begin{equation*}
\{x \in B(1)\ |\ Du(x) \neq 0\}.
\end{equation*}
\end{enumerate}
\end{thm}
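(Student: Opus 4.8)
The plan is to upgrade the $C^{0,\alpha}$-conclusion of Corollary~\ref{holderregularity} to $C^{1,\beta}$ by a $3$-harmonic comparison argument adapted from Duzaar--Mingione~\cite{DM}, viewing the second order system~\eqref{2ndorderSmithequation} as a perturbation of the homogeneous $3$-harmonic system whose inhomogeneity is controlled --- through the $\mathcal{H}^{1}$--$BMO$ duality already exploited in Remark~\ref{hardyspacermk} --- by the Morrey decay of the $3$-energy provided by Corollary~\ref{holderregularity}.

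Fixing $\alpha \in (\tfrac34,1)$ and letting $\ep_{0}$ be as in Proposition~\ref{energydecay}, one first records from Corollary~\ref{holderregularity} both $u \in C^{0,\alpha}(B(\tfrac32))$ and the Morrey estimate $\int_{B(x_{0};r)}|Du|^{3}\,dx \le K r^{3\alpha}\int_{B(2)}|Du|^{3}\,dx$ for $x_{0}\in B(\tfrac32)$, $r\in(0,\tfrac14)$. Then, on each ball $B(x_{0};2r)\subset B(\tfrac32)$, let $v$ be the $3$-energy minimizer with $v-u\in W^{1,3}_{0}(B(x_{0};r);\R^{d})$; it is $3$-harmonic, and Uhlenbeck's regularity~\cite{U} together with the Giaquinta--Modica refinement~\cite{GiMa} furnishes the excess-decay estimate $\fint_{B(x_{0};\rho)}|Dv - (Dv)_{B(x_{0};\rho)}|^{3} \le C(\rho/r)^{3\gamma}\fint_{B(x_{0};r)}|Dv - (Dv)_{B(x_{0};r)}|^{3}$ for $\rho\le r$ and a universal $\gamma\in(0,1)$. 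For $w = u-v$ one repeats the computation in the proof of Proposition~\ref{energydecay} (after rescaling $B(x_{0};2r)$ to $B(2)$) --- testing~\eqref{2ndorderSmithequation} and the perturbed $3$-harmonic equation~\eqref{modified3harmonic} for $v$ against $w$, subtracting, using the monotonicity inequality~\cite[equation~(10) on page~240]{DM} to control $\int|Dw|^{3}$ from below by the left-hand side, and estimating the right-hand side via Remark~\ref{hardyspacermk}, Proposition~\ref{divcurlhardy}, Proposition~\ref{cutoffBMO} and Theorem~\ref{BMOdual} --- except that now the $BMO$ seminorm of $(u-\xi)\zeta$, with $\xi$ the mean of $u$ on $B(x_{0};2r)$, is estimated through the Morrey bound by a small positive power of $\int_{B(x_{0};2r)}|Du|^{3}$. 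Cancelling a factor of $\|Dw\|_{3;B(x_{0};r)}$ then yields $\int_{B(x_{0};r)}|Dw|^{3} \le C r^{\delta}\int_{B(x_{0};2r)}|Du|^{3}$ for some $\delta>0$ depending on $\alpha$, and combining this gain with the excess decay for $v$ and the triangle inequality produces a Campanato-type iteration inequality for $\Phi(x_{0},\rho) := \int_{B(x_{0};\rho)}|Du - (Du)_{B(x_{0};\rho)}|^{3}$. Iterating gives $\Phi(x_{0},\rho) \le C\rho^{3+3\beta}\int_{B(2)}|Du|^{3}$ for a suitable $\beta\in(0,1)$, which by Campanato's integral characterization of H\"older continuity is exactly $Du\in C^{0,\beta}(B(1))$; keeping track of the constants through the estimates above furnishes the asserted bound on $|u|_{1,\beta;B(1)}$, proving~(a). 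I expect the bookkeeping of exponents to be the main obstacle: the degeneracy of the $3$-Laplacian caps the available $\gamma$, so one must take $\alpha$ close to $1$ --- whence the hypothesis $\alpha>\tfrac34$ --- for the combined decay rate to be genuinely positive.

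For part~(b), let $U := \{x\in B(1) : Du(x)\neq 0\}$, which is open. By~\eqref{2ndorderSmithequation2}, $u$ weakly solves a system of the form $\Div\big(a(x,Du)\big) = b(x,u,Du)$ on $B(1)$, where $a$ and $b$ are smooth (built from $\tilde J$ and the smooth metric $g$) and $a(x,p) = |p|\,p$ to leading order, so that $\partial_{p}a(x,p)$ is positive definite for $p\neq 0$; hence this is a non-degenerate quasilinear second order elliptic system on $U$, uniformly elliptic on compact subsets of $U$. Since $u\in C^{1,\beta}$ by part~(a), a standard Schauder bootstrap applies: the coefficients and right-hand side are $C^{0,\beta}_{\loc}(U)$, so $u\in C^{2,\beta}_{\loc}(U)$; this improves the structure functions to $C^{1,\beta}_{\loc}(U)$, so $u\in C^{3,\beta}_{\loc}(U)$; and iterating yields $u\in C^{\infty}(U)$.
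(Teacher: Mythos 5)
Your strategy is the right one and broadly parallel to the paper's --- compare $u$ to an $r$-scale $3$-harmonic replacement, let Uhlenbeck/Giaquinta--Modica supply the clean decay for the replacement, absorb the Smith nonlinearity by exploiting the Morrey $r^{3\alpha}$ decay already secured in Corollary~\ref{holderregularity}, and iterate a Campanato inequality. However, there is a genuine gap in the metric-freezing step. You say to repeat the computation in the proof of Proposition~\ref{energydecay}, which compares $u$ against the minimizer $v$ of the \emph{Euclidean} $3$-energy and thus produces the error term~\eqref{modified3harmonic} with $F$ controlled by $|g - g_{\euc}|_{0}$. After rescaling $B(x_{0};2r)\to B(2)$ that quantity does \emph{not} shrink: $|\tilde{g}-g_{\euc}|_{0;B(2)} = |g-g_{\euc}|_{0;B(x_{0};2r)} \le \ep_{0}$, a constant. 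Tracing it through, the perturbation contributes $\int_{B(x_{0};r)}|Dw|^{3}\lesssim \ep_{0}^{3/2}\int_{B(x_{0};2r)}|Du|^{3}\lesssim \ep_{0}^{3/2}r^{3\alpha}$, which never exceeds the critical order $r^{3}$ when $\alpha<1$, so the Campanato iteration fails to produce any positive H\"older exponent for $Du$. The fix is to freeze the metric not at $g_{\euc}$ but at $g^{0}:=g(x_{0})$: then $|\tilde{g} - g^{0}|_{0;B(2)}\le Cr|Dg|_{0}$ picks up the crucial extra factor of $r$, giving an error of order $r^{1+3\alpha}>r^{3}$ for $\alpha>\tfrac23$, and the comparison map $v$ is still $3$-harmonic for a constant-coefficient metric, so Uhlenbeck's theory still applies. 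This is precisely what the paper does.

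Two further points. First, once you have the Morrey decay from Corollary~\ref{holderregularity}, the Hardy/BMO machinery is no longer necessary at this stage and the paper does not invoke it in the proof of Theorem~\ref{thm:ep-reg}: it tests the simpler equation~\eqref{2ndorderSmithequation2} (with $\eta$ undifferentiated on the right) against $w=\tilde{u}-\tilde{v}$, estimating the right-hand side directly by $C\|w\|_{\infty}\,\|D\tilde{u}\|_{3}^{3}\le C\|D\tilde{u}\|_{3}^{4}\lesssim r^{4\alpha}$ via the maximum-principle bound $|\tilde{v}-\tilde{u}|_{0}\le C[\tilde{u}]_{\alpha}\le C\|D\tilde{u}\|_{3}$; the requirement $4\alpha>3$ is the origin of the hypothesis $\alpha>\tfrac34$. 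Your Hardy/BMO route with~\eqref{2ndorderSmithequation} gives the exponent $\tfrac{9\alpha}{2}$, which is sufficient too, so that part of your argument is not a gap --- just heavier than necessary. Second, you formulate the excess-decay and Campanato iteration directly in terms of $\int|Du-(Du)_{\rho}|^{3}$, but the Uhlenbeck/Giaquinta--Modica/Duzaar--Mingione theory is naturally formulated in the $V$-excess $\int|V(Du)-(V(Du))_{\rho}|^{2}$ with $V(\xi)=|\xi|^{1/2}\xi$; the paper iterates the $V$-excess and then invokes~\cite[Lemma 3]{DM} at the end to pass to H\"older continuity of $Du$. The two formulations are not trivially equivalent near points where $Du$ is small, so you should either work with $V$ or argue why the $L^{3}$-excess version of the reference-map decay holds. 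Finally, for part (b) the Schauder bootstrap requires a prior step establishing $u\in W^{2,2}_{\loc}(U)$ (done in the paper via difference quotients on~\eqref{2ndorderSmithequation2}); a $C^{1,\beta}$ weak solution is not directly an admissible input to Schauder theory, so that step should not be elided.
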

\begin{proof}
Part (a) can be proven as in~\cite[Lemma 6]{DM}, with only minor modifications, while part (b) is standard. For the sake of completeness we included proofs of both parts in Appendix~\ref{sec:regularity-appendix}.
\end{proof}

\begin{rmk} \label{rmk:threshold}
The constant $\ep_0$ of Theorem~\ref{thm:ep-reg} is called the \emph{threshold energy}.
\end{rmk}

\subsection{Mean value inequality, interior regularity and removable singularities} \label{sec:mean-value-r-s}

We now discuss three additional important properties of Smith maps, two of which are immediate consequences of the regularity results in $\S$\ref{sec:int-reg-sub}. These are Theorem~\ref{thm:MVI}, which is a mean value inequality for the gradient of an associative Smith map, Theorem~\ref{thm:int-reg}, which gives everywhere interior regularity of $W^{1, 3}$-Smith maps, and Theorem~\ref{thm:rem-sing}, which is a removable singularity result.

\begin{thm}[Mean value inequality] \label{thm:MVI}
Suppose that $\Ric_{g}$ is bounded by $K$, in the sense that 
\begin{equation*}
\left|\Ric_{g(x)}(v, v)\right| \leq K |v|_{g(x)}^{2} \quad \text{ for all }x \in B(2), v \in \R^{3}.
\end{equation*}
There exists $\ep_{1} > 0$ such that if~\eqref{metricnearflat} and~\eqref{smallenergy} hold with $\ep_{1}$ in place of $\ep_{0}$ and if $u \in W^{1, 3}(B(2); M)$ is an associative Smith map with respect to $g$, then we have
\begin{equation*}
\sup_{x \in B(\frac{1}{2})}|Du(x)| \leq C \Big(\int_{B(2)}|Du|^{3}dx \Big)^{\frac{1}{3}},
\end{equation*}
where both $\ep_{1}$ and $C$ depend only on $M, J$, the embedding $M \to \R^{d}$, and $K$.
\end{thm}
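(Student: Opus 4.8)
The plan is to follow the approach of Duzaar--Fuchs~\cite{DF}, adapted to our degenerate first-order system, using the second-order equation~\eqref{2ndorderSmithequation2} derived in Proposition~\ref{2ndorderSmith} as the starting point. First I would invoke the $\ep$-regularity theorem (Theorem~\ref{thm:ep-reg}): choosing $\ep_1 \le \ep_0$, the Smith map $u$ is $C^{1,\beta}$ on $B(1)$, and on the open set $\{Du \ne 0\}$ it is smooth, where in fact it solves a uniformly elliptic second-order system obtained by differentiating the $3$-harmonic-type equation. This means the quantity $e(x) := |Du(x)|^2$ is a genuine (continuous on $B(1)$, smooth where $e>0$) function, so that Moser-type or De Giorgi-type subsolution arguments make sense. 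The point of passing through Theorem~\ref{thm:ep-reg} first is that the conclusion is already qualitatively regular; what remains is to upgrade to the \emph{scale-invariant} estimate $\sup_{B(1/2)} e^{3/2} \le C \int_{B(2)} |Du|^3$ with constants independent of $u$.

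The core step is a differential inequality of the form $\Delta_g e \ge -(c_1 e + c_2 e^2)$ (or the degenerate-elliptic analogue with the operator $\operatorname{div}(|Du| D\,\cdot\,)$) valid weakly on $B(1)$, where $c_1$ absorbs the Ricci curvature bound $K$ and the $C^0$-smallness of $g - g_{\mathrm{euc}}$, and $c_2$ comes from the torsion of the $\mathrm{G}_2$-structure and the second fundamental form of $M \hookrightarrow \R^d$ (these are bounded in terms of $M, J$ and the embedding). To obtain this one differentiates the Smith equation~\eqref{eq:smith-local-2}, or equivalently works with~\eqref{2ndorderSmithequation2}, computes a Bochner--Weitzenböck-type formula for $|Du|^2$ — here the Ricci term of the domain and the curvature term of the target appear — and estimates the quadratic-in-$Du$ error terms using $|\nabla du| \lesssim$ (Hessian) and Kato's inequality. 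I would then run the standard Duzaar--Fuchs iteration: because $\int_{B(2)} e^{3/2} = \int_{B(2)}|Du|^3 < \ep_1$ is small, a Moser iteration (or the Simon-type argument in~\cite{DF}) on the inequality $\Delta_g e \ge -(c_1 e + c_2 e^2)$ on successively smaller balls converts the smallness of the $L^{3/2}$-norm of $e$ into an $L^\infty$-bound, yielding $\sup_{B(1/2)} e \lesssim \bigl(\int_{B(1)} e^{3/2}\bigr)^{2/3}$, and raising to the $3/2$ power gives the claimed estimate. Scaling/covering reduces the general statement to this normalized case, and conformal invariance of the Smith equation (Proposition~\ref{prop:smith-u-conf-inv}) guarantees the rescaled maps are again Smith maps with respect to the rescaled metrics, which still satisfy~\eqref{metricnearflat} and the Ricci bound.

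The main obstacle I expect is the degeneracy of the equation at critical points: the Bochner formula and the Moser iteration are clean where $Du \ne 0$, but $\crit_u$ may be large (unlike the $n=2$ case, it need not be discrete), so one cannot simply work on $\{Du \ne 0\}$ and take limits. The resolution, as in~\cite{DF} and as referenced in the statement of Theorem~\ref{thm:MVI} itself, is to regularize — replace $|Du|$ by $(|Du|^2 + \delta^2)^{1/2}$ in the relevant energy/equation, derive the differential inequality and the iteration estimate with constants uniform in $\delta$, and then let $\delta \to 0$; the a priori $C^{1,\beta}$ bound from Theorem~\ref{thm:ep-reg} ensures the limit is taken cleanly. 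Care is also needed in tracking that the constant $C$ depends only on $(M,J)$, the embedding, and $K$ — in particular not on $\ep_1$ beyond the requirement $\ep_1 \le \ep_0$ — which is a matter of bookkeeping in the iteration but must be checked. Since the statement says the proof is ``based on an adaptation of~\cite{DF}'', I would present the Bochner computation and the statement of the iteration lemma in detail and cite~\cite{DF} for the iteration mechanics, flagging only the modifications forced by the target's torsion and the embedding term.
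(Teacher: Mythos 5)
Your high-level scheme matches the paper's: pass through Theorem~\ref{thm:ep-reg} to obtain $C^{1,\beta}$-regularity and smoothness on the open set $U_+ := \{Du \neq 0\}$, derive a Bochner--Weitzenb\"ock-type differential inequality for a power of $|du|$ in which the domain Ricci curvature and the target geometry appear, and then run a Duzaar--Fuchs iteration using the smallness of $\int_{B(2)} |Du|^3$. The algebra you expect --- absorbing the nonlinearity via the second-order Smith system~\eqref{2ndorderSmithequation2} and Young's inequality to arrive at an inequality of roughly the form $\frac{1}{3}\Div\bigl(dw + |du|^{-2}\langle du, dw\rangle du\bigr) \geq -Cw^{5/3} - Cw$ for $w = |du|^3$ --- is exactly the computation the paper carries out on $U_+$.

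Where your proposal goes astray is in the mechanism for passing from $U_+$ to all of $B(1)$. You propose to regularize by replacing $|Du|$ with $(|Du|^2 + \delta^2)^{1/2}$ ``in the relevant energy/equation'', derive $\delta$-uniform estimates, and let $\delta \to 0$. This is not what Duzaar--Fuchs do, nor what the paper does, and there is a conceptual obstruction: the map $u$ is \emph{given} and satisfies the exact (degenerate) Smith equation, so you cannot simply substitute a regularized weight into the equation --- the resulting system is one that $u$ does not solve, and therefore cannot be used to control $u$. If instead you define the regularized quantity $w_\delta = (|Du|^2+\delta^2)^{3/2}$ and attempt to compute $\Delta w_\delta$ weakly across the critical set, you face the same regularity barrier you were trying to avoid: $u$ is only $C^{1,\beta}$ there, and the relevant second-derivative object that \emph{is} square-integrable is the weighted one $|du|^{1/2}du \in W^{1,2}_{\loc}$, not $D^2 u$ itself, so the weak Laplacian of $w_\delta$ is not directly accessible. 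The paper instead follows~\cite[Lemma 2.4]{DF}: one first establishes, via difference quotients, that $|du|^{1/2}du \in W^{1,2}_{\loc}(B(1))$, hence $w = |du|^3 \in W^{1,2}_{\loc}(B(1))$; one then proves the Bochner inequality in its integrated form against $\zeta \in C^1_c(U_+)$, and finally shows directly --- exploiting the $W^{1,2}_{\loc}$-structure of $w$ and the vanishing of the relevant coefficients on the critical set --- that the integral inequality in fact holds for every nonnegative $\zeta \in C^1_0(B(1))$. That extension step is the crux, and $\delta$-regularization does not substitute for it. Once you have the weak inequality on all of $B(1)$, your sketch of the iteration goes through and yields the stated constant dependencies.
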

\begin{proof} This essentially follows from the arguments in~\cite[Theorem 2.1]{DF}, which concerns weakly $p$-harmonic maps with respect to the Euclidean metric. Indications of the main steps of the proof along with necessary modifications can be found Appendix~\ref{sec:regularity-appendix}.
\end{proof}

\begin{thm}[Interior regularity] \label{thm:int-reg}
Suppose that $g$ is a smooth Riemannain metric on $B(2)$, and that $u$ is a $W^{1, 3}$-Smith map on $B(2)$ with respect to $g$. Then $u$ has H\"older continuous first derivatives on $B(1)$. Moreover, $u$ is $C^\infty$ on the open set $\{x \in B(1) \colon du(x) \neq 0\}$.
\end{thm}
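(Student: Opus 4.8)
The plan is to upgrade the $\ep$-regularity Theorem~\ref{thm:ep-reg}, which is a local and quantitative statement, into a global qualitative one by a covering argument. The key point is that Theorem~\ref{thm:ep-reg} requires two smallness conditions: one on the metric (condition~\eqref{metricnearflat}) and one on the energy (condition~\eqref{smallenergy}). Both can be arranged locally around \emph{any} point of $B(2)$ after rescaling, because (i) a smooth metric can be made arbitrarily close to $g_\euc$ in $C^1$ on a small enough ball by an affine change of coordinates normalizing $g$ at the center, and this smallness is stable under further shrinking; and (ii) the energy $\int_{B(x_0;r)}|Du|^3\,dx \to 0$ as $r \to 0$ for fixed $x_0$, since $|Du|^3 \in L^1$. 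The conformal invariance of the Smith equation (Corollary~\ref{cor:conformal-invariance}, Proposition~\ref{prop:smith-u-conf-inv}) guarantees that the rescaled map is again an associative Smith map with respect to the rescaled metric, so Theorem~\ref{thm:ep-reg} applies on the model ball $B(2)$.

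First I would fix an arbitrary point $x_0 \in B(1)$. After a linear change of coordinates we may assume $g(x_0) = g_\euc$; then for $r$ sufficiently small (depending on $x_0$ and the $C^1$-modulus of continuity of $g$ near $x_0$) the rescaled metric $\tilde g(x) := g(x_0 + rx)$ on $B(2)$ satisfies $|\tilde g - g_\euc|_{0;B(2)} + |D\tilde g|_{0;B(2)} < \ep_0$; note $|D\tilde g|_{0} = r\,|Dg|_{0}$ is automatically small. Next, by absolute continuity of the integral, we may further shrink $r$ so that $\int_{B(x_0;2r)}|Du|^3\,dx < \ep_0$; then $\tilde u(x) := u(x_0 + rx)$ satisfies $\int_{B(2)}|D\tilde u|^3\,dx = \int_{B(x_0;2r)}|Du|^3\,dx < \ep_0$, and by conformal invariance $\tilde u$ is an associative Smith map with respect to $\tilde g$. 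Theorem~\ref{thm:ep-reg}(a) then gives $\tilde u \in C^{1,\beta}(B(1))$, and undoing the rescaling yields $u \in C^{1,\beta}(B(x_0;r))$. Since $x_0 \in B(1)$ was arbitrary, $u$ has H\"older continuous first derivatives on all of $B(1)$. Similarly, applying Theorem~\ref{thm:ep-reg}(b) on each such ball shows $u$ is $C^\infty$ on $\{x \in B(1) : Du(x) \neq 0\}$.

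The only subtlety—and the step requiring a little care rather than being purely routine—is that the radius $r$ above depends on $x_0$, both through the $C^1$-continuity of $g$ and through the rate at which the energy concentrates near $x_0$. This is precisely why the resulting regularity is merely \emph{qualitative}: there is no uniform lower bound on $r$, hence no a priori bound on $|u|_{1,\beta;B(1)}$, as the statement of Theorem~\ref{thm:int-reg} and the surrounding discussion emphasize. One should also make sure the linear coordinate change normalizing $g$ at $x_0$ is chosen with bounded distortion (e.g. by an orthogonal-times-diagonal factorization of $g(x_0)^{1/2}$), so that balls map to comparable balls and the energy-smallness is not destroyed; this is elementary linear algebra. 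With these remarks in place the proof is complete. $\qed$
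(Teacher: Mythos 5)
Your proof is essentially the same as the paper's: fix $x_0 \in B(1)$, rescale by $x \mapsto x_0 + rx$ so that $\tilde u$ is a Smith map for $\tilde g$ on $B(2)$ (by conformal invariance), choose $r$ small so that hypotheses~\eqref{metricnearflat} and~\eqref{smallenergy} hold (using smoothness of $g$ and absolute continuity of the integral), apply Theorem~\ref{thm:ep-reg}, and conclude by arbitrariness of $x_0$. Your explicit linear coordinate change normalizing $g(x_0) = g_\euc$ is a worthwhile clarification: the paper's displayed inequality $|g - g_\euc|_{0;B(x_0;2r)} + r|Dg|_{0;B(x_0;2r)} \leq 2r|Dg|_{0;B(x_0;2r)}$ is implicitly using exactly this normalization, and your remark about choosing the linear map with bounded distortion correctly anticipates why the energy-smallness survives the coordinate change.
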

\begin{proof}
Obviously it suffices to prove that for all $x_{0} \in B(1)$ there exists $r \in (0, \frac{1}{4})$ such that both conclusions hold with $B(x_{0}; r)$ in place of $B(1)$. To that end, take any $x_{0} \in B(1)$ and define the rescalings $\tilde{u}$ and $\tilde{g}$ as in the proof of Corollary~\ref{holderregularity}, with $r < \frac{1}{4}$ to be determined. Then by the conformal invariance of the Smith equation, $\tilde{u}$ is a $W^{1, 3}$-Smith map with respect to $\tilde{g}$ on $B(2)$. Moreover, we have 
\begin{equation*}
\int_{B(2)} |D\tilde{u}|^{3}dx = \int_{B(x_0; 2r)} |Du|^{3}dx,
\end{equation*}
and
\begin{equation*}
|\tilde{g} - g_{\euc}|_{0; B(2)} + |D\tilde{g}|_{0; B(2)} = |g - g_{\euc}|_{0; B(x_0; 2r)} + r |Dg|_{0; B(x_0; 2r)}\leq 2r |Dg|_{0; B(x_0; 2r)}.
\end{equation*}
Since $u \in W^{1, 3}(B(2); M)$ and $g$ is smooth on $B(2)$, the two relations above imply that we may choose $r$ sufficiently small so that ~\eqref{metricnearflat} and ~\eqref{smallenergy} hold with $\ep_{0}$ given by Theorem~\ref{thm:ep-reg}. Consequently $\tilde{u} \in C^{1, \beta}(B(1); M)$ and is smooth on the open set $\{x \in B(1)\ |\ D\tilde{u}(x) \neq 0\}$. Since $x_{0} \in B(1)$ is arbitrary, we are done upon scaling back to $u$ and recalling the observation at the beginning of the proof.
\end{proof}

\begin{thm}[Removable singuarity] \label{thm:rem-sing}
Suppose that $g$ is a smooth Riemannian metric on $B(2)$, and that $u \in C^{1}_{\loc}(B(2)\setminus\{0\}; M)$ is a Smith map with respect to $g$, satisfying
\begin{equation*}
\int_{B(2)}|Du|^{3}dx < \infty.
\end{equation*}
Then in fact $u$ extends to a $C^{1}$-Smith map on all of $B(2)$.
\end{thm}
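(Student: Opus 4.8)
The plan is to first show that, because $M$ is compact (so $u$ is bounded) and the $3$-energy is finite, the puncture at the origin is removable at the level of Sobolev maps: $u$ belongs to $W^{1,3}(B(2);M)$ and is a $W^{1,3}$ associative Smith map there. Once this is done, the interior regularity theorem (Theorem~\ref{thm:int-reg}) immediately upgrades $u$ to a $C^{1,\beta}_{\loc}$ map on all of $B(2)$, which is the desired extension, and being a Smith map is preserved by continuity. The only genuine work is the first step; everything afterward is a citation.

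\textbf{Step 1: $u \in W^{1,3}(B(2);M)$.} Since $M$ is compact, $u$ is bounded, and by hypothesis its classical derivative $Du$ (defined on $B(2)\setminus\{0\}$) lies in $L^{3}(B(2);\R^{3\times d})$. I claim $Du$, regarded on all of $B(2)$, is the distributional derivative of $u$. Fix $\phi \in C^{\infty}_{c}(B(2);\R^{d})$, and for small $\ep > 0$ choose $\chi_{\ep}\in C^{\infty}_{c}(B(2))$ with $\chi_{\ep}\equiv 0$ on $B(\ep)$, $\chi_{\ep}\equiv 1$ outside $B(2\ep)$, and $|D\chi_{\ep}|\le C\ep^{-1}$. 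As $\chi_{\ep}\phi$ has compact support in $B(2)\setminus\{0\}$ where $u$ is $C^{1}$, integration by parts gives
\[
\int_{B(2)} u^{i}\,\partial_{\alpha}(\chi_{\ep}\phi^{i})\,dx = -\int_{B(2)} (Du)^{i}_{\alpha}\,\chi_{\ep}\phi^{i}\,dx .
\]
Expanding the left side and letting $\ep\to 0$: the term $\int u^{i}\chi_{\ep}\,\partial_{\alpha}\phi^{i}$ tends to $\int u^{i}\partial_{\alpha}\phi^{i}$ by dominated convergence, the right side tends to $-\int (Du)^{i}_{\alpha}\phi^{i}$ since $Du\in L^{1}$, and the remaining cutoff term satisfies
\[
\Big|\int_{B(2)} u^{i}\phi^{i}\,\partial_{\alpha}\chi_{\ep}\,dx\Big| \le \|u\|_{\infty}\|\phi\|_{\infty}\int_{B(2\ep)\setminus B(\ep)}|D\chi_{\ep}|\,dx \le C\,\ep^{-1}\,\ep^{3}\longrightarrow 0 .
\]
Hence $u\in W^{1,3}(B(2);\R^{d})$ with weak derivative $Du$; and since $u(x)\in M$ for $x\ne 0$, we get $u\in W^{1,3}(B(2);M)$. (Equivalently, a point has vanishing $W^{1,3}$-capacity in $\R^{3}$ and is therefore removable for $W^{1,3}$ maps.)

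\textbf{Steps 2 and 3: regularity and conclusion.} The Smith equation~\eqref{eq:smith-local-2} holds classically on $B(2)\setminus\{0\}$, where the weak derivative of $u$ agrees with the classical one; since $\{0\}$ is Lebesgue-null, the equation holds almost everywhere on $B(2)$, so $u$ is a $W^{1,3}$ associative Smith map on $B(2)$ with respect to the smooth metric $g$. Now apply Theorem~\ref{thm:int-reg} (rescaled to small balls centred at points of a neighbourhood of the origin, as in the proof of that theorem) to conclude $u\in C^{1,\beta}_{\loc}(B(2);M)$. This $C^{1}$ representative agrees a.e.\ with the original map on $B(2)\setminus\{0\}$, hence everywhere there by continuity, so it is the desired extension. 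Finally, both sides of~\eqref{eq:smith-u} depend continuously on $Du$ and agree on the dense open set $B(2)\setminus\{0\}$, so the equation holds at $0$ as well; thus the extended map is a $C^{1}$ Smith map on all of $B(2)$.

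\textbf{Main obstacle and an alternative.} The only real point is Step 1, the removal of the puncture at the $W^{1,3}$ level; this is a routine capacity/cutoff argument, but it does require both the boundedness of $u$ (from compactness of $M$) and the finiteness of $E(u;B(2))$. After Step 1 one could equally well finish via $\ep$-regularity rather than interior regularity: by absolute continuity of the integral, $\int_{B(r)}|Du|^{3}dx\to 0$ as $r\to 0$, so after a linear normalization making $g(0)=g_{\euc}$ and a rescaling $\tilde u(x)=u(rx)$, $\tilde g(x)=g(rx)$, the hypotheses of Theorem~\ref{thm:ep-reg} are met on $B(2)$ for $r$ small, giving $\tilde u\in C^{1,\beta}(B(1);M)$ and hence $u\in C^{1,\beta}$ near $0$. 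Either route yields the claim.
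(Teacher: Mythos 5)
Your proposal is correct and takes essentially the same approach as the paper: the paper's proof also consists of first noting that $u\in W^{1,3}(B(2);M)$ satisfies the Smith equation a.e.\ (a fact it asserts without the capacity/cutoff argument you supply), followed by the rescaling $\tilde u(x)=u(rx)$, $\tilde g(x)=g(rx)$ with $x_0=0$ and an application of Theorem~\ref{thm:ep-reg} — which is exactly your ``alternative'' route, and is also what Theorem~\ref{thm:int-reg}'s proof reduces to at each point. So your main route and your alternative are both sound and both coincide with the paper's argument, with your Step~1 usefully filling in a detail the paper leaves implicit.
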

\begin{proof}

We first note that the assumptions imply that $u$ belongs to $W^{1, 3}(B(2); M)$ and that its weak derivative, which coincides with its classical derivative away from the origin, satisfies the Smith equation (with respect to $g$) almost everywhere. Next, we again consider the rescalings $\tilde{u}$ and $\tilde{g}$ introduced in the proof of Corollary~\ref{holderregularity}, this time choosing $x_{0} = 0$. Then, as in the proof of Theorem~\ref{thm:int-reg}, there exists a small enough $r$ such that Theorem~\ref{thm:ep-reg} is applicable to $\tilde{u}$ on $B(2)$. Hence $\tilde{u}$ lies in $C^{1, \beta}(B(1); M)$. In other words, $u \in C^{1, \beta}(B(r); M)$, which immediately gives the desired conclusion.
\end{proof}

\subsection{Convergence modulo bubbling} \label{sec:basic-convergence}

In this section we study sequences of Smith maps with uniformly bounded $3$-energy, which we may assume to be $C^{1}$ thanks to Theorem~\ref{thm:int-reg}. We show that the estimates in $\S$\ref{sec:mean-value-r-s} give $C^{1}$-(subsequential) convergence locally away from a finite set of points to a $C^{1}$ associative Smith map. As mentioned in \S\ref{sec:results-methods}, the main result of the present section, Proposition~\ref{prop:conv-mod-bubbling}, figures prominently in the construction of the bubble tree limit. In particular, we show in $\S$\ref{sec:bubble-tree} that bubbling phenomena occur precisely at the points where $C^{1}$-convergence fails. 

We now state the main result of this section.

\begin{prop}[Convergence modulo bubbling] \label{prop:conv-mod-bubbling}
Let $(\Sigma, g)$ be a closed Riemannian $3$-manifold and let $\Omega$ be an open subset of $\Sigma$ with $\{\Omega_{n}\}$ an increasing sequence of open subsets exhausting $\Omega$. Moreover, suppose that for each $n$ we have a Riemannian metric $g_{n}$ on $\Omega_{n}$ and an associative Smith map $u_{n} \in C^{1}(\Omega_{n}, M)$ with respect to $g_{n}$, such that 
\begin{equation*}
g_{n} \text{ converges smoothly to }g \text{ on compact subsets of }\Omega,
\end{equation*}
and
\begin{equation} \label{uniformenergybound}
\int_{\Omega_{n}}|du_{n}|_{g_{n}}^{3}d\mu_{g_{n}} \leq E_{0} \text{ for all }n.
\end{equation}
Then, there exists a finite set of points $\cS\subseteq \Sigma$ such that, up to taking a subsequence, the following hold.
\begin{enumerate}[(a)]
\item $(u_{n})$ converges in $C^{1}_{\loc}(\Omega\setminus \cS)$ to $u \in C^{1}_{\loc}(\Omega; M)$ which is an associative Smith map with respect to $g$, satisfying
\begin{equation*}
\int_{\Omega}|du|_{g}^{3}d\mu_{g} \leq E_{0}.
\end{equation*}
\item As Radon measures on $\Omega$, we have
\begin{equation*}
|du_{n}|_{g_{n}}^{3}d\mu_{g_{n}} \to |du|_{g}^{3}d\mu_{g} + \sum_{x \in \cS} m_{x}\delta_{x},
\end{equation*}
with each $m_{x} \geq \frac{\ep_{0}}{2}$, where the constant $\ep_{0}$ is the threshold energy from Remark~\ref{rmk:threshold}.
\item If $\| du_n \|_{p; \Omega_n} \leq C$ for some $p \in (3, \infty]$, then $\mathcal{S} = \varnothing$.
\end{enumerate}
\end{prop}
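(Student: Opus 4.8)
The plan is to prove Proposition~\ref{prop:conv-mod-bubbling} by the standard ``concentration-compactness plus $\ep$-regularity'' scheme, adapted from~\cite[\S 4.3]{MS}, using Theorem~\ref{thm:ep-reg}, Theorem~\ref{thm:MVI}, and Theorem~\ref{thm:rem-sing} as the analytic inputs. First I would pass to the measures $\nu_n := |du_n|_{g_n}^3\,d\mu_{g_n}$ on $\Sigma$; these are nonnegative Radon measures with total mass bounded by $E_0$, so by weak-$*$ compactness a subsequence converges weakly-$*$ to a Radon measure $\nu$ on $\Sigma$ (supported in $\overline\Omega$). Define the concentration set
\begin{equation*}
\cS := \left\{ x \in \Omega \ \middle|\ \nu(\{x\}) \geq \tfrac{\ep_0}{2} \right\},
\end{equation*}
where $\ep_0$ is the threshold energy of Theorem~\ref{thm:ep-reg}. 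Since $\nu(\Omega) \leq E_0$, the set $\cS$ is finite, with $\# \cS \leq 2E_0/\ep_0$. This immediately gives part of (b): each $m_x := \nu(\{x\})$ satisfies $m_x \geq \ep_0/2$.

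Next I would establish part (a). The key claim is that $u_n$ is uniformly bounded in $C^{1,\beta}_{\loc}(\Omega \setminus \cS)$. Fix $x_0 \in \Omega \setminus \cS$. Since $\nu(\{x_0\}) < \ep_0/2$ and $\nu$ is a finite Borel measure, there is a small radius $\rho$ (in $g$-geodesic normal coordinates centered at $x_0$, so that the domain becomes a Euclidean ball $B(2\rho)$ after rescaling) with $\nu(\overline{B(x_0;2\rho)}) < \ep_0/2$; by the Portmanteau theorem (upper semicontinuity of weak-$*$ limits on closed sets), $\limsup_n \nu_n(\overline{B(x_0;2\rho)}) \leq \nu(\overline{B(x_0;2\rho)}) < \ep_0/2$, so for $n$ large $\nu_n(B(x_0;2\rho)) < \ep_0$. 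Rescaling the domain to unit size (using conformal invariance of the Smith equation via Corollary~\ref{cor:conformal-invariance}, as in the proof of Corollary~\ref{holderregularity}) and noting that $g_n \to g$ smoothly forces the rescaled metrics to satisfy the near-flatness hypothesis~\eqref{metricnearflat} for $n$ large, Theorem~\ref{thm:ep-reg}(a)---or more conveniently Theorem~\ref{thm:MVI} for a cleaner gradient bound---yields a uniform $C^{1,\beta}$ estimate on a fixed smaller ball around $x_0$. Covering any compact $K \subset \Omega \setminus \cS$ by finitely many such balls gives uniform $C^{1,\beta}(K)$ bounds, hence (Arzelà--Ascoli) a subsequence converging in $C^1_{\loc}(\Omega \setminus \cS)$ to some $u \in C^1_{\loc}(\Omega \setminus \cS; M)$. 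Since the Smith equation~\eqref{eq:smith-local-2} passes to $C^1$-limits (all terms are continuous in $u$ and its first derivatives, and $g_n \to g$), $u$ is an associative Smith map with respect to $g$ on $\Omega \setminus \cS$. The energy bound $\int_{\Omega \setminus \cS} |du|_g^3\,d\mu_g \leq E_0$ follows from lower semicontinuity of the energy under $C^1_{\loc}$ convergence (or Fatou). Then $\int_{B(x;r)} |du|^3 \to 0$ as $r \to 0$ around each $x \in \cS$, so by the removable singularity theorem (Theorem~\ref{thm:rem-sing}, applied in local coordinates after rescaling) $u$ extends to a $C^1$ associative Smith map on all of $\Omega$; this proves (a). To finish (b), write $\nu = |du|_g^3\,d\mu_g + \sigma$ where $\sigma$ is the ``defect measure''; since $u_n \to u$ in $C^1_{\loc}(\Omega \setminus \cS)$ we have $\sigma$ supported on $\cS$, so $\sigma = \sum_{x \in \cS} m_x \delta_x$, giving the stated measure convergence.

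For part (c), suppose $\|du_n\|_{p;\Omega_n} \leq C$ for some $p \in (3,\infty]$. I would argue that $\cS = \varnothing$ by showing no concentration is possible. For any $x_0 \in \Omega$ and small ball $B(x_0;r) \subset \Omega_n$, Hölder's inequality gives
\begin{equation*}
\nu_n(B(x_0;r)) = \int_{B(x_0;r)} |du_n|_{g_n}^3\,d\mu_{g_n} \leq C' \left( \int_{B(x_0;r)} |du_n|^p \right)^{3/p} \big(\vol B(x_0;r)\big)^{1 - 3/p} \leq C'' r^{3(1-3/p)}
\end{equation*}
(with the obvious modification $\nu_n(B(x_0;r)) \leq C r^3$ when $p = \infty$), where the constants are uniform in $n$ because $g_n \to g$. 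Taking weak-$*$ limits, $\nu(B(x_0;r)) \leq C'' r^{3(1-3/p)}$, and since $1 - 3/p > 0$ this forces $\nu(\{x_0\}) = 0$ for every $x_0$, hence $\cS = \varnothing$.

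The main obstacle is the passage from the local $\ep$-regularity estimate to the uniform $C^1_{\loc}$ bound away from $\cS$: one must be careful that the rescaling used to invoke Theorem~\ref{thm:ep-reg}/Theorem~\ref{thm:MVI} simultaneously (i) normalizes the domain to a fixed ball, (ii) keeps the rescaled metrics within the near-flatness threshold~\eqref{metricnearflat}---which is where the hypothesis $g_n \to g$ smoothly on compacta is essential, since the gradient term $|Dg_n|_0$ scales favorably under blow-up but needs the convergence to be controlled---and (iii) respects the conformal invariance so that the rescaled maps remain Smith maps. A secondary technical point is the transition between $g_n$-energy and Euclidean coordinate energy (the factor-$8$ comparisons as in~\eqref{energycomparable}), which is harmless but must be tracked so that the threshold $\ep_0$ is not exceeded after rescaling.
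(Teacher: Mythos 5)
Your proposal is correct and follows essentially the same route as the paper: weak-$*$ subsequential convergence of the energy measures, definition of $\cS$ via the $\tfrac{1}{2}\ep_0$ threshold, local uniform $C^{1,\beta}$ bounds from Theorem~\ref{thm:ep-reg} after rescaling, removable singularity (Theorem~\ref{thm:rem-sing}) to regularize the limit across $\cS$, the defect measure decomposition for (b), and H\"older's inequality for (c). The only cosmetic difference is that you invoke the Portmanteau upper-semicontinuity statement explicitly and mention Theorem~\ref{thm:MVI} as an alternative input, whereas the paper goes directly through Theorem~\ref{thm:ep-reg} and verifies $m_x = \nu(\{x\}) \geq \tfrac{1}{2}\ep_0$ by a short limiting computation on shrinking balls, but these are the same argument.
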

\begin{proof} 
We first identify the set $\cS$. Let $\nu_{n}$ denote the Radon measure $|du_{n}|_{g_{n}}^{3}d\mu_{g_{n}}$. Then~\eqref{uniformenergybound} implies that, for all $m \in \NN$, the sequence $(\nu_{n})_{n > m}$ has uniformly bounded total mass on $\Omega_{m}$. Hence, by standard functional analysis and a diagonal process, we obtain a subsequence, which we do not relabel, that converges in the weak-$\ast$ sense to a Radon measure $\nu$ on $\Omega$ which satisfies
\begin{equation*}
\nu(\Omega) \leq E_{0}.
\end{equation*}
Consequently, if we define
\begin{equation*}
\cS = \{x \in \Omega\ |\ \nu(\{x\})\geq \tfrac{1}{2}\ep_{0} \},
\end{equation*}
where $\ep_{0}$ is the constant from Theorem~\ref{thm:ep-reg}, then $\cS$ is finite. 

To prove part (a), take any $x \notin \cS$. Then by the definition of $\nu$ and $\cS$, there exists a radius $r > 0$ such that for all large enough $n$ we have
\begin{equation*}
\int_{B(x; r)} |d u_{n}|_{g_{n}}^{3}d\mu_{g_{n}} < \tfrac{1}{2}\ep_{0},
\end{equation*}
where the geodesic ball $B(x; r)$ is taken with respect to $g$. By the smooth convergence of $g_{n}$ to $g$ on $\overline{B(x; r)}$ and the above inequality, and shrinking $r$ if necessary, we may pull $u_{n}$ and $g_{n}$ back to $B(r)$ via the exponential map $\exp_{x}$ with respect to $g$, and then dilate to $B(2)$ to conclude that the hypotheses of Theorem~\ref{thm:ep-reg} are all verified. Thus we obtain uniform $C^{1, \beta}$-estimates on $B(x; \frac{r}{2})$ for the sequence $(u_{n})$. Therefore, passing successively to subsequences (without relabelling), we see that $(u_{n})$ converges in $C^{1}$ on every compact subset of $\Omega\setminus \cS$ to some map $u \in C^{1, \beta}_{\loc}(\Omega\setminus \cS; M)$ which satisfies the Smith equation pointwise on $\Omega\setminus \cS$. To see that $u$ is in fact $C^{1}$ on all of $\Sigma$, note that for each $\Omega' \subset\subset \Omega \setminus \cS$ we have
\begin{equation*}
\int_{\Omega'}|du|^{3}_{g}d\mu_{g} =\lim_{n \to \infty}\int_{\Omega'}|du_{n}|^{3}_{g_{n}}d\mu_{g_{n}} \leq E_{0},
\end{equation*}
and hence $\int_{\Omega}|du|_{g}^{3}d\mu_{g} \leq E_{0}$, since $\cS$ is a finite set. But then Theorem~\ref{thm:rem-sing} applies to give $C^{1}$-regularity of $u$ on all of $\Sigma$, and we have completed the proof of (a).

For (b), we consider the so-called \textit{defect measure}, defined by
\begin{equation*}
\theta := \nu - |d u|^{3}_{g}d\mu_{g},
\end{equation*}
and observe that $\supp(\theta) \subset \cS$ because $u_{n}$ converges to $u$ in $C^{1}_{\loc}(\Omega\setminus \cS)$. Therefore $\theta$ must be of the form
\begin{equation*}
\theta = \sum_{x \in \cS}m_{x}\delta_{x},
\end{equation*}
and it remains to show that $m_{x} \geq \frac{1}{2} \ep_{0}$ for all $x \in \cS$. Indeed, for any $x \in \cS$ and $r > 0$ such that $B(x; r) \cap \cS = \{x\}$, by the definition of $\theta$ we have
\begin{equation*}
m_{x} = \theta(B(x; r)) = \nu(B(x; r)) - \int_{B(x; r)} |du|^{3}_{g}d\mu_{g}.
\end{equation*}
Since $u \in C^{1}(\Sigma; M)$, letting $r$ tend to zero yields
\begin{equation} \label{mx}
m_{x} = \lim_{r \to 0}\nu(B(x; r)) = \nu(\{x\}) \geq \tfrac{1}{2} \ep_{0},
\end{equation}
and we are done with part (b).

Finally we prove part (c). Since $p \in (3, \infty]$, we see by the assumption and H\"older's inequality that for all $B(x; r) \subset \Omega_n$ there holds
\begin{align*}
\int_{B(x; r)} |d u_{n}|_{g_{n}}^{3}d\mu_{g_{n}} &\leq \Big( \int_{B(x; r)} |d u_{n}|_{g_{n}}^{p} d\mu_{g_{n}} \Big)^{\frac{3}{p}}\Big( \mu_{g_{n}}(B(x; r)) \Big)^{\frac{p-3}{p}} \\
&\leq C^{3}\Big( \mu_{g_{n}}(B(x; r)) \Big)^{\frac{p-3}{p}}.
\end{align*}
This implies, by the smooth local convergence of $g_n$ to $g$, that for all $x \in \Omega$ there exists a constant $C_{1}$ independent of $r$ and $n$ such that 
\begin{equation*}
\int_{B(x; r)} |d u_{n}|_{g_{n}}^{3}d\mu_{g_{n}} \leq C_{1}r^{\frac{3(p-3)}{p}},
\end{equation*}
for $r$ small enough and $n$ large enough. Consequently for all $x \in \Omega$ and small enough $r$ we have 
\[
\nu(\{x\}) \leq \nu(B(x; r)) \leq C_{1}r^{\frac{3(p-3)}{p}},
\]
which implies that $\cS = \varnothing$ by its definition. 
\end{proof}

\begin{rmk} We make three remarks about Proposition~\ref{prop:conv-mod-bubbling}.
\begin{enumerate}[(i)]
\item In practice we only apply Proposition~\ref{prop:conv-mod-bubbling} to the case $\Omega = \Sigma$ or $\Omega = S^{3}\setminus \{p^{-}\}$, where $p^{-}$ is the south pole. Note that in the latter case, the finite-energy property of $u$ and Theorem~\ref{thm:rem-sing} imply that $u$ is actually $C^{1}$ on all of $S^{3}$.
\item From~\eqref{mx} we deduce that the numbers $\{m_{x}\}_{x \in \cS}$ can be characterized by
\begin{equation*}
m_{x} = \lim_{r \to 0}\lim_{n \to \infty}\nu_{n}(B(x; r))
\end{equation*}
for all $x \in \cS$. Thus one can view $m_{x}$ as the amount of energy concentrating at the point $x$. For this reason, the set $\cS$ is often referred to as the \textit{energy concentration set}. Note that $\cS$ can equivalently be defined as
\begin{equation*}
\cS = \big\{ x \in \Sigma\ \big|\ \liminf_{n \to \infty}\nu_{n}(B(x; r)) \geq \tfrac{1}{2} \ep_{0} \text{ for all } r > 0 \big\}.
\end{equation*}
\item The defect measure $\theta$ can be thought to capture the energy that escapes the limit map $u$. One of the main goals of the present paper is to study what happens to this ``escaped energy''. We do this in $\S$\ref{sec:bubble-tree}. The defect measure and the energy concentration set have long been used in many other contexts, such as the study of harmonic maps (for example, see~\cite{Li, Sch}) and Yang--Mills connections (for example, see~\cite{Ri, Ti}).
\end{enumerate}
\end{rmk}

\subsection{Non-relation between $J$-holomorphic and associative bubbling} \label{sec:uv-stuff}

In Proposition~\ref{prop:conv-mod-bubbling} we established that a sequence $u_n$ of associative Smith maps with uniformly bounded $3$-energy will (up to passing to a subsequence) converge away from a finite set of points, where bubbling occurs. These are points where \emph{the $3$-energy of $u_n$ concentrates}.

As stated in item (ix) of \S\ref{sec:motivation}, an entirely analogous result holds in the classical situation of holomorphic curves. That is, if $v_n$ is a sequence of holomorphic curves with uniformly bounded $2$-energy, then up to passing to a subsequence, $v_n$ will converge away from a finite set of points where bubbling occurs. In this case, these are points where \emph{the $2$-energy of $v_n$ concentrates}.

Recall from \S\ref{sec:smith-relations} that a complex curve $\Sigma^2$ in a Calabi-Yau $3$-fold $Y^6$ gives rise to an associative submanifold $\Sigma^2 \times S^1$ in the $\G$ manifold $Y^6 \times S^1$. Therefore it is natural to wonder, if $v_n : (\Sigma^2, [g_{2, n}]) \to (Y^6, h_6)$ is a sequence of holomorphic curves bubbling at points $\{ z_1, \ldots, z_N \}$ in $\Sigma$, whether it should be the case that $u_n = v_n \times \mathrm{Id}_{S^1} : \Sigma^2 \times S^1 \to Y^6 \times S^1$ is a sequence of associative Smith maps bubbling along a finite set of circles $\{ z_k \} \times S^1$, for $k=1, \ldots, N$. This would appear to contradict Proposition~\ref{prop:conv-mod-bubbling} which says that bubbling of associative Smith maps always occurs in codimension $3$, not codimension $2$.

The following proposition clarifies in what precise sense the above reasoning is faulty. We use the notation of \S\ref{sec:smith-relations}. For simplicity we assume that the representatives $g_{2, n}$ in the conformal classes $[g_{2, n}]$ can be chosen to converge smoothly to a limit metric $g_{2}$, since $2$-energy concentration for holomorphic curves can already occur in this setting.

\begin{prop} \label{prop:no-contradiction}
Let $v_{n}: \Sigma \to (Y^6, h_6)$ be a sequence of immersions, where each $v_{n}$ is holomorphic with respect to the conformal class $[g_{2, n}]$ of metrics on $\Sigma$. Assume that the representatives $g_{2, n}$ converge smoothly to a limit metric $g_2$. Define $u_{n}: \Sigma \times S^{1} \to (Y \times S^{1}, h_{7} = h_{6} + \big( d\theta^{2}) \big)$ by
$$u_{n}(x, \phi) = (v_n(x), f_n(x, \phi))$$
and suppose that the $f_n' := \frac{\partial f_n}{\partial \phi}$ are nonvanishing, so that the $u_n$ are also immersions. Further suppose that the following three conditions all hold:
\begin{enumerate}[(a)]
\item Each $u_n$ is an associative Smith map with respect to the conformal class $[g_{3, n}]$ of metrics on $\Sigma \times S^1$.
\item The representatives $g_{3, n}$ can be chosen to converge smoothly to some limit metric $g_{3}$.
\item The $3$-energies
$$\int_{\Sigma \times S^1} |du_n|^{3}_{g_{3, n}} \vol_{g_{3, n}} \quad \text{are uniformly bounded}.$$
\end{enumerate}
Then the $3$-energy of $v_n$ with respect to $g_{2,n}$ is uniformly bounded. Hence, by item (ix)(c) in \S\ref{sec:motivation}, the sequence $\{v_n\}$ does not bubble.
\end{prop}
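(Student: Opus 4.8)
The plan is to extract, from the three convergence/boundedness hypotheses (a)--(c), an explicit lower bound on the $3$-energy density of $u_n$ in terms of the $2$-energy density of $v_n$, and then integrate. The key is that since each $u_n$ is associative Smith, Proposition~\ref{prop:smith-conf-calib-maps} (equivalently Proposition~\ref{prop:g3-vol3}) forces the metric $g_{3,n}$ to be conformal to $\mu_n^2 g_2 + (df_n)^2$, where $\mu_n^2 = \tfrac12|dv_n|^2_{g_{2,n}}$ and $|du_n|^2_{g_{3,n}} = 3\lambda_n^2$ with $\lambda_n^2 g_{3,n} = \mu_n^2 g_2 + (df_n)^2$. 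Taking determinants (on the $3$-manifold $\Sigma\times S^1$) of the relation $\lambda_n^2 g_{3,n} = \mu_n^2 g_2 + (df_n)^2$ gives $\lambda_n^6 \det(g_{3,n}) = \det(\mu_n^2 g_2 + (df_n)^2)$. The right side can be computed: viewing $g_2$ as a metric on the $\Sigma$-directions and $(df_n)^2$ as a rank-one form, one gets $\det(\mu_n^2 g_2 + (df_n)^2) = \mu_n^4\,(f_n')^2\,\det(g_2) + (\text{lower order in the }d_\Sigma f_n\text{ directions})$; in any case it is bounded below by $\mu_n^4 (f_n')^2 \det(g_2)$ up to the coframe normalization, consistent with equation~\eqref{eq:vol3}, $\lambda_n^3 \vol_{3,n} = \mu_n^2 f_n'\,d\phi\wedge\vol_2$. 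This yields, pointwise, $\lambda_n^3 \geq c\,\mu_n^2\,|f_n'|\,(\text{Jacobian factors relating }\vol_{3,n}\text{ to }d\phi\wedge\vol_2)$.

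Next I would turn this into an energy inequality. From $\lambda_n^3\vol_{3,n} = \mu_n^2 f_n'\,d\phi\wedge\vol_2$ and $|du_n|^3_{g_{3,n}} = (3\lambda_n^2)^{3/2} = 3^{3/2}\lambda_n^3$, we get
\begin{equation*}
|du_n|^3_{g_{3,n}}\,\vol_{3,n} = 3^{3/2}\,\mu_n^2\,f_n'\,d\phi\wedge\vol_2,
\end{equation*}
so integrating over $\Sigma\times S^1$,
\begin{equation*}
\int_{\Sigma\times S^1}|du_n|^3_{g_{3,n}}\,\vol_{3,n} = 3^{3/2}\int_{\Sigma}\mu_n^2\Big(\int_{S^1} f_n'\,d\phi\Big)\vol_2 = 3^{3/2}\cdot 2\pi k_n\int_{\Sigma}\tfrac12|dv_n|^2_{g_{2,n}}\vol_2,
\end{equation*}
where $k_n := \frac{1}{2\pi}\int_{S^1}f_n'\,d\phi$ is the (integer, nonzero) degree of $\phi\mapsto f_n(x,\phi)$, independent of $x$ by continuity. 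Since $\mu_n^2 g_2$ is conformally invariant and $\tfrac12|dv_n|^2_{g_{2,n}}\vol_{g_{2,n}} = dE(v_n)$ is (up to the same conformal factor) the $2$-energy density — indeed $|dv_n|^2_{g_{2,n}}\vol_{g_{2,n}}$ equals the pullback area $v_n^*\,(\text{area})$ plus $|\bar\partial v_n|^2$, but for holomorphic $v_n$ it is exactly $2\,v_n^*\omega = 2\mu_n^2\vol_2$ — the right-hand integral is, up to the universal constant $3^{3/2}\cdot 2\pi$, equal to $|k_n|\cdot E(v_n;\Sigma)$ with $E$ the $2$-energy. Therefore $E(v_n;\Sigma)\le C\,|k_n|^{-1}\int_{\Sigma\times S^1}|du_n|^3_{g_{3,n}}\vol_{3,n} \le C\,\sup_m\int|du_m|^3$, using $|k_n|\ge 1$ and hypothesis (c). This is a uniform bound on the $2$-energy of $v_n$.

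The final step is to invoke property (ix), specifically part (c) as cited: bubbling of holomorphic curves is a concentration of $2$-energy, so a uniform $2$-energy bound together with the $W^{1,p}$, $p>2$, gradient control — or more directly the standard holomorphic-curve compactness with a priori $\varepsilon$-regularity — shows no bubble points appear; more precisely, one notes that actually the hypotheses (a)--(c) also prevent $2$-energy from concentrating because the argument above gives the uniform bound directly, which is all item (ix)(c) of \S\ref{sec:motivation} needs. The main obstacle I anticipate is the determinant/volume-form bookkeeping: carefully relating $\vol_{3,n}$ to $d\phi\wedge\vol_2$ through the conformal factor $\lambda_n$ and confirming that the $d_\Sigma f_n$ contributions either cancel (because $u^*\varphi$ only saw the $f_n' d\phi$ part in~\eqref{eq:vol-condition}) or only help the inequality. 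Everything else is a direct consequence of Proposition~\ref{prop:smith-conf-calib-maps} applied to both $u_n$ and $v_n$, the energy identity for holomorphic curves, and the cited compactness statement; no delicate analysis is needed beyond what~\eqref{eq:g3}--\eqref{eq:vol3} already encode.
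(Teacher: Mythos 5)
Your volume-form identity is correct --- integrating $\lambda_n^3\vol_{3,n} = \mu_n^2 f_n'\,d\phi\wedge\vol_2$ over $\Sigma\times S^1$ and recognizing $\int_{S^1} f_n'\,d\phi = 2\pi k_n$ with $k_n$ the (constant, nonzero) winding degree of $\phi\mapsto f_n(x,\phi)$ does give
\begin{equation*}
\int_{\Sigma\times S^1}|du_n|^3_{g_{3,n}}\,\vol_{3,n} = 3^{3/2}\cdot 2\pi\,k_n\int_\Sigma \mu_n^2\,\vol_{2,n}.
\end{equation*}
(Incidentally, the worry you raise at the end about the $d_\Sigma f_n$ contributions is unfounded: a Schur-complement computation shows $\det(\mu_n^2 g_2 + (df_n)^2) = (f_n')^2\mu_n^4\det g_2$ exactly, so those terms cancel rather than merely ``help the inequality''.)

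The genuine gap is that this identity only bounds $\int_\Sigma \mu_n^2\,\vol_{2,n}$, i.e.\ the $2$-energy of $v_n$, not the $3$-energy that the proposition asserts. You then write that a uniform $2$-energy bound ``is all item (ix)(c) of \S1.1 needs,'' but this is false: (ix)(c) requires $\|dv_n\|_{L^p}\le C$ for some $p>2$. The $2$-energy bound is only an $L^2$ bound, and such a bound is precisely the \emph{hypothesis} under which holomorphic curves may bubble, not a condition ruling it out. So as written the proof does not reach the conclusion.

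The missing ingredient is hypothesis (b), which you never invoke. The paper's argument applies Proposition~\ref{prop:g3-vol3} componentwise: from $g_{3,n} = \lambda_n^{-2}\big(\mu_n^2\,g_{2,n} + (df_n)^2\big)$ and the smooth convergence of $g_{3,n}$ it extracts that $\lambda_n^{-1}f_n' \to k > 0$ (from the $(d\phi)^2$ coefficient) and $\lambda_n^{-1}\mu_n \to h \ge 0$ (from the $\Sigma$-block, after accounting for the cross terms). Consequently $f_n'/\mu_n$ is eventually bounded below by a positive constant $L^{-1}$. Feeding this back into the integrand via $\mu_n^2 f_n' = \big(f_n'/\mu_n\big)\mu_n^3 \ge L^{-1}\mu_n^3$ converts the volume-form identity into
\begin{equation*}
\int_{\Sigma\times S^1}|du_n|^3_{g_{3,n}}\,\vol_{3,n} \;\ge\; 3^{3/2}\cdot 2\pi\,L^{-1}\int_\Sigma \mu_n^3\,\vol_{2,n},
\end{equation*}
which is the needed $3$-energy ($L^3$) bound, and (ix)(c) then applies with $p = 3$. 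You should incorporate this pointwise ratio bound coming from hypothesis (b); without it the argument only recovers the ($L^2$) topological energy of the holomorphic curves, which is insufficient.
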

\begin{proof} Suppose that (a), (b), and (c) all hold. Using the notation of~\eqref{eq:v-mu-defn} and~\eqref{eq:u-lambda-defn} we write
\begin{equation} \label{eq:mu-lambda}
\mu_n^2 = \tfrac{1}{2} |dv_n|^2_{g_{2,n}}, \qquad \lambda_n^2 = \tfrac{1}{3} |du_n|^2_{g_{3,n}}.
\end{equation}
Since the $v_n$ are holomorphic with respect to $[g_{2, n}]$, and using assumption (a), Proposition~\ref{prop:g3-vol3} applies. From equation~\eqref{eq:g3} in that proposition, we have
\begin{equation} \label{eq:g3n}
g_{3, n} = \lambda_{n}^{-2} \big( \mu_{n}^{2}g_{2, n} + (df_{n})^{2} \big).
\end{equation}
From assumption (b), in local coordinates the components of $g_{3, n}$ converge smoothly to the components of the limit metric $g_3$. In local coordinates $(x^{1}, x^{2})$ on $\Sigma$, write $g_{2,n} = (g_{2,n})_{ij} dx^i dx^j$. Then in terms of $(x^1, x^2, \phi)$ we can write~\eqref{eq:g3n} as
\begin{align*}
g_{3, n} & = \Big( \frac{\mu_n^{2}}{\lambda_{n}^{2}}(g_{2,n})_{ij} + \lambda_{n}^{-2} \pa{f_n}{x^{i}}\pa{f_n}{x^{j}} \Big) dx^i dx^j + 2 \Big(\lambda_{n}^{-2}\pa{f_n}{x^{i}} \pa{f_n}{\phi}\Big) dx^i d\phi + \lambda_n^{-2}\Big( \pa{f_n}{\phi} \Big)^{2} (d\phi)^{2}.
\end{align*}
Therefore we must have
\begin{equation} \label{eq:k-limit}
\lambda_n^{-1}\pa{f_n}{\phi} \to k \quad \text{ smoothly as $n \to \infty$},
\end{equation}
where $k > 0$ on $\Sigma \times S^1$. Using this in the cross term $dx^i d\phi$ above tells us that for $i = 1, 2$ the functions
$$
\lambda_n^{-1}\pa{f_n}{x^i} = \frac{\lambda_{n}^{-2}\pa{f_n}{x^{i}} \pa{f_n}{\phi}}{\lambda_n^{-1}\pa{f_n}{\phi}}
$$
converge smoothly to some limit function. Consequently, looking at the $dx^i dx^j$ terms above, we conclude that 
\begin{equation} \label{eq:h-limit}
\lambda_n^{-1} \mu_n \to h \quad \text{ smoothly as $n \to \infty$}
\end{equation}
where $h \geq 0$ on $\Sigma \times S^1$ because $\lambda_n, \mu_n$ are both positive. We may not have $h> 0$ everywhere, but this does not matter. Writing $f_n' := \pa{f_n}{\phi}$, by combining~\eqref{eq:k-limit} and~\eqref{eq:h-limit} we find that
\begin{equation} \label{eq:muf-limit}
\frac{\mu_n}{f_n'} \to \frac{h}{k} \quad \text{smoothly as $n \to \infty$}.
\end{equation}
Thus, for $n$ sufficiently large, we have
$$
0 < \frac{\mu_n}{f_n'} < \Big |\frac{h}{k} \Big|_{0; \Sigma \times S^{1}} + 1 =: L
$$
and hence
\begin{equation} \label{eq:Linv}
\frac{f_n'}{\mu_n} > L^{-1} \quad \text{ everywhere on $\Sigma \times S^1$ for sufficiently large $n$}.
\end{equation}
Note that $L$ is just a positive \emph{constant}.

Since the $v_n$ are holomorphic with respect to $[g_{2, n}]$, and using assumption (a), Proposition~\ref{prop:g3-vol3} applies. Equation~\eqref{eq:vol3} in that proposition gives
\begin{equation} \label{eq:vol3n}
\lambda_n^3 \vol_{g_{3,n}} = \mu_n^2 f_n' d\phi \wedge \vol_{g_{2,n}}.
\end{equation}
From~\eqref{eq:vol3n} and~\eqref{eq:mu-lambda}, we have that for $n$ sufficiently large,
\begin{align*}
\frac{1}{3 \sqrt{3}} \int_{\Sigma \times S^{1}}|du_{n}|^{3}_{g_{3, n}} \vol_{g_{3, n}} & = \int_{\Sigma \times S^{1}} \lambda_{n}^{3}\vol_{g_{3, n}} \\
& = \int_{\Sigma \times S^{1}} f_n' \mu_n^{2} \vol_{g_{2,n}} d\phi \\
& = \int_{\Sigma \times S^{1}} \frac{f_n'}{\mu_n} \mu_n^{3}\vol_{g_{2,n}} d\phi \\
& > L^{-1}\int_{\Sigma \times S^{1}} \mu_{n}^{3}\vol_{g_{2,n}} d\phi \\
& = \frac{2\pi L^{-1}}{2^{\frac{3}{2}}} \int_{\Sigma}|dv_n|^{3}_{g_{2,n}} \vol_{g_{2,n}}.
\end{align*}
Hence, assumption (c) implies that the \emph{$3$-energy} of $v_n$ with respect to $g_{2,n}$ is uniformly bounded, which by item (ix)(c) in \S\ref{sec:motivation} prevents $\sup_{x \in \Sigma} |dv_n(x)|_{g_{2}}$ from going to infinity.
\end{proof}

\begin{rmk} \label{rmk:S1-bubbling} The above argument suggests that in order to construct a sequence of associative Smith maps $u_n$ from a sequence of holomorphic curves $v_n$ that ``bubbles'' along circles, one would have to violate assumptions (b) or (c) above. That is, one would have to allow the conformal classes $[g_{3,n}]$ to degenerate, or else allow for the $3$-energies of $u_n$ to be unbounded.
\end{rmk}

\begin{rmk} \label{rmk:fn-nontrivial}
Suppose we chose $f_n(x, \phi) = \phi$, so that $u_n = v_n \times \mathrm{Id}_{S^1}$. With this choice for $f_n$, if $(v_n)_*[\Sigma] \in H^2(Y, \R)$ is constant, then $(u_n)_*[\Sigma \times S^1] \in H^3(Y \times S^1, \R)$ is constant. But then $f_n' = 1$ for all $n$, and equation~\eqref{eq:Linv} shows that the sequence $\{\sup_{x \in \Sigma} \mu_n(x)\}$ is bounded. Thus for the choice $f_n(x, \phi) = \phi$, if (a), (b), (c) all hold, then the sequence $v_n$ does not bubble by item (ix)(c) of \S\ref{sec:motivation}.\end{rmk}

\subsection{Energy lower bound for maps from $S^{3}$} \label{sec:energy-lower-bound}

In this section we record for later use two results, both of which give lower bounds for the $3$-energy of maps from $S^{3}$ which are ``nontrivial'' in some sense. The first result concerns Smith maps on the standard sphere $S^{3}$ and is important for proving that the bubble tree terminates after finitely many steps.

\begin{prop}[Energy Gap. Compare with {\cite[Proposition 4.1.4]{MS}}]\label{prop:energy-gap}
Let $\ep_{0}$ be as in Theorem~\ref{thm:ep-reg}. If $u : S^{3} \to M$ is a $C^{1}$-associative Smith map with respect to the round metric on $S^{3}$, and if
\begin{equation*}
\int_{S^{3}}|du|^{3}d\mu < \ep_{0},
\end{equation*}
then $u$ is constant.
\end{prop}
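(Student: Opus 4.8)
The plan is to exploit the conformal invariance of the Smith equation together with the mean value inequality of Theorem~\ref{thm:MVI}, following the template of \cite[Proposition~4.1.4]{MS}. The idea is that a small-energy Smith map on $S^3$ must have pointwise-bounded derivative everywhere, and in fact a derivative bound that scales away to zero under conformal dilation, forcing $du \equiv 0$.

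\textbf{Step 1: Passing to a chart and rescaling.} Fix a point $p \in S^3$ and use stereographic projection $\sigma \colon (S^3 \setminus \{p\}, g_{\round}) \to (\R^3, g_{\euc})$, which is a conformal diffeomorphism. By Proposition~\ref{prop:smith-u-conf-inv} (conformal invariance of the Smith equation), the map $\bar u := u \circ \sigma^{-1}$ is an associative Smith map on $(\R^3, g_{\euc})$, and since the $3$-energy is conformally invariant, $E(\bar u; \R^3) = E(u; S^3) < \ep_0$. For any $R > 0$, consider the dilation $\delta_R(x) = Rx$, which is an orientation-preserving conformal diffeomorphism of $\R^3$; then $\bar u_R := \bar u \circ \delta_R$ is again an associative Smith map on $B(2) \subset \R^3$ (with the Euclidean metric), and $E(\bar u_R; B(2)) = E(\bar u; B(2R)) \leq E(\bar u; \R^3) < \ep_0$.

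\textbf{Step 2: Applying the mean value inequality.} Choosing $\ep_0$ (shrinking it if necessary) to be smaller than the constant $\ep_1$ of Theorem~\ref{thm:MVI} — note the Euclidean metric trivially satisfies $|g_{\euc} - g_{\euc}|_{0} + |Dg_{\euc}|_0 = 0 \leq \ep_1$ and has vanishing Ricci curvature, so $K = 0$ works — Theorem~\ref{thm:MVI} gives
\begin{equation*}
\sup_{B(\frac{1}{2})} |D\bar u_R(x)|^3 \leq C\, E(\bar u_R; B(2)) \leq C\, E(\bar u; \R^3).
\end{equation*}
Undoing the dilation, $D\bar u_R(x) = R\, D\bar u(Rx)$, so this reads $\sup_{B(R/2)} |D\bar u(y)|^3 \leq C R^{-3} E(\bar u; \R^3)$. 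Letting $R \to \infty$ with the right-hand side going to zero, we conclude $D\bar u \equiv 0$ on $\R^3$, hence $\bar u$ is constant, hence $u$ is constant on $S^3 \setminus \{p\}$, and by continuity on all of $S^3$.

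\textbf{Main obstacle.} The only genuine subtlety is bookkeeping around the constants: one must verify that the threshold $\ep_0$ from Theorem~\ref{thm:ep-reg} (which already appears in the statement) can be taken $\leq \ep_1$, and that Theorem~\ref{thm:MVI} genuinely applies with the flat metric on each dilated ball uniformly in $R$ — which it does, precisely because dilation is a conformal symmetry of $g_{\euc}$, so no metric degeneration occurs. The argument is otherwise a routine conformal rescaling, and the $R \to \infty$ limit closes it cleanly; no compactness or unique continuation input is needed.
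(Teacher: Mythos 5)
Your proof is essentially identical to the paper's own argument: pull $u$ back to $\R^3$ via stereographic projection (using the conformal invariance of both the Smith equation and the $3$-energy), dilate by $R$, apply the mean value inequality of Theorem~\ref{thm:MVI}, and let $R \to \infty$. The one subtlety you flag — that the statement uses $\ep_0$ from Theorem~\ref{thm:ep-reg} while Theorem~\ref{thm:MVI} comes with its own constant $\ep_1 \leq \ep_0$, so one should really work at the threshold $\ep_1$ (or note that the precise constant is harmless for the later bubble-tree applications) — is a genuine bookkeeping point that the paper's proof glosses over; your remark that $\ep_0$ may be shrunk if necessary is the correct resolution.
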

\begin{proof}
The argument below is inspired by the proof of~\cite[Proposition 4.1.4]{MS} and relies on the fact that the round $S^{3}$ is conformally flat. Let $\sigma : \R^{3} \to S^{3}\setminus \{p\}$ denote the stereographic projection. By the conformal invariance of the Smith equation and the $3$-energy, the map $\bar{u} = \sigma^{\ast}u$ is a Smith map on $\R^{3}$ with respect to the flat metric and satisfies
\begin{equation*}
\int_{\R^{3}}|D\bar{u}|^{3}dx < \ep_{0}.
\end{equation*}
In particular, Theorem~\ref{thm:MVI} is applicable to the map $\bar{u}_{R}(x) := \bar{u}(Rx)$ on $B(2)$ for any $R > 0$, giving us
\begin{equation*}
|D\bar{u}|_{0; B(\frac{R}{2})} \leq CR^{-3}\int_{B(2R)}|D\bar{u}|^{3}dx \leq CR^{-3}\ep_{0} \quad \text{ for all }R > 0.
\end{equation*}
From this we easily see that $\bar{u}$ must be constant, and hence $u$ is constant as well.
\end{proof}

The next result is a special case of a theorem due to White~\cite{Wh} and concerns a general Lipschitz map on $S^{3}$ with small energy and is a crucial ingredient in proving that there is no energy loss through the necks.

\begin{prop}[{\cite[Theorem 2]{Wh}}] \label{Brian}
There exists a constant $\gamma_{1} > 0$ depending only on $M$ and the embedding $M \to \R^{d}$ such that if $u : S^{3} \to M$ is a Lipschitz map with 
\begin{equation} \label{Briangap}
\int_{S^{3}}|du|^{3}d\mu < \gamma_{1},
\end{equation}
then $u$ is homotopic to a constant. Here $|du|$ and $d\mu$ are both with respect to the round metric on $S^3$.
\end{prop}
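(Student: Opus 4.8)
The plan is to obtain Proposition~\ref{Brian} directly from White's theorem on infima of energy functionals in homotopy classes~\cite[Theorem 2]{Wh}; no genuinely new work is needed, so in the paper the proof can be a citation. For context I describe the mechanism. First one reduces to the case that $u$ is smooth. Since $u$ is Lipschitz it is continuous, hence has a well-defined homotopy class; mollifying the $\R^d$-valued map $u$ and composing with the nearest-point retraction $\pi$ onto $M$ produces, for a small mollification parameter $\ep$, a smooth map $u_\ep \colon S^3 \to M$ with $\|u_\ep - u\|_{0; S^3}$ as small as desired and $\int_{S^3}|du_\ep|^3\,d\mu \to \int_{S^3}|du|^3\,d\mu < \gamma_1$. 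Because $u_\ep$ is $C^0$-close to $u$ and both take values in a fixed tubular neighborhood of $M$, the straight-line homotopy in $\R^d$ followed by $\pi$ shows $u_\ep \simeq u$. So it suffices to treat smooth $u$ with small $3$-energy, at the cost of shrinking $\gamma_1$.

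For smooth $u$ the argument in~\cite{Wh} is an induction on dimension, the base case of $S^1$ being elementary (a Lipschitz loop of small length lies in a geodesically convex ball of $M$, hence is nullhomotopic). The useful consequence of the energy smallness is obtained by foliating $S^3$ by the latitude $2$-spheres $S^2_t$: by the coarea formula, for a positive-measure set of $t$ the restriction $u|_{S^2_t}$ has small $3$-energy over $S^2_t$, and since the exponent $3$ is \emph{supercritical} in dimension $2$, the Morrey embedding $W^{1,3}(S^2)\hookrightarrow C^{0,1/3}(S^2)$ together with H\"older's inequality force the oscillation of $u|_{S^2_t}$ to be bounded by a positive power of $\gamma_1$. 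Hence, once $\gamma_1$ is small enough in terms of the injectivity radius of $M$ only, the image $u(S^2_t)$ lies in a geodesically convex ball of $M$, so $u|_{S^2_t}$ is nullhomotopic for such slices. White's theorem then assembles these slicewise triviality statements, together with a careful choice of slices designed to peel off the energy between consecutive slices, into the conclusion that $[u] = 0$ in $\pi_3(M)$.

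The substance of the proof — and the step I would not reproduce — is exactly this last assembly: promoting ``$u$ is nullhomotopic on every separating $2$-sphere'' to ``$u$ is nullhomotopic'', which in the critical case $p = \dim$ genuinely requires the inductive scheme of~\cite{Wh} (for $p$ strictly larger than the dimension the statement would be immediate from $W^{1,p}\hookrightarrow C^0$, so all the difficulty lies in the equality $3 = \dim S^3$). Accordingly, the proof of Proposition~\ref{Brian} is simply an application of~\cite[Theorem 2]{Wh}, after the elementary reduction to smooth maps described above.
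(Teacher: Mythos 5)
Your top-level plan---to invoke \cite[Theorem 2]{Wh} directly, the proposition being stated as a citation to that result---is logically valid, but the paper does not leave it at that: it supplies a short self-contained proof ``for the convenience of the reader,'' and that proof looks nothing like your sketch. The paper's mechanism is an explicit averaging homotopy. Writing $(u)_{x,r}$ for the average of $u$ over the geodesic ball $B(x;r)\subset S^3$, the Poincar\'e inequality plus the energy bound forces $(u)_{x,r}$ to lie within the tubular neighborhood of $M$ once $\gamma_1$ is small, so $\pi((u)_{x,r})$ is defined; one then sets $H(x,r) = \pi((u)_{x,r})$ for $r\in(0,r_0]$ and $H(x,0)=u(x)$. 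Lipschitz continuity of $u$ makes $H$ continuous, and a second application of the (global) Poincar\'e inequality shows that $H(\cdot,r_0)$ has small oscillation, hence image in a single geodesic ball, hence is nullhomotopic. This is a one-shot construction: no induction on dimension, no foliation by latitude spheres, no Morrey embedding, no passage from slice triviality to global triviality. Your sketch of White's argument as ``slice by $S^2_t$, apply supercritical $W^{1,3}\hookrightarrow C^{0,1/3}$ on the $2$-dimensional slices, then induct'' is not what either the paper or (as best I can tell) White's paper actually does; the averaging-on-balls idea is the real engine, and it works in one step exactly because the exponent matches the domain dimension ($p=n=3$), which is where the Poincar\'e inequality bites. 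Your preliminary reduction to smooth maps is also unnecessary: the averaging homotopy $H$ is continuous as soon as $u$ is Lipschitz, so the paper proves the Lipschitz statement directly without mollification. So while you would arrive at the right theorem by citation, the mechanism you describe is not the one at work, and if you were asked to actually carry out the proof of Proposition~\ref{Brian} you should learn the averaging-homotopy argument rather than the slicing scheme you outlined.
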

\begin{proof}
For the convenience of the reader we briefly describe what the proof in~\cite{Wh} becomes in our case. In short, the homotopy is constructed out of the averages of $u$ on geodesic balls, and the assumption~\eqref{Briangap} enters the argument through the Poincar\'e inequality. To set things up, for a geodesic ball $B(x; r)$, we define
\begin{equation*}
(u)_{x, r} = \fint_{B(x; r)} u d\mu.
\end{equation*}
Then there exist universal constants $C, r_{0} > 0$ such that for all $x \in S^{3}$ and $r \leq r_{0}$ the following two conditions hold:
\begin{align*}
\frac{1}{2} r^{3} \leq \mu(B(x; r)) & \leq 2r^{3}, \\
r^{-3} \int_{B(x; r)}|u - (u)_{x, r}|^{3}d\mu & \leq C\int_{B(x; r)}|du|^{3} d\mu.
\end{align*}
Furthermore, there exists $\delta_{0}$ depending on $M$ such that 
\begin{equation*}
\cN_{\delta_{0}}(M) := \{x \in \R^{d}\ |\ \dist_{\R^{d}}(x, M) < \delta_{0}\}
\end{equation*}
is strictly contained in a tubular neighborhood of $M$ in $\R^{d}$. Let $\pi: \cN_{\delta_{0}}(M) \to M$ denote the nearest-point projection.

Now since $u$ maps into $M$, the inequalities above and~\eqref{Briangap} imply that if $\gamma_{1}$ is small enough, then
\begin{equation*}
\dist_{\R^{d}}((u)_{x, r}, M)^{3} < 2C\gamma_{1} < \delta_{0}^{3} \quad \text{ for all }x \in S^{3}, r\leq r_{0},
\end{equation*}
and thus $\pi((u)_{x, r})$ is well-defined, and we have 
\begin{equation} \label{distancebound}
|(u)_{x, r} - \pi((u)_{x, r})| \leq \left( 2C\gamma_{1} \right)^{\frac{1}{3}} \quad \text{ for all }x \in S^{3}, r \leq r_{0}.
\end{equation}
Consider the function $H: S^{3} \times [0, r_{0}] \to M$ defined by 
\begin{equation*}
H(x, r) = \begin{cases}
\pi((u)_{x, r}) & \text{if }r \in (0, r_{0}], \\
u(x) & \text{if } r = 0.
\end{cases}
\end{equation*}
Since $u$ is Lipschitz by assumption, the function $H$ is continuous and gives a homotopy from $u$ to $H(\cdot, r_{0})$. It remains to show that the latter is null-homotopic. To that end we recall the following Poincar\'e inequality on the whole $S^{3}$. Namely,
\begin{equation*}
\int_{S^{3}}|u - (u)_{S^{3}}|^{3}d\mu \leq C\int_{S^{3}} |du|^{3}d\mu.
\end{equation*}
Then for all $x \in S^{3}$ we compute
\begin{align*}
\left| (u)_{x, r_{0}} - (u)_{S^{3}} \right| & = \Big( \fint_{B(x; r_0)}|(u)_{x_{0}, r} - (u)_{S^{3}}|^{3} d\mu \Big)^{\frac{1}{3}} \\
& \leq \Big( \fint_{B(x; r_0)}|(u)_{x_{0}, r} - u |^{3}d\mu \Big)^{\frac{1}{3}} + \Big( \fint_{B(x; r_0)}|u - (u)_{S^{3}}|^{3} d\mu \Big)^{\frac{1}{3}} \\
& \leq C \Big( \frac{r_{0}^{3}}{\mu(B(x; r_0))}\int_{B(x; r_0)}|du|^{3}d\mu \Big)^{\frac{1}{3}} + C \left[\mu(B(x; r_0))\right]^{-\frac{1}{3}} \Big( \int_{S^{3}} |du|^{3} d\mu \Big)^{\frac{1}{3}} \\
& \leq C(1 + r_{0}^{-1})\gamma_{1}^{\frac{1}{3}}.
\end{align*}
Thus for all $x_{1}, x_{2} \in S^{3}$ we obtain
\begin{equation*}
|(u)_{x_{1}, r_{0}} - (u)_{x_{2}, r_{0}}| \leq 2C(1 + r^{-1}_{0})\gamma_{1}^{\frac{1}{3}}.
\end{equation*}
Combining this with~\eqref{distancebound}, we arrive at
\begin{equation*}
|H(x_{1}, r_{0}) - H(x_{2}, r_{0})| \leq C'(1 + r_{0}^{-1}) \gamma_{1}^{\frac{1}{3}} \quad \text{ for all }x_{1}, x_{2} \in S^{3}.
\end{equation*}
Thus, making a smaller choice of $\gamma_{1}$ if necessary, we deduce that $H(\cdot, r_{0})$ has image contained in a geodesic ball in $M$, and hence must be null-homotopic. The proof is complete.
\end{proof}

\section{The bubble tree} \label{sec:bubble-tree}

As stated in $\S$\ref{sec:int-reg-cpt}, for the remainder of this paper, we are exclusively concerned with associative Smith maps $u \colon (\Sigma^3, g) \to (M^7, h)$, although our results also apply to Cayley Smith maps after making the obvious modifications.

In this section we employ the following notation:
\begin{itemize} \setlength\itemsep{-1mm}
\item For a $W^{1,3}$ map $u \colon (\Sigma, g) \to (M,h)$ and a measurable subset $A \subset \Sigma$, we write
$$E_g(u; A) = E(u;A) = \int_A |du|^3\,d\mu_g,$$
often suppressing explicit reference to the domain metric $g$ when it is clear from context. (From now on, for convenience, we drop the $\frac{1}{3 \sqrt{3}}$ factor in the $3$-energy.) For brevity, we often write $E(u) := E(u; \Sigma)$.
\item We use $I = (i_1, \ldots, i_k)$ to denote a multi-index of length $|I| = k \geq 1$.
\item The symbol $\ep_0$ always denotes the threshold energy constant of Theorem~\ref{thm:ep-reg} and Remark~\ref{rmk:threshold}.
\end{itemize}

\subsection{Overview} \label{sec:bubble-overview}

Let $u_n \colon (\Sigma^3, g) \to (M^7, h)$ be a sequence of associative Smith maps with bounded $3$-energy. That is,
$$E(u_n) \leq E_0 \quad \text{ for all } n \in \NN.$$
By Proposition~\ref{prop:conv-mod-bubbling}, after passing to a subsequence, there exists an associative Smith map $u_\infty \in C^1_{\loc}(\Sigma; M)$ called the \textbf{base map} for which
\begin{equation}
\label{eq:Setup}
u_n \to u_\infty \text{ in } C^1_{\loc}(\Sigma \setminus \mathcal{S})
\end{equation}
where $\mathcal{S} = \{x_1, \ldots, x_q\}$ is a finite set (possibly empty) whose elements are called \textbf{(zeroth level) bubble points}, and also
\begin{equation}
|du_n|^3\,d\mu_g \to |du_\infty|^3\,d\mu_g + \sum_{i=1}^q m_i \delta(x_i) \quad \text{ as Radon measures,} \label{eq:Radon-Conv}
\end{equation}
where each $m_i \geq \frac{1}{2}\ep_0$. The purpose of the whole of $\S$\ref{sec:bubble-tree} is to understand the behavior of the sequence $\{u_n\}$ at the bubble points $x_i$.

Suppose that $\mathcal{S} \neq \varnothing$. At each $x_i \in \mathcal{S}$, we choose local coordinates centered at $x_i$ and construct a sequence of conformal maps
$$R_n \colon \Omega_n \subset S^3_{x_i} \setminus \{p^-\} \to \R^3,$$
where $S^3_{x_i}$ is the round $3$-sphere (the subscript $x_i$ is simply for bookkeeping), $p^- \in S^3_{x_i}$ is the south pole, and $\Omega_n \subset S^3_{x_i}$ is a particular increasing sequence of open sets that exhaust $S^3_{x_i} \setminus \{p^-\}$. The rescaled maps
\begin{equation} \label{eq:rescaled-maps}
\widetilde{u}_{n,i} := u_n \circ R_n \colon \Omega_n \to M
\end{equation}
are a sequence of associative Smith maps (with respect to appropriate metrics $h_n$ on $\Omega_n$) that has bounded $3$-energy.

Hence, again by Proposition~\ref{prop:conv-mod-bubbling}, there exists an associative Smith map $\widetilde{u}_{\infty, i} \colon (S^3_{x_i}, g_{\round}) \to (M, h)$ of class $C^1_{\loc}$, called a \textbf{first level bubble map}, and a finite set (possibly empty)
\begin{equation*}
\mathcal{S}_i = \{x_{i1}, \ldots, x_{iq_i}\} \subset S^3_{x_i} \setminus \{p^-\},
\end{equation*}
called the \textbf{first level bubble points}, for which (after passing to a subsequence) we have
\begin{align*}
\widetilde{u}_{n,i} \to \widetilde{u}_{\infty, i} \quad \text{ in } C^1_{\loc} \big( S^3_{x_i} \setminus (\{p^-\} \cup \mathcal{S}_i) \big)
\end{align*}
and
\begin{align*}
|d\widetilde{u}_{n,i}|^3\,d\mu_{h_n} \to |d\widetilde{u}_{\infty,i}|^3\,d\mu_{\round} + \sum_{j=1}^{q_i} m_{ij}\delta(x_{ij}) \quad \text{ as Radon measures on }S^3_{x_i} \setminus \{p^-\}
\end{align*}
where each $m_{ij} \geq \frac{1}{2}\ep_0$.

If $\mathcal{S}_i \neq \varnothing$ for some $i = 1,\ldots, q$, then this process may be repeated, yielding associative Smith maps $\widetilde{u}_{\infty, ij} \colon (S^3_{x_{ij}}, g_{\round}) \to M$ of class $C^1_{\loc}$, called \textbf{second level bubble maps}, and a finite set $\mathcal{S}_{ij} \subset S^3_{x_{ij}} \setminus \{p^-\}$ (possibly empty) of \textbf{second level bubble points}. Evidently, we may iterate this procedure as long as the sets of bubble points remain nonempty. In $\S$\ref{sec:bubble-construction}, we show that, in fact, this process must eventually terminate.

The result of this iteration is a \textbf{bubble tree}. That is, one obtains a \emph{tree} $(\{T_0, T_I\}, \{E_I\})$, meaning a connected graph without cycles, in the following way:
\begin{itemize} \setlength\itemsep{-1mm}
\item The vertex $T_0$ corresponds to the base map $u_\infty$.
\item Each vertex $T_i$ corresponds to the first-level bubble map $\widetilde{u}_{\infty, i}$. Each vertex $T_i$ (for $i = 1,\ldots, q$) is joined to $T_0$ by an edge $E_i$, which corresponds to the bubble point $x_i$.
\item Each vertex $T_{ij}$ corresponds to the second-level bubble map $\widetilde{u}_{\infty, ij}$. Each vertex $T_{ij}$ is joined to $T_i$ by an edge $E_{ij}$, which corresponds to the bubble point $x_{ij}$.
\item And so on.
\end{itemize}
As we show in $\S$\ref{sec:bubble-construction}, the construction of the bubble tree, which primarily amounts to a careful choice of the rescaling maps $R_n$ and open sets $\Omega_n$, is essentially a formal process. Indeed, in that discussion we do not need the full strength of the associative Smith condition, but only the following properties of such maps:
\begin{itemize} \setlength\itemsep{-1mm}
\item The conformal invariance of the $3$-energy functional $E$.
\item The conformal invariance of the Smith equation from Proposition~\ref{prop:smith-u-conf-inv}.
\item Removal of singularities from Theorem~\ref{thm:rem-sing}.
\item The convergence result from Proposition~\ref{prop:conv-mod-bubbling}.
\item The energy gap from Proposition~\ref{prop:energy-gap}.
\end{itemize}

Analogous properties hold in several other conformally invariant settings, including harmonic maps~\cite{PW}, Yang--Mills connections~\cite{DK, Fe}, and holomorphic curves~\cite{MS, P, Ye}, leading to bubble trees in such settings.

\textbf{Zero Energy Loss and Zero Neck Length.} By virtue of~\eqref{eq:Radon-Conv}, we see that energy appears to be lost in the limit. That is,
$$\lim_{n \to \infty} E(u_n) \geq E(u_\infty).$$
Ideally, we would like to say that the discrepancy $\sum m_i$ is completely accounted for by the energies of the bubble maps. More precisely, we would like to say that each zeroth-level energy concentration $m_i$ is equal to the energy $E(\widetilde{u}_{\infty, i})$ of the first-level bubble map plus all of the the first-level energy concentrations $m_{ij}$. That is,
\begin{equation} \label{eq:ZeroEnergyLoss}
m_i = E(\widetilde{u}_{\infty, i}) + \sum_{j=1}^{q_i} m_{ij}.
\end{equation}
In turn, the first-level energy concentrations $m_{ij}$ ought to equal the sum of the energy of a second-level bubble map plus all of the second-level energy concentrations $m_{ijk}$, and so on. If this were the case, then we would indeed have
\begin{equation}
\lim_{n \to \infty} E(u_n) = E(u_\infty) + \sum_{I} E(\widetilde{u}_{\infty, I}) \label{eq:EnergyCons}
\end{equation}
essentially asserting that energy is preserved in the ``bubble tree limit''.

A priori, it is not obvious that~\eqref{eq:ZeroEnergyLoss} holds true, and we call the discrepancy $\tau_i$ the \textbf{energy loss}. That is, we define
\begin{align*}
\tau_i & = m_i - \Big( E(\widetilde{u}_{\infty, i}) + \sum_{j=1}^{q_i} m_{ij} \Big),
\end{align*}
and in $\S$\ref{sec:no-energy-loss} we prove:

\begin{thm}[No energy loss] \label{thm:no-energy-loss}
We have $\tau_i = 0$. Consequently,~\eqref{eq:ZeroEnergyLoss} and~\eqref{eq:EnergyCons} both hold.
\end{thm}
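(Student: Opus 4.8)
\textbf{Proof proposal for Theorem~\ref{thm:no-energy-loss}.}
The plan is to reduce the claim to a statement about the $3$-energy lost in ``necks'' (annular regions separating the domain of a bubble from its predecessor) and then show this neck energy vanishes. First I would make the energy loss $\tau_i$ explicit in terms of a neck. Fix a bubble point $x_i$, choose conformal coordinates, and for each $n$ pick a small radius $\delta$ (to be sent to $0$) and the rescaling parameter $\lambda_n \to 0$ associated to the bubble $\widetilde{u}_{\infty,i}$, arranged so that $\lambda_n \ll \delta$. Decomposing $E(u_n; B(x_i;\delta))$ into three pieces --- the energy on the ``outer'' region $B(x_i;\delta)\setminus B(x_i; \lambda_n^{1/2})$ (which limits to $E(\widetilde u_{\infty,i}$ near its south pole end, plus tail), the energy on the ``inner'' region $B(x_i;\epsilon\lambda_n)$ (which captures the bubble $\widetilde u_{\infty,i}$ away from $p^-$, and recursively its own bubbles), and the annular ``neck'' $A_n := B(x_i; \lambda_n^{1/2})\setminus B(x_i;\epsilon\lambda_n)$ --- one finds, after letting $n\to\infty$ and then $\delta,\epsilon$ appropriately, that
\[
\tau_i = \lim_{\epsilon \to 0}\,\lim_{\delta\to 0}\,\limsup_{n\to\infty}\, E(u_n; A_n^{\delta,\epsilon}),
\]
i.e. $\tau_i$ is exactly the energy that concentrates in the neck region. (Iterating accounts for deeper levels; since the bubble tree is finite by $\S$\ref{sec:bubble-construction}, summing over all multi-indices gives~\eqref{eq:EnergyCons} from~\eqref{eq:ZeroEnergyLoss}.) So the whole theorem reduces to: \emph{no energy concentrates in the necks.}

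Second I would control the neck energy using the geometric structure of associative Smith maps, specifically the homologically minimizing property. Here is where $d\varphi=0$ enters. A neck region, after conformal rescaling, looks like a long cylinder $[T_1,T_2]\times S^2$ with $T_2-T_1\to\infty$, and the point is to show $\int_{[T_1,T_2]\times S^2}|du_n|^3 \to 0$. The strategy, following the $J$-holomorphic curve paradigm adapted to $3$-energy as in $\S$\ref{sec:energy-annuli}, is a differential-inequality argument on $e(t) := E(u_n; (-\infty, t]\times S^2 \text{-part})$ (or the ``co-energy'' from the other end). On one hand, slicing at a good radius and using the boundary sphere one builds a comparison map --- the Smith map restricted to the annulus, capped off by the cone or by an appropriate extension over a ball --- and invokes Corollary~\ref{energyminimizing} (which requires $d\varphi=0$) to bound the energy of $u_n$ on the inner ball by the energy of a radial-type extension of the boundary slice; this yields an \emph{isoperimetric-type inequality} relating the energy on dyadic sub-annuli to the oscillation of $u_n$ on slices. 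On the other hand, White's energy gap (Proposition~\ref{Brian}) guarantees that once the energy on a unit sub-cylinder drops below $\gamma_1$, the slice map $u_n|_{\{t\}\times S^2}$ is null-homotopic, so the comparison/capping is legitimate and no topology obstructs it. Combining these, $e(t)$ satisfies a decay estimate of the form $e(t+1) \le \Theta\, e(t)$ with $\Theta < 1$ on the portion of the neck where the energy is already small, forcing exponential decay and hence $E(u_n;\text{neck}) \to 0$ as the neck length $\to\infty$.

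Third I would assemble: the exponential decay on the necks gives $\tau_i = 0$ for every $i$; the same argument at every level (legitimate because each individual neck is genuinely ``thin'' --- total energy $\le E_0$ and each bubble carries $\ge \tfrac12\epsilon_0$, bounding the number of levels and bubbles) gives $m_I = E(\widetilde u_{\infty,I}) + \sum_j m_{Ij}$ for all multi-indices $I$, which is~\eqref{eq:ZeroEnergyLoss}; and a finite induction up the tree then yields~\eqref{eq:EnergyCons}.

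\textbf{Main obstacle.} The hard part is the neck analysis --- establishing the isoperimetric/decay inequality on long cylinders. Two features make it more delicate than the holomorphic-curve case: the Smith equation degenerates on $\crit_{u_n}$, so one cannot freely use elliptic estimates on slices, and there is no unique continuation, so one genuinely must route the argument through the \emph{minimizing} property (Corollary~\ref{energyminimizing}) rather than through a monotonicity formula for the image. Making the capping construction precise --- choosing ``good'' radii where the slice energy and the slice $W^{1,3}$-norm are both controlled (a pigeonhole/mean-value choice), extending the slice map over the ball with comparable energy, verifying the homology/homotopy hypotheses of Corollary~\ref{energyminimizing} using Proposition~\ref{Brian}, and then closing the iteration --- is where the real work lies, and it is exactly the content deferred to $\S$\ref{sec:energy-annuli} and $\S$\ref{sec:no-energy-loss}.
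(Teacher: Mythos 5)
Your overall strategy is sound and you correctly identify all the right ingredients --- trapping $\tau_i$ in an annular ``neck,'' capping the boundary spheres, invoking Corollary~\ref{energyminimizing} (which needs $d\varphi=0$), and using White's gap (Proposition~\ref{Brian}) to justify the extension over $B^4$. However there are two things worth flagging.

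First, you take a longer route than the paper for this particular theorem. You propose iterating a decay inequality $e(t+1)\leq\Theta\, e(t)$ on the cylinder to force the neck energy to zero; that argument does appear in the paper, but in $\S$\ref{sec:zero-neck-length} (Proposition~\ref{neckenergydecay}), where it is used for the stronger conclusion~\eqref{eq:ZeroNeckLength2}. For no-energy-loss alone, the paper's proof is a \emph{single} application of Proposition~\ref{annularenergy}: since $\tau_i=\lim_l E(u_l;A_l)$ and the boundary-sphere energies tend to zero (Lemma~\ref{lemma:rescaling}(c) and Lemma~\ref{innerboundary}(b)), one does not need to iterate at all --- the neck energy is bounded by a constant times the boundary energies, which vanish. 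The iteration pays off only when one also needs pointwise/diameter control, which is Theorem~\ref{thm:zero-neck-length}. (Moreover, the paper deliberately proves no-energy-loss first and then feeds $E(u_l;A_l)\to 0$ into the hypotheses of the decay inequality, so reversing the order as you propose requires care.)

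Second, and this is a genuine gap: your plan starts the decay ``on the portion of the neck where the energy is already small,'' but never explains why the entire neck energy is a priori below the threshold $\tfrac18\gamma_1$ needed to invoke Proposition~\ref{annularenergy} at all (hypothesis~\eqref{smallannularenergy}). Without this you cannot cap the boundary spheres and conclude $f$ is null-homotopic, and the whole iteration never gets going. The paper supplies this seed via Lemma~\ref{energylossthreshold}: because the rescaling center $c_k$ and scale $\lambda_k$ were normalized in Lemma~\ref{lemma:rescaling}(b) so that the energy \emph{outside} $B(c_k;\lambda_k)$ within $D_4(k)$ is exactly $\eta_0$, one gets $\kappa_i(\overline{S^3_+})\geq m_i-\eta_0$ (Lemma~\ref{lemma:kappa-i}(b)) and hence $\tau_i\leq\eta_0$. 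Since $\eta_0$ was chosen $<\tfrac1{16}\min(\tfrac13\ep_0,\gamma_1)$, this places the annular energy in the small regime before any iteration. You should make explicit that this a priori bound $\tau_i\leq\eta_0$ --- a consequence of the $\eta_0$-normalization in the bubble-tree construction, not of the Smith geometry --- is what unlocks Proposition~\ref{annularenergy}; once you have it, no-energy-loss follows with or without the decay iteration.

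Minor: the null-homotopy from Proposition~\ref{Brian} is applied to the \emph{glued} map $f\colon S^3\to M$ (annulus plus two caps), not to a single slice $u_n|_{\{t\}\times S^2}$; the caps themselves are produced by Lemma~\ref{extension}, which requires only small image diameter, not triviality of the slice homotopy class.
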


Now, although we have described the bubble tree in abstract terms, as a connected graph that records the various bubble maps and bubble points, it may also be regarded as a concrete geometric object. That is, we may regard the bubble tree as the subset of $M$ given by the union of the images of the base and bubble maps:
\begin{equation}
u_\infty(\Sigma) \cup \, \bigcup_I \widetilde{u}_{\infty, I}(S^3_{x_I}). \label{eq:bubbletree}
\end{equation}
We prove that this set is connected. In fact, in $\S$\ref{sec:zero-neck-length} we prove the stronger statement that:

\begin{thm}[Zero neck length] \label{thm:zero-neck-length}
We have
\begin{equation}
u_\infty(x_i) = \widetilde{u}_{\infty, i}(p^-). \label{eq:ZeroNeckLength}
\end{equation}
By extension, we have $\widetilde{u}_{\infty, I}(x_{Ij}) = \widetilde{u}_{\infty, Ij}(p^-)$.
\end{thm}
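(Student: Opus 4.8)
\textbf{Proof proposal for Theorem~\ref{thm:zero-neck-length} (Zero neck length).}

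The plan is to prove the identity $u_\infty(x_i) = \widetilde{u}_{\infty,i}(p^-)$ by a careful analysis of the ``neck regions'' — the $3$-dimensional annuli $A(\rho,\sigma) = B(x_i;\sigma)\setminus \overline{B(x_i;\rho)}$ around the bubble point, interpolating between the scale of the base map and the scale of the bubble. The content of the theorem is that the oscillation of $u_n$ over these necks tends to zero as the inner and outer radii are driven to the appropriate limits; once this is known, $u_\infty(x_i)$ (the limit of $u_n$ at radius $\to 0$ from the base side) must coincide with $\widetilde{u}_{\infty,i}(p^-)$ (the limit of the rescaled maps at the south pole, i.e.\ radius $\to \infty$ from the bubble side). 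The general statement for higher multi-indices $I$ then follows by the same argument applied at each edge of the bubble tree, since the rescaling construction of $\S$\ref{sec:bubble-construction} reduces every bubbling to the model situation of a sequence on $S^3$ with an energy concentration point.

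The key steps, in order, are as follows. First, I would set up the annular decomposition: for the subsequence realizing the convergence~\eqref{eq:Setup}--\eqref{eq:Radon-Conv}, use the no-energy-loss statement (Theorem~\ref{thm:no-energy-loss}), which has already been established, to conclude that for any $\delta>0$ there exist radii and an index threshold such that the $3$-energy of $u_n$ on the neck annulus is less than $\delta$; this is precisely the statement that all of the concentrating energy $m_i$ is captured by the base map together with the first-level bubble and its own sub-bubbles, leaving no energy ``stuck in the neck.'' Second, with small energy on a fixed annulus, I would decompose the neck into a chain of dyadic sub-annuli of comparable conformal modulus, each with energy $<\ep_0$ (the threshold of Theorem~\ref{thm:ep-reg}), and on each such conformal annulus invoke the mean value inequality, Theorem~\ref{thm:MVI}, in the explicit gradient form, to bound $\sup |Du_n|$ on the middle of each piece by a constant times the (cube root of the) energy on the slightly larger piece. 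Third, I would sum the resulting oscillation bounds over the chain of sub-annuli: since the energy on the full neck is $<\delta$ and the pieces have bounded overlap, the total oscillation $\mathrm{osc}_{A(\rho,\sigma)} u_n$ is bounded by $C\delta^{1/3}$, uniformly in $n$. Fourth, I would pass to the limit: the outer boundary sphere of the neck, as seen from the base side, has $u_n \to u_\infty$ uniformly (being outside $\mathcal S$), while the inner boundary sphere, rescaled, has $\widetilde u_{n,i} \to \widetilde u_{\infty,i}$ uniformly near $p^-$; combining with the uniform smallness of the oscillation across the neck and letting $\delta \to 0$ yields~\eqref{eq:ZeroNeckLength}. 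Finally, connectedness of the set~\eqref{eq:bubbletree} is an immediate consequence: each bubble image $\widetilde u_{\infty,I}(S^3)$ is connected (continuous image of a connected set), each is attached to its parent's image at the common point $\widetilde u_{\infty,I}(x_{Ij}) = \widetilde u_{\infty,Ij}(p^-)$, and the base image $u_\infty(\Sigma)$ is connected, so the whole union is connected as a finite tree of connected pieces glued at points.

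The main obstacle I expect is the neck-energy estimate, i.e.\ showing that no energy is lost \emph{in the neck} as opposed to merely showing the energies add up globally. The subtlety is that Theorem~\ref{thm:no-energy-loss} controls the total energy but one must extract from it the localized statement that $\lim_{\delta\to 0}\limsup_n E(u_n; A_\delta) = 0$ for the appropriately shrinking/expanding neck annuli $A_\delta$; this requires running the standard blow-up dichotomy of the bubble-tree construction carefully enough to see that any residual energy trapped between the base scale and the first bubble scale would have to reappear either as a further bubble (already accounted for) or as genuine energy loss (excluded by Theorem~\ref{thm:no-energy-loss}). A secondary technical point, familiar from the harmonic-map and $J$-holomorphic-curve literature (cf.\ \cite{PW, MS}), is that the mean value inequality must be applied in \emph{conformal} annular coordinates, so one needs to verify that the rescaled domain metrics $h_n$ on each dyadic sub-annulus satisfy the near-flatness hypothesis~\eqref{metricnearflat} required by Theorems~\ref{thm:ep-reg} and~\ref{thm:MVI}; this is where the conformal invariance of the Smith equation (Proposition~\ref{prop:smith-u-conf-inv}) and of the $3$-energy are used crucially, exactly as in the bubble construction of $\S$\ref{sec:bubble-construction}.
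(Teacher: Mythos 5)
There is a genuine gap in step three of your plan. You decompose the neck into a chain of dyadic conformal sub-annuli, apply the mean value inequality (Theorem~\ref{thm:MVI}) on each to get an oscillation bound $\mathrm{osc}_k \leq C E_k^{1/3}$, and then claim that because $\sum_k E_k < \delta$ the total oscillation is bounded by $C\delta^{1/3}$. That inference is false: the number $N_l$ of dyadic sub-annuli in the neck is $\sim \log(\rho_l/\sigma_l)$, which tends to infinity as $l \to \infty$, and $\sum_{k=1}^{N_l} E_k^{1/3}$ is not controlled by $\big(\sum_k E_k\big)^{1/3}$. In the worst case (all $E_k \approx \delta/N_l$) one gets $\sum_k E_k^{1/3} \approx N_l^{2/3}\delta^{1/3} \to \infty$. ``Bounded overlap'' of the sub-annuli does not help, since the issue is the growing length of the chain, not multiplicity of coverage. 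So smallness of the total neck energy, which is what Theorem~\ref{thm:no-energy-loss} gives you, is by itself not enough to make the oscillation vanish; you need \emph{decay} of the energy along the chain.

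This is exactly what the paper supplies and what your outline is missing. The additional ingredient is the isoperimetric-type annulus energy estimate of Proposition~\ref{annularenergy}, which bounds the energy of a Smith map on an annulus by the energy on its two boundary spheres. This estimate does not follow from the mean value inequality; it relies on White's energy-gap theorem (Proposition~\ref{Brian}) together with the homologically energy-minimizing property of Smith maps (Corollary~\ref{energyminimizing}), and it is the place where the closedness hypothesis $d\varphi = 0$ enters. From it the paper derives the first-order differential inequality $f_l'(t) \leq -3b\,f_l(t)$ for the dyadic energy profile $f_l(t)$ (Proposition~\ref{neckenergydecay}), whose integration yields exponential decay $f_l(t) \leq e^{-3b(t-T_0)}f_l(T_0)$ along the neck. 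Combined with the pointwise gradient bound from Theorem~\ref{thm:MVI}, this gives $|Du_l(t\xi)| \lesssim t^{-1-b}\sigma_l^b\,f_l(T_0)^{1/3}$, so the radial integral $\int_{4\sigma_l}^{r} |Du_l(t\xi)|\,dt$ converges with a bound of order $f_l(T_0)^{1/3}$ even as $\rho_l/\sigma_l \to \infty$, and the neck diameter does go to zero. Your identification of the ``main obstacle'' as the localization of no-energy-loss to the neck is also slightly off target: that part is carried out in Lemma~\ref{innerboundary} (already available from \S\ref{sec:no-energy-loss}), and the genuinely new obstacle in \S\ref{sec:zero-neck-length} is precisely the exponential energy decay across the neck, which you would need to add to make your summation go through.
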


In fact, our proof of~\eqref{eq:ZeroNeckLength} establishes more: we prove the stronger result given in~\eqref{eq:ZeroNeckLength2} below. Moreover, combining~\eqref{eq:ZeroNeckLength2} and a straightforward adaptation of the proof of~\cite[Theorem 5.2.2(ii)]{MS}, we see that for large enough $k$, the map $u_{k}$ is homotopic to the connected sum $u_{\infty} \# \big( \#_{I} \widetilde{u}_{\infty, I} \big)$. Therefore homotopy is preserved in the bubble tree limit.

The proofs of Theorems~\ref{thm:no-energy-loss} and~\ref{thm:zero-neck-length} are significantly less trivial than the construction of the bubble tree. Both results rely on a deeper understanding of the energies of associative Smith maps on annuli
$$A = B^{\mathrm{outer}} \setminus B^{\mathrm{inner}},$$
where here $B^{\mathrm{inner}} \subset B^{\mathrm{outer}}$ are concentric $3$-balls. We devote $\S$\ref{sec:energy-annuli} to this study. Note that the results in $\S$\ref{sec:energy-annuli} require geometric properties of associative Smith maps beyond those used in the construction of the bubble tree, namely the energy gap of Proposition~\ref{Brian} and the energy identity of Corollary~\ref{energyminimizing}.

In Lemma~\ref{extension}, we explain how to cap off maps from the boundary $2$-spheres $\partial B^{\mathrm{outer}}$ and $\partial B^{\mathrm{inner}}$ to obtain maps from $3$-balls with controlled energy. We then patch these maps together to obtain a single map $S^3 \to M$ with small energy, which by the energy gap of Proposition~\ref{Brian} must be null-homotopic. As a result, we obtain a crucial bound (Proposition~\ref{annularenergy}) on the energy of associative Smith maps on annuli $A$ in terms of the energies on the boundary $2$-spheres $\partial B^{\mathrm{outer}}$ and $\partial B^{\mathrm{inner}}$.

In $\S$\ref{sec:no-energy-loss}, we prove Theorem~\ref{thm:no-energy-loss}. Intuitively, the idea is to ``trap'' $\tau_i$ in a sequence of carefully chosen annuli
$$A_k = B^{\mathrm{outer}}_k \setminus B^{\mathrm{inner}}_k.$$
That is, we reinterpret $\tau_i$ as a subsequential limit of energies on the annuli $A_k$ (in Lemma~\ref{innerboundary}(b)(i)), and observe that energies on the boundary $2$-spheres $\partial B^{\mathrm{outer}}_k$, $\partial B^{\mathrm{inner}}_k$ approach zero (in Lemma~\ref{lemma:rescaling}(c) and Lemma~\ref{innerboundary}(b)). Therefore, the bound (Proposition~\ref{annularenergy}) described in the previous paragraph implies that the energies on $A_k$ approach zero, whence $\tau_i = 0$.

Now, geometrically we think of the images of the outer $2$-spheres $u_k(\partial B^{\mathrm{outer}}_k)$ as being close to $u_\infty(x_i)$, and think of the images of the inner $2$-spheres $u_k(\partial B^{\mathrm{inner}}_k)$ as being close to $\widetilde{u}_{\infty, i}(p^-)$. In this way, we regard the images of the annuli $u_k(A_k) \subset M$ as ``necks'' of the bubble tree~\eqref{eq:bubbletree}.

This intuition leads to the beginning of the proof of Theorem~\ref{thm:zero-neck-length}. Indeed, choose points $y_k \in \partial B^{\mathrm{outer}}_k$ and $z_k \in \partial B^{\mathrm{inner}}_k$ and let $\widetilde{z}_k = R_k^{-1}(z_k) \subset S^3_{x_i}$. From~\eqref{eq:rescaled-maps} we get $u_k(z_k) = \widetilde{u}_{k,i}(\widetilde{z}_k)$. By the triangle inequality, we have
\begin{align*}
|u_\infty(x_i) - \widetilde{u}_{\infty,i}(p^-)| & \leq |u_\infty(x_i) - u_k(y_k)| + |u_k(y_k) - u_k(z_k)| + |\widetilde{u}_{k,i}(\widetilde{z}_k) - \widetilde{u}_{\infty, i}(p^-)| \\
& \leq |u_\infty(x_i) - u_k(y_k)| + \diam(u_k(A_k)) + |\widetilde{u}_{k,i}(\widetilde{z}_k) - \widetilde{u}_{\infty, i}(p^-)|.
\end{align*}
By our choice of $A_k$, it is relatively straightforward to see, using Lemma~\ref{lemma:rescaling}(c) and Lemma~\ref{innerboundary}(b), that the first and third terms approach zero as $k \to \infty$. The significantly more nontrivial matter, however, is establishing that the second term also approaches zero, that is:
\begin{equation}
\lim_{k \to \infty} \diam(u_k(A_k)) = 0. \label{eq:ZeroNeckLength2}
\end{equation}
In other words, the bubble tree~\eqref{eq:bubbletree} has no necks. In $\S$\ref{sec:zero-neck-length}, we prove~\eqref{eq:ZeroNeckLength2} by making use of both the mean value inequality of Theorem~\ref{thm:MVI} as well as the bound from Proposition~\ref{annularenergy}.

\subsection{Construction of the bubble tree} \label{sec:bubble-construction}

As in $\S$\ref{sec:bubble-overview}, we consider a sequence $u_n \colon (\Sigma^3, g) \to (M^7, h)$ of associative Smith maps with bounded $3$-energy:
$$E(u_n) \leq E_0 \quad \text{ for all $n \in \NN$}.$$
Let the base map $u_\infty \in C^1_{\loc}(\Sigma; M)$, the set of bubble points $\mathcal{S}$, and the energy concentrations $m_i \geq \frac{1}{2}\ep_0$ be as in~\eqref{eq:Setup} and~\eqref{eq:Radon-Conv}.

Suppose that $\mathcal{S} \neq \varnothing$ and fix a bubble point $x_i \in \mathcal{S}$. Choose a geodesic ball centered at $x_i$ that contains no other points in $\cS$, and identify it with $B(2)\subset \R^{3}$ via $\exp_{x_i}$ and a dilation. In this setting, the conclusion of Proposition~\ref{prop:conv-mod-bubbling} states that
$$u_n \to u_\infty \text{ in } C^1_{\loc}(B(2) \setminus \{0\}; M)$$
although not in $C^1_{\loc}(B(2); M)$, and that
$$|du_n|^3\,d\mu_g \to |du_\infty|^3\,d\mu_g + m_i\delta_0$$
as Radon measures on $B(1)$.

Given this setup, we now describe the construction of the bubble tree as a three step process. Steps one and two describe the base case, while step three indicates the induction step and contains a proof that the procedure eventually terminates.

\textbf{Step One: Choice of center points and dilation factors for rescaling.} We begin by focusing our attention on $x_i$. To that end, we let $\ep_k \in (0, \frac{1}{2})$ be a sequence of radii with $\ep_k \to 0$ and
\begin{equation} \label{eq:eps-choice}
E(u_\infty; B(2\ep_k)) = O(\tfrac{1}{k^2}).
\end{equation}
Having made this choice, we define a nested sequence of open balls $D_1(k) \subset D_2(k) \subset D_3(k) \subset D_4(k)$ centered at $0$ by:
\begin{align*}
D_1(k) & = B(\tfrac{1}{2 k^2} \ep_k), & D_2(k) & = B(\tfrac{1}{k^2} \ep_k), & D_3(k) & = B(\ep_k), & D_4(k) & = B(2\ep_k).
\end{align*}
We also fix, once and for all, a positive constant $\eta_0 > 0$ for which
$$\eta_0 < \tfrac{1}{16} \min\!\big(\tfrac{1}{3}\ep_0, \gamma_1\big),$$
where $\gamma_1$ is the energy gap constant of Proposition~\ref{Brian}.

Next, we choose center points and dilation factors by which to rescale. Our choice is given by the following lemma.

\begin{lemma} \label{lemma:rescaling}
The following results hold:
\begin{enumerate}[(a)]
\item There exists a subsequence $u_k := u_{n_k}$ such that:
\begin{align}
E(u_k; D_1(k)) & = m_i + O(\tfrac{1}{k^2}) \label{eq:rescaling-a1}, \\
E(u_k; D_4(k) \setminus D_1(k)) & = O(\tfrac{1}{k^2}) \label{eq:rescaling-a2}, \\
\left| u_k - u_\infty \right|_{1;D_4(k) \setminus D_1(k)} & = O(\tfrac{1}{k^2}). \label{eq:rescaling-a3}
\end{align}
\item There exist a sequence of centers $c_k \in D_2(k)$ and a sequence of radii $\lambda_k \in (0, \frac{\ep_k}{2 k^2}]$ such that
\begin{equation}
\eta_0 = E(u_k; D_4(k) \setminus B(c_k; \lambda_k)) \leq E(u_k; D_4(k) \setminus B(x; r)) \label{eq:eta-purpose}
\end{equation}
for all balls $B(x;r)$ with centers $x \in \overline{D_3(k)}$ and radii $r \leq \lambda_k$.
\item We have the following estimates:
\begin{align}
E(u_k; B(c_k; \ep_k)) & = m_i + O(\tfrac{1}{k^2}), \label{eq:rescaling-c1} \\
\ep_k \sup_{\partial B(c_k; \ep_k)} |du_k| & = o(1) \quad \text{ as } k \to \infty, \label{eq:rescaling-c2} \\
\lim_{k\to \infty}|u_{k} - u_{\infty}(0)|_{0; B(c_{k}; \ep_{k})\setminus B(c_{k}; \frac{1}{R} \ep_{k})} &= 0, \text{ for all } R > 1. \label{eq:rescaling-c3}
\end{align}
\end{enumerate}
\end{lemma}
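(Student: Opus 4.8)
\textbf{Proof plan for Lemma~\ref{lemma:rescaling}.}

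The plan is to prove the three parts in order, since each builds on the previous one. For part (a), the sequence $u_n$ has bounded $3$-energy, so by Proposition~\ref{prop:conv-mod-bubbling} (applied with $\Omega = B(2) \setminus \{0\}$, or rather exhausting $B(2) \setminus \{0\}$) we already know $u_n \to u_\infty$ in $C^1_{\loc}(B(2) \setminus \{0\})$ and $|du_n|^3 d\mu_g \to |du_\infty|^3 d\mu_g + m_i \delta_0$ as Radon measures on $B(1)$. The idea is to exploit the radii $\ep_k \to 0$ chosen in~\eqref{eq:eps-choice}. For each fixed $k$, since $D_4(k) \setminus D_1(k)$ is a compact subset of $B(2) \setminus \{0\}$, the $C^1_{\loc}$ convergence gives $|u_n - u_\infty|_{1; D_4(k) \setminus D_1(k)} \to 0$ as $n \to \infty$, and consequently $E(u_n; D_4(k) \setminus D_1(k)) \to E(u_\infty; D_4(k) \setminus D_1(k)) \le E(u_\infty; D_4(k)) = E(u_\infty; B(2\ep_k)) = O(1/k^2)$ by~\eqref{eq:eps-choice}. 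Also, weak-$*$ convergence of the measures, together with the fact that $\partial D_1(k)$ has measure zero for the limit measure (for a suitable choice of $\ep_k$, or after a harmless perturbation of the radii), gives $E(u_n; D_1(k)) \to E(u_\infty; D_1(k)) + m_i = m_i + O(1/k^2)$. Thus for each $k$ we may choose $n_k$ (increasing in $k$) large enough that~\eqref{eq:rescaling-a1},~\eqref{eq:rescaling-a2},~\eqref{eq:rescaling-a3} all hold up to an error of $O(1/k^2)$; a diagonal argument produces the subsequence $u_k := u_{n_k}$.

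For part (b), I would fix $k$ and run a standard ``point selection'' (or ``concentration'') argument. Consider the function $(x, r) \mapsto E(u_k; D_4(k) \setminus B(x; r))$ for $x \in \overline{D_3(k)}$ and $r \in (0, \ep_k/(2k^2)]$. When $r = \ep_k/(2k^2)$ and $x$ ranges over $\overline{D_3(k)}$, this energy is at least $E(u_k; D_4(k) \setminus D_1(k)) = O(1/k^2)$, which is $< \eta_0$ for $k$ large. On the other hand, as $r \to 0$ with $x$ near $0 \in D_3(k)$, the energy $E(u_k; D_4(k) \setminus B(x;r))$ tends to $E(u_k; D_4(k) \setminus \{0\})$, which by~\eqref{eq:rescaling-a1} and the total energy bound approaches $E_0 - m_i + (\text{stuff}) $... more carefully, $E(u_k; D_4(k)) = E(u_k; D_1(k)) + E(u_k; D_4(k)\setminus D_1(k)) = m_i + O(1/k^2)$, so as $r \to 0$ with $x=0$ the quantity $E(u_k; D_4(k) \setminus B(0;r)) \to m_i + O(1/k^2) > \eta_0$ since $\eta_0 < \tfrac{1}{16}\cdot\tfrac13\ep_0 \le m_i/C$ by the choice of $\eta_0$ and $m_i \ge \tfrac12 \ep_0$. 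By continuity of the energy in $(x,r)$ and an infimum/minimization argument (the set of admissible $(x,r)$ with $E(u_k; D_4(k)\setminus B(x;r)) \le \eta_0$ is closed; take $\lambda_k$ to be the supremum of admissible radii $r$ and $c_k$ a corresponding center), one obtains a pair $(c_k, \lambda_k)$ with $E(u_k; D_4(k) \setminus B(c_k;\lambda_k)) = \eta_0$, and by maximality of $\lambda_k$ (or by the minimizing property of the selected point) the balls $B(x;r)$ with $x \in \overline{D_3(k)}$ and $r \le \lambda_k$ satisfy the reverse inequality~\eqref{eq:eta-purpose}. One checks $c_k \in D_2(k)$ because any center far from $0$ would leave too much energy (namely $\ge E(u_k; D_1(k)) = m_i + O(1/k^2) > \eta_0$) outside $B(c_k;\lambda_k)$ — since $\lambda_k \le \ep_k/(2k^2)$ and $D_1(k) = B(\ep_k/(2k^2))$, if $c_k \notin D_2(k) = B(\ep_k/k^2)$ then $D_1(k)$ is disjoint from $B(c_k;\lambda_k)$, forcing $E(u_k; D_4(k) \setminus B(c_k;\lambda_k)) \ge m_i + O(1/k^2) > \eta_0$, a contradiction.

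For part (c), the estimate~\eqref{eq:rescaling-c1} follows by writing $E(u_k; B(c_k;\ep_k)) = E(u_k; D_4(k)) - E(u_k; D_4(k) \setminus B(c_k;\ep_k))$; since $c_k \in D_2(k)$ and $\ep_k$ is the radius of $D_3(k)$, the ball $B(c_k;\ep_k)$ contains $D_1(k)$ and is contained in $D_4(k)$, so $E(u_k; D_4(k)\setminus B(c_k;\ep_k)) \le E(u_k; D_4(k) \setminus D_1(k)) = O(1/k^2)$, giving $E(u_k; B(c_k;\ep_k)) = m_i + O(1/k^2)$ using $E(u_k; D_4(k)) = m_i + O(1/k^2)$. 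For~\eqref{eq:rescaling-c2}, since $B(c_k;\ep_k) \subset D_4(k)$ and $c_k \in D_2(k)$, the sphere $\partial B(c_k;\ep_k)$ lies in $D_4(k) \setminus D_1(k)$ for $k$ large (as $\ep_k - |c_k| \ge \ep_k - \ep_k/k^2 > \ep_k/(2k^2)$); on this annular region $u_k \to u_\infty$ in $C^1$ by~\eqref{eq:rescaling-a3}, and $u_\infty$ is $C^1$ at $0$, so $\sup_{\partial B(c_k;\ep_k)} |du_k| \le \sup_{\partial B(c_k;\ep_k)} |du_\infty| + o(1) \le C + o(1)$ (in fact $|du_\infty|$ is bounded near $0$), hence $\ep_k \sup_{\partial B(c_k;\ep_k)}|du_k| = O(\ep_k) = o(1)$. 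Finally~\eqref{eq:rescaling-c3}: on $B(c_k;\ep_k) \setminus B(c_k; \ep_k/R)$ (again contained in $D_4(k) \setminus D_1(k)$ for $k$ large, for each fixed $R$), we have by~\eqref{eq:rescaling-a3} that $|u_k - u_\infty|_{0} \to 0$ there, and $u_\infty$ is continuous with $u_\infty(x) \to u_\infty(0)$ uniformly on this shrinking annulus (diameter $O(\ep_k) \to 0$); the triangle inequality $|u_k - u_\infty(0)| \le |u_k - u_\infty| + |u_\infty - u_\infty(0)|$ then gives~\eqref{eq:rescaling-c3}.

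The main obstacle is part (b): making the point-selection argument fully rigorous requires care about the continuity and monotonicity of $(x,r) \mapsto E(u_k; D_4(k)\setminus B(x;r))$ and about the choice of $\lambda_k$ as an extremal value so that the minimizing inequality~\eqref{eq:eta-purpose} genuinely holds for \emph{all} competing balls with $x \in \overline{D_3(k)}$, not merely in a limiting sense — this is the associative analogue of the classical rescaling lemma for harmonic maps and $J$-holomorphic curves (cf.~\cite[Chapter 4]{MS},~\cite{PW}), and the argument there adapts, but one must verify that the constraint $r \le \lambda_k \le \ep_k/(2k^2)$ and the constraint $x \in \overline{D_3(k)}$ are mutually compatible with achieving the exact level $\eta_0$, which is where the specific choices of the nested balls $D_1(k) \subset \cdots \subset D_4(k)$ and the constant $\eta_0 < \tfrac{1}{16}\min(\tfrac13\ep_0, \gamma_1)$ are used. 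A secondary technical point, already flagged above, is that for the Radon-measure convergence to yield clean statements one should choose the radii $\ep_k$ (and the radii defining $D_1(k), D_2(k)$) to avoid the at-most-countable set of radii on which the limit measure charges the corresponding sphere; this is a routine perturbation.
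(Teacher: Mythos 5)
Your plan for parts (a) and (c) matches the paper's proof closely and is correct: part (a) is a diagonal argument using the Radon-measure convergence and $C^1_{\loc}$ convergence away from $0$ together with \eqref{eq:eps-choice}, and part (c) follows by containment of the relevant annuli in $D_4(k)\setminus D_1(k)$ together with \eqref{eq:rescaling-a3} and the $C^1$-extendability of $u_\infty$ across the origin (Theorem~\ref{thm:rem-sing}), exactly as you say. For part (b), the paper also runs a point-selection argument on the continuous, nonincreasing function
\[
F_k(r) := \inf_{x \in \overline{D_3(k)}} E\bigl(u_k; D_4(k) \setminus B(x;r)\bigr),
\]
using precisely the two bounds you identify, $F_k(0) = m_i + O(1/k^2) > \eta_0$ and $F_k(\ep_k/(2k^2)) \le E(u_k;D_4(k)\setminus D_1(k)) = O(1/k^2) < \eta_0$, and your argument that $c_k \in D_2(k)$ is the same as the paper's.

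However, there is a genuine error in how you select $\lambda_k$. You propose to take $\lambda_k$ to be the \emph{supremum} of radii $r$ for which $E(u_k; D_4(k)\setminus B(x;r)) \le \eta_0$ for some $x$. Since enlarging $B(x;r)$ shrinks the integration region, $r \mapsto E(u_k; D_4(k)\setminus B(x;r))$ and hence $F_k$ are \emph{nonincreasing} in $r$. Therefore the admissible set $\{r : F_k(r) \le \eta_0\}$ is an up-ray: its supremum is forced by whatever artificial cap you impose (here $\ep_k/(2k^2)$), not by the level $\eta_0$, and it does not yield $F_k(\lambda_k)=\eta_0$. Worse, for $r < \lambda_k$ one can still have $F_k(r) < \eta_0$, so the inequality in \eqref{eq:eta-purpose} fails. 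The correct choice is the \emph{infimum} of the admissible radii — equivalently, the \emph{smallest} root of $F_k(\lambda)=\eta_0$. Then continuity and $F_k(0) > \eta_0$ force $F_k(\lambda_k)=\eta_0$, monotonicity gives $F_k(r) \ge F_k(\lambda_k) = \eta_0$ for all $r \le \lambda_k$, and choosing $c_k$ to realize the infimum in $F_k(\lambda_k)$ (possible by compactness of $\overline{D_3(k)}$) produces exactly \eqref{eq:eta-purpose}. This is what the paper does; your parenthetical ``or by the minimizing property of the selected point'' does not repair the reversal as written, so this step needs to be corrected.
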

\begin{proof}
(a) Since $u_n \to u_\infty$ in $C^1_{\loc}(B(2) \setminus \{0\}; M)$, and since we have the convergence of Radon measures $|du_n|^3\,d\mu_g \to |du_\infty|^3\,d\mu_g + m_i\delta_0$ on $B(1)$, for each fixed $k$, the following hold:
\begin{align*}
\lim_{n \to \infty} E(u_n; D_1(k)) & = m_i + E(u_\infty; D_1(k)), \\
\lim_{n \to \infty} E(u_n; D_4(k) \setminus D_1(k)) & = E(u_\infty; D_4(k) \setminus D_1(k)), \\
\lim_{n \to \infty} \left| u_n - u_\infty\right|_{1; D_4(k) \setminus D_1(k)} & = 0.
\end{align*}
Hence, because we chose $\ep_k$ to satisfy~\eqref{eq:eps-choice}, for each $k$, the relations~\eqref{eq:rescaling-a1},~\eqref{eq:rescaling-a2}, and~\eqref{eq:rescaling-a3} all hold for large enough choices of $n_k$. In this way, we obtain a subsequence $(u_k)$ satisfying~\eqref{eq:rescaling-a1},~\eqref{eq:rescaling-a2},~\eqref{eq:rescaling-a3}.

(b) Consider the quantity $E(u_k; D_4(k) \setminus B(x;r)) = \int_{B(2\ep_k) \setminus B(x;r)} |du_k|^3\,d\mu_g$. Note that it is continuous in $(r,x)$ on $[0, \ep_k] \times \overline{D_3(k)}$. Hence, for each $k$, the function
$$F_k(r) := \inf_{x \in \overline{D_3(k)}} E(u_k; D_4(k) \setminus B(x;r))$$
is continuous in $r$. In fact, $F_k \colon [0, \ep_k] \to \R$ is a decreasing function that satisfies
\begin{align*}
F_k(0) & = E(u_k; D_4(k)) = m_i + O(\tfrac{1}{k^2}), \\
F_k(\tfrac{1}{2 k^2} \ep_k) & \leq E(u_k; D_4(k) \setminus D_1(k)) = O(\tfrac{1}{k^2}).
\end{align*}
Recalling our choice of $\eta_{0}$ and that $m_{i} \geq \frac{\ep_{0}}{2}$, for each large enough $k$ we have
\[
F_k(\tfrac{1}{2 k^2} \ep_k) < \eta_{0} < F_{k}(0).
\]
Thus, for each sufficiently large $k$, there exists a smallest radius $\lambda_k \in [0, \frac{1}{2k^{2}}\ep_k]$ for which
$$F_k(\lambda_k) = \eta_0.$$

We now choose $c_k \in \overline{D_3(k)}$ to be a point that attains the infimum defining $F_k(\lambda_k)$, which exists because $\overline{D_3(k)}$ is compact. That is,
$$F_k(\lambda_k) = E( u_k; D_4(k) \setminus B(c_k; \lambda_k)).$$

In fact, we claim that eventually $|c_k| < \tfrac{1}{k^2} \ep_k$ for large enough $k$, meaning that $c_k \in D_2(k)$. Indeed, if we instead had $|c_k| \geq \tfrac{1}{k^2} \ep_k$, then $\lambda_k \leq \tfrac{1}{2 k^2} \ep_k$ implies that $D_1(k) \subset D_4(k) \setminus B(c_k; \lambda_k)$, whence
$$m_i + O(\tfrac{1}{k^2}) = E(u_k; D_1(k)) \leq E(u_k; D_4(k) \setminus B(c_k; \lambda_k)) = F_k(\lambda_k) = \eta_0.$$
But this contradicts $\eta_0 < \frac{1}{2}\ep_0 \leq m_i$.

(c) For~\eqref{eq:rescaling-c1}, since $|c_k| < \tfrac{1}{k^2} \ep_k$, we have $D_1(k) \subset B(c_k; \ep_k)$. Hence,
\begin{align*}
E(u_k; B(c_k; \ep_k)) & = E( u_k; D_1(k) ) + E(u_k; B(c_k; \ep_k) \setminus D_1(k)) \\
& = m_i + O(\tfrac{1}{k^2})
\end{align*}
where the last equality follows from~\eqref{eq:rescaling-a1} and~\eqref{eq:rescaling-a2} and the fact that $B(c_k; \ep_k) \subset D_4(k)$.

To prove~\eqref{eq:rescaling-c2}, note that since $\partial B(c_k; \ep_k) \subset D_4(k) \setminus D_1(k)$ for $k$ sufficiently large, by~\eqref{eq:rescaling-a3}, we get
$$\sup_{\partial B(c_k; \ep_k)} |du_k| \leq \sup_{\partial B(c_k; \ep_k)} |du_\infty| + O(\tfrac{1}{k^2}) = O(1),$$
where the boundedness of $|du_\infty|$ uses the fact that $u_\infty$ extends to a $C^1$ map on all of $B(1)$ by Theorem~\ref{thm:rem-sing}. It follows that
$$\ep_k \sup_{\partial B(c_k; \ep_k)} |du_k| = O(\ep_k) = o(1) \quad \text{ as } k \to \infty.$$

Finally, to see~\eqref{eq:rescaling-c3}, we observe that by the triangle inequality, for each $x \in B(c_{k}; \ep_{k})\setminus B(c_{k}; \frac{1}{R} \ep_{k})$, we have:
\begin{align*}
|u_k(x) - u_\infty(0)| & \leq |u_k(x) - u_\infty(x)| + |u_\infty(x) - u_\infty(0)| \\
& \leq \left| u_k - u_\infty \right|_{0;\, B(c_{k}; \ep_{k})\setminus B(c_{k}; \frac{1}{R} \ep_{k})} + C\ep_k \left| du_\infty \right|_{0; \,B(c_k; \ep_k)}.
\end{align*}
Since $c_{k} \in D_{2}(k)$, we see by the triangle inequality that, when $k$ is so large that $k^{2} > 2R$, there holds
\[
B(c_{k}; \ep_{k})\setminus B(c_{k};\tfrac{1}{R} \ep_{k}) \subset D_{4}(k)\setminus D_{1}(k).
\]
Thus we may use the estimate~\eqref{eq:rescaling-a3} and the fact that $u_{\infty}$ is $C^{1}$ on all of $B(1)$ to bound the last two terms in the above string of inequalities and get
\begin{align*}
|u_k(x) - u_\infty(0)| & \leq O(\tfrac{1}{k^2}) + O(\ep_k) \\
& = o(1) \quad \text{ as } k \to \infty.
\end{align*}
This proves the lemma.
\end{proof}

\textbf{Step Two: The rescaled maps and first level bubble points.} With the choices of center points $c_k$ and scale factors $\lambda_k$ of Lemma~\ref{lemma:rescaling} in place, we may now define the desired rescalings $R_k$.

We introduce some notation. Let $\sigma \colon S^3 \setminus \{p^-\} \to \R^3$ denote stereographic projection from the south pole $p^- \in S^3$. Also let $S^3_+$ and $S^3_-$ denote the upper and lower hemispheres, respectively. In particular,
\begin{equation*}
\sigma(p^+) = 0, \qquad \sigma(S^3_+) = B(1).
\end{equation*}
We also let $a_k \colon \R^3 \to \R^3$ denote the \emph{affine function} $a_k(x) = \lambda_k x + c_k$, so that
\begin{equation*}
a_k(B(x;r)) = B(\lambda_kx + c_k; \lambda_kr).
\end{equation*}
In particular,
\begin{equation*}
a_k(0) = c_k, \qquad a_k(B(1)) = B(c_k; \lambda_k).
\end{equation*}
We denote their composition by
$$R_k = a_k \circ \sigma \colon S^3 \setminus \{p^-\} \xrightarrow{\ \sigma \ } \R^3 \xrightarrow{\ a_k \ } \R^3$$
so that
\begin{equation*}
R_k(p^+) = c_k, \qquad R_k(S^3_+) = B(c_k; \lambda_k) \subset D_4(k).
\end{equation*}
Finally, we let $\Omega_k \subset S^3$ be the open sets for which
$$R_k(\Omega_k) = D_4(k).$$
Since $R_k$ preserves inclusions, we see that $S^3_+ \subset \Omega_k$. Note that the $\Omega_k$ are increasing and exhaust $S^3 \setminus \{p^-\}$.

\begin{lemma} \label{lemma:rescaled-maps}
The rescaled maps
\begin{align*}
& \widetilde{u}_{k,i} \colon \Omega_k \to M \\
& \widetilde{u}_{k,i} := u_k \circ R_k = u_k \circ a_k \circ \sigma
\end{align*}
are associative Smith maps with respect to a sequence of metrics $h_k$ on $\Omega_k \subset S^3 \setminus \{p^-\}$ that converge in $C^\infty_{\loc}$ to the round metric. Moreover, $\widetilde{u}_{k,i}$ has uniformly bounded $3$-energy.
\end{lemma}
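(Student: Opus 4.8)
The assertion amounts to bookkeeping with conformal factors on top of the conformal invariance of the Smith equation (Proposition~\ref{prop:smith-u-conf-inv}) and of the $3$-energy; the one point that needs genuine care is selecting, within the conformal class that the Smith condition forces on $\Omega_k$, the representative metric $h_k$ that limits to the round metric.

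First I would record that $R_k = a_k \circ \sigma$ is an orientation-preserving conformal diffeomorphism from $S^3 \setminus \{p^-\}$ onto $\R^3$: the affine map $a_k(x) = \lambda_k x + c_k$ is a homothety, $a_k^* g_{\euc} = \lambda_k^2\, g_{\euc}$, and stereographic projection is conformal, say $\sigma^* g_{\euc} = \Lambda_\sigma^2\, g_{\round}$ for a fixed smooth positive function $\Lambda_\sigma$ on $S^3 \setminus \{p^-\}$. (Should $\sigma$ be orientation-reversing in the paper's conventions, one precomposes with a reflection of $\R^3$, which changes nothing below.) Pulling back the domain metric $g$ of $B(2)$ --- which, since the identification in $\S$\ref{sec:bubble-construction} uses $\exp_{x_i}$, is in normal coordinates with $g(0) = g_{\euc}$ --- one computes $R_k^* g = \lambda_k^2\, \sigma^* \tilde g_k$, where $\tilde g_k(y) := g(c_k + \lambda_k y)$. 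I then \emph{define}
\[
h_k := \Lambda_\sigma^{-2}\, \sigma^* \tilde g_k \;=\; \lambda_k^{-2}\,\Lambda_\sigma^{-2}\, R_k^* g \qquad \text{on } \Omega_k ,
\]
which is a metric conformal to $R_k^* g$.

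With this choice the three claims follow quickly. Since $u_k$ is an associative Smith map on $(B(2), g)$ and $R_k \colon (\Omega_k, h_k) \to (D_4(k), g)$ is an orientation-preserving conformal diffeomorphism onto the open set $D_4(k) = B(2\ep_k) \subset B(2)$, Proposition~\ref{prop:smith-u-conf-inv} (in the evident form for conformal diffeomorphisms onto open subsets; cf.\ Remark~\ref{rmk:conf-inv-alt}) shows that $\widetilde u_{k,i} = u_k \circ R_k$ is an associative Smith map with respect to $h_k$. For the convergence, since $g$ is smooth and $c_k, \lambda_k \to 0$, every derivative of $\tilde g_k$ of positive order carries a factor $\lambda_k^{|\alpha|}$, so $\tilde g_k \to g_{\euc}$ in $C^\infty_{\loc}(\R^3)$; applying the fixed operations $\sigma^*$ and multiplication by $\Lambda_\sigma^{-2}$ then gives $h_k \to \Lambda_\sigma^{-2}\, \sigma^* g_{\euc} = g_{\round}$ in $C^\infty_{\loc}(S^3 \setminus \{p^-\})$. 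Finally, since the $3$-energy $\int_A |du|^3\, d\mu$ is invariant both under diffeomorphisms of the domain and, on a $3$-manifold, under conformal changes of the domain metric, combining these two invariances (applied to $R_k$ and to the pair $h_k$, $R_k^* g$) yields
\[
E_{h_k}(\widetilde u_{k,i}; \Omega_k) = E_g\big(u_k;\, B(2\ep_k)\big) \le E(u_k) \le E_0 ,
\]
uniformly in $k$.

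The main --- indeed only --- subtlety is the convergence step: one must peel off the stereographic conformal factor $\Lambda_\sigma^2$ before taking the limit, so that what remains, $\Lambda_\sigma^{-2}\sigma^*\tilde g_k$, converges not merely to some round metric in the conformal class of $g_{\round}$ but to $g_{\round}$ itself. Everything else is a formal consequence of results already in hand.
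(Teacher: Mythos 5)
Your proof is correct and follows essentially the same route as the paper's: both factor $R_k = a_k \circ \sigma$, pull back $g$ step by step, peel off the affine conformal factor $\lambda_k^2$ and the stereographic conformal factor (your $\Lambda_\sigma^2$, the paper's $b$), and define $h_k$ to be the resulting conformal representative so that $h_k \to g_{\round}$ in $C^\infty_{\loc}$. The only cosmetic difference is that you strip the stereographic factor at the outset rather than at the end, and you spell out the energy bound that the paper leaves implicit.
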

\begin{proof}
By the conformal invariance of the Smith equation, the maps $u_k \circ a_k$ are associative Smith maps with respect to the metrics $\lambda_k^{-2} a_k^*g$ on $a_k^{-1}(D_4(k))$. Note that $\frac{1}{\lambda_k^2} a_k^*g \to g_{\euc}$ in $C^{\infty}_{\loc}$. Again by the conformal invariance of the Smith equation, the maps $\widetilde{u}_{k,i} = u_k \circ a_k \circ \sigma$ are associative Smith with respect to the metrics $\frac{1}{\lambda_k^2} R_k^*g = \frac{1}{\lambda_k^2} \sigma^*a_k^* g$ on $\Omega_k \subset S^3$, and $\frac{1}{\lambda_k^2} R_k^*g \to \sigma^*g_{\text{euc}}$ in $C^\infty_{\text{loc}}$. Finally, since $\sigma^*g_{\text{euc}}$ is conformal to the round metric, say $\sigma^*g_{\text{euc}} = b g_{\text{round}}$ for an appropriate function $b$, we see that $h_k := \frac{1}{b}\frac{1}{\lambda_k^2}R_k^*g \to g_{\text{round}}$ in $C^\infty_{\text{loc}}$.
\end{proof}

Therefore, by Lemma~\ref{lemma:rescaled-maps} and Proposition~\ref{prop:conv-mod-bubbling} applied to $\Omega = S^3_{x_i} \setminus \{p^-\}$ and $\Omega_k = R_k^{-1}(D_4(k))$, there exists a (possibly empty) finite set of points $\mathcal{S}_i = \{x_{i1}, \ldots, x_{iq_i}\} \subset S^3_{x_i} \setminus \{p^-\}$, called \textbf{first level bubble points}, and a $C^1$ associative Smith map $\widetilde{u}_{\infty, i} \colon (S^3_{x_i}, g_{\round}) \to M$, called a \textbf{first level bubble map}, such that after passing to a subsequence
\begin{equation*}
\widetilde{u}_{k,i} \to \widetilde{u}_{\infty,i} \quad \text{ in } C^1_{\loc}\big( S^3_{x_i} \setminus (\{p^-\} \cup \mathcal{S}_i) \big)
\end{equation*}
and such that, as Radon measures on $S^3_{x_i} \setminus \{p^-\}$, we have
\begin{equation}
|d\widetilde{u}_{k,i}|^3\,d\mu_{h_k} \to |d\widetilde{u}_{\infty,i}|^3\,d\mu_{\round} + \sum_{j=1}^{q_i} m_{ij}\delta(x_{ij}) =: \kappa_i \label{eq:FirstLevRadonConv}
\end{equation}
with each $m_{ij} \geq \frac{1}{2}\ep_0$. In fact, we claim that all of the first level bubble points lie in the closure of the upper hemisphere. This is part (d) of the following result.

\begin{lemma} \label{lemma:kappa-i}
The following results hold:
\begin{enumerate}[(a)]
\item $\kappa_i(S^3_{x_i} \setminus \{p^-\}) \leq m_i$,
\item $\kappa_i(\overline{S^3_+}) \geq m_i - \eta_0$,
\item $\kappa_i(S^3_- \setminus \{p^-\}) \leq \eta_0$,
\item $\mathcal{S}_i \subset \overline{S^3_+}$.
\end{enumerate}
\end{lemma}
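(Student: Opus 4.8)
The plan is to prove all four parts in sequence, exploiting the conformal invariance of the $3$-energy under the rescaling maps $R_k$ together with the estimates of Lemma~\ref{lemma:rescaling}. The key translation is that for any open set $U \subseteq S^3_{x_i} \setminus \{p^-\}$, the conformal invariance of $E$ gives $E(\widetilde{u}_{k,i}; U) = E(u_k \circ R_k; U) = E(u_k; R_k(U))$, and then $\kappa_i(U) \geq \limsup_k E(\widetilde u_{k,i}; U)$ for $U$ open (or with $\kappa_i$-null boundary one gets equality), by weak-$\ast$ convergence of the Radon measures in~\eqref{eq:FirstLevRadonConv}. So each claim about $\kappa_i$ becomes a claim about the $3$-energy of $u_k$ on regions of $D_4(k)$.

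First I would establish (a). Since $R_k(S^3_{x_i} \setminus \{p^-\}) = R_k(\Omega_k) = D_4(k) = B(2\ep_k)$, conformal invariance gives $E(\widetilde u_{k,i}; S^3_{x_i}\setminus\{p^-\}) = E(u_k; B(2\ep_k)) = m_i + O(\tfrac{1}{k^2})$, using~\eqref{eq:rescaling-a1} and~\eqref{eq:rescaling-a2} (or~\eqref{eq:rescaling-c1} combined with~\eqref{eq:rescaling-a2}). Passing to the limit and using that the total mass of $\kappa_i$ on an exhausting sequence of compact subsets is at most the liminf of these energies, we get $\kappa_i(S^3_{x_i}\setminus\{p^-\}) \leq m_i$. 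For (c): by the defining property~\eqref{eq:eta-purpose} of $\lambda_k$ and $c_k$, we have $E(u_k; D_4(k) \setminus B(c_k;\lambda_k)) = \eta_0$; since $R_k(S^3_-\setminus\{p^-\})$ is (up to the boundary sphere) $D_4(k)\setminus B(c_k;\lambda_k)$ — because $R_k(S^3_+) = B(c_k;\lambda_k)$ and $R_k(\Omega_k) = D_4(k)$ — conformal invariance gives $E(\widetilde u_{k,i}; S^3_-\setminus\{p^-\}) = \eta_0$ for all large $k$, hence in the limit $\kappa_i(S^3_-\setminus\{p^-\}) \leq \eta_0$ (strict inequality on an open set passing to liminf, equality possible on the closure). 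Then (b) follows by subtracting: $\kappa_i(\overline{S^3_+}) = \kappa_i(S^3_{x_i}\setminus\{p^-\}) - \kappa_i(S^3_- \setminus \{p^-\})$ — using that $\overline{S^3_+} \cup (S^3_-\setminus\{p^-\})$ is essentially a partition of $S^3_{x_i}\setminus\{p^-\}$ up to the measure-zero equatorial sphere — combined with (a)'s lower-bound companion $\kappa_i(S^3_{x_i}\setminus\{p^-\}) \geq m_i - O(\tfrac1{k^2}) \to m_i$ (actually one needs $\liminf E(u_k; B(2\ep_k)) \geq m_i$, which holds by~\eqref{eq:rescaling-c1}) and (c)'s bound $\kappa_i(S^3_-\setminus\{p^-\}) \leq \eta_0$.

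Finally (d): a first level bubble point $x_{ij} \in \mathcal{S}_i$ carries mass $m_{ij} \geq \tfrac12\ep_0 > \eta_0$ (recall $\eta_0 < \tfrac{1}{16}\cdot\tfrac13\ep_0 < \tfrac12\ep_0$). If some $x_{ij} \in S^3_- \setminus \{p^-\}$, then choosing a small geodesic ball $B$ around $x_{ij}$ contained in $S^3_- \setminus \{p^-\}$ we would have $\kappa_i(B) \geq m_{ij} > \eta_0$, contradicting $\kappa_i(S^3_-\setminus\{p^-\}) \leq \eta_0$ from part (c). Hence $\mathcal{S}_i \subset \overline{S^3_+}$. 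The main obstacle — really the only non-routine point — is handling the equatorial $2$-sphere $\partial S^3_+$ carefully when converting between open-set and closed-set statements under weak-$\ast$ convergence: one must ensure the decompositions used in passing from (a),(c) to (b) do not lose or double-count mass on $\partial S^3_+$, which is handled by noting that for all but countably many radii the relevant sphere is $\kappa_i$-null, or by a simple inner/outer regularity argument on the finite measure $\kappa_i$. I expect this bookkeeping to be the place where the most care is needed; everything else is a direct application of conformal invariance and Lemma~\ref{lemma:rescaling}.
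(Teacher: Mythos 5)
Your parts (a), (c), and (d) are fine, and your derivation of (c) is even a little slicker than the paper's: instead of obtaining (c) from (a) and (b) by subtraction, you get it directly from the open-set Portmanteau inequality $\kappa_i(S^3_-\setminus\{p^-\}) \le \liminf_k \nu_k(S^3_-\setminus\{p^-\})$ together with the fact that $\nu_k(\Omega_k\setminus\overline{S^3_+}) = E_g(u_k; D_4(k)\setminus B(c_k;\lambda_k)) = \eta_0$ by~\eqref{eq:eta-purpose}. That works.

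However, your argument for (b) has a genuine gap. You derive (b) from the identity $\kappa_i(\overline{S^3_+}) = \kappa_i(S^3\setminus\{p^-\}) - \kappa_i(S^3_-\setminus\{p^-\})$ together with the claimed ``lower-bound companion'' $\kappa_i(S^3\setminus\{p^-\}) \ge m_i$. But this lower bound is not available at this stage; in fact it is precisely the content of the no-energy-loss theorem (Theorem~\ref{thm:no-energy-loss}), which is proved \emph{later} and relies on Lemma~\ref{lemma:kappa-i} as an ingredient. The difficulty is that $S^3\setminus\{p^-\}$ is noncompact and the $\nu_k$ are supported in the growing sets $\Omega_k$, so mass can a priori escape to $p^-$ in the weak-$\ast$ limit. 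What Portmanteau gives on the open set $S^3\setminus\{p^-\}$ is only $\kappa_i(S^3\setminus\{p^-\}) \le \liminf_k \nu_k(\Omega_k) = m_i$, which is (a); the inequality runs in the opposite direction to what you need. Knowing $\liminf_k E(u_k; D_4(k)) \ge m_i$ does not give $\kappa_i(S^3\setminus\{p^-\}) \ge m_i$. The paper circumvents this by applying the Portmanteau inequality to the \emph{compact} set $\overline{S^3_+}$, which runs the other way: $\kappa_i(\overline{S^3_+}) \ge \limsup_k \nu_k(\overline{S^3_+}) = \limsup_k\big[\nu_k(\Omega_k) - \nu_k(\Omega_k\setminus\overline{S^3_+})\big] = m_i - \eta_0$, using~\eqref{eq:Omega-MI} and~\eqref{eq:OmegaComp-Eta0}. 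If you replace your subtraction argument by this compact-set bound, the proof goes through and your ordering (a), (c), (b), (d) is otherwise consistent.
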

\begin{proof}
(a) By conformal invariance of the $3$-energy and~\eqref{eq:rescaling-a1}--\eqref{eq:rescaling-a2}, we have
\begin{equation}
E_{h_k}( \widetilde{u}_{k,i}; \Omega_k) = E_g(u_k; D_4(k)) = m_i + O(\tfrac{1}{k^2}). \label{eq:Omega-MI}
\end{equation}
Hence, using~\eqref{eq:FirstLevRadonConv} and that $\Omega_\ell$ is an open set, for each $\ell$ we have:
$$\kappa_i(\Omega_\ell) \leq \liminf_{k \to \infty} E_{h_k}(\widetilde{u}_{k,i}; \Omega_\ell) \leq \liminf_{k \to \infty} E_{h_k}(\widetilde{u}_{k,i}; \Omega_k) = m_i.$$
Hence, $\kappa_i(S^3_{x_i} \setminus \{p^-\}) \leq m_i$.

(b) By conformal invariance of the $3$-energy and~\eqref{eq:eta-purpose}, we have
\begin{equation}
E_{h_k}(\widetilde{u}_{k,i}; \Omega_k \setminus \overline{S^3_+}) = E_g(u_k; D_4(k) \setminus B(c_k; \lambda_k)) = \eta_0. \label{eq:OmegaComp-Eta0}
\end{equation}
Thus, using~\eqref{eq:FirstLevRadonConv} and the fact that $\overline{S^3_+}$ is compact, followed by~\eqref{eq:Omega-MI} and~\eqref{eq:OmegaComp-Eta0}, we have
\begin{align*}
\kappa_i(\overline{S^3_+}) & \geq \limsup_{k \to \infty} E_{h_k}(\widetilde{u}_{k,i}; \overline{S^3_+}) \\
& = \limsup_{k \to \infty} \left[ E_{h_k}(\widetilde{u}_{k,i}; \Omega_k) - E_{h_k}(\widetilde{u}_{k,i}; \Omega_k \setminus \overline{S^3_+}) \right] \\
& = m_i - \eta_0.
\end{align*}
(c) This follows immediately from parts (a) and (b):
$$\kappa_i(S^3_- \setminus \{p^-\}) = \kappa_i(S^3_{x_i} \setminus \{p^-\}) - \kappa_i(\overline{S^3_+}) \leq \eta_0.$$
(d) On the one hand,
\begin{align}
\kappa_i(S^3_- \setminus \{p^-\}) & = E(\widetilde{u}_{\infty, i}; S^3_- \setminus \{p^-\}) + \sum_{j \,:\, x_{ij} \in S^3_-} m_{ij} \geq \sum_{j \,:\, x_{ij} \in S^3_-} m_{ij}. \label{eq:43c-1}
\end{align}
On the other hand, by part (c), we have
\begin{equation}
\kappa_i(S^3_- \setminus \{p^- \}) \leq \eta_0 < \tfrac{1}{2} \ep_0 \leq m_{ij} \label{eq:43c-2}
\end{equation}
for all $1 \leq j \leq q_i$. Comparing~\eqref{eq:43c-1} with~\eqref{eq:43c-2}, we conclude that none of the points in $\mathcal{S}_i$ can be in $S^3_-$. That is, $\mathcal{S}_i \subset \overline{S^3_+}$.
\end{proof}

\textbf{Step Three: Iteration and finite termination.} If any of the sets $\mathcal{S}_i$ is nonempty, then Steps One and Two above may be repeated, resulting in second-level bubble maps $\widetilde{u}_{\infty, ij} \colon (S^3_{x_{ij}}, g_{\round}) \setminus \{p^-\} \to M$ and a (possibly empty) finite set $\mathcal{S}_{ij} \subset S^3_{x_{ij}} \setminus \{p^-\}$ of second level bubble points. If any of the $\mathcal{S}_{ij}$ is nonempty, then we may repeat the process again, and so on.

As explained above in $\S$\ref{sec:bubble-overview}, iterating this procedure results in a tree $(\{T_0, T_I\}, \{E_I\})$ whose base vertex $T_0$ corresponds to the base map $u_\infty$, whose higher level vertices $T_I$ correspond to bubble maps $\widetilde{u}_{\infty, I}$, and whose edges $E_I$ correspond to bubble points $x_I$. In the tree, the edge $E_{i_1 \cdots i_k i_{k+1}}$ joins a $k$th level vertex $T_{i_1 \cdots i_k}$ to a $(k+1)$st level vertex $T_{i_1 \cdots i_k i_{k+1}}$.

Thus, to conclude the construction, it remains only to show that the bubble tree is finite, that is that the sets $\mathcal{S}_{i_1 \cdots i_k}$ of $k$th level bubble points are all empty for $k$ sufficiently large. To do this, we demonstrate that all of the $(k+1)$st level energy concentrations $m_{i_1 \cdots i_{k+1}i_{k+2}}$ are a fixed constant less than the $k$th level energy concentrations $m_{i_1 \cdots i_{k+1}}$. More precisely we have the following result.

\begin{lemma} \label{lemma:mij}
We have $m_{ij} \leq m_i - \frac{1}{2}\eta_0$ for each $1 \leq j \leq q_i$.
\end{lemma}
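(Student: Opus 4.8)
\textbf{Proof strategy for Lemma~\ref{lemma:mij}.} The plan is to show that each first-level energy concentration $m_{ij}$ is bounded above by $\kappa_i(\overline{S^3_+}) - \eta_0/2$, and then use the fact that $\kappa_i(\overline{S^3_+}) \leq m_i$ (which follows from part (a) of Lemma~\ref{lemma:kappa-i}) to conclude. The key point is that we have built into the construction (via the choice of the centers $c_k$ and scale factors $\lambda_k$ governed by the constant $\eta_0$ in~\eqref{eq:eta-purpose}) the fact that a definite amount of energy, namely $\eta_0$, sits outside the rescaled ball $\overline{S^3_+}$; hence no single first-level bubble point $x_{ij}$, all of which lie in $\overline{S^3_+}$ by Lemma~\ref{lemma:kappa-i}(d), can absorb \emph{all} of the energy $\kappa_i(\overline{S^3_+})$ --- it must leave room for that $\eta_0$, or more precisely for a nonzero portion of it.

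First I would recall from Lemma~\ref{lemma:rescaling}(b), specifically~\eqref{eq:eta-purpose}, that for each $k$ and every ball $B(x;r)$ with $x \in \overline{D_3(k)}$ and $r \le \lambda_k$, we have $E(u_k; D_4(k)\setminus B(x;r)) \ge \eta_0$, equivalently $E(u_k; B(x;r)) \le E(u_k; D_4(k)) - \eta_0 = m_i + O(\tfrac{1}{k^2}) - \eta_0$ using~\eqref{eq:Omega-MI}. Translating this through the rescaling $R_k$ (under which the energy is conformally invariant, by Lemma~\ref{lemma:rescaled-maps}), any geodesic ball in $S^3_{x_i}$ of sufficiently small radius, centered at a point whose image under $R_k$ lies in $\overline{D_3(k)}$ --- in particular any fixed small geodesic ball $B(x_{ij};\rho)$ around a first-level bubble point once $k$ is large --- satisfies $E_{h_k}(\widetilde{u}_{k,i}; B(x_{ij};\rho)) \le m_i - \eta_0 + O(\tfrac{1}{k^2})$. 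Here I must check that for $k$ large the preimage under $R_k$ of $\overline{D_3(k)}$ eventually contains any fixed compact subset of $S^3_{x_i}\setminus\{p^-\}$ while the $\lambda_k$-scale covers the relevant small balls --- this is the one slightly delicate bookkeeping point, analogous to~\eqref{eq:rescaling-c3}, and it is where I expect the main care (though not real difficulty) to be needed.

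Next I would pass to the limit. By the Radon measure convergence~\eqref{eq:FirstLevRadonConv}, for a small enough fixed $\rho$ with $\overline{B(x_{ij};\rho)}$ meeting $\mathcal{S}_i$ only in $\{x_{ij}\}$ and avoiding $p^-$, we get $m_{ij} \le \kappa_i(\overline{B(x_{ij};\rho)}) \le \liminf_{k\to\infty} E_{h_k}(\widetilde{u}_{k,i}; B(x_{ij};\rho)) \le m_i - \eta_0$. This already gives a weaker bound; to improve it to $m_i - \tfrac12\eta_0$ --- which is what the lemma claims and is all that is needed for finite termination --- I would instead argue more carefully using the compactness of $\overline{S^3_+}$: combine $m_{ij} \le \kappa_i(\overline{B(x_{ij};\rho)})$ with the fact that $\kappa_i$ assigns mass $\ge \eta_0 - \eta$ to the region $\Omega_k\setminus\overline{S^3_+}$ complement (by~\eqref{eq:OmegaComp-Eta0} and lower semicontinuity on open sets), so that $\kappa_i(\overline{S^3_+}) \le m_i$ forces $\kappa_i(\overline{B(x_{ij};\rho)}) \le m_i - \eta_0$ already; but the cleanest route is simply to note $m_{ij}\le m_i-\eta_0\le m_i-\tfrac12\eta_0$, so the stated (weaker) inequality follows a fortiori. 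If a direct argument for the sharper $-\tfrac12\eta_0$ is wanted for robustness against the $O(\tfrac1{k^2})$ error terms, absorb the error: choose $k$ large enough that $O(\tfrac1{k^2}) < \tfrac12\eta_0$, giving $m_{ij} \le E_{h_k}(\widetilde{u}_{k,i};B(x_{ij};\rho)) \le m_i - \eta_0 + \tfrac12\eta_0 = m_i - \tfrac12\eta_0$. The main obstacle, then, is not any deep analysis but the careful verification that the inner-ball energy bound~\eqref{eq:eta-purpose} survives the rescaling $R_k$ and applies uniformly to fixed small geodesic balls around the $x_{ij}$ for all large $k$; once that is in place the conclusion is immediate.
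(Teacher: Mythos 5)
Your proposal is correct, and it takes a genuinely different route from the paper. The paper argues by contradiction: assuming $m_{i1} > m_i - \tfrac{1}{2}\eta_0$, equation~\eqref{eq:441} together with the energy gap of Proposition~\ref{prop:energy-gap} forces $\widetilde{u}_{\infty,i}$ to be constant and $\mathcal{S}_i = \{x_{i1}\}$, so $\kappa_i = m_{i1}\delta(x_{i1})$; then the mass of a small ball $R_k(B)$ around $R_k(x_{i1})$ would exceed $m_i - \tfrac{1}{2}\eta_0$, leaving less than $\eta_0$ outside and contradicting~\eqref{eq:eta-purpose}. Your argument is a direct one: you translate~\eqref{eq:eta-purpose} through the rescaling to get $E_{h_k}(\widetilde{u}_{k,i}; B(x_{ij};\rho)) = E_g(u_k; R_k(B(x_{ij};\rho))) \leq E_g(u_k; B(z_k;r_k)) \leq E_g(u_k; D_4(k)) - \eta_0 = m_i - \eta_0 + O(\tfrac{1}{k^2})$ (which works because $x_{ij} \in \overline{S^3_+}$ by Lemma~\ref{lemma:kappa-i}(d), so $R_k(x_{ij}) \in \overline{B(c_k;\lambda_k)}\subset\overline{D_3(k)}$ and, for $\rho$ small, $R_k(B(x_{ij};\rho))\subset B(z_k;r_k)$ with $r_k \leq \lambda_k$), and then apply the usual lower-semicontinuity of Radon measures on open sets to conclude $m_{ij} \leq \kappa_i(B(x_{ij};\rho)) \leq \liminf_k E_{h_k}(\widetilde{u}_{k,i};B(x_{ij};\rho)) \leq m_i - \eta_0$. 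This is cleaner in two respects: it avoids the energy gap entirely (the paper only uses it to tidy up the contradiction to the single-delta case), and it actually yields the sharper bound $m_{ij}\leq m_i-\eta_0$, from which the lemma's $m_i-\tfrac12\eta_0$ is immediate. Two small points of imprecision in your write-up that you should tidy: the final remark about ``choose $k$ large enough that $O(\tfrac1{k^2}) < \tfrac12\eta_0$'' conflates a single large $k$ with the $\liminf$ (the $\liminf$ already absorbs the error term, and $m_{ij}$ is in any case not bounded by $E_{h_k}$ at a single $k$ but only by the $\liminf$), and the alternative route you sketch via mass in the lower hemisphere is unnecessary since the direct bound is already stronger than required.
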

\begin{proof}
By definition of $\kappa_i$ followed by Lemma~\ref{lemma:kappa-i}(a), we have
\begin{equation}
E(\widetilde{u}_{\infty, i}; S^3_{x_i} \setminus \{p^-\}) + \sum_{j=1}^{q_i} m_{ij} = \kappa_i(S^3_{x_i} \setminus \{p^-\}) \leq m_i. \label{eq:441}
\end{equation}
Suppose for the sake of contradiction that the first-level bubble point $x_{i1} \in \mathcal{S}_i = \{x_{i1}, \ldots, x_{iq_i}\}$ has energy concentration $m_{i1} > m_i - \frac{1}{2}\eta_0$. Then~\eqref{eq:441} implies
\begin{align*}
E(\widetilde{u}_{\infty, i}; S^3_{x_i} \setminus \{p^-\}) + \sum_{j=2}^{q_i} m_{ij} \leq m_i - m_{i1} < \tfrac{1}{2}\eta_0 < \tfrac{1}{6}\ep_0.
\end{align*}
By Proposition~\ref{prop:energy-gap}, we see that $\widetilde{u}_{\infty, i}$ is a constant map. Moreover, since each $m_{ij} \geq \frac{1}{2}\ep_0$, we must have $\mathcal{S}_i = \{x_{i1}\}$. Thus, the Radon measure $\kappa_i = |d\widetilde{u}_{\infty,i}|^3 d\mu_{\round} + \sum m_{ij}\delta(x_{ij})$ is simply
$$\kappa_i = m_{i1}\delta(x_{i1}).$$
Let $B = B^{S^3}(x_{i1}; \rho)$ denote a small geodesic ball in $S^3$ centered at $x_{i1}$. Using the conformal invariance of $E$ first, followed by~\eqref{eq:FirstLevRadonConv} and $\kappa_i = m_{i1}\delta(x_{i1})$ second, and our hypothesis third, we obtain
$$\lim_{k \to \infty} E_g(u_k; R_k(B)) = \lim_{k \to \infty} E_{h_k}(\widetilde{u}_{k,i}; B) = m_{i1} > m_i - \tfrac{1}{2}\eta_0.$$
On the other hand,~\eqref{eq:rescaling-a1}--\eqref{eq:rescaling-a2} give
$$E_g(u_k; D_4(k)) = m_i + O(\tfrac{1}{k^2}).$$
We deduce that for $k$ sufficiently large,
\begin{equation}
E_g(u_k; D_4(k) \setminus R_k(B)) < \tfrac{1}{2}\eta_0 + O(\tfrac{1}{k^2}). \label{eq:442}
\end{equation}
Now, by Lemma~\ref{lemma:kappa-i}(d), we have $x_{i1} \in \overline{S^3_+}$, and hence $R_k(x_{i1}) \in \overline{B(c_k;\lambda_k)}.$ By choosing the radius $\rho > 0$ smaller if necessary, we may suppose that $R_k(B)$ is contained in a ball $B(z; r)$ with center $z \in \overline{B(c_k; \lambda_k)} \subset \overline{D_3(k)}$ and radius $r < \lambda_k$. But then~\eqref{eq:442} implies that
$$E_g(u_k; D_4(k) \setminus B(z;r)) \leq E_g(u_k; D_4(k) \setminus R_k(B)) < \eta_0,$$
which contradicts~\eqref{eq:eta-purpose}.
\end{proof}

Iterating Lemma~\ref{lemma:mij} shows that the bubble tree has finitely many levels. Indeed, at a $k$th level bubble point $x_I = x_{i_1 \cdots i_{k+1}}$, the energy concentration $m_I = m_{i_1 \cdots i_{k+1}}$ at $x_I$ satisfies
$$m_I \leq m_{i_1} - k \big (\tfrac{1}{2}\eta_0 \big).$$
On the other hand, the energy concentration $m_I$ must satisfy
$$m_I \geq \tfrac{1}{2}\ep_0.$$
Together, these two inequalities yield $k(\frac{1}{2}\eta_0) \leq m_{i_1} - \frac{1}{2}\ep_0$, an upper bound on $k$. That is, for $k$ is sufficiently large, the set $\bigcup_{|I| = k} \mathcal{S}_I$ of $k$th level bubble points is empty, and hence the bubble tree terminates.

\subsection{Energy of Smith maps on annuli} \label{sec:energy-annuli}

In this section we show (Proposition~\ref{annularenergy}) that the $3$-energies of associative Smith maps on annular regions are controlled by their $3$-energies on the boundary spheres. One could view this as an isoperimetric-type estimate. As mentioned in $\S$\ref{sec:bubble-overview}, such an estimate is crucial to the proofs of Theorems~\ref{thm:no-energy-loss} and~\ref{thm:zero-neck-length}. 

We begin by recalling the following well-known extension result (compare with~\cite[Remark 4.4.2]{MS}). For completeness we include a proof.
\begin{lemma} \label{extension}
Suppose $u: S^{2} \to M$ is a $C^{1}$-map and that
\begin{equation*}
\diam_{M}\left( u(S^{2}) \right) < \inj_{M}.
\end{equation*}
Then there exists a Lipschitz map $v: B(1) \to M$ such that $v\rvert_{S^{2}} = u$ and 
\begin{equation*}
\int_{B(1)}|Dv|^{3}dx \leq K\int_{S^{2}}|du|^{3},
\end{equation*}
where $K$ depends only on the geometry of $M$.
\end{lemma}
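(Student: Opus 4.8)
The plan is to build the extension $v$ by coning off $u$ in geodesic normal coordinates centered at a suitably chosen point, and then to estimate the $3$-energy of the resulting map in terms of data on $S^2$. First I would fix a point $p_0 \in u(S^2)$; since $\diam_M(u(S^2)) < \inj_M$, the image $u(S^2)$ lies in the geodesic ball $B_M(p_0; \inj_M)$, on which the exponential map $\exp_{p_0} \colon B(0; \inj_M) \subset T_{p_0}M \to M$ is a diffeomorphism with uniformly bounded (two-sided) derivative bounds depending only on the geometry of $M$. Composing with $\exp_{p_0}^{-1}$, we reduce to the following Euclidean statement: given a $C^1$ map $w = \exp_{p_0}^{-1} \circ u \colon S^2 \to \R^7$ with image in a fixed ball, produce a Lipschitz extension $\widetilde v \colon B(1) \to \R^7$ with $\int_{B(1)} |D\widetilde v|^3\, dx \leq C \int_{S^2} |dw|^3$; then set $v = \exp_{p_0} \circ \widetilde v$, noting that the chain rule and the derivative bounds on $\exp_{p_0}$ convert the Euclidean estimate into the asserted Riemannian one (with $K$ absorbing the geometric constants), and that $v$ indeed maps into $M$ and restricts to $u$ on $S^2$.

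For the Euclidean extension I would use the radial (homogeneous degree-zero) cone: writing points of $B(1) \setminus \{0\}$ as $x = r\omega$ with $r \in (0,1]$ and $\omega \in S^2$, set $\widetilde v(r\omega) = (1-\chi(r)) w(\omega) + \chi(r)\, \overline{w}$ where $\overline{w} = \fint_{S^2} w$ is the average and $\chi$ is a fixed smooth cutoff with $\chi \equiv 0$ near $r = 1$ and $\chi \equiv 1$ near $r = 0$; alternatively, and more simply, just take $\widetilde v(r\omega) = w(\omega)$ for $r \ge \tfrac12$ and interpolate linearly to the constant $\overline w$ on $r \le \tfrac12$, which is Lipschitz since $w$ is $C^1$. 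The key computation is the pointwise bound on $|D\widetilde v(r\omega)|$: the angular part of the derivative contributes $\tfrac{1}{r}|d_\omega \widetilde v|$, which is $\lesssim \tfrac{1}{r}|dw(\omega)|$ on the region where $\chi$ is constant, while the radial part contributes $|\chi'(r)|\,|w(\omega) - \overline w| \lesssim |w(\omega) - \overline w|$. Cubing and integrating, $\int_{B(1)} |D\widetilde v|^3\,dx \lesssim \int_{1/2}^1 \!\!\int_{S^2} r^2 \cdot r^{-3} |dw(\omega)|^3\, d\omega\, dr + \int_0^{1/2}\!\!\int_{S^2} r^2 |w(\omega)-\overline w|^3 d\omega\, dr \lesssim \int_{S^2} |dw|^3\, d\omega + \int_{S^2} |w - \overline w|^3\, d\omega$.

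The main obstacle is therefore controlling the second term, $\int_{S^2} |w - \overline w|^3\, d\omega$, by $\int_{S^2} |dw|^3\, d\omega$; this is exactly the Sobolev--Poincar\'e inequality $\|w - \overline w\|_{L^3(S^2)} \le C_{S^2} \|dw\|_{L^{3/2}(S^2)}$ combined with H\"older's inequality $\|dw\|_{L^{3/2}(S^2)} \le |S^2|^{1/3}\|dw\|_{L^3(S^2)}$, so in fact $\int_{S^2}|w-\overline w|^3 \le C \int_{S^2}|dw|^3$ with $C$ absolute. (One should note that $w - \overline w$ is genuinely $\R^7$-valued here and the inequality is applied componentwise or directly to the vector-valued map.) Putting these pieces together yields $\int_{B(1)}|D\widetilde v|^3\, dx \le C \int_{S^2}|dw|^3$, and then transferring back through $\exp_{p_0}$ gives the lemma with $K$ depending only on $C$, $\inj_M$, and the curvature bounds of $M$ — i.e., only on the geometry of $M$, as claimed.
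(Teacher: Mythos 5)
Your proof is correct and gives the stated estimate, but your construction is genuinely different from the paper's, so a brief comparison is warranted. The paper takes $p = u(\xi_0) \in u(S^2)$, sets $\tilde u = \exp_p^{-1}\circ u$, and uses the \emph{degree-one} cone $v(r\xi) = \exp_p(r\,\tilde u(\xi))$. Because the angular derivative of this map scales like $r$, the pointwise bound $|Dv(r\xi)|^3 \leq C(|\tilde u(\xi)|^3 + |du(\xi)|^3)$ holds \emph{uniformly over all $r \in (0,1]$} with no $r^{-1}$ blow-up and no splitting into regions; the term $\int_{S^2}|\tilde u|^3$ is then controlled using $\tilde u(\xi_0) = 0$ together with the Morrey/Poincar\'e inequality for $W^{1,3}$-functions on $S^2$ vanishing at a point (valid since $3 > \dim S^2$). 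You instead use a truncated \emph{degree-zero} cone in the normal chart: constant in $r$ on $\{r \geq \tfrac12\}$, interpolating linearly to the average value $\overline w$ on $\{r \leq \tfrac12\}$. This forces a split into two regions, since the angular derivative of a degree-zero extension carries an $r^{-1}$ factor that must be kept away from the origin, but it has the advantage that you can invoke the standard mean-zero Poincar\'e inequality $\|w - \overline w\|_{L^3(S^2)} \lesssim \|dw\|_{L^3(S^2)}$ directly rather than the version for functions vanishing at a point. (As an aside, your detour through $L^{3/2}$ and Sobolev is unnecessary: the Poincar\'e inequality on a closed manifold holds for any fixed $p$, so $\|w - \overline w\|_{L^3} \lesssim \|dw\|_{L^3}$ directly.) One small omission: on the inner region $r < \tfrac12$, your $\widetilde v$ still has an angular derivative of size $\lesssim |dw(\omega)|$, so the integrand there is $\lesssim |dw(\omega)|^3 + |w(\omega)-\overline w|^3$, not just $|w-\overline w|^3$ as you wrote; this does not affect the conclusion since the extra $|dw|^3$ term is already present in the final bound. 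Both constructions stay inside the normal ball of radius $\diam_M(u(S^2)) < \inj_M$ (in your case by convexity of Euclidean balls, since $\overline w$ and the linear segments to it lie in the convex hull of $w(S^2)$), so the transfer through $\exp_{p_0}$ is legitimate and both arguments yield the lemma with a constant $K$ depending only on the geometry of $M$.
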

\begin{proof}
Fix an arbitrary $\xi_{0} \in S^{2}$ and let $p = u(\xi_{0})$. Then, by assumption, it makes sense to talk about the map $\tilde{u}: S^{2} \to \R^{7}$ given by
\begin{equation*}
\tilde{u}(\xi) = \exp_{p}^{-1}\left( u(\xi) \right).
\end{equation*}
Note that $|\tilde{u}(\xi)| = \dist_{M}(u(\xi), p)$. Next we define $v: B(1) \to M$ to be the homogeneous degree-1 extension of $u$. Specifically, let
\begin{equation*}
v(r\xi) = \exp_{p}\left( r\tilde{u}(\xi) \right) \quad \text{ for }r \in [0, 1], \xi \in S^{2}.
\end{equation*}
Then a straightforward calculation shows that
\begin{align*}
|Dv(r\xi)|^{3} & = \left( |\partial_{r}v|^{2} + r^{-2}|\partial_{\xi}v|^{2}\right)^{\frac{3}{2}} \\
& \leq C\left( |\tilde{u}(\xi)|^{3} + |du|^{3} \right).
\end{align*}
Therefore we have
\begin{align*}
\int_{B(1)} |Dv|^{3} dx & = \int_{0}^{1}r^{2}\int_{S^{2}}|Dv(r\xi)|^{3}d\xi dr \\
& \leq C\int_{S^{2}} ( |\widetilde{u}|^{3} + |du|^{3} ) d\xi \\
& \leq C\int_{S^{2}} ( |d\widetilde{u}|^{3} + |du|^{3} ) d\xi \\
& \leq C\int_{S^{2}}|du(\xi)|^{3}d\xi,
\end{align*}
where we used the Poincar\'e inequality for $W^{1, 3}$-functions on $S^{2}$ in the second-to-last inequality.
\end{proof}

\begin{prop} \label{annularenergy}
Let $u: B(2) \to M$ be a $C^{1}$-Smith map with respect to a metric which satisfies
\begin{equation} \label{comparablemetric2}
\tfrac{1}{2}|\xi|^{2} \leq g_{\alpha\beta}(x)\xi^{\alpha}\xi^{\beta} \leq 2|\xi|^{2}, \quad \text{ for all }x \in B(2), \xi \in \R^{3}.
\end{equation}
Suppose further that for some $0 < r < 1$ we have
\begin{equation} \label{smallboundarylength}
\diam_{M}\left( u(\partial B(r) ) \right) + \diam_{M}\left( u(\partial B(1)) \right) < \inj_{M},
\end{equation}
and that
\begin{equation} \label{smallannularenergy}
\int_{B(1)\setminus B(r)}|du|^{3}d\mu_{g} + Kr\int_{\partial B(r)}|du|^{3}dS_{g} + K\int_{\partial B(1)}|du|^{3}dS_{g} < \tfrac{1}{8} \gamma_1,
\end{equation}
with $\gamma_{1}$ coming from Proposition~\ref{Brian} and $K$ from Lemma~\ref{extension}. Then there holds
\begin{equation*}
\int_{B(1)\setminus B(r)} |du|^{3}d\mu_{g} \leq C \Big( r\int_{\partial B(r)}|du|^{3}dS_{g} + \int_{\partial B(1)}|du|^{3}dS_{g} \Big),
\end{equation*}
where $C$ depends only on the geometry of $M$. In particular $C$ is independent of $r$.
\end{prop}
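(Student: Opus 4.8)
The plan is to combine the extension lemma (Lemma~\ref{extension}), White's energy gap (Proposition~\ref{Brian}), and the homological energy-minimizing property of Smith maps (Corollary~\ref{energyminimizing}), in the spirit of \cite[\S4.4]{MS}. Fix $r < 1$ and write $A = B(1)\setminus B(r)$. The strategy is to build from $u$ a Lipschitz map $w \colon S^3 \to M$ as follows: cut $S^3$ into a ``cylindrical'' piece on which $w$ agrees (up to a bi-Lipschitz reparametrization of bounded distortion, using~\eqref{comparablemetric2}) with $u|_A$, and two ``cap'' pieces, each a $3$-ball, on which $w$ is the homogeneous degree-one extension of $u|_{\partial B(r)}$ and $u|_{\partial B(1)}$ respectively, produced by Lemma~\ref{extension} after rescaling $\partial B(r)$ to the unit sphere (this rescaling is the source of the factor $r$ multiplying $\int_{\partial B(r)}|du|^3\,dS_g$ in~\eqref{smallannularenergy} and~\eqref{smallboundarylength}). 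Hypothesis~\eqref{smallboundarylength} guarantees that Lemma~\ref{extension} applies to both boundary spheres, and hypothesis~\eqref{smallannularenergy} guarantees that the total $3$-energy of $w$ is below the threshold $\gamma_1$, so that Proposition~\ref{Brian} forces $w$ to be null-homotopic.

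The point of null-homotopy is this: since $w$ is homotopic to a constant, the pushforward current $w_\#\current{S^3}$ is a boundary in $M$. Writing $S^3 = A' \sqcup (\text{two caps})$, where $A'$ is the cylindrical part on which $w$ restricts (after reparametrization) to $u|_A$ — a Smith map — we are exactly in the situation of Corollary~\ref{energyminimizing}: the Smith map $u$ on $A$ extends to the Lipschitz map $w$ on all of $S^3$, which further extends over the $4$-ball that $S^3$ bounds (since $w$ is null-homotopic, hence extends to $B^4 \to M$ after composing with a retraction; alternatively one caps off the null-homotopy directly). Corollary~\ref{energyminimizing} then yields
\begin{equation*}
\frac{1}{(\sqrt{3})^3}\int_A |du|^3\,d\mu_g \leq \frac{1}{(\sqrt{3})^3}\int_{S^3\setminus A'} |dw|^3,
\end{equation*}
and the right-hand side is precisely the sum of the $3$-energies of the two cap extensions, which by Lemma~\ref{extension} is bounded by $C\big(r\int_{\partial B(r)}|du|^3\,dS_g + \int_{\partial B(1)}|du|^3\,dS_g\big)$ with $C$ depending only on the geometry of $M$. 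Absorbing the dimensional constant $(\sqrt{3})^3$ into $C$ gives the claimed inequality.

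I expect the main obstacle to be the careful bookkeeping in the first paragraph: setting up the decomposition of $S^3$ and the bi-Lipschitz identification of the cylindrical piece $A'$ with the annulus $A$ so that (i) the reparametrization has distortion controlled purely by~\eqref{comparablemetric2}, hence does not introduce any $r$-dependence into $C$; (ii) the map $w$ is genuinely Lipschitz across the gluing spheres $\partial B(r)$ and $\partial B(1)$ — this requires matching boundary values exactly, which is why the degree-one extensions of Lemma~\ref{extension} are the right choice; and (iii) the restriction $w|_{A'}$ really is a Smith map with respect to \emph{some} metric, so that Corollary~\ref{energyminimizing} applies — this follows from conformal invariance (Proposition~\ref{prop:smith-u-conf-inv}) applied to the reparametrization, provided the latter is taken orientation-preserving and weakly conformal, or at least that one works with the pulled-back metric. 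A secondary technical point is ensuring the extension over $B^4$ exists and is Lipschitz; one can avoid this by invoking Corollary~\ref{minimizingproperty} directly with $S$ taken to be the pushforward of a Lipschitz null-homotopy $S^3\times[0,1]\to M$ of $w$, whose existence is exactly the content of Proposition~\ref{Brian}. Everything else — the energy estimates for the caps, the comparison of $d\mu_g$ with $dx$, the conversion of surface integrals — is routine given~\eqref{comparablemetric2} and Lemma~\ref{extension}.
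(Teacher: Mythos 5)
Your proposal is correct and follows essentially the same route as the paper: extend over the two boundary spheres with Lemma~\ref{extension}, glue the pieces into a Lipschitz map $S^3 \to M$ whose $3$-energy is below $\gamma_1$, apply White's gap (Proposition~\ref{Brian}) to obtain a null-homotopy, and then invoke Corollary~\ref{energyminimizing}. Your concern (i) is handled in the paper not by a bi-Lipschitz reparametrization but by realizing the annulus $A = B(1)\setminus B(r)$ together with the inner cap $B(r)$ inside one hemisphere via conformal stereographic projection of $B(1)$ (with the outer cap on $B(1)$ occupying the other hemisphere), so conformal invariance of the $3$-energy makes the identification exact and no $r$-dependent distortion ever enters.
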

\begin{proof}
By assumption~\eqref{smallboundarylength}, we may apply Lemma~\ref{extension} to $u\rvert_{\partial B(1)}$ and (a suitable rescaling of) $u\rvert_{\partial B(r)}$ to obtain Lipschitz maps $v: B(1) \to M$ and $w: B(r) \to M$ such that 
\begin{equation*}
v\rvert_{\partial B(1)} = u\rvert_{\partial B(1)}, \qquad w\rvert_{\partial B(r)} = u\rvert_{\partial B(r)},
\end{equation*}
\begin{equation*}
\int_{B(1)}|Dv|^{3}dx \leq K\int_{\partial B(1)}|du|^{3},
\end{equation*}
\begin{equation*}
\int_{B(r)} |Dw|^{3}dx \leq Kr\int_{\partial B(r)} |du|^{3}.
\end{equation*}
Note that $u|_{B(1)\setminus B(r)}$ and $w$ together form a Lipschitz map on $B(1)$ into $M$, which can be pulled back to a Lipschitz map from $S^{3}_{+}$ into $M$ via the stereographic projection. Similarly, $v$ gives rise to a Lipschitz map from $S^{3}_{-}$ into $M$. The two maps agree on the equator, and we thus obtain a Lipschitz map $f: S^{3} \to M$. Moreover, by the conformal invariance of the $3$-energy, we see that 
\begin{align*}
\int_{S^{3}}|Df|^{3}dx & = \int_{B(1)\setminus B(r)} |Du|^{3} dx + \int_{B(r)}|Dw|^{3}dx + \int_{B(1)}|Dv|^{3}dx\\
& \leq \int_{B(1)\setminus B(r)} |Du|^{3} dx + Kr\int_{\partial B(r)} |du|^{3} + K\int_{\partial B(1)}|du|^{3} \\
& \leq 8 \Big (\int_{B(1)\setminus B(r)}|du|^{3}d\mu_{g} + Kr\int_{\partial B(r)}|du|^{3}dS_{g} + K\int_{\partial B(1)}|du|^{3}dS_{g} \Big) \\
& < \gamma_{1}.
\end{align*}
where we used~\eqref{comparablemetric2} in the second to last inequality, and~\eqref{smallannularenergy} in the last one. But then by Proposition~\ref{Brian}, the map $f$ is null-homotopic and hence extends to a Lipschitz map $F:B^{4}(1) \to M$. We may then invoke Corollary~\ref{energyminimizing} with $\Sigma = S^{3}$ and $W = B^{4}(1)$ to obtain
\begin{align*}
\int_{B(1)\setminus B(r)}|du|^{3}d\mu_{g} & \leq \int_{B(r)}|dw|^{3}d\mu_{g} + \int_{B(1)}|dv|^{3}d\mu_{g} \\
& \leq C\Big( r\int_{\partial B(r)}|du|^{3}dS_{g} + \int_{\partial B(1)}|du|^{3}dS_{g} \Big),
\end{align*}
and the proof is complete.
\end{proof}

\subsection{No energy loss} \label{sec:no-energy-loss}

This section is devoted to the proof of Theorem~\ref{thm:no-energy-loss} (No energy loss). Recall that, by the definition of $\tau_i$ and the measure $\kappa_i$, we have $\tau_i = m - \kappa_i(S^3 \setminus \{p^-\})$. To streamline notation, in this section we omit the subscript $i$, writing $m := m_{i}$ and $\tau := \tau_{i}$ and $\kappa := \kappa_{i}$. We also write $v_{k} := \widetilde{u}_{k, i}$ and $v := \widetilde{u}_{\infty, i}$. Finally we write $\cS_{1} = \cS_{i}$.

As a preliminary step, we note the following consequence of Lemma~\ref{lemma:kappa-i}, which says that the energy loss $\tau$ is nonnegative, and lies below the threshold $\eta_{0}$ we chose in Lemma~\ref{lemma:rescaling}. 

\begin{lemma} \label{energylossthreshold}
We have $0 \leq \tau \leq \eta_{0}$.
\end{lemma}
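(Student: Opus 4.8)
The plan is to read off both inequalities directly from the pieces of Lemma~\ref{lemma:kappa-i}, recalling the definitions $\tau = m - \kappa(S^3_{x_i} \setminus \{p^-\})$ and $\kappa = \kappa_i$, where $\kappa_i = |d\widetilde{u}_{\infty,i}|^3\,d\mu_{\round} + \sum_{j=1}^{q_i} m_{ij}\delta(x_{ij})$ is the Radon measure appearing in~\eqref{eq:FirstLevRadonConv}.

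For the lower bound $\tau \geq 0$: by Lemma~\ref{lemma:kappa-i}(a) we have $\kappa(S^3_{x_i} \setminus \{p^-\}) \leq m$, hence $\tau = m - \kappa(S^3_{x_i} \setminus \{p^-\}) \geq 0$. (This is of course also transparent from the fact that $\kappa$ is a nonnegative measure and $\tau$ is the amount of the mass $m$ not captured by $\kappa$.)

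For the upper bound $\tau \leq \eta_0$: I would decompose $S^3_{x_i} \setminus \{p^-\} = \overline{S^3_+} \cup (S^3_- \setminus \{p^-\})$ — a disjoint union — so that $\kappa(S^3_{x_i} \setminus \{p^-\}) = \kappa(\overline{S^3_+}) + \kappa(S^3_- \setminus \{p^-\}) \geq \kappa(\overline{S^3_+}) \geq m - \eta_0$, where the last inequality is Lemma~\ref{lemma:kappa-i}(b). Therefore $\tau = m - \kappa(S^3_{x_i} \setminus \{p^-\}) \leq m - (m - \eta_0) = \eta_0$, as desired.

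There is no real obstacle here; the statement is an immediate bookkeeping consequence of Lemma~\ref{lemma:kappa-i}, included as a ``preliminary step'' because the bound $\tau \leq \eta_0$ (together with $\tau \geq 0$) is what makes $\tau$ a well-controlled quantity to then show vanishes in the subsequent annular energy estimates. The only point requiring a word of care is that $\overline{S^3_+}$ and $S^3_- \setminus \{p^-\}$ are genuinely disjoint (the equator belongs to $\overline{S^3_+}$, and the open lower hemisphere minus the south pole is its complement in $S^3_{x_i} \setminus \{p^-\}$), so additivity of the measure $\kappa$ applies without overlap. Here is the argument.

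\begin{proof}
Recall that $\tau = m - \kappa(S^3_{x_i} \setminus \{p^-\})$, where we have suppressed the subscript $i$. Since $\kappa(S^3_{x_i} \setminus \{p^-\}) \leq m$ by Lemma~\ref{lemma:kappa-i}(a), we immediately obtain $\tau \geq 0$.

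For the upper bound, write $S^3_{x_i} \setminus \{p^-\}$ as the disjoint union of $\overline{S^3_+}$ and $S^3_- \setminus \{p^-\}$. By additivity and nonnegativity of the Radon measure $\kappa$, together with Lemma~\ref{lemma:kappa-i}(b),
\begin{equation*}
\kappa(S^3_{x_i} \setminus \{p^-\}) = \kappa(\overline{S^3_+}) + \kappa(S^3_- \setminus \{p^-\}) \geq \kappa(\overline{S^3_+}) \geq m - \eta_0.
\end{equation*}
Hence $\tau = m - \kappa(S^3_{x_i} \setminus \{p^-\}) \leq m - (m - \eta_0) = \eta_0$.
\end{proof}
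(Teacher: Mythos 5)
Your proof is correct and essentially identical to the paper's: the lower bound is exactly Lemma~\ref{lemma:kappa-i}(a), and for the upper bound the paper writes $\tau = m - \kappa(S^3 \setminus \{p^-\}) \leq m - \kappa(\overline{S^3_+}) \leq \eta_0$ using monotonicity of $\kappa$ and Lemma~\ref{lemma:kappa-i}(b), which is the same inequality you reach via the additive decomposition into $\overline{S^3_+}$ and $S^3_- \setminus \{p^-\}$.
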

\begin{proof}
The lower bound $\tau \geq 0$ is exactly the statement of Lemma~\ref{lemma:kappa-i}(a). For the upper bound, observe that Lemma~\ref{lemma:kappa-i}(b) gives
\begin{align*}
\tau & = m - \kappa (S^3 \setminus \{p^-\}) \leq m - \kappa (\overline{S^3_+}) \leq \eta_0
\end{align*}
as desired.
\end{proof}
Next, as explained in \S\ref{sec:bubble-overview}, we want to identify (after passing to a further subsequence if necessary) suitable annuli $A_{l}$ for which 
\begin{equation*}
\tau= \lim_{l \to \infty}E(u_{l}; A_{l}).
\end{equation*}
The outer boundaries of $A_{l}$ are a subsequence $\partial B(c_{k_{l}}; \ep_{k_{l}})$ of the spheres $\partial B(c_k; \ep_k)$ constructed in \S\ref{sec:bubble-construction}. Choosing this subsequence and finding the inner boundaries is the content of Lemma~\ref{innerboundary} below. The proof of Theorem~\ref{thm:no-energy-loss} is given after that of Lemma~\ref{innerboundary}.

We introduce the following notation: For $r > 0$ we define the open sets 
\begin{equation*}
G(r) = \sigma^{-1}\left( B(r) \right) \subset S^{3},
\end{equation*}
so that 
\begin{equation*}
R_{k}\left( G(r) \right) = B(c_{k}; r\lambda_{k}).
\end{equation*}
Then, since the sets $G( \sqrt{k} )$ exhaust $S^{3} \setminus \{p^{-}\}$, by the definition of $\tau$ we know that $\kappa\big( G(\sqrt{l}) \big)$ increases to $\kappa (S^{3} \setminus \{p^{-}\}) = m - \tau$ as $l \to \infty$. In other words, 
\begin{equation} \label{bubbleenergy}
\kappa\big( G(\sqrt{l}) \big) = m - \tau + o(1) \quad \text{ as }l \to \infty.
\end{equation}
Moreover, since $v$ is a $C^{1}$-map on $S^{3}$, we have 
\begin{equation} \label{mapenergy}
E(v; S^{3}\setminus G(\sqrt{l}))= o(1) \quad \text{ as }l \to \infty.
\end{equation}

\begin{lemma} \label{innerboundary}
We have:
\begin{enumerate}[(a)]
\item There exists a subsequence $(v_{k_{l}})$ of $(v_{k})$ such that the following hold:
\begin{equation} \label{innerboundary1}
E_{h_{k_{l}}}(v_{k_{l}}; G(\sqrt{l})) = m - \tau + o(1),
\end{equation}
\begin{equation} \label{innerboundary2}
E_{h_{k_{l}}}(v_{k_{l}}; G(l^{2})\setminus G(\sqrt{l})) = o(1),
\end{equation}
\begin{equation} \label{innerboundary3}
|v_{k_{l}} - v|_{1; G(l^{2}) \setminus G(\sqrt{l})} = o(1).
\end{equation}
\item Scaling back to $(u_{k_{l}})$, writing $u_{l}: = u_{k_{l}}$ and $c_{l}:= c_{k_{l}}$, and setting
\begin{equation*}
\rho_{l} = \ep_{k_{l}},\ \sigma_{l} = l\lambda_{k_{l}},
\end{equation*}
we have
\begin{align} \label{eq:innerboundary-b1}
E( u_{l}; B(c_{l}; \rho_l) \setminus B(c_{l}; \sigma_l) ) & = \tau + o(1), \\ \label{eq:innerboundary-b2}
l \lambda_{k_{l}} \sup_{\partial B(c_{k_{l}}; l\lambda_{k_{l}})} |du_{l}| & = o(1),
\end{align}
and
\begin{equation} \label{eq:innerboundary-b3}
\begin{aligned}
\lim\limits_{l \to \infty} \left| u_{l} - v(p^{-}) \right|_{0; B(c_{k_{l}}; Rl\lambda_{k_{l}}) \setminus B(c_{k_{l}}; l\lambda_{k_{l}})} & = 0 \quad \text{ for all $R > 1$}, \\
\text{or equivalently, } \quad \lim\limits_{l \to \infty} \left| v_{k_{l}} - v(p^{-}) \right|_{0; G(Rl) \setminus G(l)} & = 0 \quad \text{ for all $R > 1$}.
\end{aligned}
\end{equation}
\end{enumerate}
\end{lemma}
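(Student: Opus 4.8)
## Proof Plan for Lemma~\ref{innerboundary}

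The plan is to prove part (a) by a diagonal/subsequence-extraction argument entirely analogous to the proof of Lemma~\ref{lemma:rescaling}(a), and then to deduce part (b) from part (a) by scaling back via $R_{k_l}$ and invoking the conformal invariance of the $3$-energy, together with the estimates already established in Step One and Step Two of $\S$\ref{sec:bubble-construction}.

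For part (a): first I would observe that, by the weak-$*$ convergence $|dv_k|^3_{h_k}\,d\mu_{h_k}\to \kappa$ on $S^3\setminus\{p^-\}$ (this is~\eqref{eq:FirstLevRadonConv}) together with~\eqref{bubbleenergy}, for each \emph{fixed} $l$ we have $\lim_{k\to\infty} E_{h_k}(v_k; G(\sqrt l)) = \kappa(G(\sqrt l)) = m-\tau+o(1)$ as $l\to\infty$, where for the first equality I use that $G(\sqrt l)$ is open with $\kappa(\partial G(\sqrt l))=0$ for all but countably many $l$, so after discarding that countable set we may assume $G(\sqrt l)$ is a $\kappa$-continuity set. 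Next, since $v_k\to v$ in $C^1_{\loc}(S^3\setminus(\{p^-\}\cup\cS_1))$ and $\cS_1\subset\overline{S^3_+}$ lies inside a fixed compact set disjoint from the annular shells $\overline{G(l^2)}\setminus G(\sqrt l)$ once $l$ is large (here I should note $\cS_1$ is finite, so choosing the shells to avoid it is just a matter of taking $l$ large; more carefully, $\cS_1$ accumulates only possibly at... no, it is finite, so there is no issue), for each fixed $l$ we have $\lim_{k\to\infty}|v_k-v|_{1;\,\overline{G(l^2)}\setminus G(\sqrt l)}=0$, and hence also $\lim_{k\to\infty}E_{h_k}(v_k;G(l^2)\setminus G(\sqrt l)) = E(v;G(l^2)\setminus G(\sqrt l)) \le E(v;S^3\setminus G(\sqrt l)) = o(1)$ by~\eqref{mapenergy}. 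Thus for each $l$ the three quantities on the left of~\eqref{innerboundary1},~\eqref{innerboundary2},~\eqref{innerboundary3} converge as $k\to\infty$ to quantities that are $m-\tau+o(1)$, $o(1)$, $o(1)$ respectively; choosing $k_l$ large enough (and increasing in $l$) that each of the three is within $\tfrac1l$ of its limit produces the desired subsequence.

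For part (b): the maps are related by $v_{k_l} = u_{k_l}\circ R_{k_l}$ with $R_{k_l}(G(r)) = B(c_{k_l}; r\lambda_{k_l})$, and the metric $h_{k_l}$ on $S^3$ is (a conformal rescaling of) $\tfrac1{\lambda_{k_l}^2}R_{k_l}^*g$; by the conformal invariance of the $3$-energy functional, $E_{h_{k_l}}(v_{k_l}; G(r)) = E_g(u_{k_l}; B(c_{k_l}; r\lambda_{k_l}))$. Applying this with $r=\sqrt l$ and $r=l^2$, then using that $R_{k_l}(S^3_+) = B(c_{k_l};\lambda_{k_l})\subset B(c_{k_l};\sqrt l\,\lambda_{k_l})$ once $l>1$, together with~\eqref{eq:rescaling-c1} (which gives $E_g(u_{k_l};B(c_{k_l};\ep_{k_l})) = m+O(1/k_l^2)$, i.e.\ the energy in the \emph{large} ball $B(c_l;\rho_l)$ is $m+o(1)$) and~\eqref{innerboundary1}, I can write $E(u_l; B(c_l;\rho_l)\setminus B(c_l;\sigma_l))$, where $\sigma_l = l\lambda_{k_l}$, as the difference $E_g(u_{k_l};B(c_{k_l};\ep_{k_l})) - E_g(u_{k_l};B(c_{k_l};l\lambda_{k_l}))$, and the second term equals $E_{h_{k_l}}(v_{k_l};G(l)) = m-\tau+o(1)$ by~\eqref{innerboundary1} (note $G(\sqrt l)\subset G(l)\subset G(l^2)$, so $E_{h_{k_l}}(v_{k_l};G(l)) = E_{h_{k_l}}(v_{k_l};G(\sqrt l)) + E_{h_{k_l}}(v_{k_l};G(l)\setminus G(\sqrt l)) = (m-\tau) + o(1)$ using~\eqref{innerboundary2}); hence the difference is $\tau+o(1)$, giving~\eqref{eq:innerboundary-b1}. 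Estimate~\eqref{eq:innerboundary-b2} follows from~\eqref{innerboundary3}: on $G(l^2)\setminus G(\sqrt l)$ we have $|dv_{k_l}|_{h_{k_l}} \le |dv|_{h_{k_l}} + o(1)$, and since $v$ is $C^1$ on the compact $S^3$ its differential is bounded, so $|dv_{k_l}|_{h_{k_l}} = O(1)$ there; transporting this bound through $R_{k_l}$ (which scales lengths by $\lambda_{k_l}$ up to the bounded conformal factor) converts it into $\lambda_{k_l}|du_l| = O(\lambda_{k_l}) \cdot O(1)$ on $\partial B(c_{k_l};l\lambda_{k_l})$, and the extra factor of $l$ is absorbed since $\lambda_{k_l}\le \ep_{k_l}/(2k_l^2)\to0$ fast enough (I should make sure the bound is uniform; since $l\lambda_{k_l} \le \ep_{k_l}/(2k_l) \to 0$, multiplying an $O(1)$ differential bound by $l\lambda_{k_l}$ gives $o(1)$). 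Finally~\eqref{eq:innerboundary-b3} is proved exactly as~\eqref{eq:rescaling-c3}: by the triangle inequality $|v_{k_l}(\xi) - v(p^-)| \le |v_{k_l}(\xi)-v(\xi)| + |v(\xi)-v(p^-)|$ for $\xi\in \overline{G(Rl)}\setminus G(l)$; the first term is $o(1)$ by~\eqref{innerboundary3} (the shell $\overline{G(Rl)}\setminus G(l)$ sits inside $G(l^2)\setminus G(\sqrt l)$ for $l$ large), and the second term is $o(1)$ because $\overline{G(Rl)}\setminus G(l)$ shrinks toward $p^-$ in $S^3$ while $v$ is continuous at $p^-$; the ``equivalently'' reformulation is just the statement pulled back through $R_{k_l}$, using $R_{k_l}(G(Rl)\setminus G(l)) = B(c_{k_l};Rl\lambda_{k_l})\setminus B(c_{k_l};l\lambda_{k_l})$.

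I expect the main obstacle to be bookkeeping rather than conceptual difficulty: the one genuinely delicate point is~\eqref{eq:innerboundary-b2}, where I must convert a $C^1_{\loc}$-bound for the rescaled maps $v_{k_l}$ near $p^-$ into a pointwise gradient estimate for $u_l$ on the shrinking sphere $\partial B(c_{k_l};l\lambda_{k_l})$, and track precisely that the product $l\lambda_{k_l}\cdot\sup|du_l|$ — not merely $\sup|du_l|$, which blows up — tends to zero; this requires using that $l\lambda_{k_l} = o(\ep_{k_l}) = o(1)$ together with the uniformity of the $C^1$-bound for $v$ on the compact manifold $S^3$, and being careful that the conformal factor relating $h_{k_l}$ to $\tfrac1{\lambda_{k_l}^2}R_{k_l}^*g$ (the function $b$ with $\sigma^*g_{\mathrm{euc}} = b\,g_{\mathrm{round}}$ from Lemma~\ref{lemma:rescaled-maps}) is bounded above and below on the relevant region. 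Everything else is a routine repetition of the arguments already carried out for Lemma~\ref{lemma:rescaling} and the weak-$*$ convergence machinery from Proposition~\ref{prop:conv-mod-bubbling}.
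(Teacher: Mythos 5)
Your parts (a), \eqref{eq:innerboundary-b1}, and \eqref{eq:innerboundary-b3} follow the paper's argument essentially verbatim (and your initial worry about $\kappa(\partial G(\sqrt l))$ is unfounded for the simple reason that $\cS_1 \subset \overline{S^3_+}$ so $\partial G(\sqrt l) \subset S^3_-$ is disjoint from the point masses for every $l > 1$; you do recover this correctly).

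The genuine departure, and the place where your proposal has a gap, is \eqref{eq:innerboundary-b2}. The paper does \emph{not} convert a $C^1$-bound on $v_{k_l}$ through the conformal rescaling; it applies the mean value inequality (Theorem~\ref{thm:MVI}) on the small ball $B(x; \tfrac12 l\lambda_{k_l})$ around a boundary point $x \in \partial B(c_{k_l}; l\lambda_{k_l})$, noting that this ball sits inside $B(c_{k_l}; l^2\lambda_{k_l}) \setminus B(c_{k_l}; \sqrt l\,\lambda_{k_l})$ where the energy is $o(1)$ by \eqref{innerboundary2}. Your alternative route is workable but, as written, it contains an outright falsehood: the conformal factor relating $h_{k_l}$ to $\tfrac1{\lambda_{k_l}^2}R_{k_l}^*g$ is $b(\xi) = \bigl(\tfrac{1+|\sigma(\xi)|^2}{2}\bigr)^2$, which on the shell $G(l^2)\setminus G(\sqrt l)$ ranges from $\approx l^2/4$ to $\approx l^8/4$ — it is nowhere near bounded, contrary to your claim. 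Consequently your intermediate relation ``$\lambda_{k_l}|du_l| = O(\lambda_{k_l})\cdot O(1)$'' (i.e.\ $|du_l| = O(1)$) is not what any version of the conformal transport gives: the exact pointwise identity is $|dv_{k_l}|_{h_{k_l}}(\xi) = \lambda_{k_l}\sqrt{b(\xi)}\,|du_l|_g(R_{k_l}(\xi))$, so at $\xi \in \partial G(l)$ (where $\sqrt{b(\xi)} \sim l^2/2$) the $O(1)$ bound on $|dv_{k_l}|_{h_{k_l}}$ translates to $|du_l|_g = O\bigl(\tfrac1{\lambda_{k_l} l^2}\bigr)$, not $O(1)$. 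It happens that this, multiplied by $l\lambda_{k_l}$, gives $O(1/l)$, which is even stronger than the $o(1)$ you need — so the \emph{corrected} version of your approach succeeds (and gives a quantitative rate the paper's MVI argument does not). But as stated your argument asserts both that the conformal factor is bounded (false) and that $|du_l| = O(1)$ on the shrinking sphere (also false: the bound blows up at least like $2/\ep_{k_l}$), and the fact that your final answer coincides with the truth is a happy accident of the two errors, rather than a valid derivation. To repair it you must compute $b$ explicitly on $\partial G(l)$ and carry the factor $\sim l^2$ through, or else simply adopt the paper's MVI argument, which sidesteps the unbounded conformal factor by working with the conformally invariant $3$-energy.
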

\begin{proof}
(a) Recall that, as Radon measures on $S^{3} \setminus \{p^{-}\}$ we have
\begin{equation*}
|dv_{k}|^{3}d\mu_{h_{k}} \to |dv|^{3}d\mu + \sum_{x_{ij} \in \cS_{1}} m_{ij}\delta(x_{ij} ) = \kappa, 
\end{equation*}
where $\cS_{1}\subseteq \overline{S^{3}_{+}}$. This has the following two consequences. First, for all $l > 1$,we have $\kappa (\partial G(\sqrt{l})) = 0$, which implies that 
\begin{equation*}
\kappa(G(\sqrt{l})) = \lim_{k \to \infty}E_{h_{k}}(v_{k}; G(\sqrt{l})).
\end{equation*}
Secondly, since $v_{k}$ converges in $C^{1}$ to $v$ on compact subsets of $S^{3}\setminus \left( \cS_{1} \cup \{p^{-}\} \right)$, we infer that, for all $l > 1$,
\begin{equation*}
\lim_{k \to \infty} |v_{k} - v|_{1; G(l^{2})\setminus G(\sqrt{l})} = 0.
\end{equation*}
From these facts and~\eqref{bubbleenergy},~\eqref{mapenergy}, we easily see that there is a subsequence $(v_{k_{l}})$ such that~\eqref{innerboundary1},~\eqref{innerboundary2} and~\eqref{innerboundary3} all hold.

(b) To prove~\eqref{eq:innerboundary-b1}, recall from Lemma~\ref{lemma:rescaling}(c) that 
\begin{equation*}
E(u_{l}; B(c_{l}; \rho_{l})) = m + o(1).
\end{equation*}
On the other hand,~\eqref{innerboundary1} and~\eqref{innerboundary2} imply that 
\begin{equation*}
E(u_{l}; B(c_{l}; \sigma_{l})) = m - \tau + o(1).
\end{equation*}
Subtracting this from the previous equality gives~\eqref{eq:innerboundary-b1}. 

For~\eqref{eq:innerboundary-b2}, we note that if $x \in \partial B(c_{k_{l}}; l\lambda_{k_{l}})$, then eventually $B(x; \frac{1}{2} l\lambda_{k_{l}}) \subseteq B(c_{k_{l}}; l^{2}\lambda_{k_{l}}) \setminus B(c_{k_{l}}; \sqrt{l}\lambda_{k_{l}})$. Hence, by~\eqref{innerboundary2}, 
\begin{equation*}
E\big( u_{l}; B(x; \tfrac{1}{2} l\lambda_{k_{l}}) \big) \leq E(u_{l}; B(c_{k_{l}}; l^{2}\lambda_{k_{l}}) \setminus B(c_{k_{l}}; \sqrt{l}\lambda_{k_{l}})) = o(1),
\end{equation*}
which means for large enough $l$ we may apply Theorem~\ref{thm:MVI} to get
\begin{equation*}
\left(l\lambda_{k_{l}}\right)^{3} |du_{l}(x)|^{3} \leq E(u_{l}; B(c_{k_{l}}; l^{2}\lambda_{k_{l}}) \setminus B(c_{k_{l}}; \sqrt{l}\lambda_{k_{l}})).
\end{equation*}
Since $x \in \partial B(c_{k_{l}}; l\lambda_{k_{l}})$ is arbitrary, we conclude that 
\begin{equation*}
\left(l\lambda_{k_{l}}\right)^{3} \sup_{ \partial B(c_{k_{l}}; l\lambda_{k_{l}})} |du_{l}|^{3} \leq E(u_{l}; B(c_{k_{l}}; l^{2}\lambda_{k_{l}}) \setminus B(c_{k_{l}}; \sqrt{l}\lambda_{k_{l}})) = o(1).
\end{equation*}

To prove~\eqref{eq:innerboundary-b3}, we take an arbitrary $y \in G(Rl)\setminus G(l)$ and estimate
\begin{align*}
|v_{l}(y) - v(p^{-}) | & \leq |v_{l}(y) - v(y)| + |v(y) - v(p^{-})|\\
& \leq |v_{l} - v|_{0; G(Rl) \setminus G(l)} + |v - v(p^{-})|_{0; S^{3} \setminus G(l)}.
\end{align*}
Thus we obtain, for all $l > R$, that 
\begin{equation*}
|v_{l} - v(p^{-})|_{0; G(Rl) \setminus G(l)} \leq |v_{l} - v|_{0; G(l^{2}) \setminus G(\sqrt{l})} + |v - v(p^{-})|_{0; S^{3} \setminus G(l)}.
\end{equation*}
To finish the proof, we recall~\eqref{innerboundary3} and also note that $ |v - v(p^{-})|_{0; S^{3} \setminus G(l)} = o(1)$ as $l \to \infty$, since $v \in C^{1}(S^{3}; M)$ and $S^{3} \setminus G(l)$ converges to $\{p^{-}\}$ in Hausdorff distance as $l \to \infty$.
\end{proof}

We are now ready for the proof of Theorem~\ref{thm:no-energy-loss}.
\begin{proof}[Proof of Theorem~\ref{thm:no-energy-loss}] 
We maintain the notation of Lemma~\ref{innerboundary}. In particular, we continue to write
\begin{equation*}
\rho_{l} = \ep_{k_{l}}, \qquad \sigma_{l} = l \lambda_{k_{l}}.
\end{equation*}
(Note that $\rho_{l}, \sigma_{l} \to 0$ while $\frac{\rho_l}{\sigma_l} \to \infty$ as $l \to \infty$.) Furthermore, we define the following annular regions
\begin{equation*}
A_{l} = B(c_l; \rho_l) \setminus B(c_l; \sigma_l).
\end{equation*}
Then, by Lemma~\ref{lemma:rescaling}(c) and Lemma~\ref{innerboundary}(b), we have
\begin{equation} \label{boundaryenergysmall}
\rho_{l}\int_{\partial B(c_l; \rho_l)} |du_{l}|^{3}dS_{g} + \sigma_{l}\int_{\partial B(c_l; \sigma_l)}|du_{l}|^{3}dS_{g} = o(1) \text{ as }l \to \infty.
\end{equation}
This estimate then allows us to apply Proposition~\ref{annularenergy} to suitable rescalings of the maps $u_{l}$. Specifically, we show that the rescaled maps
\begin{equation*}
\tilde{u}_{l}(x) = u_{l}(c_{l} + \rho_{l}x)
\end{equation*}
eventually satisfy the hypotheses~\eqref{smallboundarylength} and~\eqref{smallannularenergy}.

To see this, note that we have by equation \eqref{eq:rescaling-c2} and equations~\eqref{eq:innerboundary-b1}--\eqref{eq:innerboundary-b2} that, 
\begin{equation*}
\lim_{l \to \infty}\Big(E(u_{l}; A_{l}) + K\rho_{l}\int_{\partial B(c_l; \rho_l)}|du_{l}|^{3}dS_{g} + K\sigma_{l}\int_{\partial B(c_l; \sigma_l)} |du_{l}|^{3}dS_{g}\Big) = \tau,
\end{equation*}
where $K$ is as in Lemma~\ref{extension}. Since $\tau \leq \eta_{0} < \frac{1}{16} \gamma_{1}$, we see that there exists $l_{0}$ such that for all $l \geq l_{0}$, the rescaled maps $\tilde{u}_{l}$ satisfy~\eqref{smallannularenergy} with $r = \frac{\sigma_l}{\rho_l}$. Moreover, increasing $l_{0}$ if necessary, we see that~\eqref{eq:rescaling-c3} and~\eqref{eq:innerboundary-b3} yield~\eqref{smallboundarylength} for all $l \geq l_{0}$. Therefore we may invoke Proposition~\ref{annularenergy} to conclude that 
\begin{equation*}
E(u_{l}; A_{l}) \leq C \Big( \rho_{l}\int_{\partial B(c_l; \rho_l)}|du_{l}|^{3}dS_{g} + \sigma_{l}\int_{\partial B(c_l; \sigma_l)} |du_{l}|^{3}dS_{g} \Big),
\end{equation*}
with $C$ independent of $l$. Letting $l \to \infty$ and recalling~\eqref{eq:innerboundary-b1} and~\eqref{boundaryenergysmall}, we get $\tau = 0$ as desired.
\end{proof}

\subsection{Zero neck length} \label{sec:zero-neck-length}

In this section we continue to use notation from $\S$\ref{sec:energy-annuli} and $\S$\ref{sec:no-energy-loss}. Our goal here is to prove Theorem~\ref{thm:zero-neck-length} (Zero neck length), which in the present notation reads 
\[u_{\infty}(0) = v(p^{-}).
\]
In fact, it is enough to show that
\begin{equation} \label{neckdiameter}
\lim_{l \to \infty} \diam\left( u_{l}\left( B(c_l; \tfrac{1}{4} \rho_l) \setminus B(c_l; 4 \sigma_l) \right) \right) = 0.
\end{equation}
To see this, note that in view of~\eqref{eq:rescaling-c3} and~\eqref{eq:innerboundary-b3} with $R = 4$, the above implies that
\begin{equation*}
\lim_{l \to \infty} \diam \left( u_{l}\left( A_{l} \right) \right) = 0,
\end{equation*}
which is~\eqref{eq:ZeroNeckLength2}. Then as indicated in \S\ref{sec:bubble-overview}, with the help of~\eqref{eq:rescaling-c3} and~\eqref{eq:innerboundary-b3} again, we may finish the proof of Theorem~\ref{thm:zero-neck-length}. That is, we choose points $y_{l} \in \partial B(c_{l}; \rho_{l})$ and $z_{l} \in \partial B(c_{l}; \sigma_{l})$ and estimate
\begin{align*}
|u_\infty(0) - v(p^-)| & \leq |u_\infty(0) - u_l(y_l)| + |u_l(y_l) - u_l(z_l)| + |u_{l}(z_{l}) - v(p^-)| \\
& \leq |u_\infty(0) - u_l|_{0; \partial B(c_{l}; \rho_{l})} + \diam(u_l(A_l)) + |u_{l} - v(p^-)|_{0; \partial B(c_{l}; \sigma_{l})}.
\end{align*}
By~\eqref{eq:rescaling-c3},~\eqref{eq:ZeroNeckLength2} and~\eqref{eq:innerboundary-b3}, all three terms in the last line tend to zero as $l \to \infty$, and we are done. Therefore, it remains to prove~\eqref{neckdiameter}.

As a first step towards proving~\eqref{neckdiameter}, we establish the following pointwise gradient estimates.
\begin{lemma} \label{pointwiseannular} For $l$ sufficiently large, we have
\begin{equation} \label{pointwiseannularbound}
\sup_{x \in B(c_{l}; r_{2})\setminus B(c_{l}; r_{1})} |x - c_{l}|^{3} |Du_{l}(x)|^{3} \leq CE \big(u_{l}; B(c_{l}; 2r_{2}) \setminus B(c_{l}; \tfrac{1}{2} r_1) \big),
\end{equation}
whenever $2\sigma_{l} \leq r_{1} < r_{2} \leq \frac{1}{2} \rho_l$. In particular,
\begin{equation} \label{pointwiseboundlimit}
\lim_{l \to \infty} \Big( \sup_{x \in B(c_{l}; \frac{1}{2} \rho_l )\setminus B(c_{l}; 2\sigma_{l}) } |x - c_{l}| \, |Du_{l}(x)| \Big) = 0.
\end{equation}
\end{lemma}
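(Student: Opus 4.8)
The plan is to prove~\eqref{pointwiseannularbound} by a point-picking and rescaling argument built on the mean value inequality of Theorem~\ref{thm:MVI}, and then to deduce~\eqref{pointwiseboundlimit} immediately by specializing to the extreme radii $r_{1} = 2\sigma_{l}$, $r_{2} = \tfrac{1}{2}\rho_{l}$ and invoking Theorem~\ref{thm:no-energy-loss}.

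For the first estimate, fix $l$ large and a point $x \in B(c_{l}; r_{2}) \setminus B(c_{l}; r_{1})$, and set $d := |x - c_{l}| \in [r_{1}, r_{2})$. The triangle inequality shows that $B(x; \tfrac{d}{2}) \subseteq B(c_{l}; 2r_{2}) \setminus B(c_{l}; \tfrac{1}{2}r_{1})$, and since $2r_{2} \leq \rho_{l}$ and $\tfrac{1}{2}r_{1} \geq \sigma_{l}$ this ball lies inside the annulus $A_{l} = B(c_{l};\rho_{l}) \setminus B(c_{l};\sigma_{l})$. I would then consider the rescaled map $\tilde{u}(y) := u_{l}(x + \tfrac{d}{4}y)$ for $y \in B(2)$, which by the conformal invariance of the Smith equation is an associative Smith map with respect to the metric $\hat{g}(y) := g(x + \tfrac{d}{4}y)$ (up to an irrelevant constant conformal factor). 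Because the construction in $\S$\ref{sec:bubble-construction} places the whole ball $B(x;\tfrac{d}{2})$ inside a ball about the origin of radius $|c_{l}| + \rho_{l} \to 0$ on which $g$ is smooth with $g(0) = g_{\euc}$, one checks that $\hat{g}$ satisfies $|\hat{g} - g_{\euc}|_{0;B(2)} + |D\hat{g}|_{0;B(2)} \to 0$ and that $\mathrm{Ric}_{\hat{g}}$ is bounded by a constant independent of $l$, $x$, $d$. Moreover $\int_{B(2)}|D\tilde{u}|^{3}dy = \int_{B(x;d/2)}|Du_{l}|^{3}dx \leq C\,E(u_{l}; A_{l})$, and by~\eqref{eq:innerboundary-b1} together with Theorem~\ref{thm:no-energy-loss} this quantity equals $C\tau + o(1) = o(1)$; hence for $l$ large all the hypotheses of Theorem~\ref{thm:MVI} hold. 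Applying that theorem and unwinding the scaling --- using $|D\tilde{u}(0)| = \tfrac{d}{4}|Du_{l}(x)|$ and the above change of variables --- gives $d^{3}|Du_{l}(x)|^{3} \leq C\int_{B(x;d/2)}|Du_{l}|^{3}dx \leq C\,E\big(u_{l}; B(c_{l};2r_{2}) \setminus B(c_{l};\tfrac{1}{2}r_{1})\big)$, which is~\eqref{pointwiseannularbound} upon taking the supremum over $x$.

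For~\eqref{pointwiseboundlimit}, I would apply~\eqref{pointwiseannularbound} with $r_{1} = 2\sigma_{l}$ and $r_{2} = \tfrac{1}{2}\rho_{l}$, which is admissible for large $l$ because $\rho_{l}/\sigma_{l} \to \infty$. Then the right-hand side is $C\,E(u_{l}; A_{l}) = C(\tau + o(1)) = o(1)$ by Theorem~\ref{thm:no-energy-loss} and~\eqref{eq:innerboundary-b1}, and taking cube roots yields the claim.

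The main obstacle is the slightly delicate but routine bookkeeping in the rescaling step: arranging the inclusions of balls so that exactly the annulus $B(c_{l};2r_{2}) \setminus B(c_{l};\tfrac{1}{2}r_{1})$ appears on the right, and verifying uniformly in $l$ (and in $x$ and $d$) that the pulled-back metrics $\hat{g}$ satisfy both the near-flatness bound~\eqref{metricnearflat} and the Ricci bound that Theorem~\ref{thm:MVI} requires. The genuinely substantive input --- that the energy carried by these annuli is $o(1)$, which is what permits the use of the mean value inequality at all --- is precisely Theorem~\ref{thm:no-energy-loss}, established in $\S$\ref{sec:no-energy-loss}.
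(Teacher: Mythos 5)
Your proposal is correct and follows essentially the same route as the paper: for each $x$ in the annulus, place the ball $B(x; |x-c_l|/2)$ inside $B(c_l; 2r_2)\setminus B(c_l; \tfrac{1}{2}r_1)\subseteq A_l$, rescale to $B(2)$ and apply Theorem~\ref{thm:MVI} (the hypotheses being verifiable because $E(u_l; A_l)\to 0$ by Theorem~\ref{thm:no-energy-loss}), then take suprema; and deduce~\eqref{pointwiseboundlimit} by specializing to the extreme radii. Your write-up merely makes the rescaling explicit, which the paper leaves implicit in the phrase ``apply the mean value inequality on $B(x;\tfrac{1}{2}r)$.''
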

\begin{proof}
To begin, we note two rather obvious facts. First of all, whenever $r_{1} \leq r \leq r_{2}$, we have 
\begin{equation*}
\min\{ 2r_{2} - r,\ r - \tfrac{1}{2} r_1 \} \geq \tfrac{1}{2} r.
\end{equation*}
Secondly, if $2\sigma_{l} \leq r_{1} < r_{2} \leq \frac{1}{2} \rho_l$, then 
\begin{equation*}
B(c_{l}; 2r_{2}) \setminus B(c_{l}; \tfrac{1}{2} r_1) \subseteq A_{l}.
\end{equation*}
It follows that, for any $l$, if $2\sigma_{l} \leq r_{1} < r_{2} \leq \frac{1}{2} \rho_l$, then for all $r \in [r_{1}, r_{2}]$ and $x \in \partial B(c_{l}; r)$ we have
\begin{equation*}
E\big( u_{l}; B(x; \tfrac{1}{2} r) \big) \leq E \big( u_{l}; B(c_{l}; 2r_{2}) \setminus B(c_{l}; \tfrac{1}{2} r_1) \big) \leq E(u_{l}; A_{l}).
\end{equation*}
Since we have shown in $\S$\ref{sec:no-energy-loss} that
\begin{equation} \label{noenergyloss}
\lim_{l \to \infty}E(u_{l}; A_{l}) = 0,
\end{equation}
we infer that there exists an $L_{0}$ such that, for all $l \geq L_{0}$, and $r_{1}, r_{2}, r, x$ as above, we may apply the mean value inequality (Theorem~\ref{thm:MVI}) on $B(x; \frac{1}{2} r)$ to get
\begin{equation*}
r^{3} |Du_{l}(x)|^{3} \leq CE \big( u_{l}; B(x; \tfrac{1}{2}r) \big) \leq CE \big( u_{l}; B(c_{l}; 2r_{2}) \setminus B(c_{l}; \tfrac{1}{2}r_1) \big).
\end{equation*}
Taking the supremum over $x \in \partial B(c_l; r)$ and $r \in [r_{1}, r_{2}]$ yields~\eqref{pointwiseannularbound}. The second conclusion follows by taking $r_{1} = 2\sigma_{l}$ and $r_{2} = \tfrac{1}{2} \rho_l$ in~\eqref{pointwiseannularbound} and recalling~\eqref{noenergyloss}.
\end{proof}

We introduce some additional notation before we continue. Below, we let $T_{0} = \log 2$. For all $l$ such that $\frac{\rho_l}{\sigma_l} > 4$ (recall that $\frac{\rho_l}{\sigma_l} \to \infty$), we let
\begin{equation*}
T_{l} = \log \sqrt{\frac{\rho_{l}}{\sigma_{l}}}.
\end{equation*}
Moreover, we define $f_{l}: [T_{0}, T_{l}] \to \R$ by 
\begin{equation*}
f_{l}(t) = E(u_{l}; B(c_{l}; e^{-t} \rho_{l}) \setminus B(c_{l}; e^{t}\sigma_{l})).
\end{equation*}
The following estimate is motivated by the proofs of~\cite[Lemma 4.7.3]{MS} and~\cite[Lemma A.4]{HS} and is the key to establishing~\eqref{neckdiameter}.
\begin{prop} \label{neckenergydecay}
There exists a constant $b > 0$ such that 
\begin{equation} \label{powerdecay}
f_{l}(t) \leq e^{-3b(t - T_{0})}f_{l}(T_{0}),
\end{equation}
for all $l$ sufficiently large and $t \in [T_{0}, T_{l}]$.
\end{prop}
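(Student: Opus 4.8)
The plan is to run a standard ODE-differential-inequality argument on the function $f_l(t)$, of the sort used in~\cite[Lemma 4.7.3]{MS}. The key observation is that $f_l$ is monotone decreasing and, crucially, that the energy of an associative Smith map on an annulus is controlled by the energies on its boundary $2$-spheres via Proposition~\ref{annularenergy}. The idea is to show that, at each dyadic scale, the energy on the "thinner" annulus $B(c_l; e^{-t}\rho_l)\setminus B(c_l; e^t\sigma_l)$ is a definite fraction of the energy on the "fatter" annulus one scale out, and then iterate.

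First I would set up the coarea/slicing machinery: writing $r = |x - c_l|$, for a.e.\ $r \in (\sigma_l, \rho_l)$ the restriction $u_l|_{\partial B(c_l; r)}$ is $C^1$ and one has the identity $-f_l'(t) = \int_{\partial B(c_l; e^{-t}\rho_l)} (\text{radial energy density}) + \int_{\partial B(c_l; e^t\sigma_l})(\cdots)$, so that (after adjusting for the logarithmic change of variables and using the metric comparison~\eqref{comparablemetric2}) the boundary-sphere integrals $\rho\int_{\partial B(c_l;\rho)}|du_l|^3\,dS$ and $\sigma\int_{\partial B(c_l;\sigma)}|du_l|^3\,dS$ appearing in Proposition~\ref{annularenergy} are bounded by $-C f_l'(t)$ at the appropriate value of $t$. (Here I would also need to check~\eqref{smallboundarylength}: using the pointwise gradient bound~\eqref{pointwiseannularbound} of Lemma~\ref{pointwiseannular} one controls $\diam_M(u_l(\partial B(c_l; r)))$ by $C r \sup_{\partial B(c_l;r)}|Du_l|$, which tends to zero by~\eqref{pointwiseboundlimit}; and~\eqref{smallannularenergy} holds for large $l$ since $f_l(T_0) \le E(u_l; A_l) \to 0$ by~\eqref{noenergyloss}.) Applying Proposition~\ref{annularenergy} to the rescaled map on the annulus $B(c_l; e^{-t}\rho_l)\setminus B(c_l; e^t\sigma_l)$ — with inner and outer radii $e^t\sigma_l$ and $e^{-t}\rho_l$, rescaled to have outer radius $1$ — then yields $f_l(t) \le -C' f_l'(t)$ for $t$ in (a slightly shrunk sub-interval of) $[T_0, T_l]$, i.e.\ a differential inequality $f_l' \le -\tfrac{1}{C'} f_l$.

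Integrating this differential inequality from $T_0$ to $t$ gives $f_l(t) \le e^{-(t-T_0)/C'} f_l(T_0)$, which is exactly~\eqref{powerdecay} with $b = \tfrac{1}{3C'}$. A small technical point I would handle carefully is that Proposition~\ref{annularenergy} was stated for the fixed annulus $B(1)\setminus B(r)$, so at scale $t$ one must dilate $B(c_l; e^{-t}\rho_l)\setminus B(c_l; e^t\sigma_l)$ by $e^t/\rho_l$ to normalize the outer radius to $1$; conformal invariance of the $3$-energy (and of the Smith equation, so that the dilated map is still Smith with respect to a metric satisfying~\eqref{comparablemetric2}) makes this harmless, but one must track that the inner radius becomes $e^{2t}\sigma_l/\rho_l \le e^{2T_l}\sigma_l/\rho_l = e^{0} \cdot \sqrt{\sigma_l/\rho_l}\cdot\text{(stuff)}$ — precisely the reason $T_l = \log\sqrt{\rho_l/\sigma_l}$ is chosen, so that the inner radius stays bounded away from the outer radius.

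The main obstacle I anticipate is the slicing step: making rigorous the claim that the boundary-sphere $3$-energies $r\int_{\partial B(c_l; r)}|du_l|^3\,dS$ are bounded (up to constants) by $-f_l'$ evaluated at the corresponding $t$. One has $-f_l'(t) = e^{-t}\rho_l\,h_l(e^{-t}\rho_l) + e^t\sigma_l\,h_l(e^t\sigma_l)$ where $h_l(r) := \int_{\partial B(c_l;r)}|du_l|^3\,dS$ is the full spherical energy density, whereas Proposition~\ref{annularenergy} wants the full $h_l$ on the two boundary spheres — so in fact these match up directly and the inequality $f_l(t)\le -C' f_l'(t)$ is immediate once Proposition~\ref{annularenergy} is applied with the radii $e^{-t}\rho_l$ and $e^t\sigma_l$. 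The only real care needed is that $f_l$ is differentiable for a.e.\ $t$ (true, as it's monotone, and in fact locally Lipschitz in $t$ since $|du_l|^3 \in L^1$ and $u_l \in C^1$ on the annulus, so one can argue at every $t$), and that the hypotheses of Proposition~\ref{annularenergy} persist uniformly as $t$ ranges over $[T_0, T_l]$ — which follows from the monotonicity $f_l(t) \le f_l(T_0) \le E(u_l; A_l)$ together with the vanishing~\eqref{noenergyloss}, and from the diameter bound via Lemma~\ref{pointwiseannular}.
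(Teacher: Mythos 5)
Your proposal is correct and follows essentially the same approach as the paper: rescale $u_l$ to $B(2)$ with outer radius $r_2 = e^{-t}\rho_l$, verify the hypotheses of Proposition~\ref{annularenergy} using Lemma~\ref{pointwiseannular} and~\eqref{noenergyloss}, conclude the annulus energy is bounded by the weighted boundary-sphere energies, identify those boundary terms with $-f_l'(t)$ via the coarea formula and the logarithmic change of variable, and integrate the resulting differential inequality. The paper's proof is terser (it records the annular estimate for $2\sigma_l \le r_1 < r_2 \le \tfrac{1}{2}\rho_l$ and then says the differential inequality ``is not hard to see''), whereas you have spelled out the slicing step that makes $-f_l'(t) = e^{-t}\rho_l\,h_l(e^{-t}\rho_l) + e^t\sigma_l\,h_l(e^t\sigma_l)$ explicit; this is precisely the computation the paper leaves implicit.
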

\begin{proof}
As the conclusion suggests, we prove the proposition by showing that $f_{l}$ satisfies some first order differential inequality. To begin, suppose $2\sigma_{l} \leq r_{1} < r_{2} \leq \frac{1}{2} \rho_l$. Then by~\eqref{pointwiseboundlimit} and~\eqref{noenergyloss} we see that, for $l$ sufficiently large, the rescaled maps
\begin{equation*}
x \mapsto u_{l}(c_{l} + r_{2}x)
\end{equation*}
satisfy both~\eqref{smallboundarylength} and~\eqref{smallannularenergy}, with $r = \frac{r_{1}}{r_{2}}$. Therefore we apply Proposition~\ref{annularenergy} to deduce that 
\begin{equation*}
E(u_{l}; B(c_{l}; r_{2}) \setminus B(c_{l}; r_{1})) \leq C \Big( r_{2}\int_{\partial B(c_{l}; r_{2})}|du_{l}|^{3}dS_{g} + r_{1}\int_{\partial B(c_{l}; r_{1})}|du_{l}|^{3}dS_{g} \Big).
\end{equation*}
From this it is not hard to see that there exists some $b$ independent of $l$ such that 
\begin{equation*}
f_{l}'(t) \leq -3 b f_{l}(t) \quad \text{ for all $l$ sufficiently large and $t \in [T_{0}, T_{l}]$}.
\end{equation*}
Integrating from $T_{0}$ to $t$ gives~\eqref{powerdecay}.
 \end{proof}
 
At last we come to the proof of Theorem~\ref{thm:zero-neck-length}.
\begin{proof}[Proof of Theorem~\ref{thm:zero-neck-length}]
As remarked in the beginning of this section it suffices to prove~\eqref{neckdiameter}. Since $\frac{\rho_l}{\sigma_l} \to \infty$ as $l \to \infty$, we may without loss of generality assume that 
\begin{equation*}
\rho_{l} > 16\sigma_{l} \text{ for all }l.
\end{equation*}
Now suppose $4\sigma_{l} \leq r \leq (\rho_{l}\sigma_{l})^{\frac{1}{2}}$ and define $t$ by the equation
\begin{equation*}
e^{t}\sigma_{l} = \tfrac{1}{2}r.
\end{equation*}
Then $t \in [T_{0}, T_{l}]$ and moreover $r \leq \frac{1}{2} e^{-t} \rho_l$. Hence by~\eqref{pointwiseannularbound} (with $r_{2} = \frac{1}{2} e^{-t} \rho_l$ and $r_{1} = 2e^{t}\sigma_{l} = r$) and~\eqref{powerdecay}, we have
\begin{align}
\nonumber r^{3}\sup_{\partial B(c_l; r)}|Du_{l}|^{3} & \leq Cf_{l}(t) \leq Ce^{3bT_{0}}e^{-3bt}f_{l}(T_{0}) \\
\label{powerdecay2} & \leq Ce^{3bT_{0}} \Big( \frac{r}{2\sigma_{l}} \Big)^{-3b}f_{l}(T_{0}).
\end{align}
Therefore, in terms of polar coordinates around $c_{l}$, we deduce that for any $4\sigma_{l} \leq s \leq r \leq (\rho_{l}\sigma_{l})^{\frac{1}{2}}$ and $\xi, \eta \in S^{2}$, there holds
\begin{align*}
|u_{l}(s\xi) - u_{l}(r\eta)| & \leq |u_{l}(s\xi) - u_{l}(r\xi)| + |u_{l}(r\xi) - u_{l}(r\eta)|\\
& \leq \int_{4\sigma_{l}}^{r}|Du_{l}(t\xi)|dt + Cr\sup_{\partial B(c_{l}; r)}|Du_{l}|.
\end{align*}
We use~\eqref{powerdecay2} to estimate the first term on the last line by
\begin{align*}
\int_{4\sigma_{l}}^{r}|Du_{l}(t\xi)|dt & \leq C(2\sigma_{l})^{b} \Big(\int_{4\sigma_{l}}^{r}t^{-b-1}dt \Big) f_{l}(T_{0})^{\frac{1}{3}} \\
& \leq Cf_{l}(T_{0})^{\frac{1}{3}} \leq CE(u_{l}; A_{l})^{\frac{1}{3}}.
\end{align*}
Combining this with the previous string of inequalities, we infer that 
\begin{equation*}
\diam\left( u_{l}\left( B(c_{l}; \sqrt{\rho_{l}\sigma_{l}})\setminus B(c_{l}; 4\sigma_{l})\right) \right) \leq CE(u_{l}; A_{l})^{\frac{1}{3}} + C \sup_{x \in B(c_{l}; \frac{1}{2} \rho_l)\setminus B(c_{l}; 2\sigma_{l}) } |x - c_{l}| \, |Du_{l}(x)|,
\end{equation*}
which tends to zero as $l\to \infty$. A similar argument shows that 
\begin{equation*}
\diam\left( u_{l} \big( B(c_{l}; \tfrac{1}{4} \rho_l) \setminus B(c_{l}; \sqrt{\rho_{l}\sigma_{l}}) \big) \right) \to 0 \text{ as }l \to \infty,
\end{equation*}
and hence we have proven~\eqref{neckdiameter}.
\end{proof}

\begin{appendices}

\section{Harmonic Analysis} \label{sec:harmonic-analysis-appendix}

In this appendix we gather some results from harmonic analysis that are used in $\S$\ref{sec:int-reg-cpt}. We first introduce the function spaces before stating the main estimates. 

\begin{defn} \label{hardydefi} The Hardy space $\mathcal{H}^{1}(\R^{n})$ is defined as follows. Consider the class
\begin{equation*}
\mathcal{T} = \{\zeta \in C^{\infty}_{c}(B(1))\ |\ \| D \zeta \|_{\infty} \leq 1\}.
\end{equation*}
For $\zeta \in \mathcal{T}$ and $\ep > 0$, we write $\zeta_{\ep}(x) = \ep^{-n}\zeta(\frac{x}{\ep})$. Then, for any $g \in L^{1}(\R^{n})$, we define
\begin{equation*}
g^{\ast}(x) = \sup\limits_{\zeta \in \mathcal{T}}\sup\limits_{\ep > 0} \left| \int_{\R^{n}}\zeta_{\ep}(x - y) g(y)dy \right|.
\end{equation*}
By definition, $g$ belongs to $\mathcal{H}^{1}$ if $g^{\ast}$ belongs to $L^{1}(\R^{n})$, in which case we define $\|g\|_{\mathcal{H}^{1}} = \|g^{\ast}\|_{1}$.
\end{defn}

\begin{defn}
A function $f \in L^{1}_{\loc}(\R^{n})$ is in $BMO(\R^{n})$, which is called the space of functions with \textit{bounded mean oscillation}, if 
\begin{equation*}
[f]_{BMO} := \sup_{x \in \R^{n}, r > 0} \fint_{B(x; r)} |f - (f)_{B(x; r)}|dx < \infty.
\end{equation*}
\end{defn}

\begin{rmk} We make two remarks concerning these definitions.
\begin{enumerate}[(i)]
\item The Hardy space is a Banach space and is strictly contained in $L^{1}(\R^{n})$. 
\item Note that $[f]_{BMO} = [g]_{BMO}$ when $f, g$ differ by a constant. Thus $[\cdot]_{BMO}$ descends to the quotient of $BMO(\R^{n})$ by the constant functions and in fact makes it a Banach space.
\end{enumerate}
\end{rmk}

As a consequence of a deep result due to Fefferman--Stein~\cite{FS}, which identifies $BMO$ with the dual space of $\cH^{1}$, we have the following inequality.
\begin{thm}[{\cite[Theorem 2]{FS}}] 
\label{BMOdual}
Suppose $f \in L^{\infty}(\R^{n})$ and $g \in \mathcal{H}^{1}(\R^{n})$. Then
\begin{equation*}
\int_{\R^{n}} fg dx \leq C\|g\|_{\mathcal{H}^{1}} [f]_{BMO},
\end{equation*}
where $C$ depends only on the dimension $n$.
\end{thm}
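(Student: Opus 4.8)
The plan is to deduce the stated inequality from the atomic decomposition of the Hardy space combined with the cancellation property of atoms. Recall that an $\mathcal{H}^{1}$-atom is a function $a \in L^{\infty}(\R^{n})$ supported in some ball $B$ with $\|a\|_{\infty} \leq |B|^{-1}$ and $\int_{\R^{n}} a \, dx = 0$. I would quote, as the single external input, the atomic decomposition theorem (Coifman--Latter): every $g \in \mathcal{H}^{1}(\R^{n})$ can be written as $g = \sum_{j} \lambda_{j} a_{j}$ with each $a_{j}$ an atom supported in a ball $B_{j}$, with $\sum_{j} |\lambda_{j}| \leq C_{n} \|g\|_{\mathcal{H}^{1}}$, and with the series converging both in $\mathcal{H}^{1}(\R^{n})$ (hence in $L^{1}(\R^{n})$, since $\mathcal{H}^{1} \hookrightarrow L^{1}$). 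Note also that $f \in L^{\infty}(\R^{n})$ forces $[f]_{BMO} \leq 2\|f\|_{\infty} < \infty$, so the right-hand side of the asserted inequality is finite.

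First I would pass to a single atom. Since $f \in L^{\infty}(\R^{n})$, the linear functional $h \mapsto \int_{\R^{n}} fh\,dx$ is continuous on $L^{1}(\R^{n})$, so the $L^{1}$-convergence of $\sum_{j}\lambda_{j}a_{j}$ to $g$ permits interchanging summation and integration:
\[
\int_{\R^{n}} fg \, dx = \sum_{j} \lambda_{j} \int_{\R^{n}} f a_{j} \, dx .
\]
This interchange is the only step that uses $f \in L^{\infty}$ rather than merely $f \in BMO$, and it is what makes the pairing $\int fg$ unambiguous here.

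Next I would estimate each term, where the cancellation of atoms enters. Fixing $j$ and letting $B_{j}$ be the supporting ball, the vanishing mean $\int_{\R^{n}} a_{j}\,dx = 0$ allows subtracting the average $(f)_{B_{j}}$, and then the size bound $\|a_{j}\|_{\infty} \leq |B_{j}|^{-1}$ gives
\[
\left| \int_{\R^{n}} f a_{j} \, dx \right| = \left| \int_{B_{j}} \big( f - (f)_{B_{j}} \big) a_{j} \, dx \right| \leq \|a_{j}\|_{\infty} \int_{B_{j}} \big| f - (f)_{B_{j}} \big| \, dx \leq \fint_{B_{j}} \big| f - (f)_{B_{j}} \big| \, dx \leq [f]_{BMO},
\]
where the last inequality is the definition of the $BMO$ seminorm. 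Summing over $j$ and using $\sum_{j}|\lambda_{j}| \leq C_{n}\|g\|_{\mathcal{H}^{1}}$ then yields $\int_{\R^{n}} fg\,dx \leq C_{n}\|g\|_{\mathcal{H}^{1}}[f]_{BMO}$, as claimed.

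The hard part is the atomic decomposition itself, which is a genuinely deep theorem: a fully self-contained proof would require the grand maximal function characterization of $\mathcal{H}^{1}$ (equivalent to Definition~\ref{hardydefi}) together with a Calder\'on--Zygmund stopping-time construction of the atoms and the control of $\sum_{j}|\lambda_{j}|$. Since Theorem~\ref{BMOdual} enters the paper only as a tool, the intended route is either to cite Fefferman--Stein~\cite{FS} directly, or to record the short argument above with the atomic decomposition quoted from a standard reference.
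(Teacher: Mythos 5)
Your argument is correct and self-contained modulo the atomic decomposition of $\mathcal{H}^{1}(\R^{n})$, which you rightly flag as the deep external input. The paper itself supplies no proof: Theorem~\ref{BMOdual} is quoted directly from Fefferman--Stein~\cite{FS}, whose 1972 argument does not use atoms (the atomic decomposition is due to Coifman for $n=1$ and Latter in general, and postdates \cite{FS}); Fefferman and Stein instead identify the dual of $\mathcal{H}^{1}$ with $BMO$ via maximal-function and Littlewood--Paley machinery and a direct construction of the pairing. Your route---decompose $g$ into atoms $a_{j}$ supported in balls $B_{j}$, use $f \in L^{\infty}$ to justify interchanging sum and integral, exploit $\int a_{j} = 0$ to subtract $(f)_{B_{j}}$, then apply $\|a_{j}\|_{\infty} \leq |B_{j}|^{-1}$ and the defining $BMO$ bound, and sum with $\sum_{j}|\lambda_{j}| \leq C_n\|g\|_{\mathcal{H}^{1}}$---is the standard modern shortcut and gives exactly the stated inequality (in fact the stronger $|\int fg| \leq C\|g\|_{\mathcal{H}^{1}}[f]_{BMO}$). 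It is perfectly adequate for the way the inequality is used in $\S$\ref{sec:int-reg-sub}. The only caveat, which you already articulate, is that the atomic decomposition is of comparable depth to the duality it is used to prove, so the proposal transfers the difficulty rather than removing it; either citing \cite{FS} as the paper does or quoting the atomic decomposition from a standard reference is an equally legitimate way to package the result.
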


The following two propositions can be found in~\cite{E} and contain estimates which, together with the Fefferman--Stein theorem above, are key to the regularity results in $\S$\ref{sec:int-reg-cpt}. 
\begin{prop} \label{cutoffBMO} Suppose $f \in W^{1, n}(B(2))$ and that $\int_{B(2)}f dx = 0$. Let $\zeta$ be a cutoff function which is identically $1$ on $B(1)$ and vanishes outside of $B(\frac{3}{2})$. Then $\zeta f \in BMO(\R^{n})$ with
\begin{equation*}
[\zeta f]_{BMO} \leq C\|Df\|_{n; B(2)}.
\end{equation*}
\end{prop}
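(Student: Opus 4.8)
The plan is to reduce the statement to a Poincar\'e inequality once $\zeta f$ has been extended by zero. First I would note that, since $\zeta$ is a fixed smooth cutoff with $\supp\zeta \subset B(\tfrac{3}{2})$ and $f \in W^{1,n}(B(2))$, the function $\zeta f$ extended by zero lies in $W^{1,n}(\R^n)$, has compact support, and has weak derivative $D(\zeta f) = \zeta\,Df + f\,D\zeta$ supported in $\overline{B(\tfrac{3}{2})}$. The hypothesis $\int_{B(2)}f\,dx = 0$ lets me invoke the Poincar\'e--Wirtinger inequality on $B(2)$ to get $\|f\|_{n;B(2)} \leq C\|Df\|_{n;B(2)}$; combining this with $|D\zeta|\leq C$ produces a single global bound $\|D(\zeta f)\|_{n;\R^n} \leq C\|Df\|_{n;B(2)}$. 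I expect this step — recognizing that the stray term $f\,D\zeta$ must be absorbed by applying Poincar\'e on all of $B(2)$, rather than on whatever small ball is being tested — to be the only point that is not entirely routine; everything afterwards is a standard two-case estimate of the mean oscillation.

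With that reduction in hand, I would bound $\fint_{B(x;r)}|\zeta f - (\zeta f)_{B(x;r)}|\,dx$ uniformly over all balls $B(x;r)\subset\R^n$. If $B(x;r)\cap\supp(\zeta f) = \varnothing$ the oscillation is zero, so I may assume $\dist(x,\overline{B(\tfrac{3}{2})}) < r$. For \emph{small balls}, $r \leq \tfrac14$, this forces $B(x;r)\subset B(\tfrac{3}{2}+2r)\subseteq B(2)$, so the ordinary Poincar\'e inequality on a ball followed by H\"older gives $\fint_{B(x;r)}|\zeta f - (\zeta f)_{B(x;r)}| \leq Cr\,(\fint_{B(x;r)}|D(\zeta f)|^n)^{1/n} \leq C\|D(\zeta f)\|_{n;\R^n}$, the factor $r$ exactly cancelling the $r^{-1}$ from normalizing the $L^n$-average; by the first paragraph this is $\leq C\|Df\|_{n;B(2)}$. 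For \emph{large balls}, $r > \tfrac14$, I would instead use the elementary bound $\fint_{B}|h - (h)_B| \leq 2\fint_B|h|$ together with $\supp(\zeta f)\subset B(\tfrac{3}{2})$ and H\"older, obtaining $\fint_{B(x;r)}|\zeta f| \leq C\|f\|_{1;B(\frac{3}{2})} \leq C\|f\|_{n;B(2)} \leq C\|Df\|_{n;B(2)}$, again via Poincar\'e on $B(2)$.

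Taking the supremum over all balls then yields $[\zeta f]_{BMO} \leq C\|Df\|_{n;B(2)}$. The only care needed is at the threshold radius $r = \tfrac14$, where one checks that a ball meeting $\overline{B(\tfrac{3}{2})}$ is still contained in $B(2)$; all constants depend only on $n$. (This is the argument indicated in~\cite{E}, reproduced here for completeness.)
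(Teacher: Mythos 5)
Your argument is correct, and the paper itself gives no proof of this proposition (it simply cites Evans~\cite{E}), so there is nothing in the source to diverge from; your route — extend $\zeta f$ by zero, control $\|D(\zeta f)\|_{n}$ via Poincar\'e on $B(2)$ to absorb the $f\,D\zeta$ term, then bound the mean oscillation by a small-ball/large-ball dichotomy in which the $r$ from Poincar\'e cancels the $r^{-1}$ from the $L^n$ average — is precisely the standard argument one expects behind that citation. The only small imprecision is the final remark that "all constants depend only on $n$": the constant also depends on the fixed cutoff $\zeta$ through $\sup|D\zeta|$, though since $\zeta$ is chosen once and for all this is harmless.
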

\begin{prop} \label{divcurlhardy}
Let $f \in W^{1, n}_{0}(B(1))$ and let $X \in L^{\frac{n}{n-1}}(B(2); \R^{n})$ be a vector field which is divergence-free on $B(2)$ in the sense of distributions. Then $X \cdot D f \in \cH^{1}(\R^{n})$, and
\begin{equation*}
\|X \cdot D f\|_{\cH^{1}} \leq C\|D f\|_{n}\|X\|_{\frac{n}{n-1}}, 
\end{equation*}
where $C$ depends only on the dimension $n$.
\end{prop}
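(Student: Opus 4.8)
The plan is to deduce Proposition~\ref{divcurlhardy} from the div--curl lemma of Coifman--Lions--Meyer--Semmes, localized to $B(2)$; a complete proof is available in~\cite{E}, but here is the strategy. First I would extend $f$ by zero to all of $\R^{n}$, so that $f \in W^{1, n}(\R^{n})$ with $\supp f \subseteq \overline{B(1)}$, and extend $X$ by zero outside $B(2)$. Since $\Div X = 0$ on $B(2)$ while $\supp(fX) \subseteq \overline{B(1)} \subset B(2)$, a density argument (testing $\Div X = 0$ against functions $f\psi$ with $\psi \in C^{\infty}_{c}(\R^{n})$, using that $f\psi \in W^{1,n}_{0}(B(1))$) identifies $g := X \cdot Df$ with $\Div(fX)$ in $\mathcal{D}'(\R^{n})$, where $fX \in L^{1}(\R^{n})$ satisfies $\|fX\|_{1} \leq \|f\|_{n; B(1)}\|X\|_{\frac{n}{n-1}} \leq C\|Df\|_{n}\|X\|_{\frac{n}{n-1}}$ by H\"older's inequality and the Poincar\'e inequality on $W^{1, n}_{0}(B(1))$. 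Thus $g \in L^{1}(\R^{n})$ is supported in $\overline{B(1)}$, and the task reduces to estimating the maximal function $g^{\ast}$ from Definition~\ref{hardydefi} in $L^{1}$.

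For the core estimate I would fix $\zeta \in \mathcal{T}$ and $\ep > 0$ and analyze $\langle g, \zeta_{\ep}(x - \cdot) \rangle$ according to the size of $\ep$. The pairing vanishes whenever $B(x;\ep) \cap \overline{B(1)} = \varnothing$. When $\ep \leq \tfrac{1}{2}$ and $B(x;\ep) \cap \overline{B(1)} \neq \varnothing$ one has $B(x;\ep) \subset B(2)$, so $\zeta_{\ep}(x-\cdot)$ is an admissible test function for $\Div X = 0$; this yields the cancellation $\int (\nabla\zeta_{\ep})(x-y) \cdot X(y)\,dy = 0$, which lets me replace $f$ by $f - (f)_{B(x;\ep)}$ before estimating. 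Using $\|\nabla\zeta_{\ep}\|_{\infty} \leq \ep^{-n-1}$, then H\"older's inequality on $B(x;\ep)$ with exponents $q$ and $q' = \tfrac{q}{q-1}$, and the Sobolev--Poincar\'e inequality $\big( \fint_{B(x;\ep)} |f - (f)_{B(x;\ep)}|^{q} \big)^{1/q} \leq C\ep \big( \fint_{B(x;\ep)} |Df|^{p} \big)^{1/p}$ with $p = \tfrac{nq}{n+q}$, all powers of $\ep$ cancel and one is left with the pointwise bound $|\langle g, \zeta_{\ep}(x-\cdot)\rangle| \leq C\,[\mathcal{M}(|Df|^{p})(x)]^{1/p}\,[\mathcal{M}(|X|^{q'})(x)]^{1/q'}$, where $\mathcal{M}$ is the Hardy--Littlewood maximal operator. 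When $\ep > \tfrac{1}{2}$ I would instead bound $|\langle g,\zeta_{\ep}(x-\cdot)\rangle| = \big|\int (\nabla\zeta_{\ep})(x-y)\cdot(fX)(y)\,dy\big| \leq \ep^{-n-1}\|fX\|_{1}$, which is nonzero only for $|x| < 1 + \ep$; integrating the resulting envelope over $x \in \R^{n}$ produces the convergent tail integral $\int_{|x| > 3/2}(|x|-1)^{-n-1}\,dx$, so the large-scale contribution to $g^{\ast}$ has $L^{1}$ norm at most $C\|fX\|_{1} \leq C\|Df\|_{n}\|X\|_{\frac{n}{n-1}}$.

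The step I expect to be the main obstacle is integrating the small-scale pointwise bound. Taking, say, $q = 2n$, so that $p = \tfrac{2n}{3}$ and $q' = \tfrac{2n}{2n-1}$ are both \emph{strictly subcritical}, H\"older's inequality with exponents $n$ and $\tfrac{n}{n-1}$ gives a bound on the small-scale part of $\|g^{\ast}\|_{1}$ by $C\|\mathcal{M}(|Df|^{p})\|_{n/p}^{1/p}\,\|\mathcal{M}(|X|^{q'})\|_{n/((n-1)q')}^{1/q'}$, and since $\tfrac{n}{p} = \tfrac{3}{2} > 1$ and $\tfrac{n}{(n-1)q'} = \tfrac{2n-1}{2(n-1)} > 1$, the $L^{r}$-boundedness of $\mathcal{M}$ for $r > 1$ controls these factors by $C\|Df\|_{n}^{p}$ and $C\|X\|_{\frac{n}{n-1}}^{q'}$ respectively, yielding $\|g\|_{\mathcal{H}^{1}} = \|g^{\ast}\|_{1} \leq C\|Df\|_{n}\|X\|_{\frac{n}{n-1}}$ with $C = C(n)$. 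The delicate point is precisely that one must \emph{not} use the critical Sobolev--Poincar\'e inequality, which would force $p = n$ or $q' = \tfrac{n}{n-1}$ and hence require the (false) $L^{1}$-boundedness of $\mathcal{M}$; choosing $q$ finite but large makes both maximal-function integrals land in $L^{r}$ for some $r > 1$, which is what allows the concluding H\"older estimate to close. Beyond that, the argument is routine, modulo the bookkeeping of the powers of $\ep$ and the support reductions restricting attention to scales with $B(x;\ep) \subset B(2)$.
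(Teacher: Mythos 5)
Your proposal is correct and follows essentially the same strategy as the paper's own proof: split by scale $\ep$, use the divergence-free structure to insert a constant $\lambda = (f)_{B(x;\ep)}$ when $B(x;\ep) \subset B(2)$, control the small-scale pairing by a product of Hardy--Littlewood maximal functions via H\"older plus a strictly subcritical Sobolev--Poincar\'e inequality, and absorb the large-scale and far-field contributions using the decay $\|D\zeta_{\ep}\|_{\infty} \leq \ep^{-n-1}$ together with the cancellation coming from $\int X \cdot Df = 0$ (equivalently, your global identification $X \cdot Df = \Div(fX)$). The only differences from the paper are cosmetic: you do the integration by parts once globally rather than scale-by-scale, fix $q = 2n$ instead of leaving $p > n$ general, and organize the case split by the support condition on $B(x;\ep)$ rather than by the location of $x$.
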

\begin{proof}
For the sake of completeness we include a proof of Proposition~\ref{divcurlhardy}. We would like to estimate the quantity
\begin{equation*}
\left| \int_{\R^{n}} \zeta_{\ep}(x - y) X(y) \cdot D f(y) dy\right|,
\end{equation*}
for any given $\zeta \in \cT$ and $\ep > 0$. To do that we need to treat a few different cases depending on where $x$ is and how large $\ep$ is.

\textbf{Case 1.} Suppose that $x \in B(\frac{3}{2})$ and $\ep < \frac{1}{2}$. Then $B(x; \ep) \subseteq B(2)$, so for any $\lambda \in \R$, the function
\begin{equation*}
y \, \mapsto \, (f(y) - \lambda)\zeta_{\ep}(x - y)
\end{equation*}
has compact support in $B(2)$. Since $\Div X$ vanishes on $B(2)$ in the sense of distributions, this implies
\begin{align*}
0 & = \int_{\R^{n}}X(y) \cdot D \left[(f(y) - \lambda) \zeta_{\ep}(x - y) \right]dy\\
& = \int_{\R^{n}} \zeta_{\ep}(x - y) X(y) \cdot D f(y)dy - \int_{\R^{n}}(f(y) - \lambda) X(y) \cdot D \zeta_{\ep}(x - y)dy.
\end{align*}
In other words, recalling the definition of $\zeta_{\ep}$
\begin{equation} \label{Xdivfree}
\int_{\R^{n}}\zeta_{\ep}(x - y) X(y) \cdot D f(y)dy = \ep^{-1}\fint_{B(x; \ep)}D \zeta \left(\ep^{-1}(x - y)\right)\cdot (f(y) - \lambda) X(y) dy.
\end{equation}
Next we choose an exponent $p > n$ and define 
\begin{equation*}
q = \frac{p}{p-1}, \qquad p_{\ast} = \frac{np}{p + n},
\end{equation*}
so that $W^{1, p_{\ast}}$ controls $L^{p}$ by the Sobolev embedding theorem, and $\frac{1}{p} + \frac{1}{q} = 1$. For later use, we note that $q < \frac{n}{n-1}$ and $p_{\ast} < n$. Now we choose $\lambda = \fint_{B(x; \ep)}f(y)dy$ and use the H\"older inequality, the fact that $\|D \zeta\|_{\infty} \leq 1$, and the Poincar\'e inequality to estimate the right hand side of~\eqref{Xdivfree} as follows:
\begin{align*}
& \left|\ep^{-1}\fint_{B(x; \ep)}D \zeta (\ep^{-1}(x - y))\cdot (f(y) - \lambda) X(y) dy \right| \\
& \qquad \leq \ep^{-1} \Big( \fint_{B(x; \ep)} |X|^{q} \Big)^{\frac{1}{q}} \Big( \fint_{B(x; \ep)} |f - \lambda|^{p} \Big)^{\frac{1}{p}} \\
& \qquad \leq \Big( \fint_{B(x; \ep)} |X|^{q} \Big)^{\frac{1}{q}} \Big( \fint_{B(x; \ep)} |D f|^{p_{\ast}} \Big)^{\frac{1}{p_{\ast}}} \\
& \qquad \leq \left[M(|X|^{q})(x)\right]^{\frac{1}{q}} \left[M(|D f|^{p_{\ast}})(x)\right]^{\frac{1}{p_{\ast}}},
\end{align*}
where $M(|X|^{q})$ and $M(|Df|^{p_{\ast}})$ in the last line denote \emph{maximal functions}. This means that
\begin{equation*}
M(F)(x) = \sup_{\ep > 0} \, \fint_{B(x; \ep)} F.
\end{equation*}

\textbf{Case 2.} Suppose that $x \in B(\frac{3}{2})$ and $\ep > \frac{1}{2}$. We observe that since 
\begin{equation*}
\int_{\R^{n}} X(y) \cdot D f(y) dy = 0,
\end{equation*}
we have
\begin{equation*}
\int_{\R^{n}} \zeta_{\ep}(x - y) X(y)\cdot D f(y) dy = \int_{B(1)} (\zeta_{\ep}(x - y) - \zeta_{\ep}(x)) X(y)\cdot D f(y) dy,
\end{equation*}
and hence, recalling that $\|D \zeta_{\ep}\|_{\infty} \leq \ep^{-n-1}$, we deduce that
\begin{align}
\nonumber & \left| \int_{\R^{n}} \zeta_{\ep}(x - y) X(y)\cdot D f(y) dy\right| \\
\label{largeepsilon}
& \qquad \leq \ep^{-n-1} \int_{B(1)} |X(y)| | D f(y)| dy \leq 2^{n + 1}\|X\|_{\frac{n}{n-1}}\|D f\|_{n}.
\end{align}

Combining Cases 1 and 2, we get that for $x \in B(\frac{3}{2})$, there holds
\begin{equation*}
(X \cdot D f)^{\ast}(x) \leq \left[M(|X|^{q})(x)\right]^{\frac{1}{q}} \left[M(|D f|^{p_{\ast}})(x)\right]^{\frac{1}{p_{\ast}}} + 2^{n + 1}\|X\|_{\frac{n}{n-1}} \|D f\|_{n}.
\end{equation*}
Consequently 
\begin{equation} \label{case1and2}
\|(X \cdot D f)^{\ast}\|_{1; B(\frac{3}{2})} \leq \| M(|X|^{q})^{\frac{1}{q}} M(|Df|^{p_{\ast}})^{\frac{1}{p_{\ast}}} \|_{1} + C\|X\|_{\frac{n}{n-1}} \|D f\|_{n}.
\end{equation}
To continue, note that the function $|X|^{q}$ lies in $L^{\alpha}(\R^{n})$ for $\alpha = \frac{n}{q(n-1)} > 1$, and that the function $|D f|^{p_{\ast}}$ lies in $L^{\beta}(\R^{n})$ for $\beta = \frac{n}{p_{\ast}} > 1$. Hence 
\begin{align*}
\| M(|X|^{q}) \|_{\alpha} & \leq C \| \, |X|^{q} \, \|_{\alpha} = C\|X\|_{\frac{n}{n-1}}^{q}, \\
\|M(|D f|^{p_{\ast}})\|_{\beta} & \leq C\| \, |D f|^{p_{\ast}} \, \|_{\beta} = C\|D f\|_{n}^{p_{\ast}}.
\end{align*}
Combining these estimates with the H\"older inequality and the fact that $\frac{1}{\alpha q} + \frac{1}{\beta p_{\ast}} = 1$, we get
\begin{align*}
\| M(|X|^{q})^{\frac{1}{q}} M(|D f|^{p_{\ast}})^{\frac{1}{p_{\ast}}} \|_{1} & \leq \|M(|X|^{q})^{\frac{1}{q}}\|_{q\alpha} \|M(|D f|^{p_{\ast}})^{\frac{1}{p_{\ast}}}\|_{p_{\ast}\beta} \\
& = \|M(|X|^{q})\|_{\alpha}^{\frac{1}{q}} \|M(|D f|^{p_{\ast}})\|_{\beta}^{\frac{1}{p_{\ast}}} \\
& \leq C\|X\|_{\frac{n}{n-1}} \|D f\|_{n}.
\end{align*}
Putting this back into~\eqref{case1and2}, we get
\begin{equation} \label{hardybound1}
\|(X \cdot D f)^{\ast}\|_{1; B(\frac{3}{2})} \leq C\|X\|_{\frac{n}{n-1}} \|D f\|_{n}.
\end{equation}

\textbf{Case 3.} Suppose that $x \in \R^{n} \setminus B(\frac{3}{2})$. Then since $D f$ is nonzero only in $B(1)$ and since $B(x; \ep) \cap B(1) = \varnothing$ if $\ep < \frac{1}{3}|x|$, we have in this case that 
\begin{equation*}
(X \cdot Df)^{\ast}(x) = \sup\limits_{\zeta}\sup\limits_{\ep \geq \frac{1}{3}|x|} \ep^{-n}\bigg| \int_{B(1)}\zeta(\ep^{-1}(x - y)) X(y) \cdot Df(y)dy \bigg|.
\end{equation*}
Then, the same observation that lead to~\eqref{largeepsilon} allows us to conclude that in fact
\begin{equation*}
(X \cdot Df)^{\ast}(x) \leq 3^{n + 1}|x|^{-n-1} \|X\|_{\frac{n}{n-1}}\|D f\|_{n}.
\end{equation*}
Integrating this over $\R^{n}\setminus B(\frac{3}{2})$ gives
\begin{equation} \label{hardybound2}
\|(X \cdot D f)^{\ast}\|_{1; \R^{n}\setminus B(\frac{3}{2})} \leq C\|X\|_{\frac{n}{n-1}}\|D f\|_{n}.
\end{equation}
We complete the proof upon combining~\eqref{hardybound1} and~\eqref{hardybound2}, and recalling the definition of $\cH^{1}$.
\end{proof}

\section{Higher regularity and the mean value inequality} \label{sec:regularity-appendix}

In this appendix we give proofs of Theorems~\ref{thm:ep-reg} and~\ref{thm:MVI}. The arguments are largely based on the work of Duzaar--Fuchs~\cite{DF} and Duzaar--Mingione~\cite{DM} on $p$-harmonic maps, with some necessary modifications due to the fact that we have a different system of equations, and because we are working with a potentially nonflat metric on the domain. 

\begin{proof}[Proof of Theorem~\ref{thm:ep-reg}]
We begin with part (a), whose proof actually resembles the proof of Proposition~\ref{energydecay}, but we estimate things slightly differently. Take any $x_{0} \in B(1)$ and $r < \frac{1}{8}$, and again introduce the rescalings
\begin{equation*}
\tilde{u}(x) = u(x_{0} + rx), \qquad \tilde{g}(x) = g(x_{0} + rx).
\end{equation*}
Then Corollary~\ref{holderregularity} is applicable to $\tilde{u}$, so we have~\eqref{morreyestimate} and~\eqref{holderestimate} at our disposal. Next, we let $g^{0}$ denote the constant metric given by 
\begin{equation*}
g^{0}_{ij}(x) := \tilde{g}_{ij}(0) = g_{ij}(x_{0}) \text{ for all }x \in B(2).
\end{equation*}
Similarly to the proof of Proposition~\ref{energydecay}, we consider the unique function $\tilde{v} \in W^{1, 3}(B(1); \R^{d})$ which minimizes 
\begin{equation*}
\int_{B(1)}|dw|_{g^{0}}^{3}d\mu_{g^{0}}
\end{equation*}
amongst all functions $w \in W^{1, 3}(B(1); \R^{d})$ with $w - \tilde{u} \in W^{1, 3}_{0}(B(1); \R^{d})$. Then $\tilde{v}$ satisfies
\begin{equation*}
\int_{B(1)} \langle |d\tilde{v}|_{g^{0}} d\tilde{v}, d\eta \rangle_{g^{0}}d\mu_{g^{0}} = 0 \quad \text{ for all }\eta \in W^{1, 3}_{0}(B(1); \R^{d}),
\end{equation*}
and
\begin{equation} \label{vg0minimizing}
\int_{B(1)} |d\tilde{v}|_{g^{0}}^{3}d\mu_{g^{0}} \leq \int_{B(1)}|d\tilde{u}|^{3}_{g^{0}}d\mu_{g^{0}}.
\end{equation}
Following~\cite{DM}, we introduce the function $V: \R^{3 \times d} \to \R^{3 \times d}$ defined by 
\begin{equation*}
V(\xi) = |\xi|^{\frac{1}{2}}\xi.
\end{equation*}
Then by a refinement of Uhlenbeck's work due to Giaquinta--Modica~\cite[Theorem, 3.1]{GiMa}, we know that $\tilde{v}$ has H\"older continuous first derivatives, and furthermore there exist universal constants $C$ and $\gamma$ such that 
\begin{equation} \label{Giaquinta}
\int_{B(\lambda)} \left| V(D\tilde{v}) - (V(D\tilde{v}))_{B(\lambda)} \right|^{2}dx \leq C\lambda^{3 + 2\gamma}\int_{B(1)}|V(D\tilde{v}) - (V(D\tilde{v}))_{B(1)}|^{2}dx \quad \text{ for all }\lambda \in (0, \tfrac{1}{4}).
\end{equation}
Also, the argument leading to~\eqref{esssupbound} and~\eqref{essinfbound} is still valid, and for all $x \in B(1)$ and $i = 1, \ldots, d$, we get
\begin{equation*}
\min_{B(1)} \tilde{u}^{i} - \tilde{u}^{i}(x) \leq \tilde{v}^{i}(x) - \tilde{u}^{i}(x) \leq \max_{B(1)}\tilde{u}^{i} - u^{i}(x).
\end{equation*}
In particular, by~\eqref{holderestimate}, we have
\begin{equation*}
|\tilde{v} - \tilde{u}|_{0; B(1)} \leq C[\tilde{u}]_{\alpha; B(1)} \leq C\|D\tilde{u}\|_{3; B(2)}.
\end{equation*}
Therefore, using $\tilde{u} - \tilde{v}$ as a test function in~\eqref{2ndorderSmithequation2}, we get
\begin{align*}
\int_{B(1)} \langle |d\tilde{u}|d\tilde{u}, d(\tilde{u} - \tilde{v}) \rangle d\mu_{g} & \leq C\|D\tilde{u}\|_{3; B(2)}^{4} \\
& \leq C\|Du\|_{3; B(x_0; 2r)}^{4} \leq Cr^{4\alpha}\|Du\|_{3; B(2)}^{4},
\end{align*}
where we used~\eqref{morreyestimate} in the last inequality. On the other hand, repeating the calculation in~\eqref{modified3harmonic} and~\eqref{errortermestimate}, and using~\eqref{vg0minimizing}, we deduce that
\begin{align*}
\int_{B(1)} \langle |d\tilde{v}|_{g}d\tilde{v}, d(\tilde{u} - \tilde{v}) \rangle_{g}d\mu_{g} & \leq C|\tilde{g} - g^{0}|_{0; B(1)} \|D\tilde{v}\|_{3; B(1)}^{2}\|D\tilde{u} - D\tilde{v}\|_{3; B(1)} \\
& \leq C|g - g(x_{0})|_{0; B(x_0; r)}\|D\tilde{u}\|^{3}_{3; B(1)} \\
& \leq Cr|Dg|_{0; B(2)} r^{3\alpha}\|Du\|_{3; B(2)}^{3} \\
& \leq C\ep_{0} r^{1 + 3\alpha} \|Du\|_{3; B(2)}^{3}.
\end{align*}
Letting $v(x) = \tilde{v}(r^{-1}(x - x_{0}))$, the above gives
\begin{align*}
\int_{B(x_0; r)} \left| V(Du) - V(Dv) \right|^{2}dx & = \int_{B(1)} \left| V(D\tilde{u}) - V(D\tilde{v}) \right|^{2} dx\\
 & \leq C\int_{B(1)} \langle |d\tilde{u}|_{g}d\tilde{u} - |d\tilde{v}|_{g}d\tilde{v}, d\tilde{u} - d\tilde{v} \rangle d\mu_{g} \\
 & \leq Cr^{4\alpha}(1 + \|Du\|_{3; B(2)})\|Du\|_{3; B(2)}^{3}.
\end{align*}
Note that we used~\cite[equation (11) on page 240]{DM} in the second line above.

Scaling $\tilde{v}$ to $v$ in~\eqref{Giaquinta} and combining it with estimate just obtained, we find that for all $x_{0} \in B(1)$, $r < \frac{1}{8}$, and $\lambda \in (0, \frac{1}{4})$, we have 
\begin{align*}
\int_{B(x_0; \lambda r)} \left| V(Du) - (V(Du))_{B(x_{0}; \lambda r)} \right|^{2}dx \leq\ &C\lambda^{3 + 2\gamma}\int_{B(x_0; r)} \left| V(Du) - (V(Du))_{B(x_{0}; r)} \right|^{2}dx\\
&+ Cr^{4\alpha} \left( 1 + \|Du\|_{3; B(2)} \right)\|Du\|_{3; B(2)}^{3}.
\end{align*}
Letting $K = C\left( 1 + \|Du\|_{3; B(2)} \right)\|Du\|_{3; B(2)}^{3}$ and 
\begin{equation*}
\Phi(x_{0}, t) = \int_{B(x_0; t)} \left| V(Du) - (V(Du))_{B(x_{0}; t)} \right|^{2}dx,
\end{equation*}
we see that for all $x_{0} \in B(1)$ and $s \leq r < \frac{1}{8}$, we have
\begin{equation*}
\Phi(x_{0}, s) \leq C\left( \frac{s}{r} \right)^{3 + 2\gamma}\Phi(x_0, r) + Kr^{4\alpha}.
\end{equation*}
Recalling that $4\alpha > 3$, we obtain by~\cite[Lemma 3.4]{Han-Lin} that 
\begin{equation*}
\Phi(x_{0}, s) \leq C s^{3 + 2\gamma'}
\end{equation*}
for all $x_{0} \in B(1)$ and $s \in (0, \frac{1}{9})$, where $\gamma' = \min\{\gamma, \frac{1}{2}(4\alpha - 3) \}$. Thus $V(Du) \in C^{0, \gamma'}(B(1); \R^{3 \times d})$ with $[V(Du)]_{\gamma'; B(1)}$ depending on $M, J$, the embedding $M \to \R^{d}$, and $\|Du\|_{3; B(2)}$. Then~\cite[Lemma 3]{DM} yields the assertion in part (a).

For (b) we only indicate the main steps. Fix any ball $B$ with compact closure in the set
\[
\{x \in B(1)\ |\ Du(x) \neq 0 \} \quad (= B(1) \setminus \crit_u).
\] Via a difference quotient argument (compare with~\cite[Lemma 2.2]{DF}) using~\eqref{2ndorderSmithequation2}, we can show that 
\begin{equation*}
\int_{B} |Du| \, |D^{2}u|^{2} dx < \infty,
\end{equation*}
so that $u \in W^{2, 2}(B)$, because $|Du|$ is bounded away from zero on $\overline{B}$. This $W^{2, 2}$-regularity then allows us to integrate by parts on the left hand side of~\eqref{2ndorderSmithequation2} to see that $u$ satisfies an elliptic system of equations of the form
\begin{equation*}
A^{\gamma\lambda}_{ij}u^{j}_{\gamma\lambda} + B_{ij}^{\lambda}u^{j}_{\lambda} = F_{i},
\end{equation*}
almost everywhere in $B$, where $A^{\gamma\lambda}_{ij}, B_{ij}^{\lambda}$ and $F_{i}$ are rational expressions involving $g, Dg, J \circ u$ and $Du$ and hence all lie in $C^{0, \beta}(B)$ because $u \in C^{1, \beta}(B)$, and because $g$ is smooth by assumption. The regularity theory for $W^{2, 2}$-solutions to elliptic systems of the above type (see~\cite[Theorem 5.22]{GM}) tells us that $u \in C_{\loc}^{2, \beta}(B)$, and a bootstrapping argument yields smoothness locally in $B$. Since $B$ is an arbitrary ball with compact closure in $\{x \in B(1)\ |\ Du(x) \neq 0 \}$, we are done.
\end{proof}

\begin{proof}[Proof of Theorem~\ref{thm:MVI}]
Throughout this proof, we use $d^{\ast}$ to denote the dual, with respect to $g$, of the exterior derivative. The operators $\Delta$ and $\nabla$, as well as the volume form $\mu$, are with respect to the metric $g$, and $\nabla du$ refers to the covariant derivative of $du$ as a $\R^{d}$-valued $1$-form, as opposed to a section of $\R^{3} \otimes u^{\ast}TM$.

We first require $\ep_{1} < \ep_{0}$, so that by Theorem~\ref{thm:ep-reg} we have $u \in C^{1, \beta}(B(1); M)$ and $u$ is smooth on the set $\{x \in B(1)\ |\ Du(x) \neq 0\}$, which we denote by $U_{+}$ for this proof.

Next, we note that the difference quotient argument in the proof of~\cite[Lemma 2.2]{DF} carries over to our case to give $|du|^{\frac{1}{2}}du \in W^{1, 2}_{\loc}(B(1))$. Since $u \in C^{1}(B(1))$, we deduce that $|du|^{3} \in W^{1, 2}_{\loc}(B(1))$. Next we show, as in~\cite[Lemma 2.3]{DF}, that the function $w := |du|^{3}$ satisfies a second order, uniformly elliptic equation in $U_{+}$.

The computation in the proof of~\cite[Lemma 2.3]{DF} can then be modified as follows. (See also~\cite[pages 222--223]{U}.) At any point in $U_{+}$ we have
\begin{align*}
\Delta w & = \Delta \langle |du|du, du \rangle\\
& = \langle \Delta \left( |du|du \right), du \rangle + 2\langle \nabla \left( |du|du \right), \nabla du \rangle + \langle |du|du, \Delta du \rangle\\
& = -\langle (d^{\ast}d + dd^{\ast}) \left( |du|du \right), du \rangle + \Ric(|du|du, du) + 2\langle \nabla \left( |du|du \right), \nabla du \rangle + \langle |du|du, \Delta du \rangle\\
& = -\langle (d^{\ast}d + dd^{\ast})\left( |du|du \right), du \rangle + \Ric(|du|du, du) + \langle \nabla \left( |du|du \right), \nabla du \rangle + \tfrac{1}{3}\Delta w.
\end{align*}
(Note that we used the Weitzenb\"ock formula for differential $1$-forms in going from the second line to the third.) Hence we get
\begin{align} \label{bochnerprecursor}
\nonumber \tfrac{2}{3}\Delta w + \langle d^{\ast}d\left( |du|du \right), du \rangle & = -\langle dd^{\ast}\left( |du|du \right), du \rangle + \Ric(|du|du, du) + \langle \nabla (|du|du), \nabla du \rangle\\
\nonumber & \geq -\langle dd^{\ast}\left( |du|du \right), du \rangle - K|du|^{3} + |du|^{-1}\langle du, \nabla du \rangle^{2} + |du| |\nabla du|^{2} \\
& \geq -\langle dd^{\ast}\left( |du|du \right), du \rangle - K|du|^{3} + |du| |\nabla du|^{2}.
\end{align}
To continue, note that we have
\begin{equation*}
 \langle d^{\ast}d\left( |du|du \right), du \rangle = -\tfrac{1}{3}\Div\left( dw - |du|^{-2}\langle du, dw \rangle du \right),
\end{equation*}
and thus the left hand side of~\eqref{bochnerprecursor} becomes
\begin{equation} \label{bochnerprincipalpart}
\tfrac{2}{3}\Delta w + \langle d^{\ast}d\left( |du|du \right), du \rangle = \tfrac{1}{3}\Div ( dw + |du|^{-2}\langle du, dw \rangle du ),
\end{equation}
where the right hand side can be understood as a uniformly elliptic operator of second order acting on $w$. For the right hand side of~\eqref{bochnerprecursor}, the first term can be estimated with the help of~\eqref{2ndorderSmithequation2} as follows:
\begin{align}
-\langle dd^{\ast}\left( |du|du \right), du \rangle & = \tfrac{\sqrt{3}}{2} \big\langle d \big(\tfrac{1}{\sqrt{g}} \perm^{\alpha \beta \gamma} (J^{i}_{jk}\circ u)_{\gamma} u^{j}_{\alpha}u^{k}_{\beta} \big), du^{i} \big\rangle \nonumber \\
& \geq -C|du|^{4} - C|du|^{5} - C|du|^{3}|D^{2}u|\nonumber \\
& \geq -C|du|^{3} - C|du|^{5} - C|du|^{3}|D^{2}u| \label{eq:nonlinearity-bound},
\end{align}
where we used Young's inequality to estimate $|du|^{4} \leq C(|du|^3 + |du|^5)$ in the last inequality above.

Next, by the coordinate expression for $\nabla du$, we have 
\[
|D^{2}u| \leq C(|\nabla du| + |du|).
\]
Using this and Young's inequality again, we bound the last term in~\eqref{eq:nonlinearity-bound} as follows.
\begin{align}
|du|^{3}|D^{2}u| & \leq C|du|^{3}(|\nabla du| + |du|) \nonumber \\
&\leq C \big( \tfrac{1}{4 \ep} |du|^{5} + \ep |du||\nabla du|^{2} \big) + C \big( \tfrac{1}{2}|du|^{3} + \tfrac{1}{2}|du|^{5} \big).
\end{align}
Combining this with~\eqref{eq:nonlinearity-bound} and choosing a small enough $\ep$, we arrive at
\[
-\langle dd^{\ast}\left( |du|du \right), du \rangle \geq -C|du|^{3} - C|du|^{5} - \tfrac{1}{2}|du||\nabla du|^{2}.
\]
Putting this back into the right hand side of~\eqref{bochnerprecursor} and using~\eqref{bochnerprincipalpart}, we obtain
\begin{equation} \label{bochner}
\tfrac{1}{3}\Div ( dw + |du|^{-2}\langle du, dw \rangle du ) \geq -Cw^{\frac{5}{3}} - Cw,
\end{equation}
pointwise everywhere on $U_{+}$. Hence, for all $\zeta \in C^{1}_{c}(U_{+})$ with $\zeta \geq 0$, we have
\begin{equation*}
\int_{U_{+}} \big( \langle dw, d\zeta \rangle + |du|^{-2}\langle du, dw \rangle \langle du, d\zeta \rangle \big)d\mu \leq C\int_{U_{+}} (w^{\frac{5}{3}} + w)\zeta d\mu.
\end{equation*}

Now the proof of~\cite[Lemma 2.4]{DF} carries over to show that the above inequality actually holds for all $\zeta \in C^{1}_{0}(B(1))$ with $\zeta \geq 0$. Consequently $w \in W^{1, 2}_{\loc}(B(1))$ is a weak solution to~\eqref{bochner} on all of $B(1)$. We may then follow~\cite[pages 394--395]{DF} to complete the proof, shrinking $\ep_{1}$ if necessary.
\end{proof}

\end{appendices}

\addcontentsline{toc}{section}{References}


\begin{thebibliography}{99}\setlength\itemsep{-1mm}

\bibitem{Be} F. Bethuel, ``On the singular set of stationary harmonic maps'', {\em Manuscripta Math.} {\bf 78} (1993), 417--443. MR1208652

\bibitem{BC} H. Brezis\ and\ J.-M. Coron, ``Multiple solutions of $H$-systems and Rellich's conjecture'', {\em Comm. Pure Appl. Math.} {\bf 37} (1984), 149--187. MR0733715

\bibitem{BG} R. B. Brown\ and\ A. Gray, ``Vector cross products'', {\em Comment. Math. Helv.} {\bf 42} (1967), 222--236. MR0222105

\bibitem{BH} R. Bryant\ and\ R. Harvey, ``Submanifolds in hyper-K\"{a}hler geometry'', {\em J. Amer. Math. Soc.} {\bf 2} (1989), 1--31. MR0953169

\bibitem{BS} R.L. Bryant\ and\ S.M. Salamon, ``On the construction of some complete metrics with exceptional holonomy'', {\em Duke Math. J.} {\bf 58} (1989), 829--850. MR1016448

\bibitem{DW} A. Doan\ and\ T. Walpuski, ``On counting associative submanifolds and Seiberg-Witten monopoles'', {\em Pure Appl. Math. Q.} {\bf 15} (2019), 1047--1133. MR4085667

\bibitem{DK} S. K. Donaldson\ and\ P. B. Kronheimer, {\it The geometry of four-manifolds}, Oxford Mathematical Monographs, The Clarendon Press, Oxford University Press, New York, 1990. MR1079726

\bibitem{DS} S. Donaldson\ and\ E. Segal, ``Gauge theory in higher dimensions, II'', in {\it Surveys in differential geometry. Volume XVI. Geometry of special holonomy and related topics}, 1--41, Surv. Differ. Geom., 16, Int. Press, Somerville, MA. MR2893675

\bibitem{DT} S. K. Donaldson\ and\ R. P. Thomas, ``Gauge theory in higher dimensions'', in {\it The geometric universe (Oxford, 1996)}, 31--47, Oxford Univ. Press, Oxford. MR1634503

\bibitem{DF} F. Duzaar\ and\ M. Fuchs, ``On removable singularities of $p$-harmonic maps'', {\em Ann. Inst. H. Poincar\'{e} Anal. Non Lin\'{e}aire} {\bf 7} (1990), 385--405. MR1138529

\bibitem{DM} F. Duzaar\ and\ G. Mingione, ``The $p$-harmonic approximation and the regularity of $p$-harmonic maps'', {\em Calc. Var. Partial Differential Equations} {\bf 20} (2004), 235--256. MR2062943

\bibitem{E} L. C. Evans, ``Partial regularity for stationary harmonic maps into spheres'', {\em Arch. Rational Mech. Anal.} {\bf 116} (1991), 101--113. MR1143435

\bibitem{Fe} P. M. N. Feehan, ``Geometry of the ends of the moduli space of anti-self-dual connections'', {\em J. Differential Geom.} {\bf 42} (1995), 465--553. MR1367401

\bibitem{FS} C. Fefferman\ and\ E. M. Stein, ``$H\sp{p}$ spaces of several variables'', {\em Acta Math.} {\bf 129} (1972), 137--193. MR0447953

\bibitem{F} M. Fuchs, ``The blow-up of $p$-harmonic maps'', {\em Manuscripta Math.} {\bf 81} (1993), 89--94. MR1247590

\bibitem{GM} M. Giaquinta\ and\ L. Martinazzi, {\it An introduction to the regularity theory for elliptic systems, harmonic maps and minimal graphs}, second edition, Appunti. Scuola Normale Superiore di Pisa (Nuova Serie), 11, Edizioni della Normale, Pisa, 2012. MR3099262

\bibitem{GiMa} M. Giaquinta\ and\ G. Modica, ``Remarks on the regularity of the minimizers of certain degenerate functionals'', {\em Manuscripta Math.} {\bf 57} (1986), 55--99. MR0866406

\bibitem{GT} D. Gilbarg\ and\ N. S. Trudinger, {\it Elliptic partial differential equations of second order}, reprint of the 1998 edition, Classics in Mathematics, Springer-Verlag, Berlin, 2001. MR1814364

\bibitem{Gr-CR} A. Gray, ``Vector cross products, harmonic maps and the Cauchy--Riemann equations'', in {\it Harmonic maps (New Orleans, La., 1980)}, 57--74, Lecture Notes in Math., 949, Springer, Berlin. MR0673583

\bibitem{Gro} M. Gromov, ``Pseudo holomorphic curves in symplectic manifolds'', {\em Invent. Math.} {\bf 82} (1985), 307--347. MR0809718

\bibitem{Han-Lin} Q. Han\ and\ F. Lin, {\it Elliptic partial differential equations}, second edition, Courant Lecture Notes in Mathematics, 1, Courant Institute of Mathematical Sciences, New York, 2011. MR2777537

\bibitem{Ha-Li} R. Hardt\ and\ F.-H. Lin, ``Mappings minimizing the $L^p$ norm of the gradient'', {\em Comm. Pure Appl. Math.} {\bf 40} (1987), 555--588. MR0896767

\bibitem{HL} R. Harvey\ and\ H. B. Lawson, Jr., ``Calibrated geometries'', {\em Acta Math.} {\bf 148} (1982), 47--157. MR0666108

\bibitem{Ha} A. Haydys, ``$\rm G_2$ instantons and the Seiberg-Witten monopoles'', in {\it Gromov-Witten theory, gauge theory and dualities}, 12 pp, Proc. Centre Math. Appl. Austral. Nat. Univ., 48, Austral. Nat. Univ., Canberra. MR3951401

\bibitem{HW} A. Haydys\ and\ T. Walpuski, ``A compactness theorem for the Seiberg-Witten equation with multiple spinors in dimension three'', {\em Geom. Funct. Anal.} {\bf 25} (2015), 1799--1821. MR3432158

\bibitem{He} F. H\'{e}lein, ``R\'{e}gularit\'{e} des applications faiblement harmoniques entre une surface et une vari\'{e}t\'{e} riemannienne'', {\em C. R. Acad. Sci. Paris S\'{e}r. I Math.} {\bf 312} (1991), 591--596. MR1101039

\bibitem{HS} H. Hofer\ and\ D. A. Salamon, ``Floer homology and Novikov rings'', in {\it The Floer memorial volume}, 483--524, Progr. Math., 133, Birkh\"{a}user, Basel. MR1362838

\bibitem{J-book} D. Joyce, {\it Compact manifolds with special holonomy}, Oxford Mathematical Monographs, Oxford University Press, Oxford, 2000. MR1787733

\bibitem{J} D. Joyce, ``Conjectures on counting associative 3-folds in $G_2$-manifolds'', in {\it Modern geometry: a celebration of the work of Simon Donaldson}, 97--160, Proc. Sympos. Pure Math., 99, Amer. Math. Soc., Providence, RI. MR3838881

\bibitem{K1} S. Karigiannis, ``Deformations of $G_2$ and ${\rm Spin}(7)$ structures'', {\em Canad. J. Math.} {\bf 57} (2005), 1012--1055. MR2164593

\bibitem{LL} J.-H. Lee\ and\ N. C. Leung, ``Instantons and branes in manifolds with vector cross products'', {\em Asian J. Math.} {\bf 12} (2008), 121--143. MR2415016

\bibitem{LWZ} N. C. Leung, X. Wang\ and\ K. Zhu, ``Thin instantons in $G_2$-manifolds and Seiberg-Witten invariants'', {\em J. Differential Geom.} {\bf 95} (2013), 419--481. MR3128991

\bibitem{LWZ-2} N. C. Leung, X. Wang\ and\ K. Zhu, ``Instantons in $G_2$ manifolds from $J$-holomorphic curves in coassociative submanifolds'', in {\it Proceedings of the G\"{o}kova Geometry-Topology Conference 2012}, 89--110, Int. Press, Somerville, MA. MR3203358

\bibitem{Li} F.-H. Lin, ``Gradient estimates and blow-up analysis for stationary harmonic maps'', {\em Ann. of Math. (2)} {\bf 149} (1999), 785--829. MR1709303

\bibitem{MS} D. McDuff\ and\ D. Salamon, {\it $J$-holomorphic curves and symplectic topology}, second edition, American Mathematical Society Colloquium Publications, 52, American Mathematical Society, Providence, RI, 2012. MR2954391

\bibitem{MY} L. Mou\ and\ P. Yang, ``Regularity for $n$-harmonic maps'', {\em J. Geom. Anal.} {\bf 6} (1996), 91--112. MR1402388

\bibitem{MW} L. Mou\ and\ C. Y. Wang, ``Bubbling phenomena of Palais-Smale-like sequences of $m$-harmonic type systems'', {\em Calc. Var.} {\bf 4} (1996), 341--367. MR1393269

\bibitem{NVV} A. Naber, D. Valtorta\ and\ G. Veronelli, ``Quantitative regularity for $p$-harmonic maps'', {\em Comm. Anal. Geom.} {\bf 27} (2019), 111--159. MR3951022

\bibitem{P} T. H. Parker, ``Bubble tree convergence for harmonic maps'', {\em J. Differential Geom.} {\bf 44} (1996), 595--633. MR1431008

\bibitem{PW} T. H. Parker\ and\ J. G. Wolfson, ``Pseudo-holomorphic maps and bubble trees'', {\em J. Geom. Anal.} {\bf 3} (1993), 63--98. MR1197017

\bibitem{Ri} T. Rivi\`ere, ``Interpolation spaces and energy quantization for Yang-Mills fields'', {\em Comm. Anal. Geom.} {\bf 10} (2002), 683--708. MR1925499

\bibitem{SU} J. Sacks\ and\ K. Uhlenbeck, ``The existence of minimal immersions of $2$-spheres'', {\em Ann. of Math. (2)} {\bf 113} (1981), 1--24. MR0604040

\bibitem{Sch} R. Schoen, ``Analytic aspects of the harmonic map problem'', in {\it Seminar on nonlinear partial differential equations (Berkeley, Calif., 1983)}, 321--358, Math. Sci. Res. Inst. Publ., 2, Springer, New York. MR0765241

\bibitem{Si} L. Simon, {\it Lectures on geometric measure theory}, Proceedings of the Centre for Mathematical Analysis, Australian National University, 3, Australian National University, Centre for Mathematical Analysis, Canberra, 1983. MR0756417

\bibitem{Sm} A. M. Smith, ``A theory of multiholomorphic maps'', \url{https://arxiv.org/abs/1112.1471}

\bibitem{T} H. Takeuchi, ``Some conformal properties of $p$-harmonic maps and a regularity for sphere-valued $p$-harmonic maps'', {\em J. Math. Soc. Japan} {\bf 46} (1994), 217--234. MR1264939

\bibitem{Ti} G. Tian, ``Gauge theory and calibrated geometry I'', {\em Ann. of Math. (2)} {\bf 151} (2000), 193--268. MR1745014

\bibitem{TW} T. Toro\ and\ C. Wang, ``Compactness properties of weakly $p$-harmonic maps into homogeneous spaces'', {\em Indiana Univ. Math. J.} {\bf 44} (1995), 87--113. MR1336433

\bibitem{U} K. Uhlenbeck, ``Regularity for a class of non-linear elliptic systems'', {\em Acta Math.} {\bf 138} (1977), 219--240. MR0474389

\bibitem{Wa} C. Wang, ``Regularity and blow-up analysis for $J$-holomorphic maps'', {\bf Commun. Contemp. Math.} {\bf 5} (2003), 671--704. MR2003213

\bibitem{We} H. C. Wente, ``An existence theorem for surfaces of constant mean curvature'', {\em J. Math. Anal. Appl.} {\bf 26} (1969). MR0243467

\bibitem{Wh} B. White, ``Infima of energy functionals in homotopy classes of mappings'', {\em J. Differential Geom.} {\bf 23} (1986), 127--142. MR0845702

\bibitem{Ye} R. Ye, ``Gromov's compactness theorem for pseudo holomorphic curves'', {\em Trans. Amer. Math. Soc.} {\bf 342} (1994), 671--694. MR1176088

\end{thebibliography}
\end{document}